\def\cgaps#1{}
\def\Cgaps#1{}
\def\undersetbrace#1\to#2{\underbrace{#2}_{#1}}								 
\def\oversetbrace#1\to#2{\overbrace{#2}^{#1}}
\def\AMSunderset#1\to#2{\underset{#1}{#2}}
\def\AMSoverset#1\to#2{\overset{#1}{#2}}
\def\East#1#2{\overset{#1}{\longrightarrow}}
\def\norm#1{\left\|{#1}\right\|}
\def\adj#1{\on{Adj}({#1})}
\newtheorem{ass}{Assumption}[subsection]
\newtheorem*{ass*}{Assumption}
\newtheorem*{prop*}{Proposition}
\newtheorem*{thm*}{Theorem}
\newtheorem*{lem*}{Lemma}
\newtheorem*{cor*}{Corollary}
\numberwithin{equation}{subsection}
\numberwithin{equation}{section}
\newenvironment{demo}[1]{{\textit{#1.}}}{\par\smallskip}
\def\ign#1{}             %=ignore, invisible entry for the index only
\def\o{\circ}
\def\X{\mathfrak X}
\def\al{\alpha}
\def\be{\beta}
\def\ep{\varepsilon}
\def\ze{\zeta}
\def\et{\eta}
\def\la{\lambda}
\def\si{\sigma}
\def\ph{\varphi}
\def\om{\omega}
\def\Ga{\Gamma}
\def\De{\Delta}
\def\La{\Lambda}
\def\Om{\Omega}
\def\i{^{-1}}
\def\x{\times}
\def\p{\partial}
\let\on=\operatorname
\def\D{T} %decide later = derivative in second direction
\def\L{\mathcal L}
\def\grad{\on{grad}}%
\def\AMSonly#1{}
\def\Id{\on{Id}}
\def\R{\mathbb R}
\def\Tr{\on{Tr}}
\def\vol{{\on{vol}}}
\def\Vol{{\on{Vol}}}
\def\Imm{{\on{Imm}}}
\def\Emb{{\on{Emb}}}
\def\Diff{{\on{Diff}}}
\def\g{\overline{g}}
\def\grad{\on{grad}}
\def\dist{{\on{dist}}}
\def\Nor{{\on{Nor}}}
\def\sym{{\on{sym}}}
\def\alt{{\on{alt}}}
\def\Hor{{\on{Hor}}}
\def\hor{{\on{hor}}}
\def\vert{{\on{vert}}}
\begin{document}
%---------------------------------------------------------------------------------------------

\frontmatter
\title{Sobolev metrics on shape space of surfaces}
\author{Philipp Harms}

\begin{titlepage}
\vspace*{-2cm}  % bei Verwendung von vmargin.sty
\begin{flushright}
    \includegraphics{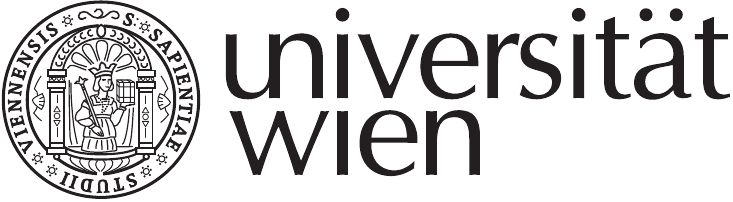}
\end{flushright}
\vspace{1cm}

\begin{center}  % Diplomarbeit ODER Magisterarbeit ODER Dissertation
    \Huge{\textbf{\textsf{\MakeUppercase{
        Dissertation
    }}}}
    \vspace{1.5cm}

    \large{\textsf{  % Diplomarbeit ODER Magisterarbeit ODER Dissertation
                     % (Dies ist erst die Ueberschrift!)
    Titel der Dissertation 
    }}
    \vspace{.1cm}

    \LARGE{\textsf{  % Hier kommt der eigentliche Titel, bei Bedarf mit \\
                     % ACHTUNG: Deutsche Anfuehrungszeichen: ,,Titel``
                     %          English quotes:              ``title''
        % >>>>> BEGINN TITEL >>>>>
        Sobolev metrics on shape space of surfaces
        %``The Creation of a Title Page''
        % <<<<< ENDE TITEL <<<<<
    }}
    \vspace{2cm}

    \large{\textsf{  % Verfasserin ODER Verfasser (Ueberschrift)
        Verfasser
    }}

    \Large{\textsf{  % Vorname Nachname
        % >>>>> BEGINN VORNAME NACHNAME >>>>>
        Philipp Harms
        % <<<<< ENDE VORNAME NACHNAME <<<<<
    }}
    \vspace{2cm}

    \large{\textsf{
        angestrebter akademischer Grad  % (Ueberschrift)
    }}

    \Large{\textsf{  % Magistra ODER Magister ODER Doktorin ODER Doktor
                     % ACHTUNG: Kuerzel "Mag.a" oder "Dr.in" nicht zulaessig
       Doktor der Naturwissenschaften (Dr. rer. nat.)
    }}
\end{center}
\vspace{1.5cm}

\noindent\textsf{Wien, Dezember 2010}  % <<<<< ORT, MONAT UND JAHR EINTRAGEN
\vfill

\noindent\begin{tabular}{@{}ll}
\textsf{Studienkennzahl lt.\ Studienblatt:}
&
\textsf{A 091 405}  % <<<<< STUDIENKENNZAHL EINTRAGEN
\\
    % BEI DISSERTATIONEN:
%\textsf{Dissertationsgebiet lt. Studienblatt:}
    % ANSONSTEN:
\textsf{Dissertationsgebiet lt.\ Studienblatt:}
&
\textsf{Mathematik}  % <<<<< DISSGEBIET/STUDIENRICHTUNG EINTRAGEN
\\
% Betreuerin ODER Betreuer:
\textsf{Betreuer:}
&
\textsf{Ao. Univ.-Prof. Dr. Peter W. Michor}  % <<<<< NAME EINTRAGEN
\end{tabular}

\end{titlepage}

%-------------------------------------------------------------------------------------------
\cleardoublepage
%-------------------------------------------------------------------------------------------

\long\def\symbolfootnote[#1]#2{\begingroup%
\def\thefootnote{\fnsymbol{footnote}}\footnotetext[#1]{#2}\endgroup}

\mbox{}%
\symbolfootnote[1]{The persistence of memory by Salvador Dali. 
Picture taken from
\url{http://en.wikipedia.org/wiki/File: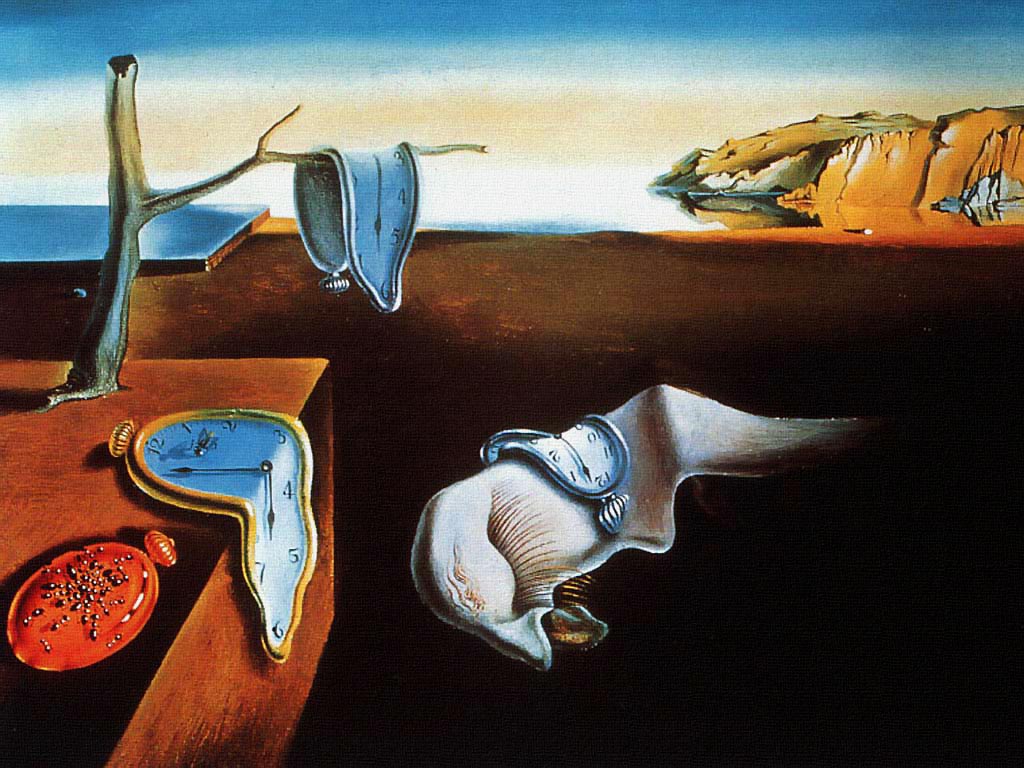}, November 2010.}
\symbolfootnote[2]{Proemium of Ovid's metamorphoses.}

\begin{figure}[h]
\centering
\includegraphics[width=\textwidth-1pt]{The_Persistence_of_Memory}

\vspace{3cm}

\centering

\settowidth{\versewidth}{corpora; di, coeptis (nam vos mutastis et illas)}
\begin{verse}[\versewidth]
\selectlanguage{latin}
In nova fert animus mutatas dicere formas
\\
corpora; di, coeptis (nam vos mutastis et illas)
\\
adspirate meis primaque ab origine mundi
\\
ad mea perpetuum deducite tempora carmen!
\selectlanguage{english}
\end{verse}

\end{figure}

%---------------------------------------------------------------------------------------------
%---------------------------------------------------------------------------------------------
\cleardoublepage
\addcontentsline{toc}{chapter}{Preface}
\markboth{}{}
\addcontentsline{toc}{section}{Abstract}
%---------------------------------------------------------------------------------------------
%---------------------------------------------------------------------------------------------

\noindent
{\sc Abstract.}
Many procedures in science, engineering and medicine produce data in the form of geometric shapes.
Mathematically, a shape can be modeled as an un-parameterized immersed sub-manifold, which is the
notion of shape used here. Endowing shape space with a Riemannian metric opens up the world of
Riemannian differential geometry with geodesics, gradient flows and curvature. Unfortunately, the
simplest such metric induces vanishing path-length distance on shape space. This discovery by Michor
and Mumford was the starting point to a quest for stronger, meaningful metrics that should be able to
distinguish salient features of the shapes. Sobolev metrics are a very promising approach to that. They
come in two flavors: Outer metrics which are induced from metrics on the diffeomorphism group of
ambient space, and inner metrics which are defined intrinsically to the shape. In this work, Sobolev
inner metrics are developed and treated in a very general setting. There are no restrictions on the
dimension of the immersed space or of the ambient space, and ambient space is not required to be
flat. It is shown that the path-length distance induced by Sobolev inner metrics does not vanish. The
geodesic equation and the conserved quantities arising from the symmetries are calculated, and
well-posedness of the geodesic equation is proven. Finally examples of numerical solutions to the
geodesic equation are presented.

%-------------------------------------------------------------------------------------------
\cleardoublepage
\addcontentsline{toc}{section}{Acknowledgment}
%-------------------------------------------------------------------------------------------

\noindent
{\sc Acknowledgment.}
I am very thankful for the support I have received from so many sides during my time
as a Ph.D. student. 
First of all I want to thank my very dear advisor Peter Michor. 
I would also like to thank Alain Trouv\'e, Bianca Falcidieno, Silvia Biasotti, 
Darryl Holm and Andrea Mennucci who have invited me to research visits at their institutes; 
David Mumford, Stefan Haller, Johannes Wallner and Hermann Schichl 
who have helped me in mathematical questions; 
Peter Gruber and Josef Teichmann for their 
ongoing caring interest in my mathematical and personal welfare;
Andreas Kriegl, Dietrich Burde, Armin Rainer and Andreas Nemeth for their merry company 
at so many meals and coffee breaks; 
Martin Bauer for bearing with me during all those years as friends and colleagues;
my friends, siblings and parents for being such good friends, siblings and parents!

%-------------------------------------------------------------------------------------------
%-------------------------------------------------------------------------------------------
\addcontentsline{toc}{section}{Contents}
\setcounter{tocdepth}{2}
\tableofcontents
%-------------------------------------------------------------------------------------------
%-------------------------------------------------------------------------------------------

%-------------------------------------------------------------------------------------------
%-------------------------------------------------------------------------------------------
\mainmatter
\chapter{Introduction}\label{in}
\section{The Riemannian setting for shape analysis}
%-------------------------------------------------------------------------------------------

From very early on, shapes spaces have been analyzed in a Riemannian setting. 
This is also the setting that has been adopted in this work.
The Riemannian setting is well-suited to shape analysis for several reasons. 
\begin{itemize}
\item It formalizes an \emph{intuitive notion of similarity} of shapes: Shapes that
differ only by a small \emph{deformation} are similar to each other. To compare shapes, it
is thus necessary to measure deformations. This is exactly what is accomplished by 
a Riemannian metric. A Riemannian metric measures continuous deformations of shapes, that is, 
paths in shape space. 
\item Riemannian metrics on shape space have been used successfully in 
\emph{computer vision} for a long time, often without any mention of the 
underlying metric. Gradient flows for shape smoothing are an example.  
An underlying metric is needed for the definition of a gradient.
Often, the metric that has been used implicitly was the $L^2$-metric
that has however turned out to be too weak.
\item The exponential map that is induced by a Riemannian metric permits to
\emph{linearize shape space}: When shapes are represented as initial velocities of 
geodesics connecting them to a fixed reference shape, one effectively works
in the linear tangent space over the reference shape. Curvature will play an 
essential role in quantifying the deviation of curved shape space from its 
linearized approximation. 
\item The linearization of shape space by the exponential map
allows to do \emph{statistics} on shape space. 
\end{itemize}
A disadvantage of the Riemannian approach is that shapes can be compared with each other
only when there is a deformation between them. 

%-------------------------------------------------------------------------------------------
\section{Related work}
%-------------------------------------------------------------------------------------------

In mathematics and computer vision, shapes have been represented in many ways.
Point clouds, meshes, level-sets, graphs of a function, currents and measures are
but some of the possibilities.
Furthermore, the resulting shape spaces have been endowed with many different metrics. 
Approaches found in the literature include: 
\begin{itemize}
\item Inner metrics on shape space of unparametrized immersions.
	These metrics are induced from metrics on parametrized immersions. 
	Since this is the approach studied in this work, more references will be given later. 
\item Outer metrics on various shape spaces 
	(images, embedded surfaces, landmarks, measures and currents)
	that the diffeomorphism group of ambient space is acting on. See for example
	\cite{Bajcsy1983,Beg2005,Michor107,Trouve2002,% Images
		Bookstein1997,Michor121,Kendall1984,% Landmarks
		Glaunes2008% measures
		}. 
\item Metamorphosis metrics. See for example \cite{Trouve2005,Holm2009}.
\item The Wasserstein metric or Monge-Kantorovic metric on shape space of probability measures. 
	See for example \cite{Ambrosio2004,Ambrosio2008,Benamou2000,Benamou2002}.
\item The Weil-Peterson metric on shape space of planar curves. 
	See for example \cite{Mumford2004,Mumford2006,Kushnarev2009}.
\item Current metrics. See for example \cite{Vaillant2005, Trouve2008b,Trouve2009}.
\end{itemize}

More references can be found in the review papers 
\cite{Alt1996,Arman1993,Besl1985,Loncaric1998,Pope1994,Veltkamp1999}.
% these references are from Shape Distributions, 
% ROBERT OSADA, THOMAS FUNKHOUSER, BERNARD CHAZELLE, and DAVID DOBKIN.

%-------------------------------------------------------------------------------------------
\section{Inner versus outer metrics}
%-------------------------------------------------------------------------------------------

\begin{figure}[p]
\centering
\includegraphics[width=.7\textwidth]{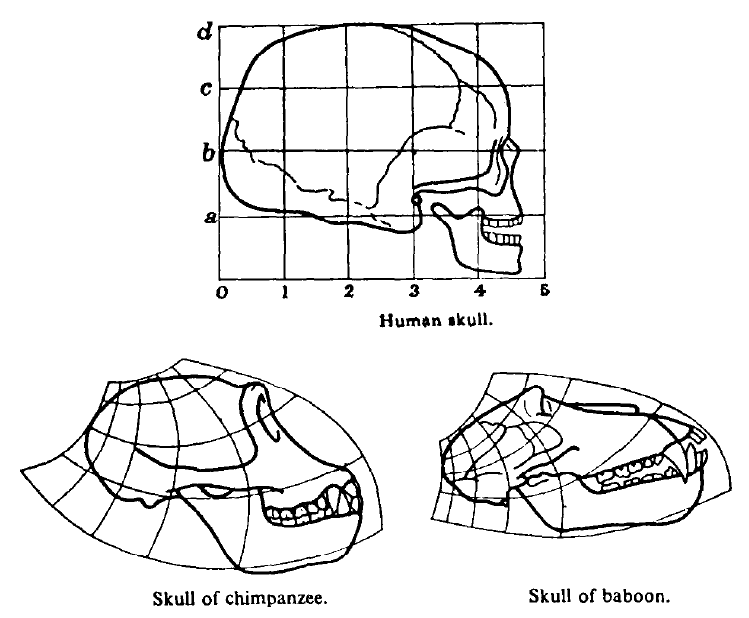}
\caption{Transformation of the ambient space carrying with it 
a transformation from human skull to chimpanzee and baboon. From 
\cite[p.~318--319]{Thompson1942}. }
\label{in:sk}
\vspace{.3in}
\includegraphics[width=.7\textwidth]{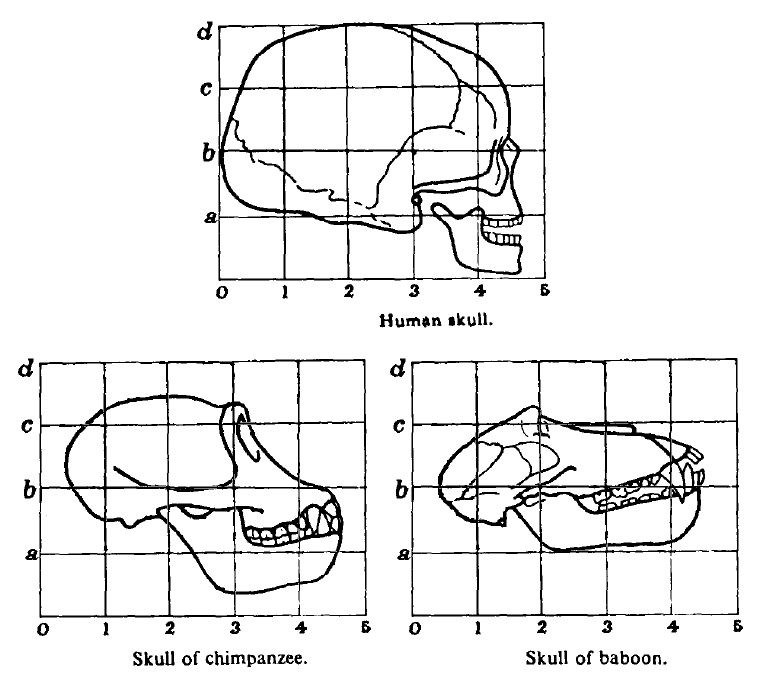}
\caption{The same transformation as in figure~\ref{in:sk}, but
only the shape is transformed, not the ambient space.
Modification of \cite[p.~318--319]{Thompson1942}. }
\label{in:sk2}
\end{figure}

\emph{Outer metrics} measure how much ambient space has to be deformed 
in order to yield the desired deformation of the shape. 
This concept has been introduced by the Scottish biologist, mathematician 
and classics scholar d'Arcy Thompson in 1942, already. 
As Thompson declared in the epilogue of his book ``On Growth and Form'' \cite{Thompson1942}, 
his aim was to show that ``a certain mathematical aspect of morphology is essential to (the) proper
study of Growth and Form''.
In the chapter ``On the comparison of related forms'' of this book, Thompson 
pictures transformations of the ambient 
space by a Cartesian grid. A transformation of the ambient space and the grid then
results in a deformation of the embedded shape, too. An example is given in figure~\ref{in:sk}. 

Thompson's notion of shape transformation and the concept of outer metrics 
is fundamentally different from the notion of shape transformation that underlies this work. 
In this work, so called \emph{inner metrics} are treated. 
Inner metrics measure how much the shape itself is deformed. A deformation of the shape 
does not carry with it a deformation of the ambient space.
The ambient space is fixed, and the shape
moves within it. This is illustrated in figure~\ref{in:sk2}.

Riemannian metrics on shape space measure \emph{infinitesimal deformations}. 
An infinitesimal deformation of ambient space is a vector field on ambient 
space and could be pictured as a small arrow attached to every point in ambient space. 
This is the kind of deformation that is measured by outer metrics. 
In contrast to this, an infinitesimal deformation of the shape itself is a (normal or horizontal)
vector field along the shape. It could be pictured as a small arrow attached to every point
of the shape. Figure~\ref{in:inout} illustrates the two kinds of deformation.

\begin{figure}[h]
\centering
\includegraphics[width=.75\textwidth]{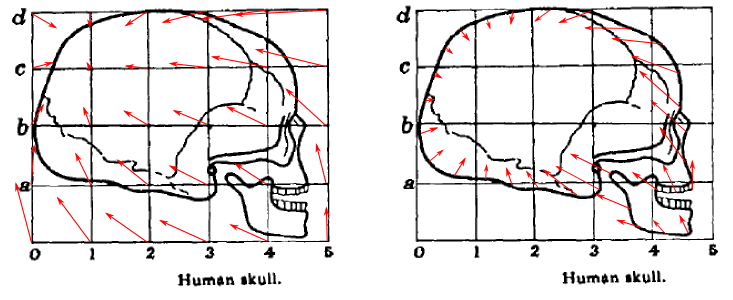}
\caption{Infinitesimal transformation of ambient space (left) as measured
by an outer metric and infinitesimal transformation of the shape itself (right)
as measured by an inner metric.}
\label{in:inout}
\end{figure}

The deformations that are measured by inner metrics have much smaller (lower dimensional)
support than those measured by outer metrics. This is an advantage in \emph{numerics}. 
A disadvantage is that the differential operator governing an inner metric usually depends 
on the shape, which is not the case for outer metrics.

Section~\ref{sh:inout} explains how the difference between inner and outer metrics
becomes manifest in the mathematical theory. Sections~\ref{sh:na} and \ref{sh:rish} explain
in mathematical terms what an infinitesimal deformation of a shape is.

%-------------------------------------------------------------------------------------------
\section{Where this work comes from}
%-------------------------------------------------------------------------------------------

This work continues the study of \emph{inner metrics} on
shape space of \emph{unparametrized immersions} 
of a fixed manifold $M$ into a Riemannian manifold $N$. 
$M$ fixes the topology of the shapes. 
$N$ is the ambient space, and it is endowed with a fixed metric. 
For example, $M$ could be a simple geometric object like a sphere, 
and $N$ could be some $\R^n$. 

It came as a big surprise when Michor and Mumford found out in \cite{Michor98,Michor102} 
that the simplest and most natural Riemannian metric on this shape space (the $L^2$-metric)
induces \emph{vanishing geodesic distance} on shape space. 
More precisely the result is that any two shapes can be 
deformed into each other by a deformation that is arbitrarily small
when measured with respect to this metric. The result also holds for $L^2$-metrics
on diffeomorphism groups and for outer $L^2$-metrics on shape space. 
The discovery of this degeneracy was the starting point to a quest for stronger metrics.
These metrics should be able to distinguish salient features of the shapes.
The meaning of salient of course depends on the application. 

One approach to strengthen the $L^2$ inner metric is by weighting it by a function depending
on the mean curvature and/or the volume \cite{Michor98,Michor102,Michor118,Michor120,Bauer2010}. 
Such metrics have been called \emph{almost local metrics}. 

Another approach, and the approach taken in this work, 
is to incorporate derivatives 
of the deformation vector fields in the definition of the metric.
This yields \emph{Sobolev inner metrics} on shape space. 
Sobolev inner metrics have first been defined on shape space of planar curves
\cite{Michor107,Michor111,Younes1998,TrouveYounes2000,MennucciYezzi2008}.
In this work and in \cite{Michor119}, these metrics are generalized 
to higher dimensional shape spaces and to a possibly curved ambient space. 

Some parts of this work have been written in collaboration with Martin Bauer 
and can also be found in his thesis~\cite{Bauer2010}. This concerns mainly 
sections \ref{no}, \ref{sh} and \ref{va} covering a lot of background material.
 
%-------------------------------------------------------------------------------------------
\section{Content of this work}
%-------------------------------------------------------------------------------------------

This work progresses from a very general setting to a specific one in three steps. 
In the beginning, a framework for general inner metrics is developed. 
Then the general concepts carry over to more and more specific inner metrics. 
\begin{itemize}
\item First, shape space is endowed with a \emph{general inner metric}, i.e with a metric
that is induced from a metric on the space of immersions, but that is unspecified otherwise.
It is shown how various versions of the geodesic equation can be expressed using gradients of the 
metric with respect to itself and how conserved quantities arise from symmetries. 
(This is section~\ref{sh}.)
\item Then it is assumed that the inner metric is defined via an elliptic pseudo-differential operator. 
Such a metric will be called a \emph{Sobolev-type metric}. 
The geodesic equation is formulated in terms of the operator, and
existence of horizontal paths of immersions within each equivalence class of paths is proven. 
(This is section~\ref{so}.)
Then estimates on the path-length distance are derived. Most importantly it is shown that
when the operator involves high enough powers of the Laplacian, then the metric does not have 
the degeneracy of the $L^2$-metric. (This is section~\ref{ge}.)
\item Motivated by the previous results it is assumed that the elliptic pseudo-differential operator 
is given by the \emph{Laplacian} and powers of it.
Again, the geodesic equation is derived. The formulas that are obtained are ready to be implemented numerically. 
(This is section~\ref{la}.)
\end{itemize}
The remaining sections cover the following material: 
\begin{itemize}
\item Section~\ref{no} treats some \emph{differential geometry of surfaces} that is needed throughout 
this work. It is also a good reference for the notation that is used.
The biggest emphasis is on a rigorous treatment of the covariant derivative. 
Some material like the adjoint covariant derivative is not found in standard text books. 
\item Section~\ref{va} contains formulas for the \emph{variation} of the metric, volume form, 
covariant derivative and Laplacian with respect to the immersion inducing them.
These formulas are used extensively later. 
\item Section~\ref{su} covers the special case of \emph{flat ambient space}. 
The geodesic equation is simplified and conserved momenta for the Euclidean motion group
are calculated. Sobolev-type metrics are compared to the Fr\'echet metric which is 
available in flat ambient space. 
\item Section~\ref{di} treats \emph{diffeomorphism groups} of compact manifolds as a 
special case of the theory that has been developed so far.
\item In section~\ref{nu} it is shown in some examples that the geodesic equation 
on shape space can be solved \emph{numerically}. 
\end{itemize}

%-------------------------------------------------------------------------------------------
\section{Contributions of this work.}
%-------------------------------------------------------------------------------------------

\begin{itemize}
\item This work is the first to treat Sobolev inner metrics on spaces of immersed
surfaces and on higher dimensional shape spaces. 

\item It contains the first description of how the geodesic equation can be 
formulated in terms of gradients of the metric with respect to itself 
when the ambient space is not flat. 
To achieve this, a covariant derivative on some bundles over immersions is defined. 
This covariant derivative is induced from the Levi-Civita covariant derivative on ambient space. 

\item The geodesic equation is formulated in terms of this covariant derivative. 
Well-posedness of the geodesic equation is shown under some
regularity assumptions that are verified for Sobolev metrics. 
Well-posedness also follows for the geodesic equation on 
diffeomorphism groups, where this result has not yet been obtained in that full generality.

\item To derive the geodesic equation, a variational formula for the Laplacian operator is developed.
The variation is taken with respect to the metric on the manifold where the Laplacian is 
defined. This metric in turn depends on the immersion inducing it. 

\item It is shown that Sobolev inner metrics separate points in shape space when the 
order of the differential operator governing the metric is high enough.
(The metric needs to be as least as strong as the $H^1$-metric.)
Thus Sobolev inner metrics overcome the degeneracy of the $L^2$-metric. 

\item The path-length distance of Sobolev inner metrics is compared to the Fr\'echet distance. 
It would be desirable to bound F\'echet distance by some Sobolev distance. 
This however remains an open problem. 

\item Finally it is demonstrated in some examples that the geodesic equation 
for the $H^1$-metric on shape space of surfaces in $\R^3$ can be solved numerically. 
\end{itemize}

%-------------------------------------------------------------------------------------------
%-------------------------------------------------------------------------------------------
\chapter[Notation]{Differential geometry of surfaces and notation}\label{no}
%-------------------------------------------------------------------------------------------
%-------------------------------------------------------------------------------------------

In this section the differential geometric tools 
that are needed to deal with immersed surfaces
are presented and developed. 
The most important point is a rigorous treatment of the covariant derivative 
and related concepts. 

The notation of \cite{MichorH} is used. 
Some of the definitions can also be found in \cite{Kobayashi1996a}.
A similar exposition in the same notation is \cite{Michor118, Michor119}.
This section has been written in collaboration with Martin Bauer
and is the same as section~1.1 of his Ph.D. thesis \cite{Bauer2010}, up 
to slight modifications.

%-------------------------------------------------------------------------------------------
\section{Basic assumptions and conventions}\label{no:as}
%-------------------------------------------------------------------------------------------

\begin{ass*}
It is always assumed that $M$ and $N$ are connected manifolds 
of finite dimensions $m$ and $n$, respectively. 
Furthermore it is assumed that $M$ is compact, 
and that $N$ is endowed with a Riemannian metric $\g$.
\end{ass*}

In this work, \emph{immersions} of $M$ into $N$ will be treated, i.e. 
smooth functions $M \to N$ with injective tangent mapping at every point.
The set of all such immersions will be denoted by $\Imm(M,N)$.
It is clear that only the case $\dim(M) \leq \dim(N)$ is of interest 
since otherwise $\Imm(M,N)$ would be empty.

Immersions or paths of immersions are usually denoted by $f$. 
Vector fields on $\Imm(M,N)$ or vector fields along $f$ will be called $h,k,m$, for example. 
Subscripts like $f_t = \p_t f = \p f/\p t$ denote differentiation 
with respect to the indicated variable, but subscripts are also used to indicate the 
foot point of a tensor field.

%-------------------------------------------------------------------------------------------
\section{Tensor bundles and tensor fields}\label{no:te}
%-------------------------------------------------------------------------------------------

The \emph{tensor bundles}
\begin{equation*}\xymatrix{
T^r_s M \ar[d] & 
T^r_s M \otimes f^*TN \ar[d] \\
M & M 
}\end{equation*}
will be used. Here $T^r_sM$ denotes the bundle of 
$\left(\begin{smallmatrix}r\\s\end{smallmatrix}\right)$-tensors on $M$, i.e.
$$T^r_sM=\bigotimes^r TM \otimes \bigotimes^s T^*M,$$
and $f^*TN$ is the pullback of the bundle $TN$ via $f$, see \cite[section~17.5]{MichorH}. 
A \emph{tensor field} is a section of a tensor bundle. Generally, when $E$ is a bundle, 
the space of its sections will be denoted by $\Ga(E)$. 

To clarify the notation that will be used later, 
some examples of tensor bundles and tensor fields are given now.
$S^k T^*M = L^k_{\on{sym}}(TM; \R)$ and
$\La^k T^*M = L^k_{\on{alt}}(TM; \R)$
are the bundles of symmetric and alternating 
$\left(\begin{smallmatrix}0\\k\end{smallmatrix}\right)$-tensors, respectively. 
$\Om^k(M)=\Ga(\La^k T^*M)$ is the space of differential forms,  
$\X(M)=\Ga(TM)$ is the space of vector fields, and 
$$\Ga(f^*TN) \cong \big\{ h \in C^\infty(M,TN): \pi_N \o h = f \big\}$$ 
is the space of \emph{vector fields along $f$}.

% In finite dimensions the bundle $T^r_sM$ is isomorphic to the bundle
% of multilinear mappings $L^s(TM;T^rM)$. However in infinite dimensions $T^r_sM$
% corresponds to mappings with finite rank, so that $L^s(TM;T^rM)$ is strictly greater. 

%-------------------------------------------------------------------------------------------
\section{Metric on tensor spaces}\label{no:me}
%-------------------------------------------------------------------------------------------

Let $\g \in \Gamma(S^2_{>0} T^*N)$ denote a fixed Riemannian metric on $N$. 
The \emph{metric induced on $M$ by $f \in \Imm(M,N)$} is the pullback metric 
\begin{align*}
g=f^*\g \in \Gamma(S^2_{>0} T^*M), \qquad g(X,Y)=(f^*\g)(X,Y) = \g(Tf.X,Tf.Y),
\end{align*}
where $X,Y$ are vector fields on $M$.
The dependence of $g$ on the immersion $f$ should be kept in mind.
Let $$\flat = \check g: TM \to T^*M \quad \text{and} \quad \sharp=\check g\i: T^*M \to TM.$$ 
$g$ can be extended to the cotangent bundle $T^*M=T^0_1M$ by setting
$$g\i(\alpha,\beta)=g^0_1(\alpha,\beta)=\alpha(\beta^\sharp)$$
for $\alpha,\beta \in T^*M$. 
The product metric 
$$g^r_s = \bigotimes^r g \otimes \bigotimes^s g\i$$
extends $g$ to all tensor spaces $T^r_s M$, and 
$g^r_s \otimes \g$ yields a metric on $T^r_s M \otimes f^*TN$. 
%
% \begin{multline*}
% g^r_s(X_1 \otimes \ldots X_r \otimes \alpha^1 \otimes \ldots \alpha_s,
% Y_1 \otimes \ldots Y_r \otimes \beta^1 \otimes \ldots \beta_s) \\ =
% g(X_1,Y_1) \ldots g(X_r,Y_r) g\i(\alpha^1, \beta^1) \ldots g\i(\alpha^s, \beta^s) 
% \end{multline*}
% for $X_i,Y_i \in TM$ and $\alpha^j, \beta^j \in T^*M$. 
%
%
%The coordinate expression of the metric is as follows. 
%Fix a point $x$ in $M$. Then $\p_i$ form a basis of $T_xM$ and $dx^i$ form a basis of $T_x^*M$. 
%The components of $g$ with respect to this basis are given by
%$$g_{ij}=g(\p_i,\p_j).$$
%The metric on $T^0_1(x) = T_x^*M$ is given by
%$$g\i(dx^i,dx^j)=g^{ij}=(g\i)_{ij},$$
%where $g^{ij}$ are called the contravariant components of $g$.
%If $B$ and $C$ are tensors at $x$ of type $\left(\begin{smallmatrix}r\\s\end{smallmatrix}\right)$ with components 
%$B^{i_1\ldots i_r}_{j_1\ldots j_s}$ and $C^{k_1\ldots k_r}_{l_1\ldots l_s}$ the induced metric is given by 
%$$g^r_s(B,C)=\sum g_{i_1 k_1}\dots g_{i_r k_r}g^{j_1 l_1}\dots g^{j_s l_s} B^{i_1\ldots i_r}_{j_1\ldots j_s} %C^{k_1\ldots k_r}_{l_1\ldots l_s}.$$

%-------------------------------------------------------------------------------------------
\section{Traces}\label{no:tr}
%-------------------------------------------------------------------------------------------

The \emph{trace} contracts pairs of vectors and co-vectors in a tensor product: 
\begin{align*}
\Tr:\; T^*M \otimes TM = L(TM,TM) \to M \x \R
\end{align*}
A special case of this is the operator
$i_X$ inserting a vector $X$ into a co-vector or into a covariant factor of a tensor product.
The inverse of the metric $g$ can be used to define a trace 
$$\Tr^g: T^*M \otimes T^*M \to M \x \R$$
contracting pairs of co-vecors.
Note that $\Tr^g$ depends on the metric whereas $\Tr$ does not. 
The following lemma will be useful in many calculations: 

\begin{lem*}
\begin{equation*}
g^0_2(B,C)= \on{Tr}(g\i B g\i C) \quad \text{for $B,C \in T^0_2M$ if $B$ or $C$ is symmetric.}
\end{equation*}
(In the expression under the trace, $B$ and $C$ are seen maps $TM \to T^*M$.)
\end{lem*}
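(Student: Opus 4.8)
The plan is to reduce the identity to a pointwise statement about matrices and to verify it in a $g$-orthonormal frame, where the role of the symmetry hypothesis becomes transparent. Both sides of the claimed equation are tensorial (that is, $C^\infty(M)$-bilinear in $B$ and $C$) and depend only on the values of $B$, $C$ and $g$ at a single point, so I would fix $p\in M$ together with a basis of $T_pM$ that is orthonormal for $g_p$. In such a basis the components satisfy $g_{ij}=\delta_{ij}$, and hence $g\i$ is represented by the identity matrix.

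First I would unwind the two definitions in components. By the definition of the product metric $g^0_2 = g\i \otimes g\i$ one gets $g^0_2(B,C)=g^{ik}g^{jl}B_{ij}C_{kl}$, which in the chosen frame collapses to the Frobenius pairing $\sum_{i,j}B_{ij}C_{ij}$. On the other side, viewing $B$ and $C$ as maps $TM\to T^*M$ amounts to representing them by their component matrices $(B_{ij})$ and $(C_{ij})$; since $g\i$ is the identity in this frame, the quantity $\on{Tr}(g\i B g\i C)$ is just the matrix trace $\on{Tr}(BC)=\sum_{i,j}B_{ij}C_{ji}$.

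The two expressions differ only in whether the second factor carries the indices $C_{ij}$ or $C_{ji}$, so their difference is $\sum_{i,j}B_{ij}(C_{ij}-C_{ji})$. This vanishes \emph{precisely} under the stated hypothesis: if $C$ is symmetric then $C_{ij}-C_{ji}=0$, while if $B$ is symmetric then the symmetric array $(B_{ij})$ is contracted against the antisymmetric array $(C_{ij}-C_{ji})$, and this contraction is zero. This is exactly where the assumption that $B$ or $C$ be symmetric enters, and it also makes clear that the hypothesis cannot be dropped.

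I expect the only genuine obstacle to be bookkeeping rather than mathematics: one must be careful about the identification of a $(0,2)$-tensor with a map $TM\to T^*M$ (insertion into the first versus the second slot) and check that the metric $g^0_2$ inherited from $g^r_s=\bigotimes^r g\otimes\bigotimes^s g\i$ really matches the slot-by-slot pairing used above. As a coordinate-free cross-check I would observe that both sides are symmetric under $B\leftrightarrow C$ (the metric is symmetric and the trace is cyclic), so by polarization it suffices to treat $C=\gamma\otimes\gamma$ with $B=\beta_1\otimes\beta_2$ decomposable, for which a direct computation yields $g\i(\beta_1,\gamma)\,g\i(\beta_2,\gamma)$ on both sides.
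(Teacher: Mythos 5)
Your proposal is correct and takes essentially the same approach as the paper: a pointwise component computation in which the symmetry hypothesis lets one swap the two indices of $C$ (equivalently, discard its antisymmetric part) to turn the $g^0_2$-contraction into the trace. The differences are cosmetic — you normalize to a $g$-orthonormal basis at a point where the paper keeps general coordinates $g^{ij}$, and you treat the case of symmetric $B$ explicitly, which the paper leaves implicit via its closing remark that only the symmetry of $C$ was used (combined with the symmetry of both sides in $B$ and $C$).
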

\begin{proof}
Express everything in a local coordinate system $u^1, \ldots u^{m}$ of $M$.
\begin{align*}
g^0_2(B,C)&=g^0_2\Big(\sum_{ik} B_{ik}du^i \otimes du^k,\sum_{jl}C_{jl}du^j \otimes du^l\Big) \\ & =
\sum_{ijkl} g^{ij}B_{ik}g^{kl}C_{jl} = \sum_{ijkl} g^{ji}B_{ik}g^{kl}C_{lj} 
= \on{Tr}(g\i B g\i C)
\end{align*}
Note that only the symmetry of $C$ has been used. 
\end{proof}

%--------------------------------------------------------------------------------------------------------------
\section{Volume density}\label{no:vo}
%--------------------------------------------------------------------------------------------------------------

Let $\Vol(M)$ be the \emph{density bundle} over $M$, see \cite[section~10.2]{MichorH}.
The \emph{volume density} on $M$ induced by $f \in \Imm(M,N)$ is 
$$\vol(g)=\vol(f^*\g) \in \Ga\big(\Vol(M)\big).$$
The \emph{volume} of the immersion is given by
$$\Vol(f)=\int_M \vol(f^*\g)=\int_M \vol(g).$$
The integral is well-defined since $M$ is compact. If $M$ is oriented the volume 
density may be identified with a differential form.

%--------------------------------------------------------------------------------------------------------------
\section{Metric on tensor fields}\label{no:me2}
%--------------------------------------------------------------------------------------------------------------

A \emph{metric on a space of tensor fields} is defined by integrating the appropriate metric on the 
tensor space with respect to the volume density:
$$\widetilde{g^r_s}(B,C)=\int_M g^r_s\big(B(x),C(x)\big)\vol(g)(x)$$
for $B,C \in \Gamma(T^r_sM)$, and
$$\widetilde{g^r_s \otimes \g}(B,C) = \int_M g^r_s\otimes \g \big(B(x),C(x)\big)\vol(g)(x)$$
for $B,C \in \Gamma(T^r_sM \otimes f^*TN)$, $f \in \Imm(M,N)$. 
The integrals are well-defined because $M$ is compact.

%--------------------------------------------------------------------------------------------
\section{Covariant derivative}\label{no:co}
%--------------------------------------------------------------------------------------------

Covariant derivatives on vector bundles as explained in \cite[sections 19.12, 22.9]{MichorH}
will be used.
Let $\nabla^g, \nabla^{\g}$ be the \emph{Levi-Civita covariant derivatives} on $(M,g)$
and $(N,\g)$, respectively. 
For any manifold $Q$ and vector field $X$ on $Q$, one has
\begin{align*}
\nabla^g_X:C^\infty(Q,TM) &\to C^\infty(Q,TM), & h &\mapsto \nabla^g_X h \\
\nabla^{\g}_X: C^\infty(Q,TN) &\to C^\infty(Q,TN), & h &\mapsto \nabla^{\g}_X h.
\end{align*}
Usually the symbol $\nabla$ will be used for all covariant derivatives.
It should be kept in mind that $\nabla^g$ depends on the metric $g=f^*\g$ and therefore also on 
the immersion $f$.
The following properties hold \cite[section~22.9]{MichorH}:
\begin{enumerate}
\item \label{no:co:ba}
$\nabla_X$ respects base points, i.e. 
$\pi \o \nabla_X h = \pi \o h$, where $\pi$ is the projection 
of the tangent space onto the base manifold. 
\item
$\nabla_X h$ is $C^\infty$-linear in $X$. So for a tangent vector $X_x \in T_xQ$, 
$\nabla_{X_x}h$ makes sense and equals $(\nabla_X h)(x)$.
\item
$\nabla_X h$ is $\R$-linear in $h$.
\item
$\nabla_X (a.h) = da(X).h + a.\nabla_X h$ for $a \in C^\infty(Q)$, the derivation property of $\nabla_X$.
\item \label{no:co:prop5}
For any manifold $\widetilde Q$ and smooth mapping 
$q:\widetilde Q \to Q$ and $Y_y \in T_y \widetilde Q$ one has
$\nabla_{Tq.Y_y}h=\nabla_{Y_y}(h \o q)$. If $Y \in \X(Q_1)$ and $X \in \X(Q)$ are $q$-related, then 
$\nabla_Y(h \o q) = (\nabla_X h) \o q$.
\end{enumerate}
The two covariant derivatives $\nabla^g_X$ and $\nabla^{\g}_X$ 
can be combined to yield a covariant derivative $\nabla_X$ acting on
$C^\infty(Q,T^r_sM \otimes TN)$ by additionally requiring the following properties 
\cite[section 22.12]{MichorH}:
\begin{enumerate}
\setcounter{enumi}{5} 
\item $\nabla_X$ respects the spaces $C^\infty(Q,T^r_sM \otimes TN)$. 
\item $\nabla_X(h \otimes k) = (\nabla_X h) \otimes k + h \otimes (\nabla_X k)$, 
a derivation with respect to the tensor product.
\item $\nabla_X$ commutes with any kind of contraction (see \cite[section 8.18]{MichorH}). 
A special case of this is
$$\nabla_X\big(\alpha(Y)\big)=(\nabla_X \alpha)(Y)+\alpha(\nabla_X Y) \quad 
\text{for } \alpha\otimes Y :N \to T^1_1M.$$
\end{enumerate}
Property \eqref{no:co:ba} is important because it implies that $\nabla_X$ 
respects spaces of sections of bundles. 
For example, for $Q=M$ and $f \in C^\infty(M,N)$, one gets
$$\nabla_X : \Ga(T^r_s M \otimes f^* TN) \to \Ga(T^r_s M \otimes f^* TN). $$

%-----------------------------------------------------------------
\section{Swapping covariant derivatives}\label{no:sw}
%-----------------------------------------------------------------

Some formulas allowing to swap covariant derivatives will be used repeatedly. 
Let $f$ be an immersion, $h$ a vector field along $f$ and $X,Y$ vector fields on $M$. 
Since $\nabla$ is torsion-free, one has \cite[section~22.10]{MichorH}:
\begin{equation}\label{no:sw:to}
\nabla_X Tf.Y-\nabla_Y Tf.X -Tf.[X,Y] = \on{Tor}(Tf.X,Tf.Y) = 0.
\end{equation}
Furthermore one has \cite[section~24.5]{MichorH}:
\begin{equation}\label{no:sw:r}
\nabla_X \nabla_Y h - \nabla_Y \nabla_X h - \nabla_{[X,Y]} h 
= R^{\g} \o (Tf.X,Tf.Y) h, 
\end{equation}
where $R^{\g} \in \Om^2\big(N;L(TN,TN)\big)$ is the Riemann curvature tensor of $(N,\g)$.

These formulas also hold when $f:\R \x M \to N$ is a path of immersions, 
$h:\R \x M \to TN$ is a vector field along $f$ and
the vector fields are vector fields on $\R \x M$. 
A case of special importance is when one of the vector fields is $(\p_t,0_M)$ and the 
other $(0_\R,Y)$, where $Y$ is a vector field on $M$. 
Since the Lie bracket of these vector fields vanishes, 
\eqref{no:sw:to} and \eqref{no:sw:r} yield
\begin{equation}\label{no:sw:to2}
\nabla_{(\p_t,0_M)} Tf.(0_{\R},Y)-\nabla_{(0_{\R},Y)} Tf.{(\p_t,0_M)} = 0
\end{equation}
and
\begin{equation}\label{no:sw:r2}
\nabla_{(\p_t,0_M)} \nabla_{(0_\R,Y)} h - \nabla_{(0_\R,Y)} \nabla_{(\p_t,0_M)} h
\\= R^{\g} \big(Tf.(\p_t,0_M),Tf.(0_\R,Y)\big) h .
\end{equation}

%--------------------------------------------------------------------------------
\section{Second and higher covariant derivatives}\label{no:co2}
%--------------------------------------------------------------------------------

When the covariant derivative is seen as a mapping
$$\nabla: \Gamma(T^r_s M) \to \Gamma(T^r_{s+1}M)\quad \text{or} \quad
\nabla : \Gamma(T^r_sM \otimes f^*TN) \to \Gamma(T^r_{s+1}M \otimes f^*TN),$$
then the \emph{second covariant derivative} is simply $\nabla\nabla=\nabla^2$.
Since the covariant derivative commutes with contractions,
$\nabla^2$ can be expressed as
$$\nabla^2_{X,Y} :=\iota_Y \iota_X \nabla^2 =
\iota_Y \nabla_X \nabla =
\nabla_X\nabla_Y -\nabla_{\nabla_XY} \qquad \text{for $X,Y\in \X(M)$.}$$
Higher covariant derivates are defined accordingly as $\nabla^k$, $k \geq 0$. 

%--------------------------------------------------------------------------------------------------------------
\section{Adjoint of the covariant derivative}\label{no:co*}
%--------------------------------------------------------------------------------------------------------------

The covariant derivative
$$\nabla: \Gamma(T^r_sM) \to  \Gamma( T^r_{s+1}M)$$
admits an \emph{adjoint} 
$$\nabla^*:\Gamma( T^r_{s+1}M)\to \Gamma(T^r_sM)$$
with respect to the metric $\widetilde{g}$, i.e.: 
$$\widetilde{g^r_{s+1}}(\nabla B, C)= \widetilde{g^r_s}(B, \nabla^* C).$$
In the same way, $\nabla^*$ can be defined when $\nabla$ is acting on 
$\Ga(T^r_s M \otimes f^*TN)$.
In either case it is given by
$$\nabla^*B=-\on{Tr}^g(\nabla B), $$
where the trace is contracting the first two tensor slots of $\nabla B$. 
This formula will be proven now: 

\begin{proof}
The result holds for decomposable tensor fields $\be \otimes B \in \Ga(T^r_{s+1} M)$ since
\begin{align*} &
\widetilde {g^r_s}\Big(\nabla^*(\be \otimes B),C\Big) = 
\widetilde {g^r_{s+1}}\Big(\be \otimes B,\nabla C\Big) = 
\widetilde {g^r_{s}}\Big(B,\nabla_{\be^\sharp} C\Big) \\&\qquad= 
\int_M \L_{\be^\sharp} g^r_{s}(B, C) \vol(g) - \int_M g^r_s(\nabla_{\be^\sharp} B,C) \vol(g) 
\\&\qquad= 
\int_M -g^r_{s}(B, C) \L_{\be^\sharp} \vol(g) - \int_M g^r_s\big(\Tr^g(\be \otimes \nabla B),C\big) \vol(g) 
\\&\qquad= 
\widetilde {g^r_{s}}\Big(-\on{div}(\be^\sharp) B - \Tr^g(\be \otimes \nabla B),C\Big) \\&\qquad= 
\widetilde {g^r_{s}}\Big(-\on{div}(\be^\sharp) B + \Tr^g((\nabla\be) \otimes B) 
-\Tr^g(\nabla (\be \otimes B))  ,C\Big)
\\&\qquad= 
\widetilde {g^r_{s}}\Big(-\on{div}(\be^\sharp) B + \Tr^g(\nabla\be) B 
-\Tr^g(\nabla (\be \otimes B))  ,C\Big)\\&\qquad= 
\widetilde {g^r_{s}}\Big(0-\Tr^g(\nabla (\be \otimes B)), C\Big)
\end{align*}
Here it has been used that $\nabla_X g=0$, 
that $\nabla_{X}$ commutes with any kind of contraction and acts as a derivation 
on tensor products \cite[section~22.12]{MichorH}
and that $\on{div}(X) = \Tr(\nabla X)$ for all vector fields $X$ \cite[section~25.12]{MichorH}.
To prove the result for $\be \otimes B \in \Ga(T^r_{s+1} M \otimes f^*TN)$ one simply has
to replace $g^r_{s}$ by $g^r_{s} \otimes \g$. 
\end{proof}

\section{Laplacian}\label{no:la}
%--------------------------------------------------------------------------------

The definition of the Laplacian used in this work is the \emph{Bochner-Laplacian}. 
It can act on all tensor fields $B$ and is defined as
$$\Delta B = \nabla^*\nabla B = - \on{Tr}^g(\nabla^2 B).$$ 
% The covariant derivative $\nabla^{\g}$ and therefore also the Laplacian are 
% \emph{scalar operators}. This means that the action of the Laplacian 
% on a vector field $h \in \Ga(f^*T\R^n)$ along $f$ 
% is given by the action of the Laplacian on each coordinate function:
% $$\Delta h=(\Delta h^1,\ldots,\Delta h^n) \quad \text{for $h=(h^1, \ldots, h^n)$}.$$

%The Weitzenb\"ock formulas relate the Bochner-Laplacian to the \emph{Hodge-Laplacian}, also known as 
%Laplace-de-Rham operator.
%Recall that the Hodge-Laplacian acts on differential forms only and is defined as
%$$\Delta_H \omega =(d d^* +d^*d)\omega,$$
%where $d^*$ denotes the adjoint of $d$ with respect to the metric $\tilde g$.
%The Weitzenb\"ock Formula states that 
%$$\Delta_H \omega=
%-\sum_k \nabla^2_{s_k,s_k}\omega
%-\sum_{k,l}\sigma^k\wedge i_{s_l} R(s_k,s_l)\omega=\Delta \omega-\sum_{k,l}\sigma^k\wedge i_{s_l} R(s_k,s_l)\omega,$$
%where $s_k$ is a local orthonormal frame $s_k$, $\sigma^k$ its dual coframe and $i$ the insertion operator.
%Note that since $R(s_k,s_l)a=0$ for functions $a$, the Hodge-Laplacian 
%equals the Bochner-Laplacian on functions. 
%For more details see \citet[Theorem~3.3.3]{Jost2008} and \citet[Section 1.I]{Besse2008}.

%-------------------------------------------------------------------------------------------
\section{Normal bundle}\label{no:no}
%-------------------------------------------------------------------------------------------

The \emph{normal bundle} $\Nor(f)$ of an immersion $f$ is a sub-bundle of $f^*TN$ 
whose fibers consist of all vectors that are orthogonal to the image of $f$:
$$\Nor(f)_x = \big\{ Y \in T_{f(x)}N : \forall X \in T_xM : \g(Y,Tf.X)=0  \big\}.$$
%\begin{equation*}\xymatrix{
%N\big(f(t,\cdot)\big) \ar[d] \ar @{} [rrd]|*{\text{or}} && \Nor(f) \ar[d] \\
%M && \R \x M
%}
%\end{equation*} 
If $\dim(M)=\dim(N)$ then the fibers of the normal bundle are but the zero vector. 
Any vector field $h$ along $f \in \Imm$ can be decomposed uniquely 
into parts {\it tangential} and {\it normal} to $f$ as
$$h=Tf.h^\top + h^\bot,$$ 
where $h^\top$ is a vector field on $M$ and $h^\bot$ is a section of the normal bundle $\Nor(f)$. 

%In co-dimension one (i.e. $\dim(M)=n-1$) and when $f$ is orientable, then
%the unit normal field $\nu$ of $f$ can be defined. It is a section of the normal bundle 
%in one of the above forms with constant $\g$-length one which is chosen such that
%$$\big(\nu(x), T_xf.X_1,T_xf.X_2,\dots T_xf.X_{n-1}\big)$$ 
%is a positive oriented basis in $T_{f(x)}\R^n$ if $X_1, \ldots, X_{n-1}$ is a positive oriented basis in $T_xM$.

%--------------------------------------------------------------------------------------------
\section{Second fundamental form and Weingarten mapping}\label{no:we}
%--------------------------------------------------------------------------------------------

Let $X$ and $Y$ be vector fields on $M$. 
Then the covariant derivative $\nabla_X Tf.Y$ splits into tangential and a normal parts as
$$\nabla_X Tf.Y=Tf.(\nabla_X Tf.Y)^\top + (\nabla_X Tf.Y)^\bot = Tf.\nabla_X Y + S(X,Y).$$
$S$ is the \emph{second fundamental form of $f$}. 
It is a symmetric bilinear form with values in the normal bundle of $f$. 
When $Tf$ is seen as a section of $T^*M \otimes f^*TN$ one has $S=\nabla Tf$ since
$$S(X,Y) = \nabla_X Tf.Y - Tf.\nabla_X Y = (\nabla Tf)(X,Y).$$
The trace of $S$ is the \emph{vector valued mean curvature} $\Tr^g(S) \in \Ga\big(\Nor(f)\big)$. 
%In codimension one, one can define the \emph{scalar second fundamental form $s$} as
%$$s(X,Y) = \g\big(S(X,Y),\nu\big).$$
%Moreover, there is the \emph{Weingarten mapping} or \emph{shape operator} $L=g\i s$.
%It is a $g$-symmetric bundle mapping defined by $$s(X,Y) =  g( LX,Y ).$$ 
%The eigenvalues of $L$ are called \emph{principal curvatures} and
%the eigenvectors \emph{principal curvature directions}. 
%$\det(L)$ is the \emph{Gau{\ss}-curvature}, 
%and $\on{Tr}(L)=\Tr^g(s)$ is the \emph{scalar mean curvature}. 
%The covariant derivative $\nabla_X \nu$ of the normal vector is related to $L$ 
%by the \emph{Weingarten equation}
%$$ \nabla_X \nu = - Tf. L.X. $$

%---------------------------------------------------------------------------------------------
%---------------------------------------------------------------------------------------------
\chapter{Shape space}\label{sh}
%---------------------------------------------------------------------------------------------
%---------------------------------------------------------------------------------------------

Briefly said, in this work the word shape means an \emph{unparametrized surface}.
(The term surface is used regardless of whether it has dimension two or not.)  
This section is about the infinite dimensional space of all shapes. 
First an overview of the differential calculus that is used is presented. 
Then some spaces of parametrized and unparametrized surfaces are described, 
and it is shown how to define Riemannian metrics on them. 
The geodesic equation and conserved quantities arising from symmetries are derived.

The agenda that is set out in this section
will be pursued in section~\ref{so} when the arbitrary metric is replaced
by a Sobolev-type metric involving a pseudo-differential operator and later in 
section~\ref{la} when the pseudo-differential operator is replaced by
an operator involving powers of the Laplacian.

This section is common work with Martin Bauer and can also be found in section~1.2
of his Ph.D. thesis \cite{Bauer2010}.

%-------------------------------------------------------------------------------------------
\section{Convenient calculus}\label{sh:co}
%-------------------------------------------------------------------------------------------

The differential calculus used in this work is \emph{convenient calculus} \cite{MichorG}. 
The overview of convenient calculus presented here is taken from 
\cite[Appendix~A]{Michor109}. 
Convenient calculus is a generalization of differential calculus to spaces
beyond Banach and Fr\'echet spaces. 
Its most useful property for this work is
that the \emph{exponential law} holds without any restriction: 
$$C^\infty(E \x F, G) \cong C^\infty\big(E,C^\infty(F,G)\big) $$
for convenient vector spaces $E,F,G$ 
and a natural convenient vector space structure on $C^\infty(F,G)$.
As a consequence \emph{variational calculus} simply works: 
For example, a smooth curve in $C^\infty(M,N)$ can 
equivalently be seen as a smooth mapping $M \x \R \to N$. 
The main difficulty in the convenient setting is that the composition of 
linear mappings stops being jointly continuous at the level of Banach 
spaces for any compatible topology. 

Let $E$ be a \emph{locally convex vector space}. 
A curve $c:\R\to E$ is called 
\emph{smooth} or $C^\infty$ if all derivatives exist and are 
continuous - this is a concept without problems. Let 
$C^\infty(\R,E)$ be the space of smooth functions. It can be 
shown that $C^\infty(\R,E)$ does not depend on the locally convex 
topology of $E$, but only on its associated bornology (system of bounded 
sets).

$E$ is said to be a \emph{convenient 
vector space} if one of the following equivalent
conditions is satisfied (called $c^\infty$-completeness):
\begin{enumerate}
\item For any $c\in C^\infty(\R,E)$ the (Riemann-) integral 
       $\int_0^1c(t)dt$ exists in $E$.
\item A curve $c:\R\to E$ is smooth if and only if $\la\o c$ is 
       smooth for all $\la\in E'$, where $E'$ is the dual consisting 
       of all continuous linear functionals on $E$.
\item Any Mackey-Cauchy-sequence (i.\ e.\  $t_{nm}(x_n-x_m)\to 0$  
       for some $t_{nm}\to \infty$ in $\R$) converges in $E$. 
       This is visibly a weak completeness requirement.
\end{enumerate}
The final topology with respect to all smooth curves is called the 
$c^\infty$-topology on $E$, which then is denoted by $c^\infty E$. 
For Fr\'echet spaces it coincides with 
the given locally convex topology, but on the space $\mathcal D$ of test 
functions with compact support on $\R$ it is strictly finer.

Let $E$ and $F$ be locally 
convex vector spaces, and let $U\subset E$ be $c^\infty$-open. 
A mapping $f:U\to F$ is called {\it smooth} or 
$C^\infty$, if $f\o c\in C^\infty(\R,F)$ for all 
$c\in C^\infty(\R,U)$.
The notion of smooth mappings carries over to mappings between 
\emph{convenient manifolds}, which are manifolds modeled on 
$c^\infty$-open subsets of convenient vector spaces. 

\begin{thm*}
The main properties of smooth calculus are the following.
\begin{enumerate}
\item For mappings on Fr\'echet spaces this notion of smoothness 
       coincides with all other reasonable definitions. Even on 
       $\R^2$ this is non-trivial.
\item Multilinear mappings are smooth if and only if they are 
       bounded.
\item If $f:E\supseteq U\to F$ is smooth then the derivative 
       $df:U\x E\to F$ is  
       smooth, and also $df:U\to L(E,F)$ is smooth where $L(E,F)$ 
       denotes the space of all bounded linear mappings with the 
       topology of uniform convergence on bounded subsets.
\item The chain rule holds.
\item The space $C^\infty(U,F)$ is again a convenient vector space 
       where the structure is given by the obvious injection
$$
C^\infty(U,F)\to \prod_{c\in C^\infty(\R,U)} C^\infty(\R,F)
\to \prod_{c\in C^\infty(\R,U), \la\in F'} C^\infty(\R,\R).
$$
\item The exponential law holds:
$$
C^\infty(U,C^\infty(V,G)) \cong C^\infty(U\x V, G)
$$
     is a linear diffeomorphism of convenient vector spaces. Note 
     that this is the main assumption of variational calculus.
\item A linear mapping $f:E\to C^\infty(V,G)$ is smooth (bounded) if 
       and only if $E \East{f}{} C^\infty(V,G) \East{\on{ev}_v}{} G$ is smooth 
       for each $v\in V$. This is called the smooth uniform 
       boundedness theorem and it is quite applicable.
\end{enumerate}
\end{thm*}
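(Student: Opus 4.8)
The plan is to reduce the entire list to the single central result, the exponential law (cartesian closedness) in item (6), and to develop only the minimal calculus of smooth curves needed to state and prove it. First I would set up the calculus of curves $c\colon\R\to E$ into a convenient vector space: difference quotients, existence of the Riemann integrals guaranteed by $c^\infty$-completeness, and a fundamental theorem of calculus. The decisive structural fact at this level is that smoothness of a curve is a \emph{bornological} notion, namely $c$ is smooth if and only if $\la\o c$ is smooth for every $\la\in E'$, so that every question about curves reduces to the scalar case and depends only on the bounded sets of $E$. From this one extracts the two workhorse lemmas of the theory: the \emph{special curve lemma}, that any Mackey-convergent sequence lies on a single smooth curve (which is infinitely flat at the limit point), and the characterization of smooth curves into a mapping space.

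Building on this, the heart of the argument is (6), and this is the step I expect to be the main obstacle. One first equips $C^\infty(V,G)$ with the initial locally convex structure from the injection of (5) into a product of copies of $C^\infty(\R,\R)$; then (5) itself follows at once, since products of convenient spaces are convenient and the image is a $c^\infty$-closed subspace. Cartesian closedness is then proved by showing that a map $U\x V\to G$ is smooth if and only if its exponential transpose $U\to C^\infty(V,G)$ is smooth. The easy direction composes the transpose with a smooth curve and recognizes the result, via the structure on $C^\infty(V,G)$, as $f$ precomposed with a smooth map; the hard direction feeds a smooth curve in $U$ and one in $V$ into the transpose and uses the special curve lemma to manufacture a smooth two-parameter object, reducing everything to the already-understood scalar situation through the bornological test. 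The delicacy, as the introduction warns, is that composition of linear maps is not jointly continuous beyond Banach spaces, so no purely topological argument is available: one must instead control \emph{all} iterated difference quotients bornologically and uniformly on bounded sets.

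Once (6) is in hand the remaining items fall out. For (3) I would write $df(x,v)=\lim_{t\to0}\tfrac{1}{t}\big(f(x+tv)-f(x)\big)$, recognize it as (the transpose of) a smooth map, and apply the exponential law to get joint smoothness of $df\colon U\x E\to F$ and then, transposing again, smoothness of $df\colon U\to L(E,F)$. The chain rule (4) is essentially immediate from the definition, since a composite of smooth maps sends smooth curves to smooth curves, and the derivative formula follows by testing against curves and using (3). For (2), a smooth multilinear map is bounded because smooth maps carry bounded sets to bounded sets (test against sequences turned into smooth curves by the special curve lemma), while conversely a bounded multilinear map is smooth by a Leibniz-type expansion of its difference quotients along smooth curves, each summand being controlled by boundedness. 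For (7), one direction is trivial since each evaluation $\on{ev}_v$ is bounded linear hence smooth; the converse is the genuine uniform boundedness content, namely that pointwise boundedness of the linear $f\colon E\to C^\infty(V,G)$ forces boundedness into the mapping space, and it follows from the description of $C^\infty(V,G)$ as a $c^\infty$-closed subspace of a product of Fréchet spaces together with the uniform boundedness principle, (6) being used to pass between evaluation at points and evaluation along curves via the transpose $E\x V\to G$.

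Finally, (1) stands slightly apart, being not a formal consequence of (6) but a comparison with classical calculus. The clause ``even on $\R^2$ this is nontrivial'' is exactly Boman's theorem, that a function on $\R^2$ which is smooth along every smooth curve is already smooth; I would prove it by the classical estimates on partial difference quotients obtained from polynomial test curves. The general Fréchet statement then follows by applying Boman's theorem to all finite-dimensional sections and invoking the bornological description of smoothness to identify convenient smoothness with the usual Fréchet notion. A fully detailed treatment of all seven points is precisely the content of the opening chapters of \cite{MichorG}, to which the honest proof ultimately defers.
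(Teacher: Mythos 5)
Your proposal is correct and is essentially the paper's own approach: the paper gives no proof of this theorem at all, deferring entirely to \cite{MichorG}, and your outline (cartesian closedness as the central result, the special curve lemma and bornological testing as workhorses, Boman's theorem for item (1), and the uniform boundedness principle for item (7)) faithfully reproduces the proof architecture of that reference, to which you also ultimately defer.
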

Proofs of these statements can be found in \cite{MichorG}.

%-------------------------------------------------------------------------------------------
\section{Manifolds of immersions and embeddings}\label{sh:im}
%-------------------------------------------------------------------------------------------

What has sloppily been called a \emph{parametrized surface} will now be turned into a rigorous definition.
Mathematically, parametrized surfaces will be modeled as immersions or embeddings 
of one manifold into another.
Immersions and embeddings are called parametrized since a change in their parametrization 
(i.e. applying a diffeomorphism on the domain of the function) 
results in a different object. 
The following sets of functions will be important:
\begin{equation}\label{sh:im:eq1}
\Emb(M,N) \subset \Imm_f(M,N) \subset \Imm(M,N) \subset C^\infty(M,N).
\end{equation}
$C^\infty(M,N)$ is the set of smooth functions from $M$ to $N$. 
$\Imm(M,N)$ is the set of all \emph{immersions} of $M$ into $N$, i.e.
all functions $f \in C^\infty(M,N)$ such that 
$T_x f$ is injective for all $x \in M$. 
$\Imm_f(M,N)$ is the set of all \emph{free immersions}.
An immersion $f$ is called free if the diffeomorphism 
group of $M$ acts freely on it, i.e. $f \o \ph = f$ implies
$\ph=\Id_M$ for all $\ph \in \Diff(M)$. 
$\Emb(M,N)$ is the set of all \emph{embeddings} of $M$ into $N$, i.e. 
all immersions $f$ that are a homeomorphism onto their image. 

The following lemma from \cite[1.3 and 1.4]{Michor40} gives sufficient 
conditions for an immersion to be free. 
In particular it implies that every embedding is free. 
\begin{lem*}
If $\ph \in \Diff(M)$ has a fixed point and if $f \o \ph = f$ for some immersion $f$, then $\ph = \Id_M$.

If for an immersion $f$ there is a point $x \in f(M)$ with only one preimage then $f$ is free. 
\end{lem*}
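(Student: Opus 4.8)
The plan is to prove the first statement and then obtain the second as an immediate consequence. For the first statement, fix the given fixed point $x_0 \in M$ (so $\ph(x_0)=x_0$) and set
$$A = \{x \in M : \ph(x) = x\}.$$
Since $M$ is connected (by the standing assumption), it suffices to show that $A$ is nonempty, open and closed, which then forces $A = M$, i.e. $\ph = \Id_M$. Nonemptiness is immediate from $x_0 \in A$. Closedness is formal: $A$ is the preimage of the diagonal under the continuous map $x \mapsto (x,\ph(x))$, and the diagonal is closed because $M$, being a manifold, is Hausdorff.

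The crucial step is openness, and this is where the immersion hypothesis enters. First I would recall that an immersion is locally injective: by the rank theorem, around any point $x \in M$ the map $f$ looks like the standard inclusion $\R^m \hookrightarrow \R^n$, so $f$ restricts to an injective map on some open neighborhood $U$ of $x$. Now let $x \in A$. Because $\ph$ is continuous and $\ph(x) = x \in U$, the set $V := U \cap \ph\i(U)$ is an open neighborhood of $x$ with $\ph(V) \subseteq U$. For any $y \in V$ one has $f(\ph(y)) = (f\o\ph)(y) = f(y)$, with both $y$ and $\ph(y)$ lying in $U$; injectivity of $f|_U$ then forces $\ph(y) = y$, so $V \subseteq A$ and $A$ is open. (One may also note, by differentiating $f\o\ph = f$ at $x_0$ and cancelling the injective linear map $T_{x_0}f$, that $T_{x_0}\ph = \Id$; but this observation is not needed for the argument.)

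For the second statement, suppose there is a point $y_0 \in f(M)$ with a single preimage, say $f\i(y_0) = \{x_0\}$, and let $\ph \in \Diff(M)$ satisfy $f \o \ph = f$. Then $f(\ph(x_0)) = f(x_0) = y_0$, so $\ph(x_0)$ lies in the one-point fiber $f\i(y_0) = \{x_0\}$, forcing $\ph(x_0) = x_0$. Thus $\ph$ has a fixed point, and the first statement yields $\ph = \Id_M$; that is, $\Diff(M)$ acts freely on $f$, so $f$ is free.

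The only genuine obstacle is the openness of $A$, and the key realization is that one should not attempt to use any global feature of $\ph$ but instead exploit the purely \emph{local} injectivity of the immersion $f$ on a single chart, combined with the fiber condition $f\o\ph = f$. Everything else — closedness, the connectedness argument, and the reduction of the second statement to the first — is then routine.
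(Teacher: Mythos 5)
Your proof is correct. Note that the paper itself gives no proof of this lemma — it quotes it directly from the cited reference \cite[1.3 and 1.4]{Michor40} — and your argument is exactly the standard one used there: local injectivity of an immersion (via the rank theorem) makes the fixed-point set of $\ph$ open, closedness and nonemptiness are immediate, and connectedness of $M$ (a standing assumption of the paper) forces $\ph = \Id_M$, after which the second statement reduces to the first by observing that the unique preimage must be a fixed point.
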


Since $M$ is compact by assumption (see section~\ref{no:as}) it follows that $C^\infty(M,N)$ is a 
\emph{Fr\'echet manifold} \cite[section~42.3]{MichorG}. 
All inclusions in \eqref{sh:im:eq1} are inclusions of open subsets: 
$\Imm(M,N)$ is open in $C^\infty(M,N)$ since
the condition that the differential is injective at every point 
is an open condition on the one-jet of $f$ \cite[section~5.1]{MichorC}. 
$\Imm_f(M,N)$ is open in $\Imm(M,N)$ by \cite[theorem~1.5]{Michor40}.
$\Emb(M,N)$ is open in $\Imm_f(M,N)$ by \cite[theorem~44.1]{MichorG}. 
Therefore all function spaces in \eqref{sh:im:eq1} are Fr\'echet manifolds as well. 

When it is clear that $M$ and $N$ are the domain and target of the mappings, 
the abbreviations $\Emb, \Imm_f, \Imm$ will be used. 
In most cases, immersions will be used since this is the most general 
setting. Working with free immersions instead of immersions makes a difference in 
section \ref{sh:sh}, and working with embeddings instead of immersions makes a difference in 
section \ref{ge:ge}. The tangent and cotangent space to $\Imm$ are treated in the next section.

%---------------------------------------------------------------------------------------------
\section{Bundles of multilinear maps over immersions}\label{sh:na}
%---------------------------------------------------------------------------------------------

Consider the following \emph{natural bundles of $k$-multilinear mappings}: 
\begin{equation*}\xymatrix{
L^k(T\Imm;\R) \ar[d] & L^k(T\Imm;T\Imm ) \ar[d] \\
\Imm & \Imm
}\end{equation*}
These bundles are isomorphic to the bundles
\begin{equation*}\xymatrix{
L\left(\widehat\bigotimes^k T\Imm;\R\right)\ar[d] & 
L\left(\widehat\bigotimes^k T\Imm;T\Imm\right)\ar[d]\\
\Imm & \Imm
}\end{equation*}
where $\widehat\bigotimes$ denotes the $c^\infty$-completed bornological tensor product of 
locally convex vector spaces \cite[section~5.7, section~4.29]{MichorG}.
Note that $L(T\Imm;T\Imm)$ is not isomorphic to 
$T^*\Imm \;\widehat\otimes\; T\Imm$ 
since the latter bundle corresponds to multilinear mappings with finite rank.

It is worth to write down more explicitly what some of these bundles of multilinear mappings are.  
The \emph{tangent space to $\Imm$} is given by
\begin{align*}
T_f\Imm &= C^\infty_f(M,TN) := \big\{ h \in C^\infty(M,TN): \pi_N \o h =f\big\}, \\
T\Imm &= C^\infty_{\Imm}(M,TN) := \big\{ h \in C^\infty(M,TN): \pi_N \o h \in \Imm \big\}.
\end{align*}
Thus $T_f\Imm$ is the space of vector fields along the immersion $f$.
Now the \emph{cotangent space to $\Imm$} will be described.
The symbol $\widehat\otimes_{C^\infty(M)}$ means that the tensor product
is taken over the algebra $C^\infty(M)$.
\begin{align*}
T^*_f\Imm &= L(T_f\Imm;\R) = C^\infty_f(M,TN)' = 
C^\infty(M)'\; \widehat\otimes_{C^\infty(M)} C^\infty_f(M,T^*N) \\
T^*\Imm &= L(T\Imm;\R) = C^\infty(M)'\; \widehat\otimes_{C^\infty(M)} C^\infty_{\Imm}(M,T^*N) 
\end{align*}
The bundle $L^2_{\on{sym}}(T\Imm;\R)$ is of interest for the definition of a Riemannian 
metric on $\Imm$. 
(The subscripts $_\sym$ and $_\alt$ indicate symmetric and alternating multilinear maps, respectively.) 
Letting $\otimes_S$ denotes the symmetric tensor product and 
$\widehat\otimes_S$ the $c^\infty$-completed bornological symmetric tensor product, one has
\begin{align*}
L^2_{\on{sym}}(T_f\Imm;\R) &= (T_f\Imm\; \widehat\otimes_S\; T_f\Imm)' = 
\big(C^\infty_f(M,TN) \; \widehat\otimes_S\; C^\infty_f(M,TN)\big)' \\ &=
\big(C^\infty_f(M,TN \; \otimes_S\; TN) \big)' \\&=
C^\infty(M)' \;\widehat\otimes_{C^\infty(M)} C^\infty_f(M,T^*N \; \otimes_S\; T^*N) 
\\
L^2_{\on{sym}}(T\Imm;\R) &= 
C^\infty(M)' \;\widehat\otimes_{C^\infty(M)} C^\infty_{\Imm}(M,T^*N \;\otimes_S\; T^*N) 
\end{align*}

%-------------------------------------------------------------------------------------------
\section{Diffeomorphism group}\label{sh:di}
%-------------------------------------------------------------------------------------------

The following result is taken from \cite[section~43.1]{MichorG} 
with slight simplifications due to the compactness of $M$.

\begin{thm*}
For a smooth compact manifold $M$ the group $\on{Diff}(M)$ of all 
smooth diffeomorphisms of $M$ is an open submanifold 
of $C^\infty(M,M)$. 
Composition and inversion are smooth. 
The Lie algebra of the smooth infinite dimensional Lie group $\on{Diff}(M)$ 
is the convenient vector space $\X(M)$ of all smooth vector fields 
on $M$, equipped with the negative of the usual Lie bracket. 
$\on{Diff}(M)$ is a regular Lie group in the sense that the right evolution 
$$\on{evol}^r: C^\infty\big(\R,\X(M)\big) \to \Diff(M)$$
as defined in \cite[section~38.4]{MichorG}
exists and is smooth. 
The exponential mapping 
$$\on{exp}: \X(M) \to \Diff(M)$$ 
is the flow mapping to time $1$, and it is smooth.
\end{thm*}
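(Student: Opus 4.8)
The plan is to build on the fact, recalled above, that $C^\infty(M,M)$ is a Fr\'echet manifold for compact $M$: choosing a local addition on $M$ (for instance the geodesic exponential of an auxiliary Riemannian metric) gives charts identifying a neighborhood of $f \in C^\infty(M,M)$ with an open subset of the space of sections $\Ga(f^*TM)$, so that $T_f C^\infty(M,M) = \Ga(f^*TM)$ and in particular $T_{\Id} C^\infty(M,M) = \Ga(TM) = \X(M)$. Everything then reduces to four points: openness of $\Diff(M)$, smoothness of the two group operations, identification of the Lie algebra including the sign of the bracket, and regularity together with smoothness of $\exp$.

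For openness I would first argue that the set of local diffeomorphisms is open. Invertibility of $T_xf$ is an open condition on the one-jet of $f$ at each point, and by compactness of $M$ it is open uniformly; this is the same argument that shows $\Imm(M,N)$ is open in $C^\infty(M,N)$. To pass from local diffeomorphisms to genuine diffeomorphisms I would use compactness once more: a local diffeomorphism of the compact manifold $M$ is a covering map, and a map $C^1$-close to a fixed diffeomorphism $f$ is still injective (a limiting argument on coincident pairs contradicts either injectivity of $f$ or local injectivity) and surjective (its image is open and closed, hence all of connected $M$). Thus $\Diff(M)$ is an open subset of $C^\infty(M,M)$ and inherits its Fr\'echet manifold structure.

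Smoothness of composition follows from the general fact in convenient calculus that composition $C^\infty(M,M) \times C^\infty(M,M) \to C^\infty(M,M)$ is smooth, an application of the exponential law and the smooth uniform boundedness theorem quoted in section~\ref{sh:co}. Inversion is the delicate operation: here I would view $f \mapsto f\i$ through the finite-dimensional inverse function theorem applied fiberwise to $(f,x)\mapsto f(x)$, and then upgrade the resulting pointwise-smooth dependence to smoothness of $f \mapsto f\i$ as a map of function spaces via the exponential law, since a smooth curve in $\Diff(M)$ is a smooth map $\R \x M \to M$ whose pointwise inverse is again smooth by the parametrized inverse function theorem. The tangent space at the identity is $T_{\Id}\Diff(M) = \X(M)$, and computing the group commutator of flows shows that the bracket induced by the group structure is the negative of the usual bracket of vector fields, the sign reflecting the chosen order of composition.

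The main obstacle is the regularity statement. Given a smooth curve $t \mapsto X_t$ in $\X(M)$, the right evolution $\on{evol}^r$ returns the value at time $1$ of the solution $\ph$ of $\p_t \ph_t = X_t \o \ph_t$ with $\ph_0 = \Id$; existence and uniqueness on the compact manifold $M$ are classical, and each time-$t$ map lies in $\Diff(M)$ because vector fields on compact $M$ are complete. What requires care is that this construction is smooth as a map $C^\infty(\R,\X(M)) \to \Diff(M)$: one must establish smooth dependence of the flow on the time-dependent vector field in the convenient sense, again reducing via the exponential law to smooth dependence of solutions of a finite-dimensional ODE on parameters, which holds by the standard smooth-dependence theory for ODEs. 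The exponential mapping $\exp\colon \X(M) \to \Diff(M)$ is then the restriction of this evolution to autonomous curves evaluated at time $1$, namely $X \mapsto \on{Fl}^X_1$, and its smoothness follows from the same smooth-dependence result specialized to time-independent $X$.
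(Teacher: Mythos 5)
The paper does not actually prove this theorem: it is quoted from Kriegl--Michor \cite[section~43.1]{MichorG}, with compactness of $M$ used only to simplify the statement, so there is no in-paper proof to compare against. Your sketch is a correct outline of essentially the argument given in that reference: openness of $\Diff(M)$ from the open one-jet condition plus a compactness argument for injectivity and surjectivity; smoothness of composition from cartesian closedness; smoothness of inversion by applying the finite-dimensional inverse function theorem to $(t,x)\mapsto\big(t,c(t)(x)\big)$ for a smooth curve $c$ and then invoking the exponential law; the bracket sign from the adjoint/commutator computation; and regularity and smoothness of $\exp$ by reducing the flow equation of a (time-dependent) vector field to a finite-dimensional ODE with smooth dependence on parameters, then lifting back via the exponential law. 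Each of these steps is the standard one in the convenient-calculus proof and is sound at this level of detail.
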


The diffeomorphism group $\Diff(M)$ acts smoothly on $C^\infty(M,N)$ and its subspaces
$\Imm, \Imm_f$ and $\Emb$ by composition from the right. For $\Imm$, the action is given by the mapping
$$\Imm(M,N) \x \Diff(M) \to \Imm(M,N), \qquad (f,\ph) \mapsto r(f,\ph) = r^\ph(f)= f \o \ph.$$
The tangent prolongation of this group action is given by the mapping
$$T\Imm(M,N) \x \Diff(M) \to T\Imm(M,N), \qquad (h,\ph) \mapsto Tr^\ph(h) = h \o \ph.$$

%---------------------------------------------------------------------------------------------
\section{Riemannian metrics on immersions}\label{sh:ri}
%---------------------------------------------------------------------------------------------

A \emph{Riemannian metric $G$ on $\Imm$} is a section of the bundle
$$L^2_{\on{sym}}(T\Imm;\R)$$
such that at every $f \in \Imm$, $G_f$ is a symmetric positive definite bilinear mapping 
$$G_f: T_f\Imm \x T_f\Imm \to \R.$$
Each metric is {\it weak} in the sense that $G_f$, seen as a mapping
$$G_f: T_f\Imm \to T^*_f\Imm$$
is injective. (But it can never be surjective.)

\begin{ass*}
It will always be assumed that the metric $G$ is compatible with the action of $\Diff(M)$ on $\Imm(M,N)$
in the sense that the group action is given by isometries. 
\end{ass*}
This means that $G=(r^\ph)^* G$ for all $\ph \in \Diff(M)$, where 
$r^\ph$ denotes the right action of $\ph$ on $\Imm$ that was described in section~\ref{sh:di}. 
This condition can be spelled out in more details using the definition of $r^\ph$ as follows:
\begin{align*}
G_f(h,k)=\big((r^\ph)^* G\big)(h,k)
=G_{r^\ph(f)}\big(Tr^\ph(h),Tr^\ph(k)\big)
=G_{f \o \ph}(h \o \ph,k \o \ph). 
\end{align*}

%-------------------------------------------------------------------------------------------
\section{Covariant derivative $\nabla^{\g}$ on immersions}\label{sh:cov}
%-------------------------------------------------------------------------------------------

The covariant derivative $\nabla^{\g}$ defined in section~\ref{no:co} induces a
\emph{covariant derivative over immersions} as follows.
Let $Q$ be a smooth manifold. Then one identifies
\begin{align*}
&h \in  C^\infty\big(Q,T\Imm(M,N)\big) && \text{and} && X \in \X(Q)
\intertext{with}
&h^{\wedge} \in C^\infty(Q \x M, TN) && \text{and} && (X,0_M) \in \X(Q \x M).
\end{align*}
As described in section~\ref{no:co} one has the covariant derivative
$$\nabla^{\g}_{(X,0_M)} h^{\wedge} \in C^\infty\big(Q \x M, TN).$$
Thus one can define
$$\nabla_X h = \left(\nabla^{\g}_{(X,0_M)} h^{\wedge}\right)^{\vee} \in C^\infty\big(Q,T\Imm(M,N)\big).$$
This covariant derivative is torsion-free by section~\ref{no:sw}, formula~\eqref{no:sw:to}. 
It respects the metric $\g$ but in general does not respect $G$. 

It is helpful to point out some special cases of how this construction can be used. 
The case $Q=\R$ will be important to formulate the geodesic equation. 
The expression that will be of interest in the formulation of the 
geodesic equation is $\nabla_{\p_t} f_t$, which is 
well-defined when $f:\R \to \Imm$ is a path of immersions and $f_t: \R \to T\Imm$ is its velocity. 

Another case of interest is $Q = \Imm$. Let $h, k, m \in \X(\Imm)$. Then the covariant 
derivative $\nabla_m h$ is well-defined and tensorial in $m$.
Requiring $\nabla_m$ to respect the grading of the spaces of multilinear maps, to act as a derivation 
on products and to commute with compositions of multilinear maps, one obtains 
as in section~\ref{no:co} a covariant 
derivative $\nabla_m$ acting on all mappings into 
the natural bundles of multilinear mappings over $\Imm$.
In particular, $\nabla_m P$ and $\nabla_m G$ are well-defined for 
\begin{align*}
P \in \Ga\big(L(T\Imm;T\Imm)\big), \quad
G \in \Ga\big(L^2_{\on{sym}}(T\Imm;\R)\big)
\end{align*}
by the usual formulas
\begin{align*}
(\nabla_m P)(h) &= \nabla_m\big(P(h)\big) - P(\nabla_mh), \\
(\nabla_m G)(h,k) &= \nabla_m\big(G(h,k)) - G(\nabla_m h,k) - G(h,\nabla_m k). 
\end{align*}

%------------------------------------------------------------------------------------------
\section{Metric gradients}\label{sh:me}
%------------------------------------------------------------------------------------------

The \emph{metric gradients} $H,K \in \Ga\big(L^2(T\Imm;T\Imm)\big)$ are uniquely defined by the equation
$$(\nabla_m G)(h,k)=G\big(K(h,m),k\big)=G\big(m,H(h,k)\big),$$
where $h,k,m$ are vector fields on $\Imm$ and 
the covariant derivative of the metric tensor $G$ is defined as in the previous section. 
(This is a generalization of the definition used in \cite{Michor107} 
that allows for a curved ambient space $N \neq \R^n$.)

Existence of $H, K$ has to proven case by case for each metric $G$, 
usually by partial integration. 
For Sobolev metrics, this will be proven in sections~\ref{la:ad} and \ref{la:ge}.

\begin{ass*}
Nevertheless it will be assumed for now that the metric gradients $H,K$ exist.
\end{ass*}

%------------------------------------------------------------------------------------------
\section{Geodesic equation on immersions}\label{sh:ge}
%------------------------------------------------------------------------------------------

\begin{thm*}
Given $H,K$ as defined in the previous section and $\nabla$ as defined in 
section~\ref{sh:cov}, the geodesic equation reads as
$$\nabla_{\p_t} f_t=\frac12 H_f(f_t,f_t)-K_f(f_t,f_t).$$
\end{thm*}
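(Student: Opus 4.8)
The plan is to derive the geodesic equation from the first variation of the energy functional, using the standard machinery of Riemannian geometry adapted to the weak metric $G$ on $\Imm$. The energy of a path $f:[0,1]\to\Imm$ is $E(f)=\frac12\int_0^1 G_f(f_t,f_t)\,dt$, and geodesics are its critical points. I would consider a variation $f(t,s)$ with fixed endpoints and compute $\partial_s\big|_{s=0} E$, then set it to zero and read off the equation. The key technical point enabling this is the exponential law of convenient calculus (from section~\ref{sh:co}), which lets me treat the two-parameter family $f(t,s)$ as a smooth map and freely interchange the roles of the $s$- and $t$-derivatives as vector fields along $f$.

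First I would write $\partial_s E=\int_0^1 \tfrac12\,\partial_s\big(G_f(f_t,f_t)\big)\,dt$ and expand the integrand using the covariant derivative $\nabla$ over immersions from section~\ref{sh:cov}. Since $\nabla$ need not respect $G$, the product rule picks up a $\nabla_{f_s}G$ term:
\begin{equation*}
\tfrac12\,\partial_s G_f(f_t,f_t)=\tfrac12(\nabla_{f_s}G)(f_t,f_t)+G(\nabla_{f_s}f_t,f_t).
\end{equation*}
For the first summand I would invoke the metric gradient $H$ from section~\ref{sh:me}, which gives $\tfrac12(\nabla_{f_s}G)(f_t,f_t)=\tfrac12\,G\big(f_s,H_f(f_t,f_t)\big)$. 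For the second summand I would use that $\nabla$ is torsion-free (section~\ref{sh:cov}, invoking formula~\eqref{no:sw:to}), so that the mixed covariant derivatives swap, $\nabla_{f_s}f_t=\nabla_{f_t}f_s$, allowing me to write $G(\nabla_{f_s}f_t,f_t)=G(\nabla_{f_t}f_s,f_t)$.

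Next comes the integration by parts in $t$, which is the crux of the argument. I would like to move the $\nabla_{f_t}$ off of $f_s$ and onto $f_t$. Naively $\partial_t G_f(f_s,f_t)=G(\nabla_{f_t}f_s,f_t)+G(f_s,\nabla_{f_t}f_t)$, but again because $\nabla$ does not respect $G$ this differentiation produces an extra $(\nabla_{f_t}G)(f_s,f_t)$ term, which I would rewrite via the second metric gradient $K$ as $G\big(K_f(f_s,f_t),f_t\big)=G\big(f_s,K_f(f_t,f_t)\big)$ (using symmetry/tensoriality of $K$ in the appropriate slot). Integrating the total $t$-derivative against the fixed endpoints kills the boundary term, and after collecting everything the variation becomes
\begin{equation*}
\partial_s\big|_{s=0}E=\int_0^1 G\Big(f_s,\ \tfrac12 H_f(f_t,f_t)-K_f(f_t,f_t)-\nabla_{f_t}f_t\Big)\,dt.
\end{equation*}
Since $f_s$ ranges over an essentially arbitrary variation field vanishing at the endpoints, and $G$ is weak but still injective (nondegenerate) in its first slot, the bracketed expression must vanish, yielding $\nabla_{\p_t}f_t=\tfrac12 H_f(f_t,f_t)-K_f(f_t,f_t)$.

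The main obstacle I anticipate is bookkeeping the two distinct correction terms arising because $\nabla$ fails to be metric for $G$: one appears in the $s$-differentiation and is absorbed by $H$, the other in the $t$-integration by parts and is absorbed by $K$, and it is essential to match each $\nabla G$ term to the correct gradient and to the correct argument slot so the signs come out as $+\tfrac12 H - K$. A secondary subtlety, which I would handle by appeal to the exponential law rather than by hand, is justifying that one may pass the $\partial_s$ under the integral and treat $f_s,f_t$ as genuine vector fields along the map to which the torsion-free identity $\nabla_{f_s}f_t=\nabla_{f_t}f_s$ applies; this is exactly where the convenient-calculus framework does the heavy lifting.
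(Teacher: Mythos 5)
Your proposal is correct and follows essentially the same route as the paper's proof: first variation of the energy, absorbing the $\partial_s$-correction term $(\nabla_{f_s}G)(f_t,f_t)$ into $H$, swapping $\nabla_{f_s}f_t=\nabla_{f_t}f_s$ by torsion-freeness, integrating by parts in $t$ with the $(\nabla_{f_t}G)(f_s,f_t)$ term absorbed into $K$ (via the symmetry of $\nabla_m G$ in its two undifferentiated slots, which is the precise justification behind your ``symmetry of $K$'' step), and concluding by weak nondegeneracy of $G$.
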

This is the same result as in \cite[section~2.4]{Michor107}, but in a more general setting.
\begin{proof}
Let $f: (-\ep,\ep) \x [0,1] \x M \to N$ be a one-parameter 
family of curves of immersions with fixed endpoints. 
The variational parameter will be denoted by $s \in (-\ep,\ep)$ and 
the time-parameter by $t \in [0,1]$. 
In the following calculation, let $G_f$ denote $G$ composed with $f$, i.e.
$$G_f: \R \to \Imm \to L^2_{\sym}(T\Imm;\R).$$ 
Remember that the covariant derivative on $\Imm$ that has been introduced in 
section~\ref{sh:cov} is torsion-free so that one has
$$\nabla_{\p_t}f_s - \nabla_{\p_s}f_t=Tf.[\p_t,\p_s]+\on{Tor}(f_t,f_s) = 0.$$
Thus the first variation of the energy of the curves is
\begin{align*}
\p_s \frac12 \int_0^1 G_f(f_t, f_t) dt &= 
\frac12 \int_0^1 (\nabla_{\p_s} G_f)(f_t, f_t)
+ \int_0^1 G_f(\nabla_{\p_s} f_t, f_t) dt 
\\&= 
\frac12 \int_0^1 (\nabla_{f_s} G)(f_t, f_t) 
+ \int_0^1 G_f(\nabla_{\p_t} f_s, f_t) dt 
\\&= 
\frac12 \int_0^1 (\nabla_{f_s} G)(f_t, f_t) dt
+ \int_0^1 \p_t\ G_f(f_s,f_t) dt \\&\qquad
- \int_0^1 (\nabla_{f_t} G)(f_s, f_t) dt 
- \int_0^1 G_f(f_s,\nabla_{\p_t} f_t) dt 
\\&=
\int_0^1 G\Big(f_s,\frac12 H(f_t,f_t)+0-K(f_t,f_t)-\nabla_{\p_t} f_t\Big) dt.
\end{align*}
If $f(0,\cdot,\cdot)$ is energy-minimizing, then one has at $s=0$ that
\begin{equation*}
\frac12 H(f_t,f_t)-K(f_t,f_t)-\nabla_{\p_t} f_t =0. \qedhere
\end{equation*}
\end{proof}

%------------------------------------------------------------------------------------------
\section[Geodesic equation on immersions]%
{Geodesic equation on immersions in terms of the momentum}\label{sh:gemo}
%------------------------------------------------------------------------------------------

In the previous section the geodesic equation for the velocity $f_t$ has been derived. 
In many applications it is more convenient to formulate the geodesic equation as an equation 
for the momentum $G(f_t,\cdot) \in T^*_f\Imm$. 
$G(f_t,\cdot)$ is an element of the \emph{smooth 
cotangent bundle}, also called \emph{smooth dual}, which is given by 
$$G(T\Imm) := \coprod_{f \in \Imm} G_f(T_f\Imm) = 
\coprod_{f \in \Imm} \{ G_f(h,\cdot): h \in T_f\Imm\} \subset T^*\Imm. $$
It is strictly smaller than $T^*\Imm$ since at every $f \in \Imm$
the metric $G_f: T_f\Imm \to T^*_f\Imm$ is injective but not surjective. It is called 
smooth since it does not contain distributional sections of $f^*TN$, whereas $T_f^*\Imm$ does. 

\begin{thm*}
The geodesic equation for the momentum $p \in T^*\Imm$ is given by
\begin{equation*}
\left\{\begin{aligned}
p &= G(f_t, \cdot) \\
\nabla_{\p_t} p &= \frac12 G_f\big( H(f_t,f_t),\cdot\big),
\end{aligned}\right.
\end{equation*}
where $H$ is the metric gradient defined in section~\ref{sh:me}
and $\nabla$ is the covariant derivative action on mappings into $T^*\Imm$ as 
defined in section~\ref{sh:cov}.
\end{thm*}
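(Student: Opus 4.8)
The plan is to start from the velocity form of the geodesic equation established in the previous section, $\nabla_{\p_t} f_t = \frac12 H_f(f_t,f_t) - K_f(f_t,f_t)$, and to convert it into an equation for the momentum $p = G(f_t,\cdot)$ by differentiating $p$ covariantly along the geodesic. First I would fix a test vector field $k$ along $f$ and use that $\nabla_{\p_t}$ on $T^*\Imm$ is the dual connection induced from $\nabla$ on $T\Imm$; since $p(k) = G(f_t,k)$ is a scalar, its covariant derivative is the ordinary $t$-derivative, so
\[
(\nabla_{\p_t} p)(k) = \p_t\big(G(f_t,k)\big) - G(f_t, \nabla_{\p_t} k).
\]

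Next I would expand the first term using the derivation property of $\nabla$ with respect to the metric tensor $G$ recorded in section~\ref{sh:cov}, namely $\p_t\big(G(f_t,k)\big) = (\nabla_{\p_t} G)(f_t,k) + G(\nabla_{\p_t} f_t, k) + G(f_t, \nabla_{\p_t} k)$. Substituting this, the two $G(f_t, \nabla_{\p_t} k)$ terms cancel, leaving
\[
(\nabla_{\p_t} p)(k) = (\nabla_{\p_t} G)(f_t,k) + G(\nabla_{\p_t} f_t, k).
\]
The crucial point is that $G$ is \emph{not} parallel for $\nabla$, so the term $(\nabla_{\p_t} G)(f_t,k)$ genuinely survives. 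Interpreting the derivative along $\p_t$ as the covariant derivative on $\Imm$ in the direction $f_t$ (property~(5) of section~\ref{no:co}) and invoking the definition of the metric gradient $K$ in section~\ref{sh:me}, this term equals $G\big(K(f_t,f_t),k\big)$.

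Finally I would insert the velocity form of the geodesic equation for $\nabla_{\p_t} f_t$, obtaining
\[
(\nabla_{\p_t} p)(k) = G\big(K(f_t,f_t),k\big) + G\big(\tfrac12 H(f_t,f_t) - K(f_t,f_t),\, k\big) = \tfrac12 G\big(H(f_t,f_t),k\big).
\]
The $K$-contributions cancel exactly, and since $k$ is arbitrary this is the asserted identity $\nabla_{\p_t} p = \frac12 G_f\big(H(f_t,f_t),\cdot\big)$; the converse implication follows by reading the same chain backwards and using that $G_f$ is injective on $T_f\Imm$. I expect the main obstacle to be bookkeeping rather than conceptual: one must verify that $\nabla_{\p_t}$ acts on the smooth covector $p$ as the dual connection so that $(\nabla_{\p_t} p)(k) = \p_t(p(k)) - p(\nabla_{\p_t} k)$ holds, and one must carefully transport the metric-gradient identity of section~\ref{sh:me} from an abstract direction $m$ on $\Imm$ to the curve direction $\p_t$. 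Once these identifications are secured, the result is a one-line cancellation.
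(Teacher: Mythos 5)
Your proposal is correct and follows essentially the same route as the paper: both compute $\nabla_{\p_t}\big(G_f(f_t,\cdot)\big)$ via the derivation property of the covariant derivative from section~\ref{sh:cov}, identify $(\nabla_{f_t}G)(f_t,\cdot)$ with $G\big(K(f_t,f_t),\cdot\big)$ using the definition of $K$, and substitute the velocity form of the geodesic equation so that the $K$-terms cancel. Evaluating against a test field $k$ rather than working directly with the covector is only a presentational difference.
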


\begin{proof}
Let $G_f$ denote $G$ composed with the path $f:\R\to\Imm$, i.e.
$$G_f: \R \to \Imm \to L^2_{\sym}(T\Imm;\R).$$
Then one has
\begin{align*}
\nabla_{\p_t} p &= \nabla_{\p_t} \big(G_f(f_t,\cdot)\big) =
(\nabla_{\p_t}G_f)(f_t,\cdot) + G_f(\nabla_{\p_t} f_t,\cdot) \\&=
(\nabla_{f_t}G)(f_t,\cdot) + G_f\Big(\frac12 H(f_t,f_t) - K(f_t,f_t),\cdot\Big) \\&=
G_f\big(K(f_t,f_t),\cdot\big)+ G_f\Big(\frac12 H(f_t,f_t) - K(f_t,f_t),\cdot\Big) \qedhere
\end{align*}
\end{proof}
This equation is equivalent to \emph{Hamilton's equation} restricted to the smooth cotangent bundle:
\begin{equation*}
\left\{\begin{aligned}
p &= G(f_t, \cdot) \\
p_t &= (\grad^{\om} E)(p).
\end{aligned}\right.
\end{equation*} 
Here $\om$ denotes the restriction of the canonical symplectic form on $T^*\Imm$ to the smooth 
cotangent bundle and $E$ is the Hamiltonian
$$E: G(T\Imm) \to \R, \quad E(p) = G\i(p,p)$$
which is only defined on the smooth cotangent bundle. 

%-------------------------------------------------------------------------------------------
\section{Conserved momenta}\label{sh:mo}
%-------------------------------------------------------------------------------------------

This section describes how a group acting on $\Imm$ by isometries defines 
a \emph{momentum mapping} that is conserved along geodesics in $\Imm$.
It is similar to \cite[section~4]{Michor118}. 
A more detailed treatment and proofs can be found in \cite{Michor107}.

Consider an infinite dimensional regular Lie group with
Lie algebra $\mathfrak g$ and a right action $g\mapsto r^g$ of this group on $\on{Imm}$.
Let $\Imm$ be endowed with a Riemannian metric $G$. 
The basic assumption (assumption~\ref{sh:ri}) is that the action is by isometries: 
$$ G=(r^g)^*G, \quad \text{i.e.} \quad G_f(h,k)=G_{r^g(f)}\big(T_f(r^g)h,T_f(r^g)k\big). $$
Denote by $\X(\on{Imm})$ the set of vector fields on $\on{Imm}$. Then
the group action can be specified by the fundamental vector field mapping $\ze:\mathfrak g\to
\X(\on{Imm})$, which will be a bounded Lie algebra homomorphism.
The fundamental vector field $\ze_X, X \in \mathfrak g$ is
the infinitesimal action in the sense:
$$\ze_X(f)=\p_t|_0 r^{\exp(tX)}(f).$$
% We also consider the tangent prolongation $g \mapsto Tr^g$ of the group 
% action on $T\Imm$. We will denote the corresponding fundamental vector field by $\ze_X^{T\Imm}$. 
The key to the Hamiltonian approach is to write the infinitesimal action 
as a Hamiltonian vector field, i.e. as the $\om$-gradient of some function.
This function will be called the \emph{momentum map}. 
$\om$ is a two-form on $T\Imm$, 
$$\om \in \Ga\big(L^2_{\alt}(TT\Imm;\R)\big)$$ 
that is obtained as the pullback of the canonical symplectic form on $T^*\Imm$ 
via the metric
$$G: T\Imm \to T^*\Imm.$$
The $\om$-gradient is defined by the relation
$$\grad^\om f \in \X(T\Imm), \qquad \om(\on{grad}^{\om} f, \cdot)=df, $$
where $f$ is a smooth function on $T\Imm$. 
Not all functions have an $\om$-gradient because 
$$\om:TT\Imm \to T^*T\Imm$$
is injective, but not surjective. 
The set of functions that have a smooth $\om$-gradient are denoted by 
$$C^\infty_{\om}(T\Imm,\R) \subset C^\infty(T\Imm,\R).$$
The momentum map is defined as 
$$j:\mathfrak g\to C^\infty(T\Imm,\R), \quad j_X(h_f) = G_f\big(\ze_X(f),h_f\big)$$
and it is verified that it has the desired properties: 
Assuming that the metric gradients $H,K$ exist (assumption~\ref{sh:me}), 
it can be proven that 
$$ j_X \in C^\infty_{\om}(T\Imm,\R) \quad \text{and} \quad \grad^\om(j_X) = \ze_X. $$
Thus the momentum map fits into the following commutative diagram of Lie algebras:
\begin{displaymath}
\xymatrix{
H^0(T\on{Imm}) \ar[r]^{i} & 
C^\infty_{\om}(T\Imm,\R)  \ar[r]^{\on{grad}^{\om}}  &
\X(T\Imm,\om) \ar[r]^{\om} & 
H^1(T\on{Imm})
\\
& & \mathfrak g \ar[lu]^{j} \ar[u]_{\ze^{T\Imm}} &
}\end{displaymath}
Here $\X(T\Imm,\om)$ is the space of vector fields on $T\Imm$
whose flow leaves $\om$ fixed. 
All arrows in this diagram are homomorphism of Lie algebras. 
The sequence at the top is exact when it is extended by zeros on the left and right end.  

By \emph{Emmy Noether's theorem}, the momentum mapping is constant 
along any geodesic $f: \R \to \Imm$. 
Thus for any $X\in\mathfrak g$ one has that
\begin{equation*}
%\langle \mathcal J(f_t),X \rangle = 
j_X(f_t) = G_f\big(\ze_X(f),f_t\big) 
\quad\text{  is constant in }t.
\end{equation*}
Now several group actions on $\Imm$ will be considered, 
and the corresponding conserved momenta will be calculated. 
\begin{itemize}
\item
Consider the smooth right action of the group
$\on{Diff}(M)$ on $\on{Imm}(M,N)$ given by composition from the right:
$$f\mapsto f\o \ph \quad \text{for} \quad \ph\in\on{Diff}(M).$$
This action is isometric by assumption, see section~\ref{sh:ri}. 
For $X\in\X(M)$ the fundamental vector field is given by
$$ \ze_X(f) = \p_t|_0 (f\o \on{Fl}^X_t) = Tf \o X $$
where $\on{Fl}^X_t$ denotes the flow of $X$.
The {\it reparametrization momentum}, for any vector field $X$ on $M$ is thus
$G_f(Tf \o X, h_f)$.
Assuming that the metric is reparametrization invariant, it follows that along any
geodesic $f(t,\cdot)$, the expression $G_f(Tf \o X,f_t)$ is constant for all $X$.
\end{itemize}
For a flat ambient space $N=\R^n$ the following group actions can be consider in addition:
\begin{itemize}
\item
The left action of the Euclidean motion group
$\mathbb R^n\rtimes SO(n)$ on $\on{Imm}(M,\mathbb R^n)$ given by
$$f\mapsto A+Bf \quad \text{for} \quad (A,B)\in \R^n\x SO(n).$$
The fundamental vector field mapping is 
$$\ze_{(A,X)}(f)= A+Xf \quad \text{for} \quad (A,X) \in \R^n \x \mathfrak s\mathfrak o(n).$$
The {\it linear momentum} is thus $G_f(A,h), A \in \R^n$ and if the
metric is trans\-la\-tion invariant, $G_f(A,f_t)$ will be constant along
geodesics for every $A\in \mathbb R^n$.
The {\it angular momentum} is similarly
$G_f(X.f,h), X\in \mathfrak s\mathfrak o(n)$ and if the
metric is rotation invariant, then $G_f(X.f,f_t)$ will be constant along
geodesics for each $X\in \mathfrak s\mathfrak o(n)$.

\item
The action of the scaling group of $\mathbb R$ given by $f\mapsto e^r f$,
with fundamental vector field $\ze_a(f)=a.f$.
If the metric is scale invariant, then
the {\it scaling momentum} $G_f(f,f_t)$ will be constant along
geodesics.
\end{itemize}

%-------------------------------------------------------------------------------------------
\section{Shape space}\label{sh:sh}
%-------------------------------------------------------------------------------------------

$\Diff(M)$ acts smoothly on $C^\infty(M,N)$ and its subsets $\Imm,\Imm_f$ and $\Emb$ 
by composition from the right. 
\emph{Shape space} is defined as the orbit space with respect to this action. 
That means that in shape space, two mappings that differ only in their parametrization
will be regarded the same. 

\begin{thm*}
Let $M$ be compact and of dimension $\leq n$.
Then $\Imm_f(M,N)$ is the total space of a smooth principal fiber bundle 
with structure group $\Diff(M)$, whose base manifold 
is a Hausdorff smooth Fr\'echet manifold denoted by
$$B_{i,f}(M,N) = \Imm_f(M,N)/\Diff(M).$$
The same result holds for the open subset $\Emb(M,N) \subset \Imm_f(M,N)$. 
The corresponding base space is denoted by
$$B_{e}(M,N) = \Emb(M,N)/\Diff(M).$$
However, the space
$$B_i(M,N) = \Imm(M,N)/\Diff(M)$$
is not a smooth manifold, but has singularities of orbifold type: 
Locally, it looks like a finite dimensional orbifold times an infinite dimensional 
Fr\'echet space. 
\end{thm*}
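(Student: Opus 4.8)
The plan is to construct, around each $f_0\in\Imm_f(M,N)$, a local \emph{slice} for the $\Diff(M)$-action that is transverse to the orbit, and to read off from it a principal bundle chart. The action is smooth and $\Diff(M)$ is a regular Lie group by the theorem of section~\ref{sh:di}, and it is free on $\Imm_f$ by the very definition of a free immersion; so the substance of the proof lies entirely in local triviality and in the manifold structure on the quotient. First I would fix $f_0$ and use the pointwise splitting of section~\ref{no:no}, writing every $h\in T_{f_0}\Imm=\Ga(f_0^*TN)$ as $h=Tf_0.h^\top+h^\bot$. The tangential part $Tf_0.\X(M)$ is precisely the tangent space to the orbit $\Diff(M)\o f_0$, so the normal sections $\Ga(\Nor(f_0))$ are the natural model for a transverse slice.

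Using the Riemannian exponential map $\exp^N$ of $(N,\g)$ I would attach to each section $s$ the map
$$f_s(x)=\exp^N_{f_0(x)}\big(s(x)\big),$$
and set $Q_{f_0}=\{f_s:\|s\|<\ep\}$. Since being an immersion is an open condition and $\Imm_f$ is open in $\Imm$ (section~\ref{sh:im}), each $f_s$ is again a free immersion for small $s$, and $s\mapsto f_s$ is a chart presenting $Q_{f_0}$ as an open piece of the Fr\'echet space $\Ga(\Nor(f_0))$.

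The core of the argument is to show that
$$\Phi:\{s:\|s\|<\ep\}\x\Diff(M)\to\Imm_f,\qquad (s,\ph)\mapsto f_s\o\ph$$
is a diffeomorphism onto an open $\Diff(M)$-invariant neighborhood of $f_0$; the components of $\Phi\i$ then give the bundle chart. As the inverse function theorem is unavailable in the Fr\'echet/convenient category, I would build $\Phi\i$ by hand through a \emph{footpoint construction}. Along the sheet of $f_0$ through each $x$ the point $f_0(x)$ has a tubular neighborhood in $N$, giving a footpoint projection $P$ with $P\o f_s=\Id_M$ for all small $s$. Given an immersion $g$ close to $f_0$, the map $P\o g:M\to M$ equals $\Id_M$ when $g=f_0$ and is hence a diffeomorphism $\ph$ for $g$ near $f_0$ (openness of $\Diff(M)$ in $C^\infty(M,M)$, section~\ref{sh:di}); then $g\o\ph\i$ is normal to $f_0$, so $g=f_s\o\ph$ with $s$ recovered from $\exp^N$. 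Each pointwise equation is solved by the \emph{finite-dimensional} implicit function theorem on $N$, and the exponential law of convenient calculus (section~\ref{sh:co}) upgrades this to smoothness of $g\mapsto(s,\ph)$ as a map of mapping spaces, so that no Nash--Moser machinery is needed. \textbf{The main obstacle} is precisely that $P$ is only defined sheetwise: when $f_0$ is merely immersed its image $f_0(M)$ may self-intersect, so the global well-definedness of the footpoint map and the injectivity of $\Phi$ are not automatic. Applying $P$ to $f_s\o\chi=f_{s'}$ pins down $\chi=\Id$ and $s=s'$ only for $\chi$ near the identity; to exclude the remaining identifications coming from self-intersections I would invoke freeness, since $f_0\o\chi=f_0$ forces $\chi=\Id$, together with properness of the action (which follows from the compactness of $M$) to conclude that each orbit meets $Q_{f_0}$ in a single point. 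Properness also yields that $B_{i,f}(M,N)$ and $B_e(M,N)$ are Hausdorff, and for $\Emb(M,N)$ the whole construction simplifies, since $f_0(M)$ is then an embedded submanifold and $P$ is globally defined, so the slice argument restricts verbatim to the open subset $\Emb\subset\Imm_f$.

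Finally, for the full space $\Imm(M,N)$, where the action need not be free, I would study the isotropy group $\Diff(M)_{f_0}=\{\ph:f_0\o\ph=f_0\}$. Each such $\ph$ is an isometry of $(M,f_0^*\g)$, since $\ph^*(f_0^*\g)=f_0^*\g$; differentiating a one-parameter subgroup $\ph_t$ of isotropies gives $\p_t|_0(f_0\o\ph_t)=Tf_0\o X=0$ for its generator $X$, whence $X=0$ by injectivity of $Tf_0$, so the isotropy group admits no nontrivial one-parameter subgroups and is discrete. Being a discrete subgroup of the compact group $\on{Isom}(M,f_0^*\g)$ it is therefore finite, and by the lemma of section~\ref{sh:im} its nontrivial elements are moreover fixed-point-free. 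This finite group acts linearly and isometrically on the slice $\Ga(\Nor(f_0))$, and the local model for $B_i(M,N)$ near $[f_0]$ is the orbit space of this finite linear action on a Fr\'echet space, whose fixed-point strata produce exactly the claimed orbifold-type singularities.
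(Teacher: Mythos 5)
The paper itself does not prove this theorem: immediately after the statement it refers to \cite{Michor40} for the two immersion cases and to \cite[section~44.1]{MichorG} for embeddings. Your reconstruction follows essentially the same route as those cited sources: a slice of normal exponential graphs $f_s=\exp^N_{f_0}\o s$ with $s\in\Ga(\Nor(f_0))$, a sheetwise footpoint projection producing the $\Diff(M)$-component of the chart (with smoothness obtained from the fibrewise finite-dimensional implicit function theorem plus the exponential law, so that no Nash--Moser machinery is needed), freeness plus a compactness argument to guarantee that the slice meets each orbit exactly once, and, at a non-free immersion, the finite isotropy group acting linearly on the slice via $s\mapsto s\o\ph$ (well defined because $f_0\o\ph=f_0$ forces $\Nor(f_0)_{\ph(x)}=\Nor(f_0)_x$). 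Your isotropy analysis is correct: the stabilizer is a closed subgroup of the compact Lie group $\on{Isom}(M,f_0^*\g)$, hence a Lie subgroup by Cartan's theorem, its Lie algebra is killed by the injectivity of $Tf_0$, so it is discrete and therefore finite, and its nontrivial elements are fixed-point free by the lemma of section~\ref{sh:im}; this is exactly how the orbifold model of $B_i$ arises.

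The one load-bearing claim you assert without proof is properness of the $\Diff(M)$-action: ``which follows from the compactness of $M$'' hides genuine work, and both the injectivity of $\Phi$ for $\chi$ far from the identity and the Hausdorffness of $B_{i,f}$ and $B_e$ rest on it. The statement is true, but the argument must be supplied: if $f_n\to f$ in $\Imm$ and $f_n\o\ph_n\to g$ in $C^\infty$, then the uniform injectivity of $Tf_n$ on compact $M$ converts bounds on all derivatives of $f_n\o\ph_n$ into bounds on all derivatives of $\ph_n$; Arzel\`a--Ascoli with a diagonal argument gives a $C^\infty$-convergent subsequence $\ph_n\to\ph$; and one must still show that the limit, a priori only a local diffeomorphism satisfying $f\o\ph=g$, is bijective (injectivity by a degree or openness argument played against the injectivity of the $\ph_n$, surjectivity from compactness and connectedness of $M$). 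With this lemma in place, your limiting argument (sequences $s_n,s_n'\to 0$ and $\chi_n$ with $f_{s_n}\o\chi_n=f_{s_n'}$ force, along a subsequence, $\chi_n\to\chi$ with $f_0\o\chi=f_0$, whence $\chi=\Id$ by freeness, or $\chi$ in the isotropy group in the non-free case, after which the sheetwise uniqueness applies for large $n$) closes correctly. So the architecture is right and agrees with the references the thesis relies on; only the properness lemma needs to be proven rather than asserted.
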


The proofs for free and non-free immersions can be found in \cite{Michor40} and 
the one for embeddings in \cite[section~44.1]{MichorG}. 

As with immersions and embeddings, the notation $B_{i,f}, B_i, B_e$ will be used
when it is clear that $M$ and $N$ are the domain and target of the mappings.

%---------------------------------------------------------------------------------------------
\section{Riemannian submersions and geodesics}\label{sh:sub}
%---------------------------------------------------------------------------------------------

The way to induce a Riemannian metric on shape space is to use the 
concept of a \emph{Riemannian submersion}. 
This section explains in general terms what a Riemannian submersion is
and how horizontal geodesics in the top space correspond nicely to geodesics in the quotient space. 
The definitions and results of this section are taken from \cite[section~26]{MichorH}.

Let $\pi:E\rightarrow B$ be a submersion of smooth manifolds, 
that is, $T\pi:TE \rightarrow TB$ is surjective. Then
$$V=V(\pi):=\on{ker}(T\pi) \subset TE$$
is called the {\it vertical subbundle}. 
If $E$ carries a Riemannian metric $G$, 
then one can go on to define the {\it horizontal subbundle}
as the $G$-orthogonal complement of $V$: 
$$\Hor=\Hor(\pi,G):=V(\pi)^\bot \subset TE.$$
Now any vector $X \in TE$ can be decomposed uniquely in vertical and horizontal components as
$$X=X^{\on{ver}}+X^{\hor}.$$
This definition extends to the cotangent bundle as follows: 
An element of $T^*E$ is called horizontal when it annihilates all vertical vectors, 
and vertical when it annihilates all horizontal vectors. 

In the setting described so far, the mapping 
\begin{equation*}\label{sh:sub:eq1}
T_x \pi|_{\Hor_x}:\Hor_x\rightarrow T_{\pi(x)}B
\end{equation*}
is an isomorphism of vector spaces for all $x\in E$. 
If both $(E,G_E)$ and $(B,G_B)$ are Riemannian manifolds and
if this mapping is an isometry for all $x\in E$, 
then $\pi$ will be called a {\it Riemannian submersion}.

\begin{thm*}
Consider a Riemannian submersion $\pi:E\rightarrow B,$ 
and let $c:[0,1]\rightarrow E$ be a geodesic in $E$.
\begin{enumerate}
\item If $c'(t)$ is horizontal at one $t$, then it is horizontal at all $t$. 
\item If $c'(t)$ is horizontal then $\pi \circ c$ is a geodesic in $B$.
\item If every curve in $B$ can be lifted to a horizontal curve in $E$, 
then there is a one-to-one correspondence between curves in $B$ and horizontal curves in $E$. 
This implies that instead of solving the geodesic equation on $B$ one can equivalently solve
the equation for horizontal geodesics in $E$.
\end{enumerate}
\end{thm*}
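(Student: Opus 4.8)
The plan is to reduce all three assertions to one structural identity relating the Levi-Civita connections $\nabla^E$ and $\nabla^B$ along horizontal lifts, and then to invoke uniqueness of geodesics. For a vector field $X$ on $B$ let $\bar X \in \Ga(\Hor)$ denote its \emph{horizontal lift}, the unique horizontal field with $T\pi \o \bar X = X \o \pi$; this exists because $T_x\pi|_{\Hor_x}$ is an isomorphism. The key lemma I would prove is the O'Neill-type formula
\begin{equation*}
\nabla^E_{\bar X}\bar Y = \overline{\nabla^B_X Y} + A_{\bar X}\bar Y, \qquad
A_{\bar X}\bar Y := \tfrac12\,[\bar X,\bar Y]^{\on{ver}},
\end{equation*}
obtained by comparing the Koszul formulas on $E$ and $B$: since $\pi$ is a Riemannian submersion one has $G_E(\bar X,\bar Y) = G_B(X,Y)\o\pi$, which is constant along the fibers, and $T\pi[\bar X,\bar Y] = [X,Y]\o\pi$. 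The two consequences I extract are: (a) $T\pi(\nabla^E_{\bar X}\bar Y) = \nabla^B_X Y \o \pi$, so $\pi$ intertwines the connections on horizontal fields; and (b) the correction $A_{\bar X}\bar Y$ is vertical and \emph{antisymmetric} in $X,Y$, hence $A_{\bar X}\bar X = 0$.

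From (a) and (b) the auxiliary claim ``the horizontal lift of a geodesic is a geodesic'' is immediate: if $\ga$ is a geodesic in $B$ and $c$ its horizontal lift, then $c' = \overline{\ga'}$ along $c$ and $\nabla^E_{c'}c' = \overline{\nabla^B_{\ga'}\ga'} + A_{c'}c' = 0 + 0$. I would then prove \textbf{(1)} by uniqueness of geodesics. Given a geodesic $c$ in $E$ with $c'(t_0)$ horizontal, let $\ga$ be the geodesic in $B$ with $\ga(t_0)=\pi(c(t_0))$ and $\ga'(t_0)=T\pi.c'(t_0)$, and let $\tilde c$ be its horizontal lift through $c(t_0)$. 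By the auxiliary claim $\tilde c$ is a geodesic, and it shares the initial position and velocity of $c$ (because a horizontal $c'(t_0)$ equals the horizontal lift of its own projection), so $c = \tilde c$ and is therefore horizontal for all $t$. Assertion \textbf{(2)} then follows by applying $T\pi$ to the geodesic equation: since $c'$ is horizontal, (a) gives $\nabla^B_{(\pi\o c)'}(\pi\o c)' = T\pi(\nabla^E_{c'}c') = 0$.

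Assertion \textbf{(3)} is essentially formal once lifting is assumed. A horizontal lift with prescribed initial point is \emph{unique}, since $c$ is then governed by the first-order condition $c'(t) = \overline{\ga'(t)}\big|_{c(t)}$, whose solution is determined by $c(0)$; combined with the standing hypothesis that every curve in $B$ admits a horizontal lift, composition with $\pi$ becomes a bijection between horizontal curves in $E$ (with fixed initial point) and curves in $B$. Feeding (1) and (2) into this bijection identifies the geodesics of $B$ with the horizontal geodesics of $E$, which is the asserted reduction. I expect the genuine obstacle to lie not in this algebra --- the antisymmetry of $A$ and the projection of brackets are routine --- but in justifying the connection identity and the local existence and uniqueness of geodesics in the weak, infinite-dimensional convenient setting; well-posedness of the geodesic flow in particular has to be borrowed from the later sections rather than assumed outright.
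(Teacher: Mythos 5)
Your proposal is correct, and it follows essentially the route of the proof the paper relies on: the paper itself gives no argument but defers to \cite[section~26]{MichorH}, and that reference rests on exactly the O'Neill machinery you set up --- the identity $\nabla^E_{\bar X}\bar Y=\overline{\nabla^B_X Y}+\tfrac12[\bar X,\bar Y]^{\on{ver}}$ extracted from the Koszul formula, the antisymmetry of the $A$-tensor (so $A_{\bar X}\bar X=0$), and uniqueness of geodesics, with only routine glosses left to fill in (e.g.\ the open--closed argument extending horizontality from a neighborhood of $t_0$ to all $t$). Your closing caveat is also the right one to flag: the paper quotes a finite-dimensional theorem and applies it to the weak-Riemannian, infinite-dimensional submersion $\Imm\to B_i$, where existence and uniqueness of geodesics and of the relevant connection must indeed be supplied separately, as the paper does later for Sobolev-type metrics.
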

See \cite[section~26]{MichorH} for the proof.

%---------------------------------------------------------------------------------------------
\section{Riemannian metrics on shape space}\label{sh:rish}
%---------------------------------------------------------------------------------------------

Now the previous chapter is applied to the submersion $\pi:\Imm \to B_i$:
\begin{thm*}
Given a  $\Diff(M)$-invariant Riemannian metric on $\Imm$, 
there is a unique Riemannian metric on the quotient space $B_i$ such that the
quotient map $\pi:\Imm \to B_i$ is a Riemannian submersion. 
\end{thm*}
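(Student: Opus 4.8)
The plan is to apply the general Riemannian submersion machinery of section~\ref{sh:sub} to the projection $\pi:\Imm\to B_i$; the only real work is to produce a well-defined metric on the quotient and to recognize that the $\Diff(M)$-invariance of $G$ is exactly what makes the construction independent of the chosen representative in each fiber.

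First I would identify the vertical bundle. By the theorem of section~\ref{sh:sh} the map $\pi$ is a submersion (on the smooth part of $B_i$) with fibers the $\Diff(M)$-orbits, so $V_f=\ker(T_f\pi)$ is the tangent space to the orbit through $f$, which by the fundamental vector field computation of section~\ref{sh:mo} is
$$V_f=\{\,Tf\o X : X\in\X(M)\,\}\subset T_f\Imm,$$
the space of tangential vector fields along $f$. I would then set $\Hor_f:=V_f^{\perp}$, the $G_f$-orthogonal complement, and record that $T_f\pi|_{\Hor_f}:\Hor_f\to T_{\pi(f)}B_i$ is a linear isomorphism, so that each $\xi\in T_{\pi(f)}B_i$ has a unique horizontal lift $\xi^{\hor}_f\in\Hor_f$.

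Next I would define the candidate metric by transporting $G$ through these horizontal lifts,
$$G^{B_i}_{\pi(f)}(\xi,\eta):=G_f\big(\xi^{\hor}_f,\eta^{\hor}_f\big),\qquad \xi,\eta\in T_{\pi(f)}B_i.$$
Symmetry, positive definiteness and smoothness are inherited directly from $G$, and by construction $T_f\pi|_{\Hor_f}$ is an isometry, so $\pi$ becomes a Riemannian submersion; uniqueness is then immediate, since the isometry requirement leaves no freedom in the choice of $G^{B_i}$. The substantive point is well-definedness: the right hand side must not depend on the representative $f$ of the orbit $\pi(f)$.

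The hard part will be this last independence, and it is where invariance enters. Let $\tilde f=f\o\ph$ with $\ph\in\Diff(M)$. Because the action is principal, $Tr^{\ph}$ carries $V_f$ isomorphically onto $V_{\tilde f}$; because $G=(r^{\ph})^*G$ by the isometry assumption of section~\ref{sh:ri}, $Tr^{\ph}$ preserves $G$-orthogonality and hence maps $\Hor_f$ onto $\Hor_{\tilde f}$. From $\pi\o r^{\ph}=\pi$ one gets $T\pi\o Tr^{\ph}=T\pi$, so the horizontal lift is equivariant, $\xi^{\hor}_{\tilde f}=Tr^{\ph}\,\xi^{\hor}_f$, and invariance of $G$ yields
$$G_{\tilde f}\big(\xi^{\hor}_{\tilde f},\eta^{\hor}_{\tilde f}\big) =G_{\tilde f}\big(Tr^{\ph}\xi^{\hor}_f,Tr^{\ph}\eta^{\hor}_f\big) =G_f\big(\xi^{\hor}_f,\eta^{\hor}_f\big).$$
I expect the only genuine subtlety beyond this formal manipulation to be the weak-metric issue hidden in the assertion that $\Hor_f=V_f^{\perp}$ really projects isomorphically onto $T_{\pi(f)}B_i$: for a weak metric one need not have a topological splitting $T_f\Imm=V_f\oplus\Hor_f$. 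Here this is underwritten by the principal bundle local triviality established in section~\ref{sh:sh} together with the general submersion result cited in section~\ref{sh:sub}, and it is the one place where more than bookkeeping is required.
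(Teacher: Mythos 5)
Your proof is correct and follows essentially the same route as the paper: the paper states this theorem without proof, as a direct application of the Riemannian submersion framework of section~\ref{sh:sub}, and your argument is precisely the working-out of that framework — vertical bundle equal to the orbit tangent space $\{Tf\o X : X\in\X(M)\}$, horizontal lifts, and $\Diff(M)$-invariance giving well-definedness and uniqueness. Your closing remark also correctly isolates the one point the paper glosses over, namely that for a weak metric the splitting $T_f\Imm = V_f \oplus \Hor_f$ is not automatic and must be verified metric by metric (as is done for Sobolev-type metrics in section~\ref{so:ho}).
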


One also gets a description of the tangent space to shape space: When $f \in \Imm$, then
$T_{\pi(f)}B_i$ is isometric to the horizontal bundle at $f$. The horizontal bundle 
depends on the definition of the metric. For the $H^0$-metric, it consists of vector 
fields along $f$ that are everywhere normal to $f$, see section~\ref{so:ho}.

\begin{ass*}
It will always be assumed that a $\Diff(M)$-invariant Riemannian
metric on the manifold of immersions is given, and that shape space 
is endowed with the unique Riemannian
metric turning the projection into a Riemannian submersion. 
\end{ass*}

%---------------------------------------------------------------------------------------------
\section{Geodesic equation on shape space}\label{sh:gesh}
%---------------------------------------------------------------------------------------------

Theorem~\ref{sh:sub} applied to the Riemannian submersion $\pi: \Imm \to B_i$ yields: 
\begin{thm*}
Assuming that every curve in $B_i$ can be lifted to a horizontal curve in $\Imm$, 
the geodesic equation on shape space is equivalent to
\begin{equation}\label{sh:gesh:eq1}
\left\{\begin{aligned}
f_t&=f_t^{\hor}\in \Hor \\
(\nabla_{\p_t}f_t)^{\hor} &= \Big(\frac12 H(f_t,f_t)-K(f_t,f_t)\Big)^{\hor},
\end{aligned}\right.
\end{equation}
where $f$ is a horizontal curve in $\Imm$, where $H,K$ are the metric gradients
defined in section~\ref{sh:me} and where $\nabla$ is the covariant derivative defined
in section~\ref{sh:cov}.
\end{thm*}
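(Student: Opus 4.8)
The plan is to combine the Riemannian submersion theorem of section~\ref{sh:sub} with the immersion-level geodesic equation of section~\ref{sh:ge}. Under the standing liftability hypothesis, parts (2) and (3) of that theorem identify the geodesics of $B_i$ with the $\pi$-images of horizontal curves $f$ in $\Imm$ whose projection $\pi\o f$ is a geodesic; so the whole problem reduces to characterizing, among horizontal curves, those that project to geodesics in the base.

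First I would recast the geodesic equation on $\Imm$ in terms of the Levi-Civita connection $\nabla^G$ of $G$. Section~\ref{sh:ge} shows that $\nabla^G_{\p_t}f_t=0$ is the same as $\nabla_{\p_t}f_t=\tfrac12 H(f_t,f_t)-K(f_t,f_t)$, where $\nabla$ is the auxiliary connection of section~\ref{sh:cov}; equivalently the two connections are related along the path by $\nabla^G_{\p_t}f_t=\nabla_{\p_t}f_t-\tfrac12 H(f_t,f_t)+K(f_t,f_t)$. This is the dictionary that will convert a condition phrased for $\nabla^G$ into the stated condition for $\nabla$.

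The key step is the basic property of a Riemannian submersion (as in \cite[section~26]{MichorH}): for a horizontal curve $f$, the acceleration of the base curve $\pi\o f$ is the image under the isometry $T\pi|_{\Hor}$ of the horizontal component $(\nabla^G_{\p_t}f_t)^{\hor}$ of the acceleration upstairs; concretely, extending $f_t$ to a horizontal vector field $X$ gives $(\nabla^G_{\p_t}f_t)^{\hor}=(\nabla^G_XX)^{\hor}\o f$, which is the horizontal lift of $\nabla^{B_i}_{\bar f'}\bar f'$. Hence $\pi\o f$ is a geodesic in $B_i$ if and only if $(\nabla^G_{\p_t}f_t)^{\hor}=0$. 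Feeding in the connection dictionary and projecting horizontally, this reads $(\nabla_{\p_t}f_t)^{\hor}=\bigl(\tfrac12 H(f_t,f_t)-K(f_t,f_t)\bigr)^{\hor}$, which together with the horizontality $f_t=f_t^{\hor}\in\Hor$ of the lift is precisely the system~\eqref{sh:gesh:eq1}. Both implications follow by reading this chain of equivalences in the two directions, using liftability to produce a horizontal lift of a prescribed geodesic in $B_i$.

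I expect the main obstacle to be the rigorous use of the submersion formula in the present weak, infinite-dimensional setting: the metric $G$ is only weak, the horizontal projection is $f$-dependent and need not split $T\Imm$ as a topological direct sum, and the connection $\nabla^G$ is available only because section~\ref{sh:ge} produced the gradients $H,K$. Care is also needed to verify that horizontal and vertical projections are compatible with the identifications of section~\ref{sh:cov}, so that projecting the immersion-level geodesic equation onto $\Hor$ is legitimate and that the horizontal extension $X$ of $f_t$ exists smoothly.
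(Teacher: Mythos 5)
Your reduction is set up correctly, and the easy direction (the full horizontal geodesic equation on $\Imm$ implies the stated system) is fine, but the crux of your argument rests on a fact you do not prove and that the paper's quoted results do not supply. Theorem~\ref{sh:sub}, as stated, gives only three properties: horizontality of a geodesic's velocity is preserved, horizontal geodesics project to geodesics, and (under liftability) curves downstairs correspond to horizontal curves upstairs, so that the geodesic equation on $B_i$ is, \emph{by this correspondence}, the full horizontal geodesic equation \eqref{sh:gesh:eq2} on $\Imm$. It does \emph{not} contain the O'Neill-type identity you invoke, namely that for a merely horizontal curve $f$ the horizontal part $(\nabla^G_{\p_t}f_t)^{\hor}$ is the horizontal lift of the acceleration of $\pi\o f$, so that $\pi\o f$ is a geodesic iff $(\nabla^G_{\p_t}f_t)^{\hor}=0$. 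Within the paper's framework, where ``geodesic on shape space'' means precisely ``projection of a solution of \eqref{sh:gesh:eq2}'', the identity you invoke \emph{is} the assertion that \eqref{sh:gesh:eq1} implies \eqref{sh:gesh:eq2}, i.e.\ the theorem itself, so the argument is circular. If instead you mean to import the finite-dimensional O'Neill formula, there is a genuine gap: its Koszul-formula proof needs a Levi-Civita covariant derivative on the quotient $B_i$ and smooth projectable horizontal extensions of $f_t$, none of which are established here (the metric is weak, horizontal projection requires inverting an elliptic operator, and $B_i$ is not even a manifold off the free immersions). You flag these as ``obstacles'', but they are exactly where the work lies.

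What is missing is the mechanism the paper uses for the hard direction: show directly that for a horizontal curve the \emph{vertical} part of the geodesic equation holds automatically. Since every vertical vector at $f$ is a fundamental vector field $\ze_X(f)=Tf.X$ of the $\Diff(M)$-action, horizontality of $f_t$ gives the identity $G_f\big(\ze_X(f),f_t\big)\equiv 0$ in $t$; differentiating it, using torsion-freeness of $\nabla$ (to rewrite $\nabla_{\p_t}\ze_X(f)$ as a derivative of $T(r^{\exp(sX)})f_t$), the $\Diff(M)$-invariance of $G$, and the defining relations of $H$ and $K$, one obtains
$$0=G_f\Big(\nabla_{\p_t}f_t-\tfrac12 H_f(f_t,f_t)+K_f(f_t,f_t),\ \ze_X(f)\Big)\quad\text{for all }X\in\X(M),$$
which says exactly that the vertical part of $\nabla_{\p_t}f_t-\tfrac12 H_f(f_t,f_t)+K_f(f_t,f_t)$ vanishes. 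Combined with the horizontal part assumed in \eqref{sh:gesh:eq1}, this upgrades \eqref{sh:gesh:eq1} to \eqref{sh:gesh:eq2}, and theorem~\ref{sh:sub} then finishes the proof. This Noether-type computation is the idea absent from your proposal.
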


\begin{proof}
Theorem~\ref{sh:sub} states that the geodesic equation on shape space is equivalent 
to the horizontal geodesic equation on $\Imm$ which is given by
\begin{equation}\label{sh:gesh:eq2}
\left\{\begin{aligned}
f_t &= f_t^{\hor}\\
\nabla_{\p_t} f_t&=\frac12 H_f(f_t,f_t)-K_f(f_t,f_t)
\end{aligned}\right.
\end{equation}
Clearly \eqref{sh:gesh:eq2} implies \eqref{sh:gesh:eq1}. 
To prove the converse it remains to show that 
$$(\nabla_{\p_t}f_t)^{\vert} = \Big(\frac12 H(f_t,f_t)-K(f_t,f_t)\Big)^{\vert}.$$
As the following proof shows, this is a consequence of the 
conservation of the momentum along $f$ and of 
the invariance of the metric under $\Diff(M)$. 

Recall the infinitesimal action of $\Diff(M)$ on $\Imm(M,N)$. 
For any $X \in \X(M)$ it is given by the fundamental vector field
$$\ze_X \in \X(\Imm), \qquad 
\ze_X(f) = \p_s|_0 r\big(f,\exp(sX)\big) = \p_s|_0 \big(f \o Fl_t^X\big) = Tf.X.$$
Here $r$ is the right action of $\Diff(M)$ on $\Imm(M,N)$ defined in section~\ref{sh:di}. 
When $f: \R \to \Imm$ is a curve of immersions, one obtains a two-parameter family of immersions
$$g: \R \x \R \to \Imm, \qquad g(s,t) = r\big(f(t),\exp(sX)\big) $$
that satisfies
\begin{align*}
\nabla_{\p_t} Tg.\p_s &= 
\nabla_{\p_s} Tg.\p_t + Tg.[\p_t,\p_s] + \on{Tor}(Tg.\p_t,Tg.\p_s) \\&= 
\nabla_{\p_s} T\big(r^{\exp(sX)}\big)f_t +0+0
\end{align*}
since $\nabla$ is torsion-free.
This implies 
\begin{align*}
\nabla_{\p_t} \ze_X(f) = \nabla_{\p_t} Tg.\p_s|_0 = \nabla_{\p_s|_0} T\big(r^{\exp(sX)}\big)f_t.
\end{align*}
$\ze_X(f)$ is vertical and $f_t$ is horizontal by assumption. 
Thus the momentum mapping $G_f\big(\ze_X(f),f_t\big)$ is constant and equals zero.
Its derivative is
\begin{align*}
0 &= \p_t \Big(G_f\big(\ze_X(f),f_t\big)\Big) \\&=  
(\nabla_{\p_t} G_f)\big(\ze_X(f),f_t\big) 
+ G_f\big(\nabla_{\p_t}\ze_X(f), f_t\big) 
+ G_f\big(\ze_X(f), \nabla_{\p_t} f_t\big)
\\&=
(\nabla_{f_t} G)\big(\ze_X(f),f_t\big) 
+ G_f\Big(\nabla_{\p_s|_0} T\big(r^{\exp(sX)}\big)f_t, f_t\Big)
+ G_f\big(\ze_X(f), \nabla_{\p_t} f_t\big)
\\&=
G_f\big(K_f(f_t,f_t) + \nabla_{\p_t} f_t,\ze_X(f)\big) \\&\qquad
+ \left.G_{r^{\exp(sX)}f}\Big(\nabla_{\p_s} T\big(r^{\exp(sX)}\big)f_t, 
T\big(r^{\exp(sX)}\big)f_t\Big)\right|_{s=0}
\\&=
G_f\big(K_f(f_t,f_t) + \nabla_{\p_t} f_t,\ze_X(f)\big) \\&\qquad
+ \frac12 \p_s|_0 \bigg(G_{r^{\exp(s.X)}f}\Big(T\big(r^{\exp(sX)}\big)f_t, 
T\big(r^{\exp(s.X)}\big)f_t\Big) \bigg)
\\&\qquad
- \frac12 \left. \big(\nabla_{\p_s} G_{r^{\exp(sX)}f}\big)
\Big( T\big(r^{\exp(sX)}\big)f_t, T\big(r^{\exp(sX)}\big)f_t\Big) \right|_{s=0}
\\&=
G_f\big(K_f(f_t,f_t) + \nabla_{\p_t} f_t,\ze_X(f)\big) \\&\qquad
+ \frac12 \p_s|_0 \big(G_f(f_t, f_t) \big)
- \frac12 \big(\nabla_{\ze_X(f)} G\big)(f_t, f_t)
\\&=
G_f\Big(K_f(f_t,f_t) + \nabla_{\p_t} f_t + 0 - \frac12 H_f(f_t,f_t),\ze_X(f)\Big) 
\end{align*}
Any vertical tangent vector to $f$ is of the form $\ze_X(f)$ for some $X \in \X(M)$. 
Therefore
\begin{equation*}
0=\Big(\nabla_{\p_t} f_t - \frac12 H_f(f_t,f_t) + K_f(f_t,f_t) \Big)^{\vert}. \qedhere
\end{equation*}
\end{proof}

It will be shown in section~\ref{so:ho2} that curves in $B_i$ 
can be lifted to horizontal curves in $\Imm$ for the very general class of 
Sobolev type metrics. Thus all assumptions and conclusions of the theorem hold. 

%---------------------------------------------------------------------------------------------
\section[Geodesic equation on shape space]%
{Geodesic equation on shape space in terms of the momentum}\label{sh:geshmo}
%---------------------------------------------------------------------------------------------

As in the previous section, theorem~\ref{sh:sub} will be applied to the Riemannian submersion 
$\pi: \Imm \to B_i$. 
But this time, the formulation of the geodesic equation in terms of the momentum will be used, 
see section~\ref{sh:gemo}.
As will be seen in section~\ref{so:geshmo}, this is the most convenient formulation
of the geodesic equation for Sobolev-type metrics.
\begin{thm*}
Assuming that every curve in $B_i$ can be lifted to a horizontal curve in $\Imm$, 
the geodesic equation on shape space is equivalent to the set of equations
\begin{equation*}
\left\{\begin{aligned}
p &= G_f(f_t,\cdot) \in \Hor \subset T^*\Imm, \\
(\nabla_{\p_t}p)^{\hor} &= \frac12 G_f\big(H(f_t,f_t),\cdot)^{\hor}.
\end{aligned}\right.
\end{equation*}
Here $f$ is a curve in $\Imm$, 
$H$ is the metric gradient defined in section~\ref{sh:me},
and $\nabla$ is the covariant derivative defined in section~\ref{sh:cov}.
$f$ is horizontal because $p$ is horizontal. 
\end{thm*}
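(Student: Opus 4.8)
The plan is to obtain the momentum form directly from the velocity form of the geodesic equation on shape space that has already been established, without rerunning the variational argument. By theorem~\ref{sh:sub} applied to the submersion $\pi:\Imm\to B_i$, a curve in $\Imm$ projects to a geodesic in $B_i$ exactly when it is a horizontal geodesic in $\Imm$, and by the preceding theorem~\ref{sh:gesh} this is equivalent to the system~\eqref{sh:gesh:eq1}. It therefore suffices to show that~\eqref{sh:gesh:eq1} is equivalent to the two displayed momentum equations. The one structural fact I would isolate first is that the metric map $G_f:T_f\Imm\to T_f^*\Imm$ carries the $G$-orthogonal splitting $T_f\Imm=\Hor\oplus V$ onto the splitting of $T_f^*\Imm$ into horizontal and vertical covectors from section~\ref{sh:sub}: a covector $G_f(Z,\cdot)$ annihilates every vertical vector precisely when $Z\in\Hor$, and consequently $(G_f(Z,\cdot))^{\hor}=G_f(Z^{\hor},\cdot)$. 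This is immediate from the orthogonality of the splitting and the definition of horizontal covectors.

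Granting this, the first line translates at once: with $p=G_f(f_t,\cdot)$, the condition $p\in\Hor$ is the statement that $p$ annihilates all vertical vectors, which by the intertwining fact is equivalent to $f_t\in\Hor$, i.e.\ to the first line of~\eqref{sh:gesh:eq1}. For the second line I would reuse verbatim the computation from the proof of theorem~\ref{sh:gemo}, which yields
$$\nabla_{\p_t}p=(\nabla_{f_t}G)(f_t,\cdot)+G_f(\nabla_{\p_t}f_t,\cdot)=G_f\big(K(f_t,f_t)+\nabla_{\p_t}f_t,\cdot\big),$$
the last equality being the defining property of the metric gradient $K$ from section~\ref{sh:me}. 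Taking horizontal parts and pulling the projection through $G_f$, the equation $(\nabla_{\p_t}p)^{\hor}=\frac12 G_f(H(f_t,f_t),\cdot)^{\hor}$ becomes
$$G_f\Big(\big(K(f_t,f_t)+\nabla_{\p_t}f_t\big)^{\hor},\cdot\Big)=\frac12 G_f\big(H(f_t,f_t)^{\hor},\cdot\big),$$
and, cancelling the injective $G_f$, this is exactly $(\nabla_{\p_t}f_t)^{\hor}=\big(\frac12 H(f_t,f_t)-K(f_t,f_t)\big)^{\hor}$, the second line of~\eqref{sh:gesh:eq1}. The closing remark that $f$ is horizontal because $p$ is then reduces to the intertwining fact applied to the first line.

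The only real obstacle I anticipate is verifying the intertwining fact itself, i.e.\ that the weak and non-surjective metric $G_f$ still matches the orthogonal decomposition of $T_f\Imm$ with the annihilator decomposition of $T_f^*\Imm$, so that horizontal projection commutes with $G_f(\,\cdot\,)$. Here one uses only that $G_f$ is injective in order to cancel it on the left, and never its (absent) surjectivity, since both sides of every identity are already of the form $G_f(\,\cdot\,)$ with a tangent-vector argument. The genuinely delicate part of the whole circle of ideas---controlling the vertical component of $\nabla_{\p_t}f_t$ through conservation of the reparametrization momentum and the $\Diff(M)$-invariance of $G$---has already been dispatched in the proof of theorem~\ref{sh:gesh}, so here it need only be invoked rather than repeated.
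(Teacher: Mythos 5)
Your proof is correct and follows exactly the route the paper intends: the paper states this theorem without an explicit proof, leaving it as the combination of theorem~\ref{sh:sub}, the momentum computation of theorem~\ref{sh:gemo}, and theorem~\ref{sh:gesh}, whose proof disposes of the vertical component via conservation of the reparametrization momentum. Your explicit intertwining fact --- that the injective weak metric $G_f$ carries the $G$-orthogonal splitting of $T_f\Imm$ onto the annihilator splitting of covectors, so that horizontal projection commutes with $G_f$ and in particular $(\nabla_{\p_t}p)^{\hor}=G_f\big((K(f_t,f_t)+\nabla_{\p_t}f_t)^{\hor},\cdot\big)$ is well defined --- is precisely the detail needed to make that reduction rigorous, and your observation that only injectivity (never surjectivity) of $G_f$ is used is the right safeguard in this weak-metric setting.
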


%---------------------------------------------------------------------------------------------
\section{Inner versus outer metrics}\label{sh:inout}
%---------------------------------------------------------------------------------------------

There are two similar yet different approaches 
on how to define a Riemannian metric on shape space. 

The metrics on shape space presented in this work are induced by metrics on $\Imm(M,N)$. 
One might call them \emph{inner metrics} since they are defined intrinsically to $M$. 
Intuitively, these metrics can be seen as 
describing a deformable material that the shape itself is made of. 

In contrast to these metrics, there are also metrics that are induced from metrics on $\on{Diff}(N)$
by the same construction of Riemannian submersions. 
(The widely used LDDMM algorithm is based on such a metric.)
The differential operator governing these metrics is defined on all of $N$, even outside of the shape. 
When the shape is deformed, the surrounding ambient space is deformed with it. 
Intuitively, such metrics can be seen as describing some deformable material that the ambient 
space is made of. Therefore one might call them \emph{outer metrics}. 

The following diagram illustrates both approaches. Metrics are defined on one of the top 
spaces and induced on the corresponding space below by the construction of Riemannian submersions. 
\begin{equation*}\xymatrix{
\Diff(N) \ar[d] & \\
\Emb(M, N) \ar[d] \ar@{^{(}->}[r] & \Imm(M,N) \ar[d] \\
B_e(M,N) \ar@{^{(}->}[r] & B_i(M,N)
}\end{equation*}

%---------------------------------------------------------------
%---------------------------------------------------------------
\chapter{Variational formulas}\label{va}
%---------------------------------------------------------------
%---------------------------------------------------------------

Recall that many operators like
$$g=f^*\g, \quad S=S^f, \quad \vol(g), \quad \nabla=\nabla^g, \quad \Delta=\Delta^g, \quad \ldots$$ 
implicitly depend on the immersion $f$. In this section their derivative 
with respect to $f$ which is called their \emph{first variation} will be calculated . 
These formulas will be used to calculate the metric gradients that are needed 
for the geodesic equation. 

This section is based on \cite{Michor118, Michor119}. 
Some of the formulas can be found in \cite{Besse2008, Michor102, Verpoort2008}.
The presentation is similar to \cite{Bauer2010}, and some of the variational formulas 
are the same.

%-------------------------------------------------------------------------------------------
\section{Paths of immersions}\label{va:pa}
%-------------------------------------------------------------------------------------------

All of the differential-geometric concepts introduced in section \ref{no}
can be recast for a path of immersions instead of a fixed immersion. 
This allows to study variations of immersions. 
So let $f:\R \to \on{Imm}(M,N)$ be a path of immersions. By convenient calculus
\cite{MichorG}, $f$ can equivalently be seen as $f:\R \x M \to N$ 
such that $f(t,\cdot)$ is an immersion for each $t$. 
The bundles over $M$ can be replaced by bundles over $\R \x M$:
\begin{equation*}\xymatrix{
\on{pr}_2^* T^r_s M \ar[d] & 
\on{pr}_2^* T^r_s M \otimes f^*TN \ar[d] &
\Nor(f) \ar[d]\\
\R \x M & \R \x M & \R \x M
}\end{equation*}
Here $\on{pr}_2$ denotes the projection $\on{pr}_2:\R \x M \to M$.
The covariant derivative $\nabla_Z h$ is now defined for vector fields $Z$ on $\R \x M$ 
and sections $h$ of the above bundles. 
The vector fields $(\p_t, 0_M)$ and $(0_{\R}, X)$, where $X$ is a vector field on $M$, are of
special importance. 
In later sections they will be identified with $\p_t$ and $X$ 
whenever this does not pose any problems. 
Let
$$\on{ins}_t : M \to \R \x M, \qquad x \mapsto (t,x) .$$
Then by property~\ref{no:co:prop5} from section~\ref{no:co} one has for vector fields $X,Y$ on $M$
\begin{align*}
\nabla_X Tf(t,\cdot).Y &= \nabla_X T(f \o \on{ins}_t) \o Y = \nabla_X Tf \o T\on{ins}_t \o Y
\\&= \nabla_X Tf \o (0_\R,Y) \o \on{ins}_t 
= \nabla_{T\on{ins}_t \o X} Tf \o (0_\R,Y)\\&
 = \big(\nabla_{(0_\R,X)} Tf \o (0_\R,Y)\big) \o \on{ins}_t .
\end{align*}
This shows that one can recover the static situation at $t$ by using vector fields on $\R \x M$ 
with vanishing $\R$-component and evaluating at $t$.

%-------------------------------------------------------------------------------------------
\section{Directional derivatives of functions}
%-------------------------------------------------------------------------------------------

The following ways to denote directional derivatives of functions will be used, in particular in 
infinite dimensions.
Given a function $F(x,y)$ for instance,
$$ D_{(x,h)}F \text{ will be written as a shorthand for } \partial_t|_0 F(x+th,y).$$
Here $(x,h)$ in the subscript denotes the tangent vector with foot point $x$ and direction $h$. 
If $F$ takes values in some linear space, this linear space and its tangent space will be identified. 

%-----------------------------------------------------------------
\section{Setting for first variations}\label{va:se}
%-----------------------------------------------------------------

In all of this chapter, let $f$ be an immersion and $f_t \in T_f\Imm$ a tangent vector to $f$. 
The reason for calling the tangent vector $f_t$ is that in calculations  
it will often be the derivative of a curve of immersions through $f$. 
Using the same symbol $f$ for the fixed immersion
and for the path of immersions through it, one has in fact that
$$D_{(f,f_t)} F = \p_t F(f(t)).$$

%-----------------------------------------------------------------
\section{Variation of equivariant tensor fields}\label{va:ta}
%-----------------------------------------------------------------

Let the smooth mapping $F:\Imm(M,N) \to \Gamma(T^r_s M)$ 
take values in some space of tensor fields over $M$, 
or more generally in any natural bundle over $M$, see \cite{MichorF}.

\begin{lem*}
If $F$ is equivariant 
with respect to pullbacks by 
diffeomorphisms of $M$, i.e. 
$$F(f)=(\ph^* F)(f)=\ph^* \Big(F\big((\ph\i)^*f\big)\Big) $$ 
for all $\ph \in \on{Diff}(M)$ and $f \in \Imm(M,N)$,
then the tangential variation of $F$ is its Lie-derivative:
\begin{align*}
D_{(f,Tf.f_t^\top)} F&=
\p_t|_0 F\Big(f \o Fl^{f_t^\top}_t\Big)=
\p_t|_0 F\Big((Fl^{f_t^\top}_t)^* f\Big)\\&=
\p_t|_0 \Big(Fl_t^{f_t^\top}\Big)^* \big(F(f)\big) = \L_{f_t^\top}\big(F(f)\big).
\end{align*}
\end{lem*}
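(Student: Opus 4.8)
The plan is to reproduce the chain of equalities displayed in the statement by exhibiting a convenient curve of immersions through $f$ whose velocity at time zero is exactly the tangential direction $Tf.f_t^\top$, and then to push the diffeomorphism action through $F$ by equivariance, recognizing the result as a Lie derivative.

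First I would fix the vector field $X:=f_t^\top \in \X(M)$ coming from the tangential part of $f_t$. Since $M$ is compact (section~\ref{no:as}), its flow $Fl^X_t \in \Diff(M)$ exists for all $t$ near $0$ and is smooth in $t$ by the theorem of section~\ref{sh:di}. I would then take the curve $c(t):=f \o Fl^X_t$. Because $\Imm$ is open in $C^\infty(M,N)$ and composition on the right by a diffeomorphism preserves immersions (the tangent map $T(f\o Fl^X_t)=Tf\o T\,Fl^X_t$ stays injective), $c$ is a smooth curve in $\Imm$ with $c(0)=f$. The chain rule gives $\p_t|_0 c(t)=Tf\o \p_t|_0 Fl^X_t = Tf.X = Tf.f_t^\top$, using the defining property of the flow. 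By convenient calculus (the chain rule, item (4) of the theorem in section~\ref{sh:co}), the directional derivative $D_{(f,Tf.f_t^\top)}F$ of the smooth map $F$ depends only on the velocity $Tf.f_t^\top$ and not on the chosen curve, so it may be evaluated along $c$.

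Next I would rewrite the composition as a pullback of the map, $f\o Fl^X_t=(Fl^X_t)^* f$, and use the equivariance hypothesis in the form $F(\ph^* f)=\ph^* F(f)$; this is exactly the stated relation, obtained by replacing $f$ with $\ph^* f$ and noting $(\ph\i)^*\ph^*=\Id$. Applying it with $\ph=Fl^X_t$ yields $F\big((Fl^X_t)^* f\big)=(Fl^X_t)^*\big(F(f)\big)$, whence $D_{(f,Tf.f_t^\top)}F=\p_t|_0 (Fl^X_t)^*\big(F(f)\big)$. Finally I would recognize the right-hand side as the definition of the Lie derivative of the tensor field $F(f)$ along $X$, which gives $\L_{f_t^\top}\big(F(f)\big)$ and finishes the argument.

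The only point needing care, as opposed to any genuine computation, is the justification that the Gâteaux-type derivative $D_{(f,\cdot)}F$ may be computed along the flow curve $c(t)=f\o Fl^X_t$ rather than along an affine variation: this is precisely where smoothness of $F$ and the chain rule of convenient calculus enter, together with the completeness of the flow guaranteed by compactness of $M$. Everything else is a formal manipulation of the equivariance identity and the definition of the Lie derivative.
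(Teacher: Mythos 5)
Your proposal is correct and follows essentially the same route as the paper: the paper's proof is precisely the displayed chain of equalities (derivative along the flow curve, rewritten via equivariance as the derivative of the pulled-back tensor field, recognized as the Lie derivative), and you justify each link in that chain. The additional care you take — checking that $c(t)=f\o Fl^{f_t^\top}_t$ stays in $\Imm$, that its velocity is $Tf.f_t^\top$, and that the directional derivative of the smooth map $F$ may be computed along this curve by the chain rule of convenient calculus — is exactly what the paper leaves implicit.
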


This allows us to calculate the tangential variation of the pullback metric and 
the volume density, for example.

%-----------------------------------------------------------------
\section{Variation of the metric}\label{va:me}
%-----------------------------------------------------------------

\begin{lem*}
The differential of the pullback metric
\begin{equation*}\left\{ \begin{array}{ccl}
\Imm &\to &\Gamma(S^2_{>0} T^*M),\\
f &\mapsto &g=f^*\g
\end{array}\right.\end{equation*}
is given by
\begin{align*}
D_{(f,f_t)} g&= 2\on{Sym}\g(\nabla f_t,Tf) = -2 \g(f_t^\bot,S)+2 \on{Sym} \nabla (f_t^\top)^\flat 
\\& = -2 \g(f_t^\bot,S)+ \L_{f_t^\top} g.
\end{align*}
\end{lem*}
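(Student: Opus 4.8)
The plan is to compute directly from the definition $g(X,Y)=\g(Tf.X,Tf.Y)$ for vector fields $X,Y$ on $M$, differentiating along the path $f$ with velocity $f_t$ and exploiting that the pulled-back Levi-Civita connection $\nabla=\nabla^{\g}$ is compatible with $\g$. First I would regard $f$ as a map $\R\times M\to N$ and apply $\nabla_{\p_t}$ to the scalar $\g(Tf.X,Tf.Y)$. Metric compatibility gives
$$D_{(f,f_t)}g(X,Y)=\g(\nabla_{\p_t}Tf.X,Tf.Y)+\g(Tf.X,\nabla_{\p_t}Tf.Y).$$
The crucial simplification is the symmetry of mixed covariant derivatives from \eqref{no:sw:to2}: identifying $(\p_t,0_M)$ with $\p_t$, $(0_\R,Y)$ with $Y$, and $Tf.\p_t$ with $f_t$, that formula reads $\nabla_{\p_t}Tf.Y=\nabla_Y f_t$. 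Substituting yields $D_{(f,f_t)}g(X,Y)=\g(\nabla_X f_t,Tf.Y)+\g(\nabla_Y f_t,Tf.X)$, which is exactly $2\on{Sym}\,\g(\nabla f_t,Tf)$, the first claimed equality.

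Next I would decompose $f_t=Tf.f_t^\top+f_t^\bot$ into tangential and normal parts and treat the two contributions to $\g(\nabla_X f_t,Tf.Y)$ separately. For the tangential part, the definition of the second fundamental form in section~\ref{no:we} gives $\nabla_X(Tf.f_t^\top)=Tf.\nabla_X f_t^\top+S(X,f_t^\top)$; pairing with the tangential vector $Tf.Y$ annihilates $S(X,f_t^\top)$ and leaves $g(\nabla_X f_t^\top,Y)=(\nabla_X (f_t^\top)^\flat)(Y)$, using that $\flat$ commutes with the metric connection. For the normal part I would differentiate the orthogonality relation $\g(f_t^\bot,Tf.Y)=0$ in the direction $X$, obtaining $\g(\nabla_X f_t^\bot,Tf.Y)=-\g(f_t^\bot,\nabla_X Tf.Y)=-\g(f_t^\bot,S(X,Y))$, where once more only the normal component $S(X,Y)$ of $\nabla_X Tf.Y$ survives. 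Summing these and symmetrizing, and using that $S$ is symmetric so that $\g(f_t^\bot,S)$ is already symmetric, gives the second equality $-2\g(f_t^\bot,S)+2\on{Sym}\nabla(f_t^\top)^\flat$.

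Finally, the third equality is the identification $2\on{Sym}\nabla(f_t^\top)^\flat=\L_{f_t^\top}g$. I would derive this from the standard Killing-type formula $(\L_Z g)(X,Y)=g(\nabla_X Z,Y)+g(\nabla_Y Z,X)$, which holds because $\nabla$ is torsion-free and metric; equivalently it follows from the equivariance Lemma in section~\ref{va:ta} applied to the pullback metric $g=f^*\g$, whose purely tangential variation $D_{(f,Tf.f_t^\top)}g$ is by that lemma the Lie derivative $\L_{f_t^\top}g$, and which must coincide with the $f_t^\bot=0$ specialization of the formula just established. The only delicate points are the bookkeeping in the swapping step and the clean separation of the tangential from the normal contribution; once the orthogonality relation $\g(f_t^\bot,Tf.Y)=0$ is differentiated correctly, everything else is routine.
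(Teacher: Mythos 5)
Your proposal is correct and follows essentially the same route as the paper's proof: metric compatibility plus the swapping identity \eqref{no:sw:to2} for the first equality, the tangential/normal split with the second fundamental form and the differentiated orthogonality relation $\g(f_t^\bot,Tf.Y)=0$ for the second, and the Killing-type formula (equivalently the equivariance lemma of section~\ref{va:ta}) for the identification with $\L_{f_t^\top}g$. No gaps to report.
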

Here $\on{Sym}$ denotes the symmetric part of the tensor field $C$ of type $\left(\begin{smallmatrix}0\\2\end{smallmatrix}\right)$  given by
$$\big(\on{Sym}(C)\big)(X,Y):=\frac12\big(C(X,Y)+C(Y,X)\big).$$
%\emph{In codimension one this formula specializes to}
%\begin{equation*}
%D_{(f,f_t)} g= -2 \g(f_t,\nu).s+2\on{Sym}\nabla(f_t^\top)^\flat
%=  -2 \g(f_t,\nu).s + \mathcal L_{f_t^\top}(g).
%\end{equation*}

\begin{proof}
Let $f:\R \x M \to N$ be a path of immersions. Swapping covariant derivatives as in 
section~\ref{no:sw}, formula \eqref{no:sw:to2} one gets
\begin{align*}
\p_t\big(g(X,Y)\big) &= \p_t\big( \g( Tf.X,Tf.Y ) \big)
= \g( \nabla_{\p_t}Tf.X,Tf.Y ) + \g( Tf.X, \nabla_{\p_t}Tf.Y )\\
&=\g( \nabla_X f_t,Tf.Y ) + \g( Tf.X, \nabla_Y f_t ) = \big(2 \on{Sym}\g(\nabla f_t,Tf)\big)(X,Y).
\end{align*}
Splitting $f_t$ into its normal and tangential part yields
\begin{align*}
2 \on{Sym}\g(\nabla f_t,Tf) &=
2 \on{Sym}\g(\nabla f_t^\bot + \nabla Tf.f_t^\top,Tf) \\&=
-2 \on{Sym}\g(f_t^\bot,\nabla Tf)+2 \on{Sym} g(\nabla f_t^\top,\cdot) \\&=
-2 \g(f_t^\bot,S)+2 \on{Sym} \nabla (f_t^\top)^\flat .
\end{align*}
Finally the relation
$$D_{(f,Tf.f_t^\top)} g = 2 \on{Sym} \nabla (f_t^\top)^\flat = \L_{f_t^\top} g $$
follows either from the equivariance of $g$ 
with respect to pullbacks by diffeomorphisms (see section~\ref{va:ta}) or directly from
\begin{align*}
(\L_Xg)(Y,Z)&=
\L_X\big(g(Y,Z)\big)-g(\L_XY,Z)-g(Y,\L_XZ)\\&=
\nabla_X\big(g(Y,Z)\big)-g(\nabla_XY-\nabla_YX,Z)-g(Y,\nabla_XZ-\nabla_ZX)\\&=
g(\nabla_YX,Z)+g(Y,\nabla_ZX)=
(\nabla_YX)^\flat(Z)+(\nabla_ZX)^\flat(Y)\\&=
(\nabla_YX^\flat)(Z)+(\nabla_ZX^\flat)(Y)=2 \on{Sym} \big(\nabla(X^\flat)\big)(Y,Z).\qedhere
\end{align*}
\end{proof}

%-----------------------------------------------------------------------------------
\section{Variation of the inverse of the metric}\label{va:in}
%-----------------------------------------------------------------------------------

\begin{lem*}
The differential of the inverse of the pullback metric
\begin{equation*}\left\{ \begin{array}{ccl}
\Imm &\to &\Ga\big(L(T^*M,TM)\big),\\
f &\mapsto &g\i=(f^*\g)\i
\end{array}\right.\end{equation*}
is given by
\begin{align*}
D_{(f,f_t)} g\i = D_{(f,f_t)} (f^*\g)\i =2 \g(f_t^\bot, g\i S  g\i) + \mathcal L_{f_t^\top}(g\i)
\end{align*}
\end{lem*}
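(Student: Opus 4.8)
The plan is to differentiate the pointwise identity $g\i g = \Id_{TM}$ and combine the resulting formula with the variation of $g$ established in the previous section.

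First I would regard $g$ as a fiberwise linear isomorphism $TM \to T^*M$ and $g\i$ as its inverse $T^*M \to TM$, so that $g\i g = \Id_{TM}$ holds at every point. Applying the directional derivative $D_{(f,f_t)}$ together with the product rule yields $(D_{(f,f_t)} g\i)\,g + g\i\,(D_{(f,f_t)} g) = 0$, and hence
$$D_{(f,f_t)} g\i = -g\i\,(D_{(f,f_t)} g)\,g\i.$$
This is nothing but the standard formula for the derivative of a matrix inverse; it is a purely fiberwise computation and therefore valid in the convenient setting as well.

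Next I would substitute the variation of the metric from section~\ref{va:me}, namely $D_{(f,f_t)} g = -2\g(f_t^\bot,S) + \L_{f_t^\top} g$, and treat the two summands separately. For the normal part, $-g\i\big(-2\g(f_t^\bot,S)\big)g\i = 2\g(f_t^\bot,\,g\i S g\i)$: the symmetric covariant tensor $\g(f_t^\bot,S)$ is conjugated by $g\i$ on both sides, which raises both lower indices of $S$ and turns it into the contravariant tensor $g\i S g\i$, while $f_t^\bot$ rides along unchanged. This reproduces the first term of the claim. For the tangential part I would show $-g\i(\L_{f_t^\top} g)g\i = \L_{f_t^\top}(g\i)$. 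This follows by applying the same inverse-differentiation identity to the Lie derivative: from $\L_X(g\i g) = \L_X \Id = 0$ and the derivation property of $\L_X$ one obtains $\L_X(g\i)\,g + g\i(\L_X g) = 0$, i.e. $\L_X(g\i) = -g\i(\L_X g)g\i$. Alternatively, since $g\i$ is equivariant under pullbacks by diffeomorphisms of $M$, its tangential variation is directly its Lie derivative by the lemma in section~\ref{va:ta}.

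The routine steps are all fiberwise linear algebra, and I expect no genuine difficulty. The only point requiring care is the bookkeeping of tensor types: keeping track of which slots are raised or lowered so that the covariant tensor $\g(f_t^\bot,S)$ becomes the contravariant $g\i S g\i$ after conjugation by $g\i$, and so that the signs coming from the two minus signs in the inverse-derivative formula and in the variation of $g$ combine correctly.
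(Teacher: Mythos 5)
Your proposal is correct and follows essentially the same route as the paper: the paper's proof is exactly the computation $\p_t g\i = -g\i(\p_t g)g\i$, substitution of the variation formula for $g$ from section~\ref{va:me}, and the identifications $2g\i\g(f_t^\bot,S)g\i = 2\g(f_t^\bot,g\i S g\i)$ and $-g\i(\L_{f_t^\top}g)g\i = \L_{f_t^\top}(g\i)$. Your additional justifications (the product rule on $g\i g=\Id$ and the derivation property of $\L_X$, or alternatively the equivariance argument of section~\ref{va:ta}) are valid elaborations of steps the paper leaves implicit.
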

% See \cite[section~3.4]{Michor118} for the proof. 
\begin{proof}
\begin{align*}
\p_t g\i &= - g\i (\p_t g ) g\i
 = -g\i \big(-2 \g(f_t^\bot,S)+ \L_{f_t^\top} g\big) g\i \\
& = 2 g\i \g(f_t^\bot,S) g\i -g\i (\L_{f_t^\top} g) g\i
= 2 \g(f_t^\bot,g\i S g\i)+ \L_{f_t^\top} (g\i) \qedhere
\end{align*}
\end{proof}

%----------------------------------------------------------------------------------------
\section{Variation of the volume density}\label{va:vo}
%----------------------------------------------------------------------------------------

\begin{lem*}
The differential of the volume density
\begin{equation*}
\left\{ \begin{array}{ccl}
\Imm &\to &\Vol(M),\\
f &\mapsto &\vol(g)=\vol(f^*\g)
\end{array}\right.\end{equation*}
is given by
\begin{equation*}
D_{(f,f_t)} \vol(g) = 
\Tr^g\big(\g(\nabla f_t,Tf)\big) \vol(g)=
\Big(\on{div}^{g}(f_t^{\top})-\g\big(f_t^{\bot},\Tr^g(S)\big)\Big) \vol(g).
\end{equation*}
\end{lem*}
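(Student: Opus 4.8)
The plan is to reduce everything to the already established variation of the pullback metric (Lemma~\ref{va:me}) by first recording how an arbitrary volume density depends on its underlying metric. Working in a local chart $u^1,\dots,u^m$ one has $\vol(g)=\sqrt{\det(g_{ij})}\,|du^1\wedge\dots\wedge du^m|$, and the classical Jacobi formula $\p_t\sqrt{\det(g_{ij})}=\tfrac12\sqrt{\det(g_{ij})}\,g^{ij}\p_t g_{ij}$ yields the coordinate-free identity
$$D_{(f,f_t)}\vol(g)=\tfrac12\,\Tr^g\big(D_{(f,f_t)}g\big)\,\vol(g),$$
where I use that $g^{ij}\p_t g_{ij}=\Tr^g(\p_t g)$ is exactly the invariant contraction of the symmetric $(0,2)$-tensor $D_{(f,f_t)}g$ against $g\i$ from section~\ref{no:tr}. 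I would check that the right-hand side is chart-independent, so that this holds globally.

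First I would substitute $D_{(f,f_t)}g=2\on{Sym}\,\g(\nabla f_t,Tf)$ from Lemma~\ref{va:me}. Since $\Tr^g$ contracts against the symmetric tensor $g\i$, it is insensitive to symmetrisation, so $\Tr^g(D_{(f,f_t)}g)=2\,\Tr^g\big(\g(\nabla f_t,Tf)\big)$, and the factor $\tfrac12$ cancels. This already produces the first claimed equality
$$D_{(f,f_t)}\vol(g)=\Tr^g\big(\g(\nabla f_t,Tf)\big)\,\vol(g).$$

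Next I would split $f_t=Tf.f_t^\top+f_t^\bot$ and treat the two summands separately. For the tangential part I would use the Weingarten decomposition $\nabla_X Tf.f_t^\top=Tf.\nabla_X f_t^\top+S(X,f_t^\top)$ from section~\ref{no:we}; as $S$ takes values in the normal bundle, its pairing with $Tf$ vanishes and leaves $\g(\nabla(Tf.f_t^\top),Tf)=g(\nabla f_t^\top,\cdot)$, whose $\Tr^g$ is precisely $\on{div}^g(f_t^\top)=\Tr(\nabla f_t^\top)$. (Alternatively, the equivariance Lemma~\ref{va:ta} identifies the tangential variation with $\L_{f_t^\top}\vol(g)=\on{div}^g(f_t^\top)\,\vol(g)$, giving the same term at once.) For the normal part I would differentiate the orthogonality relation $\g(f_t^\bot,Tf.Y)=0$ and again insert $\nabla_X Tf.Y=Tf.\nabla_X Y+S(X,Y)$, obtaining $\g(\nabla_X f_t^\bot,Tf.Y)=-\g\big(f_t^\bot,S(X,Y)\big)$; taking $\Tr^g$ and pulling the factor $f_t^\bot$ out of the contraction gives $-\g\big(f_t^\bot,\Tr^g(S)\big)$. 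Adding the two contributions yields the second claimed equality.

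The main obstacle I anticipate is the very first step: justifying the Jacobi-type formula for the variation of a \emph{density} rather than a function, since $\vol(g)$ is a section of the density bundle $\Vol(M)$. The cleanest route is the local computation above together with a verification of chart-independence; the remaining manipulations are then routine applications of Lemma~\ref{va:me}, the Weingarten relation of section~\ref{no:we}, and the trace conventions of section~\ref{no:tr}.
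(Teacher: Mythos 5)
Your proposal is correct. Its first half is identical to the paper's proof: the same local Jacobi/determinant computation yielding $D_{(f,f_t)}\vol(g)=\tfrac12\Tr^g\big(D_{(f,f_t)}g\big)\vol(g)$, followed by substituting Lemma~\ref{va:me} and noting that the trace against the symmetric tensor $g\i$ absorbs the symmetrisation. The two arguments differ only in the passage to the second equality. The paper keeps $f_t$ whole and applies the Leibniz rule under the trace, $\g(\nabla f_t,Tf)=\nabla\g(f_t,Tf)-\g(f_t,\nabla Tf)$, then uses $\nabla Tf=S$, $\g(f_t,Tf)=(f_t^\top)^\flat$ and the adjoint identity $\Tr^g(\nabla\al)=-\nabla^*\al$ of section~\ref{no:co*} to obtain $\on{div}(f_t^\top)-\g\big(f_t^\bot,\Tr^g(S)\big)$. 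You instead split the vector field $f_t=Tf.f_t^\top+f_t^\bot$ and compute each contribution directly: the Weingarten decomposition kills the $S$-term in the tangential piece and produces $\on{div}^g(f_t^\top)$, while differentiating the orthogonality relation $\g(f_t^\bot,Tf.Y)=0$ produces $-\g\big(f_t^\bot,\Tr^g(S)\big)$ from the normal piece. Both routes are sound and of comparable length; yours is slightly more elementary since it never invokes $\nabla^*$, whereas the paper's stays in a single chain of identities and reuses the adjoint machinery it has already set up. Your alternative treatment of the tangential term via the equivariance Lemma~\ref{va:ta} is precisely the remark the paper appends at the end of its own proof.
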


\begin{proof}
Let $g(t) \in \Ga(S^2_{>0}T^*M)$ be any curve of Riemannian metrics. Then
$$\p_t \vol(g)=\frac{1}{2}\on{Tr}(g\i.\p_t g)\vol(g).$$
This follows from the formula for $\vol(g)$ in a local oriented chart
$(u^1,\ldots u^m)$ on $M$:
\begin{align*}
\p_t\vol(g)&=\p_t \sqrt{\det( (g_{ij})_{ij})}\ du^1\wedge\cdots\wedge du^{m}\\
&=\frac{1}{2\sqrt{\det ((g_{ij})_{ij})}}\on{Tr}(\on{adj}(g) \p_t g)\
du^1\wedge\cdots\wedge du^{m}\\
&=\frac{1}{2\sqrt{\det ((g_{ij})_{ij})}}\on{Tr}(\det((g_{ij})_{ij})g^{-1}\p_t g)\
du^1\wedge\cdots\wedge du^{m}\\
&=\frac{1}{2}\on{Tr}(g\i.\p_t g)\vol(g)
\end{align*}
Now one can set $g = f^*\g$ and plug in the formula
$$\p_t g=\p_t (f^*\g)=2\on{Sym}\g(\nabla f_t,Tf)$$
from \ref{va:me}. 
This immediately proves the first formula:
\begin{align*}
\p_t \vol(g)&=\frac12 \Tr\big(g\i.2\on{Sym}\g(\nabla f_t,Tf) \big) 
=\Tr^g\big(\g(\nabla f_t,Tf) \big).
\end{align*}
Expanding this further yields the second formula:
\begin{align*}
\p_t \vol(g)&=\Tr^g\Big(\nabla\g( f_t,Tf)-\g( f_t,\nabla Tf) \Big)\\&
=\Tr^g\Big(\nabla\g( f_t,Tf)-\g( f_t,S) \Big)=-\nabla^*\g( f_t,Tf)-\g\big(f_t,\Tr^g(S)\big)\\&
=-\nabla^*\big((f_t^{\top})^{\flat}\big)-\g\big(f_t^{\bot},\Tr^g(S)\big)
=\on{div}(f_t^{\top})-\g\big(f_t^{\bot},\Tr^g(S)\big). 
\end{align*}
Here it has been used that
$$\nabla Tf = S \quad \text{and} \quad 
\on{div}(f_t^\top) = \Tr(\nabla f_t^\top)= \Tr^g\big((\nabla f_t^\top)^\flat\big)
= -\nabla^*\big((f_t^\top)^\flat\big).$$
Note that by \ref{va:ta}, the formula for the tangential variation 
would have followed also from the equivariance of the volume form with respect to pullbacks by 
diffeomorphisms. 
\end{proof}

%-----------------------------------------------------------------------------------------
\section{Variation of the covariant derivative}\label{va:co}
%-----------------------------------------------------------------------------------------

In this section, let $\nabla=\nabla^g=\nabla^{f^*\g}$ be the 
Levi-Civita covariant derivative acting on vector fields on $M$. 
Since any two covariant derivatives on $M$ differ by a tensor field, 
the first variation of $\nabla^{f^*\g}$ is tensorial. It is given by the 
tensor field $D_{(f,f_t)} \nabla^{f^*\g} \in \Ga(T^1_2 M)$.

\begin{lem*}
The tensor field $D_{(f,f_t)}\nabla^{f^*\g}$ is determined by the following relation
holding for vector fields $X,Y,Z$ on $M$:
\begin{multline*}
g\big((D_{(f,f_t)} \nabla)(X, Y),Z\big) = 
\frac12 (\nabla D_{(f,f_t)} g)\big( X \otimes Y \otimes Z
+ Y \otimes X \otimes Z	- Z \otimes X \otimes Y \big)
\end{multline*}
\end{lem*}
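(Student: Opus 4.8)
The plan is to derive the variation formula from the defining property of the Levi-Civita covariant derivative, namely the Koszul formula, which expresses $\nabla$ purely in terms of the metric $g$. Since $\nabla=\nabla^{f^*\g}$ depends on $f$ only through $g=f^*\g$, differentiating the Koszul formula with respect to $f$ (equivalently, with respect to $g$ in the direction $D_{(f,f_t)}g$) will produce the claimed tensor. First I would recall that for vector fields $X,Y,Z$ on $M$ the Koszul formula reads
\begin{multline*}
2g(\nabla_X Y,Z) = \L_X\big(g(Y,Z)\big) + \L_Y\big(g(X,Z)\big) - \L_Z\big(g(X,Y)\big) \\
+ g([X,Y],Z) - g([X,Z],Y) - g([Y,Z],X),
\end{multline*}
and that the Lie brackets and the vector fields themselves do not depend on $f$. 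Only the metric coefficients $g(\cdot,\cdot)$ carry the $f$-dependence.

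Next I would apply the directional derivative $D_{(f,f_t)}$ to both sides. On the left, the product rule gives $2(D_{(f,f_t)}g)(\nabla_X Y,Z) + 2g\big((D_{(f,f_t)}\nabla)(X,Y),Z\big)$, where I use that $D_{(f,f_t)}\nabla$ is a tensor (as noted in the lemma's preamble, any two covariant derivatives differ by a tensor). On the right, every occurrence of $g$ gets replaced by $h:=D_{(f,f_t)}g$, yielding exactly the Koszul expression but with $h$ in place of $g$. The key observation is that this right-hand side, with $h$ substituted for $g$, can be reassembled into covariant derivatives $\nabla h$ of the tensor $h$: using $\L_X(h(Y,Z)) = (\nabla_X h)(Y,Z) + h(\nabla_X Y,Z) + h(Y,\nabla_X Z)$ together with the torsion-free identity $[X,Y]=\nabla_X Y - \nabla_Y X$, the Lie-derivative terms and bracket terms combine so that all the undifferentiated $h(\nabla_\bullet\bullet,\bullet)$ contributions either cancel or precisely match the leftover $2(D_{(f,f_t)}g)(\nabla_X Y,Z)=2h(\nabla_X Y,Z)$ term on the left. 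What survives is exactly the symmetrized combination
$$(\nabla h)(X\otimes Y\otimes Z + Y\otimes X\otimes Z - Z\otimes X\otimes Y),$$
and dividing by $2$ gives the stated formula.

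The main obstacle will be the bookkeeping in this cancellation: I must verify carefully that the six Lie-derivative-plus-bracket terms, once rewritten via the compatibility identity $\L_X(h(Y,Z))=(\nabla_X h)(Y,Z)+h(\nabla_X Y,Z)+h(Y,\nabla_X Z)$, produce precisely the three $\nabla h$ terms with the correct signs while the remaining $h(\nabla_\bullet\bullet,\bullet)$ terms conspire with the left-hand $2h(\nabla_X Y,Z)$ to cancel completely. The symmetry of $h=D_{(f,f_t)}g$ (established in section~\ref{va:me}) will be essential here, since it lets me freely swap the two covariant slots of $h$; without it the cancellation would fail. Once the algebra is arranged, no analytic input is needed—the entire argument is a pointwise tensorial identity, so it suffices to check it for arbitrary $X,Y,Z$ at a single point, and I may even assume $\nabla_X Y$, $\nabla_X Z$, etc., vanish at that point by choosing normal coordinates for $g$, which trivializes the cancellation and isolates the three surviving $\nabla h$ terms.
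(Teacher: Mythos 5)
Your proposal is correct and follows essentially the same route as the paper: differentiate the Koszul formula in the direction $f_t$, convert Lie brackets to covariant derivatives via torsion-freeness, and expand the terms $X\big((D_{(f,f_t)}g)(Y,Z)\big)$ with the Leibniz identity so that, using the symmetry of $D_{(f,f_t)}g$, the undifferentiated terms cancel against $(D_{(f,f_t)}g)(\nabla_X Y,Z)$, leaving the three $\nabla D_{(f,f_t)}g$ terms. The normal-coordinate shortcut you add is a legitimate simplification since both sides are tensorial, but otherwise the argument coincides with the paper's proof.
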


\begin{proof}
The defining formula for the covariant derivative is
\begin{align*}
g(\nabla_X Y,Z)&= \frac12 \Big[ Xg(Y,Z)+Yg(Z,X)-Zg(X,Y)\\&\qquad
-g(X,[Y,Z])+g(Y,[Z,X])+g(Z,[X,Y]) \Big].
\end{align*}
Taking the derivative $D_{(f,f_t)}$ yields
\begin{multline*}
(D_{(f,f_t)}g)(\nabla_X Y,Z)+g\big((D_{(f,f_t)}\nabla)(X, Y),Z\big)\\ 
\begin{aligned}
=\frac12 \Big[ & X\big((D_{(f,f_t)}g)(Y,Z)\big)+Y\big((D_{(f,f_t)}g)(Z,X)\big)-Z\big((D_{(f,f_t)}g)(X,Y)\big)\\&
-(D_{(f,f_t)}g)(X,[Y,Z])+(D_{(f,f_t)}g)(Y,[Z,X])+(D_{(f,f_t)}g)(Z,[X,Y]) \Big].
\end{aligned}
\end{multline*}
Then the result follows by replacing all Lie brackets in the above formula by covariant derivatives using 
$[X,Y]=\nabla_X Y - \nabla_Y X$
and by expanding all terms of the form $X\big((D_{(f,f_t}g)(Y,Z)\big)$ using
\begin{align*}
&X\big((D_{(f,f_t)}g)(Y,Z)\big)=\\&\qquad\qquad
(\nabla_X D_{(f,f_t)}g)(Y,Z)
+(D_{(f,f_t)}g)(\nabla_X Y,Z)
+(D_{(f,f_t)}g)(Y,\nabla_X Z).
\qedhere\end{align*}
\end{proof}

\section{Variation of the Laplacian}\label{va:la}
%--------------------------------------------------------------------------------------------

The Laplacian as defined in section \ref{no:la} 
can be seen as a smooth section of the bundle $L(T\Imm;T\Imm)$ over $\Imm$ since 
for every $f \in \Imm$ it is a mapping 
$$\De^{f^*\g}:T_f\Imm \to T_f\Imm.$$
The right way to define a first variation is to 
use the covariant derivative defined in section~\ref{sh:cov}.

\begin{lem*}
For $\De \in \Ga\big(L(T\Imm;T\Imm)\big)$, $f \in \Imm$ and $h \in T_f\Imm$ one has
\begin{align*}
(\nabla_{f_t} \Delta)(h) &=
\on{Tr}\big(g\i.(D_{(f,f_t)}g).g\i \nabla^2 h\big) 
-\nabla_{\big(\nabla^*(D_{(f,f_t)} g)+\frac12 d\on{Tr}^g(D_{(f,f_t)}g)\big)^\sharp}h \\&\qquad
+\nabla^*\big(R^{\g}(f_t,Tf)h\big)
-\Tr^g\Big( R^{\g}(f_t,Tf)\nabla h \Big).
\end{align*}
\end{lem*}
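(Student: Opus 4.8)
The plan is to differentiate the defining formula for the Bochner-Laplacian, $\Delta h = -\Tr^g(\nabla^2 h)$, using the covariant derivative $\nabla_{f_t}$ on $\Imm$ introduced in section~\ref{sh:cov}. Since $\nabla_{f_t}$ acts as a derivation and commutes with contractions, the derivative of $\Delta$ will split into a term where $\nabla_{f_t}$ hits the trace operator $\Tr^g$ (which depends on $f$ through $g\i$) and a term where it hits the second covariant derivative $\nabla^2 h$ (which depends on $f$ through the Levi-Civita connection $\nabla^g$). Concretely, I would write
\begin{equation*}
(\nabla_{f_t}\Delta)(h) = -\big(\nabla_{f_t}\Tr^g\big)(\nabla^2 h) - \Tr^g\big(\nabla_{f_t}(\nabla^2 h)\big),
\end{equation*}
so that the two geometric inputs—the variation of the metric $D_{(f,f_t)}g$ and the variation of the connection $D_{(f,f_t)}\nabla$—enter separately. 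The first term uses the identity $D_{(f,f_t)}g\i = -g\i(D_{(f,f_t)}g)g\i$ from section~\ref{va:in} (modulo the explicit normal/tangential expansion), and this is precisely what produces the leading summand $\Tr(g\i(D_{(f,f_t)}g)g\i\nabla^2 h)$.

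\textbf{Handling the variation of the second covariant derivative.}
The harder term is $\nabla_{f_t}(\nabla^2 h)$, because $\nabla^2 = \nabla\nabla$ depends on $f$ both through the metric-induced connection on $M$ and through the pullback connection on $f^*TN$. Here I would invoke the variational formula for the covariant derivative from section~\ref{va:co}, which gives $D_{(f,f_t)}\nabla^g$ as an explicit tensor expressed through $\nabla(D_{(f,f_t)}g)$. Applying this tensor to $\nabla h$ and then tracing should, after symmetrization, collect into the first-derivative term $-\nabla_{(\,\cdot\,)^\sharp}h$, where the vector field in the subscript is built from $\nabla^*(D_{(f,f_t)}g)$ and $d\Tr^g(D_{(f,f_t)}g)$; the combination of these two pieces is exactly the kind of expression one obtains from contracting the three-term formula of section~\ref{va:co} against $g\i$. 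The main bookkeeping obstacle is that $\nabla^2_{X,Y} = \nabla_X\nabla_Y - \nabla_{\nabla_X Y}$ contains the connection in two places, so its variation has both a genuine $(D_{(f,f_t)}\nabla)$ contribution and a correction from the subtracted term $\nabla_{\nabla_X Y}$; tracking these consistently is where most of the care is needed.

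\textbf{The curvature terms.}
The two curvature summands $\nabla^*(R^{\g}(f_t,Tf)h)$ and $-\Tr^g(R^{\g}(f_t,Tf)\nabla h)$ arise because differentiating $\nabla^2 h$ in the immersion direction $f_t$ requires swapping a $\p_t$-derivative past the spatial covariant derivatives $\nabla_X\nabla_Y$. I would use the curvature-swapping formula~\eqref{no:sw:r2} from section~\ref{no:sw}, applied with one vector field being the $\R$-direction $(\p_t,0_M)$ and the other a spatial field, which introduces $R^{\g}(Tf.(\p_t,0_M),Tf.(0_\R,Y))h = R^{\g}(f_t,Tf.Y)h$ each time such a swap is performed. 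Swapping once inside the inner $\nabla$ and once to produce the outer trace accounts for the two curvature terms; the adjoint $\nabla^*$ in the first of them appears because, after the swap, one of the resulting derivatives gets contracted into the trace defining $\nabla^*$.

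\textbf{Assembling the result.}
Finally I would reassemble the four groups of terms—the metric-variation trace, the first-order term from the connection variation, and the two curvature terms—into the stated formula, checking that all tangential versus normal decompositions of $D_{(f,f_t)}g$ are left implicit (as in the lemma statement, where $D_{(f,f_t)}g$ is kept unexpanded). The chief difficulty throughout is purely organizational: keeping the Leibniz expansion of $\nabla_{f_t}(\nabla\nabla h)$ fully consistent while simultaneously performing the two curvature swaps and substituting the section~\ref{va:co} connection variation, without double-counting the places where the metric enters. I expect no new conceptual input beyond the earlier variational and swapping lemmas; the proof is a careful but routine combination of them.
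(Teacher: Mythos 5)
Your proposal is correct and follows essentially the same route as the paper's proof: Leibniz-split the operator variation $(\nabla_{f_t}\Delta)(h)$, obtain the leading term from $D_{(f,f_t)}g\i=-g\i (D_{(f,f_t)}g)g\i$, produce the two curvature terms by swapping $\nabla_{\p_t}$ past the two spatial derivatives via~\eqref{no:sw:r2}, and get the first-order term by contracting the section~\ref{va:co} formula to arrive at $\Tr^g(D_{(f,f_t)}\nabla)=-\big(\nabla^*(D_{(f,f_t)}g)+\tfrac12\, d\Tr^g(D_{(f,f_t)}g)\big)^\sharp$. The only bookkeeping your sketch leaves implicit, and which the paper spells out, is that the connection variation enters concretely as the Lie bracket $[\p_t,\nabla_X Y]=(D_{(f,f_t)}\nabla)(X,Y)$ coming from the subtracted term $\nabla_{\nabla_X Y}h$, and that the leftover swap term $-R^{\g}(f_t,Tf.\nabla_X Y)h$ must be absorbed so that what is traced is the tensorial derivative $\nabla\big(R^{\g}(f_t,Tf)h\big)$, i.e.\ exactly $-\nabla^*$ applied to $R^{\g}(f_t,Tf)h$ --- but your remark that the adjoint arises from contracting the outer derivative into the trace anticipates this correctly.
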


\begin{proof}
Let $f$ be a curve of immersions and $h$ a vector field along $f$. One has
$$\De : \Imm \to L(T\Imm;T\Imm), \quad \De \o f = \De^{f^*\g} : \R \to \Imm \to L(T\Imm;T\Imm). $$
Using property~\ref{no:co}.5 one gets
\begin{align*}
(\nabla_{f_t} \De)(h) &=
\big(\nabla_{\p_t} (\De \o f)\big)(h)=
\nabla_{\p_t} \De h - \De \nabla_{\p_t} h\\&=
-\nabla_{\p_t} \Tr^g(\nabla^2 h) - \De \nabla_{\p_t} h \\&=
\Tr\big(g\i (D_{(f,f_t)} g) g\i \nabla^2 h\big)- \Tr^g(\nabla_{\p_t} \nabla^2 h) 
- \De(\nabla_{\p_t} h).
\end{align*}
The term $\Tr^g(\nabla_{\p_t} \nabla^2 h)$ will be treated further. 
Let $X,Y$ be vector fields on $M$ that are constant in time. 
When they are seen as vector fields on $\R \x M$ then $\nabla_{\p_t}X=\nabla_{\p_t}Y=0$.
Using the formulas from section~\ref{no:sw} to swap covariant derivatives one gets
\begin{align*}
&(\nabla_{\p_t}\nabla^2 h)(X,Y)=
\nabla_{\p_t}(\nabla_X\nabla_Y h-\nabla_{\nabla_X Y}h)
\\&\qquad=
\nabla_X\nabla_{\p_t}\nabla_Y h+R^{\g}(f_t,Tf.X)\nabla_Y h-\nabla_{\p_t}\nabla_{\nabla_X Y}h
\\&\qquad=
\nabla_X\nabla_Y\nabla_{\p_t} h+\nabla_X\big(R^{\g}(f_t,Tf.Y)h\big)
+R^{\g}(f_t,Tf.X)\nabla_Y h\\&\qquad\qquad
-\nabla_{\nabla_X Y}\nabla_{\p_t}h-\nabla_{[\p_t,\nabla_X Y]}h
-R^{\g}(f_t,Tf.\nabla_X Y)h.
\end{align*}
The Lie bracket is
\begin{align*}
[\p_t,\nabla^{f^*\g}_X Y] = (D_{(f,f_t)}\nabla)(X,Y)
\end{align*}
since (now without the slight abuse of notation)
\begin{align*}
[(\p_t,0_M),(0_\R,\nabla^{f^*\g}_X Y)]
&=\p_s|_0\ TFl_{-s}^{(\p_t,0_M)} \o \nabla_X Y \o Fl_s^{(\p_t,0_M)} 
\\&= \big(0_\R,(D_{(f,f_t)}\nabla)(X,Y)\big).
\end{align*}
Therefore
\begin{align*}
&(\nabla_{\p_t}\nabla^2 h)(X,Y)=\\&\qquad=
(\nabla^2\nabla_{\p_t} h)(X,Y)+\nabla_X\big(R^{\g}(f_t,Tf.Y)h\big)
+R^{\g}(f_t,Tf.X)\nabla_Y h\\&\qquad\qquad
-\nabla_{(D_{(f,f_t)}\nabla)(X,Y)}h-R^{\g}(f_t,Tf.\nabla_X Y)h
\\&\qquad=
(\nabla^2\nabla_{\p_t} h)(X,Y)
+(\nabla_{Tf.X} R^{\g})(f_t,Tf.Y)h
+R^{\g}(\nabla_X f_t,Tf.Y)h\\&\qquad\qquad
+R^{\g}(f_t,\nabla_X Tf.Y)h
+R^{\g}(f_t,Tf.Y)\nabla_X h
+R^{\g}(f_t,Tf.X)\nabla_Y h\\&\qquad\qquad
-\nabla_{(D_{(f,f_t)}\nabla)(X,Y)}h-R^{\g}(f_t,Tf.\nabla_X Y)h
\\&\qquad=
(\nabla^2\nabla_{\p_t} h)(X,Y)
+(\nabla_{Tf.X} R^{\g})(f_t,Tf.Y)h
+R^{\g}(\nabla_X f_t,Tf.Y)h\\&\qquad\qquad
+R^{\g}\big(f_t,(\nabla Tf)(X,Y)\big)h
+R^{\g}(f_t,Tf.Y)\nabla_X h
+R^{\g}(f_t,Tf.X)\nabla_Y h\\&\qquad\qquad
-\nabla_{(D_{(f,f_t)}\nabla)(X,Y)}h
\\&\qquad=
(\nabla^2\nabla_{\p_t} h)(X,Y)
+ \nabla_X\big(R^{\g}(f_t,Tf.Y)h\big)
+R^{\g}(f_t,Tf.X)\nabla_Y h \\&\qquad\qquad
-\nabla_{(D_{(f,f_t)}\nabla)(X,Y)}h
\end{align*}
Putting together all terms one obtains
\begin{align*}
(\nabla_{f_t} \De)(h) &=
\Tr\big(g\i (D_{(f,f_t)} g) g\i \nabla^2 h\big)
-\Tr^g\Big(  \nabla\big(R^{\g}(f_t,Tf)h\big) \Big)\\&\qquad
-\Tr^g\Big( R^{\g}(f_t,Tf)\nabla h \Big)
+\nabla_{\Tr^g(D_{(f,f_t)}\nabla)}h\\&=
\Tr\big(g\i (D_{(f,f_t)} g) g\i \nabla^2 h\big)
+\nabla^*\big(R^{\g}(f_t,Tf)h\big) \\&\qquad
-\Tr^g\Big( R^{\g}(f_t,Tf)\nabla h \Big)
+\nabla_{\Tr^g(D_{(f,f_t)}\nabla)}h.
\end{align*}
It remains to calculate $\Tr^g(D_{(f,f_t)}\nabla)$. 
Using the variational formula for $\nabla$ from section~\ref{va:co}
one gets for any vector field $Z$ and a $g$-orthonormal frame $s_i$
\begin{align*}
&g\big(\Tr^g(D_{(f,f_t)}\nabla),Z\big) \\&\qquad=
\frac12 \sum_i (\nabla D_{(f,f_t)} g)\big( s_i \otimes s_i \otimes Z
+ s_i \otimes s_i \otimes Z	- Z \otimes s_i \otimes s_i \big) \\&\qquad=
-\big(\nabla^*(D_{(f,f_t)}g)\big)(Z) - \frac12 \Tr^g(\nabla_Z D_{(f,f_t)}g) \\&\qquad=
-\big(\nabla^*(D_{(f,f_t)}g)\big)(Z) - \frac12\nabla_Z \Tr^g(D_{(f,f_t)}g) \\&\qquad=
-\Big(\nabla^*(D_{(f,f_t)}g) + \frac12 d \Tr^g(D_{(f,f_t)}g)\Big)(Z) \\&\qquad=
-g\Big(\big(\nabla^*(D_{(f,f_t)}g) + \frac12 d \Tr^g(D_{(f,f_t)}g)\big)^\sharp,Z\Big) .
\end{align*}
Therefore 
\begin{equation*}\label{va:la:eq2}
\Tr^g(D_{(f,f_t)}\nabla) = -\big(\nabla^*(D_{(f,f_t)}g) + \frac12 d \Tr^g(D_{(f,f_t)}g)\big)^\sharp.
\qedhere
\end{equation*}
\end{proof}

\chapter{Sobolev-type metrics}\label{so}
%---------------------------------------------------------------
%---------------------------------------------------------------

\begin{ass*}
Let $P$ be a smooth section of the bundle $L(T\Imm;T\Imm)$ over $\Imm$
such that at every $f \in \Imm$ the operator 
$$P_f:T_f\Imm \to T_f\Imm$$ 
is an elliptic pseudo differential operator that is positive and symmetric with respect to 
the $H^0$-metric on $\Imm$, 
$$H^0_f(h,k) = \int_M \g(h,k)\vol(g).$$
\end{ass*}

Then $P$  induces a  metric on the set of immersions, namely
$$G^P_f(h,k)=\int_M \g(P_fh,k) \vol(g) \quad \text{for} \quad f \in \Imm, \quad h,k \in T_f\Imm.$$ 
In this section, the geodesic equation on $\Imm$ and $B_i$ for the $G^P$-metric will be calculated
in terms of the operator $P$ and it will be proven that it is well-posed under some assumptions. 

This section is based on \cite[section~4]{Michor119}.

%------------------------------------------------------------------------------------------
\section{Invariance of $P$ under reparametrizations}\label{so:in}
%------------------------------------------------------------------------------------------

\begin{ass*}
It will be assumed that $P$ is invariant under 
the action of the reparametrization group $\Diff(M)$ acting on $\Imm(M,N)$, i.e. 
$$P=(r^{\ph})^* P \qquad \text{for all } \ph \in \Diff(M).$$ 
\end{ass*}

For any $f \in \Imm$ and $\ph \in \Diff(M)$ this means
$$P_f = (T_fr^{\ph})\i \o P_{f \o \ph} \o T_fr^{\ph}.$$
Applied to $h \in T_f\Imm$ this means 
$$P_f(h) \o \ph = P_{f \o \ph}(h \o \ph).$$

The invariance of $P$ implies that the induced metric $G^P$ is invariant 
under the action of $\Diff(M)$, too. Therefore it induces a 
unique metric on $B_i$ as explained in section~\ref{sh:rish}
% (See for example \cite[section~1]{Michor118}.)

%------------------------------------------------------------------------------------------
\section{The adjoint of $\nabla P$}\label{so:ad}
%------------------------------------------------------------------------------------------

The following construction is needed to express the metric gradient $H$ which is part of
the geodesic equation. $H_f$ arises from the metric $G_f$ by differentiating it with 
respect to its foot point $f \in \Imm$.
Since $G$ is defined via the operator $P$, one also needs to differentiate $P_f$ with respect to its 
foot point. As for the metric, this is accomplished by the covariant derivate.
For $P \in \Ga\big(L(T\Imm;T\Imm)\big)$ and $m \in T\Imm$ one has
$$\nabla_m P \in \Ga\big(L(T\Imm;T\Imm)\big), \qquad \nabla P \in \Ga\big(L(T^2\Imm;T\Imm)\big).$$
See section~\ref{sh:cov} for more details.

\begin{ass*}
It is assumed that there exists a smooth \emph{adjoint} 
$$\adj{\nabla P} \in \Ga\big(L^2(T\Imm;T\Imm)\big)$$ 
of $\nabla P$ in the following sense: 
\begin{equation*}
\int_M \g\big((\nabla_m P)h,k\big) \vol(g)=\int_M \g\big(m,\adj{\nabla P}(h,k)\big) \vol(g).
\end{equation*}
\end{ass*}

The existence of the adjoint needs to be checked in each specific example, 
usually by partial integration. 
For the operator $P=1+A\Delta^p$, the existence of the adjoint will be proven
and explicit formulas will be calculated in sections~\ref{la:ad} and \ref{la:ge}. 

\begin{lem*}
If the adjoint of $\nabla P$ exists, then its tangential part is determined 
by the invariance of $P$ with respect to reparametrizations:
\begin{align*}
\adj{\nabla P}(h,k)^\top &=\big(\g(\nabla Ph,k)-\g(\nabla h,Pk)\big)^\sharp \\
&=\grad^g \g(Ph,k)-\big(\g(Ph,\nabla k)+\g(\nabla h,Pk)\big)^\sharp
\end{align*}  
for $f \in \Imm, h,k \in T_f\Imm$. 
\end{lem*}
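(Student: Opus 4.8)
The plan is to use the reparametrization invariance of $P$ to convert the foot-point variation $\nabla_{Tf.X}P$ into an expression built only from covariant derivatives along $M$, and then to recover the tangential part of $\adj{\nabla P}$ by testing the defining identity of the adjoint against purely tangential vector fields $m=Tf.X$, $X\in\X(M)$. Testing against tangential $m$ is exactly what isolates the tangential component, since the normal part of $\adj{\nabla P}(h,k)$ is annihilated by the pairing with $Tf.X$.

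First I would establish the key identity
$$(\nabla_{Tf.X}P)(h)=\nabla_X(Ph)-P(\nabla_X h)\qquad\text{for }X\in\X(M).$$
Recall that the fundamental vector field of the $\Diff(M)$-action is $\ze_X(f)=Tf.X$, so I would take the curve of immersions $f_s=f\o\on{Fl}^X_s$, which has $\p_s|_0 f_s=Tf.X$. The invariance assumption reads $P_f(h)\o\ph=P_{f\o\ph}(h\o\ph)$; applying it with $\ph=\on{Fl}^X_s$ and differentiating with $\nabla_{\p_s}$ at $s=0$ produces the identity. On the left, property~\ref{no:co:prop5} of section~\ref{no:co} gives $\nabla_{\p_s}|_0(Ph\o\on{Fl}^X_s)=\nabla_X(Ph)$; on the right, the Leibniz rule of $\nabla_{\p_s}$ applied to the evaluation $P_{f_s}(h\o\on{Fl}^X_s)$ yields $(\nabla_{Tf.X}P)(h)+P(\nabla_X h)$, where I use that the intrinsic operator derivative $\nabla_{Tf.X}P$ of section~\ref{sh:cov} is tensorial and therefore agrees with $\nabla_{\p_s}|_0 P$ along $f_s$, and again that $\nabla_{\p_s}|_0(h\o\on{Fl}^X_s)=\nabla_X h$.

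Next I would insert $m=Tf.X$ into the defining equation of $\adj{\nabla P}$. Since $Tf.X$ is tangential, the normal component of $\adj{\nabla P}(h,k)$ drops out and $\int_M\g(Tf.X,\adj{\nabla P}(h,k))\vol(g)=\int_M g(X,\adj{\nabla P}(h,k)^\top)\vol(g)$. Substituting the key identity on the other side and using the $H^0$-symmetry of $P$ (an integral identity, applied with argument $\nabla_X h$) gives
$$\int_M g\big(X,\adj{\nabla P}(h,k)^\top\big)\vol(g)=\int_M\big[\g(\nabla_X(Ph),k)-\g(\nabla_X h,Pk)\big]\vol(g).$$
Reading $X\mapsto\g(\nabla_X(Ph),k)$ and $X\mapsto\g(\nabla_X h,Pk)$ as the one-forms $\g(\nabla Ph,k)$ and $\g(\nabla h,Pk)$ and contracting with $X$ via $\sharp$, the right side becomes $\int_M g\big(X,(\g(\nabla Ph,k)-\g(\nabla h,Pk))^\sharp\big)\vol(g)$; as this holds for all $X\in\X(M)$, non-degeneracy of the $L^2(g)$-pairing yields the first formula pointwise. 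The second formula follows from parallelism of $\g$, namely $d\,\g(Ph,k)=\g(\nabla Ph,k)+\g(Ph,\nabla k)$, whence $\g(\nabla Ph,k)^\sharp=\grad^g\g(Ph,k)-\g(Ph,\nabla k)^\sharp$.

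The main obstacle is the bookkeeping in the first step: justifying that the intrinsic operator-variation $\nabla_{Tf.X}P$ coincides with the $s$-derivative along $f_s=f\o\on{Fl}^X_s$, and that $\nabla_{\p_s}$ obeys the derivation rule across the evaluation $P_{f_s}(h\o\on{Fl}^X_s)$. Once the key identity is in hand, the remainder is a routine rewriting into one-forms together with a single appeal to non-degeneracy of the pairing.
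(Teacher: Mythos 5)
Your proposal is correct and follows essentially the same route as the paper's proof: differentiate the invariance identity $P_f(h)\o\on{Fl}^X_s=P_{f\o\on{Fl}^X_s}(h\o\on{Fl}^X_s)$ to get $(\nabla_{Tf.X}P)(h)=\nabla_X(Ph)-P(\nabla_X h)$, then test the defining equation of the adjoint against tangential vectors $Tf.X$, use the $H^0$-symmetry of $P$, and conclude by non-degeneracy of the $L^2(g)$-pairing. The only cosmetic difference is that you spell out the metricity step for the second displayed formula, which the paper leaves implicit.
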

\begin{proof}
Let $X$ be a vector field on $M$. Then
\begin{align*}
(\nabla_{Tf.X} P)(h) &= 
(\nabla_{\p_t|_0} P_{f\o Fl_t^X})(h \o Fl_0^X) \\&= 
\nabla_{\p_t|_0}\big(P_{f\o Fl_t^X}(h \o Fl_t^X)\big) -
P_{f\o Fl_0^X} \big( \nabla_{\p_t|_0}(h \o Fl_t^X)\big) \\&=
\nabla_{\p_t|_0}\big(P_f(h) \o Fl_t^X\big) -
P_f \big( \nabla_{\p_t|_0}(h \o Fl_t^X)\big) \\&=
\nabla_X\big(P_f(h)) - P_f \big( \nabla_X h\big) 
\end{align*}
Therefore one has for $m,h,k \in T_f\Imm$ that
\begin{align*} &
\int_M g\big(m^\top,\adj{\nabla P}(h,k)^\top \big) \vol(g) =
\int_M \g\big(Tf.m^\top,\adj{\nabla P}(h,k)\big) \vol(g) \\&\qquad=
\int_M \g\big((\nabla_{Tf.m^\top} P)h,k\big) \vol(g)  \\&\qquad=
\int_M \g\big(\nabla_{m^\top}(Ph)-P(\nabla_{m^\top}h),k\big) \vol(g) \\&\qquad=
\int_M \big(\g(\nabla_{m^\top}Ph,k)-\g(\nabla_{m^\top}h,Pk)\big) \vol(g) \\&\qquad=
\int_M g\Big(m^\top,\big(\g(\nabla Ph,k)-\g(\nabla h,Pk)\big)^\sharp\Big) \vol(g).
\qedhere
\end{align*} 
\end{proof}

%------------------------------------------------------------------------------------------
\section{Metric gradients}\label{so:me}
%------------------------------------------------------------------------------------------

As explained in section~\ref{sh:ge}, the geodesic equation can be expressed
in terms of the metric gradients $H$ and $K$. 
These gradients will be computed now.

\begin{lem*}
If $\adj{\nabla P}$ exists, then also $H$ and $K$ exist and are given by
\begin{align*}
K_f(h,m)&=P_f\i\Big((\nabla_m P)h+\Tr^g\big(\g(\nabla m,Tf)\big).Ph\Big) \\
H_f(h,k)&=P_f\i\Big(\adj{\nabla P}(h,k)^\bot-Tf.\big(\g(Ph,\nabla k)+\g(\nabla h,Pk)\big)^\sharp
\\&\qquad -\g(Ph,k).\Tr^g(S)\Big).
\end{align*}
\end{lem*}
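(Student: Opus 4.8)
The plan is to compute $(\nabla_m G)(h,k)$ for the Sobolev metric $G=G^P$, $G(h,k)=\int_M \g(Ph,k)\vol(g)$, directly from the definition of the covariant derivative of $G$ in section~\ref{sh:cov}, namely $(\nabla_m G)(h,k)=\nabla_m\big(G(h,k)\big)-G(\nabla_m h,k)-G(h,\nabla_m k)$, and then to read off $K$ and $H$ by matching against $G(K(h,m),k)$ and $G(m,H(h,k))$. First I would treat $G(h,k)$ as a real-valued function on $\Imm$ and differentiate it in the direction $m$. Using that $\nabla$ respects $\g$, the product rule $\nabla_m(Ph)=(\nabla_m P)h+P\nabla_m h$ that defines $\nabla_m P$, and the variation of the volume density from section~\ref{va:vo}, this gives
\begin{align*}
\nabla_m\big(G(h,k)\big)&=\int_M \g\big((\nabla_m P)h,k\big)\vol(g)+\int_M \g(P\nabla_m h,k)\vol(g)\\
&\quad+\int_M \g(Ph,\nabla_m k)\vol(g)+\int_M \g(Ph,k)\,\Tr^g\big(\g(\nabla m,Tf)\big)\vol(g).
\end{align*}

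The two middle integrals are exactly $G(\nabla_m h,k)$ and $G(h,\nabla_m k)$, so they cancel in the definition of $\nabla_m G$, leaving $(\nabla_m G)(h,k)=\int_M \g\big((\nabla_m P)h,k\big)\vol(g)+\int_M \g(Ph,k)\,\Tr^g\big(\g(\nabla m,Tf)\big)\vol(g)$. To obtain $K$, I rewrite this as $\int_M \g\big((\nabla_m P)h+\Tr^g(\g(\nabla m,Tf))\,Ph,\;k\big)\vol(g)$ and compare with $G(K(h,m),k)=\int_M \g\big(PK(h,m),k\big)\vol(g)$. Since the $H^0$-pairing is nondegenerate on $T_f\Imm$ and the comparison holds for all $k$, I get $P_f K_f(h,m)=(\nabla_m P)h+\Tr^g(\g(\nabla m,Tf))\,Ph$, and applying $P_f\i$ yields the stated formula for $K$.

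For $H$ the same identity must be matched against $G(m,H(h,k))=\int_M \g\big(m,PH(h,k)\big)\vol(g)$, using that $P$ is $H^0$-symmetric, so both integrals have to be rewritten in the form $\int_M \g(m,\cdot)\vol(g)$. The first integral is handled by the adjoint hypothesis of section~\ref{so:ad}, giving $\int_M \g\big(m,\adj{\nabla P}(h,k)\big)\vol(g)$. For the second, I decompose $m=Tf.m^\top+m^\bot$ and use $\Tr^g(\g(\nabla m,Tf))=\on{div}(m^\top)-\g(m^\bot,\Tr^g(S))$ exactly as in the proof of section~\ref{va:vo}; integrating the divergence term by parts on the compact manifold $M$ moves the derivative onto $\g(Ph,k)$ and produces $-Tf.\grad^g\g(Ph,k)=-Tf.\big(\g(\nabla Ph,k)+\g(Ph,\nabla k)\big)^\sharp$, while the normal piece gives $-\g(Ph,k)\Tr^g(S)$. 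Collecting terms, $PH(h,k)$ equals $\adj{\nabla P}(h,k)-Tf.\big(\g(\nabla Ph,k)+\g(Ph,\nabla k)\big)^\sharp-\g(Ph,k)\Tr^g(S)$.

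The final and most delicate step is to see the normal projection $\adj{\nabla P}(h,k)^\bot$ emerge. Here I would substitute the tangential part of the adjoint computed in section~\ref{so:ad}, namely $\adj{\nabla P}(h,k)^\top=\big(\g(\nabla Ph,k)-\g(\nabla h,Pk)\big)^\sharp$, into $\adj{\nabla P}(h,k)=\adj{\nabla P}(h,k)^\bot+Tf.\adj{\nabla P}(h,k)^\top$. The two occurrences of $\g(\nabla Ph,k)$ then cancel, leaving $PH(h,k)=\adj{\nabla P}(h,k)^\bot-Tf.\big(\g(Ph,\nabla k)+\g(\nabla h,Pk)\big)^\sharp-\g(Ph,k)\Tr^g(S)$; applying $P_f\i$ gives the asserted formula. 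The main obstacle is precisely this tangential/normal bookkeeping: the integration by parts inevitably produces the full adjoint together with a $\g(\nabla Ph,k)$ gradient term, and only invoking the reparametrization-invariance formula for $\adj{\nabla P}(h,k)^\top$ isolates the normal part so that the expression reduces to the claimed one.
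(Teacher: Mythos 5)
Your proof is correct and follows essentially the same route as the paper: compute $(\nabla_m G^P)(h,k)$ to isolate the $(\nabla_m P)h$-term plus the volume-variation term, read off $K$ directly, and for $H$ use the adjoint hypothesis on the first term, integrate the volume term by parts, and finally cancel the highest-order term $\grad^g\g(Ph,k)$ against the tangential part of $\adj{\nabla P}$ from section~\ref{so:ad}. The only cosmetic difference is that you split $m$ into tangential and normal parts before integrating by parts, whereas the paper works with $\nabla^*\g(m,Tf)$ for the full $m$; both yield the same pairing $\int_M\g\big(m,-Tf.\grad^g\g(Ph,k)-\g(Ph,k)\Tr^g(S)\big)\vol(g)$.
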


\begin{proof}
For vector fields $m,h,k$ on $\Imm$ one has
\begin{equation}\label{so:me:na}
\begin{aligned}
&(\nabla_m G^P)(h,k)=
D_{(f,m)} \int_M \g(Ph,k) \vol(g) \\&\qquad\qquad
- \int_M \g\big(P(\nabla_m h),k\big) \vol(g)
- \int_M \g(Ph,\nabla_m k) \vol(g)
\\&\qquad=
\int_M D_{(f,m)}\g(Ph,k) \vol(g)
+ \int_M \g(Ph,k) D_{(f,m)}\vol(g)\\&\qquad\qquad
- \int_M \g\big(P(\nabla_m h),k\big) \vol(g)
- \int_M \g(Ph,\nabla_m k) \vol(g)
\\&\qquad=
\int_M \g\big(\nabla_m(Ph),k\big) \vol(g)
+ \int_M \g(Ph,\nabla_m k) \vol(g)\\&\qquad\qquad
+ \int_M \g(Ph,k) D_{(f,m)}\vol(g)\\&\qquad\qquad
- \int_M \g\big(P(\nabla_m h),k\big) \vol(g)
- \int_M \g(Ph,\nabla_m k) \vol(g)
\\&\qquad=
\int_M \g\big((\nabla_m P)h,k\big) \vol(g)
+ \int_M \g(Ph,k) D_{(f,m)}\vol(g)
\end{aligned}
\end{equation}
One immediately gets the $K$-gradient 
by plugging in the variational formula \ref{va:vo} for the volume form:
$$K_f(h,m)=P_f\i\Big((\nabla_m P)h+\Tr^g\big(\g(\nabla m,Tf)\big).Ph\Big).$$
To calculate the $H$-gradient, one rewrites equation~\eqref{so:me:na}
using the definition of the adjoint:
\begin{equation*}
\begin{aligned}
(\nabla_m G^P)(h,k)=
\int_M \g\big(m,\adj{\nabla P}(h,k)\big) \vol(g) + \int_M \g(Ph,k) D_{(f,m)}\vol(g).
\end{aligned}
\end{equation*}
Now the second summand is treated further using again the variational formula of the volume density 
from section~\ref{va:vo}:
\begin{align*}
&\int_M \g(Ph,k) D_{(f,m)}\vol(g)=
\int_M \g(Ph,k) \Tr^g\big(\g(\nabla m,Tf)\big) \vol(g) \\
&\qquad=
\int_M \g(Ph,k) \Tr^g\big(\nabla\g(m,Tf)-\g(m,\nabla Tf)\big) \vol(g) \\
&\qquad=
\int_M \g(Ph,k) \Big(-\nabla^*\g(m,Tf)-\g\big(m,\Tr^g(S)\big)\Big) \vol(g) \\
&\qquad=
-\int_M g^0_1\big(\nabla\g(Ph,k),\g(m,Tf)\big)\vol(g)-\int_M \g(Ph,k) \g\big(m,\Tr^g(S)\big) \vol(g) \\
&\qquad=
\int_M \g\big(m,-Tf.\grad^g\g(Ph,k)-\g(Ph,k) \Tr^g(S)\big) \vol(g) 
\end{align*} 
Collecting terms one gets that 
\begin{align*}
&G_f^P(H_f(h,k),m)=(\nabla_m G^P)(h,k)\\&\qquad=
\int_M \g\big(m,\adj{\nabla P}(h,k)-Tf.\grad^g\g(Ph,k)-\g(Ph,k) \Tr^g(S)\big) \vol(g) 
\end{align*}
Thus the $H$-gradient is given by
\begin{align*}
H_f(h,k)=P\i\Big(\adj{\nabla P}(h,k)-Tf.\grad^g\g(Ph,k) -\g(Ph,k).\Tr^g(S)\Big)
\end{align*}
The highest order term $\grad^g\g(Ph,k)$ cancels out when taking into 
account the formula for the tangential part of the adjoint from section~\ref{so:ad}:
\begin{align*}
H_f(h,k)=P\i\Big(&\adj{\nabla P}(h,k)^\bot-Tf.\big(\g(Ph,\nabla k)+\g(\nabla h,Pk)\big)^\sharp
\\& -\g(Ph,k).\Tr^g(S)\Big). 
\qedhere
\end{align*}
\end{proof}

%------------------------------------------------------------------------------------------
\section[Geodesic equation]{Geodesic equation on immersions}\label{so:ge}
%------------------------------------------------------------------------------------------

The geodesic equation for a general metric on $\Imm(M,N)$ has been calculated
in section~\ref{sh:ge} and reads as 
$$\nabla_{\p_t} f_t = \frac12 H_f(f_t,f_t) - K_f(f_t,f_t). $$
Plugging in the formulas for $H,K$ derived in the last section yields the following theorem. 

\begin{thm*}
The geodesic equation for a Sobolev-type metric $G^P$ on immersions is given by
\begin{align*}
\nabla_{\p_t} f_t= &\frac12P\i\Big(\adj{\nabla P}(f_t,f_t)^\bot-2.Tf.\g(Pf_t,\nabla f_t)^\sharp
-\g(Pf_t,f_t).\Tr^g(S)\Big)
\\&
-P\i\Big((\nabla_{f_t}P)f_t+\Tr^g\big(\g(\nabla f_t,Tf)\big) Pf_t\Big).
\end{align*}
\end{thm*}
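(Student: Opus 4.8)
The plan is to derive the stated equation by direct substitution, since the substantive work has already been carried out. First I would recall the general geodesic equation from section~\ref{sh:ge}, namely $\nabla_{\p_t} f_t = \frac12 H_f(f_t,f_t) - K_f(f_t,f_t)$, which is valid for any $\Diff(M)$-invariant weak Riemannian metric as soon as the metric gradients $H$ and $K$ exist. Under the existence assumption for $\adj{\nabla P}$ made in section~\ref{so:ad}, the lemma in section~\ref{so:me} guarantees that these gradients exist for the metric $G^P$ and provides explicit formulas for them in terms of $P$, its covariant derivative, and $\adj{\nabla P}$. So the only remaining task is to evaluate those formulas on the diagonal $h=k=m=f_t$ and to collect terms.

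Next I would specialize the two gradient formulas. The $K$-term is immediate: putting $h=m=f_t$ into $K_f(h,m)=P\i\big((\nabla_m P)h+\Tr^g(\g(\nabla m,Tf)) Ph\big)$ gives exactly $K_f(f_t,f_t)=P\i\big((\nabla_{f_t} P)f_t+\Tr^g(\g(\nabla f_t,Tf)) Pf_t\big)$, which is the second line of the claim. For the $H$-term I would substitute $h=k=f_t$ into the formula from section~\ref{so:me}. The one point requiring attention is the tangential middle term: the two contributions $\g(Ph,\nabla k)$ and $\g(\nabla h,Pk)$ become $\g(Pf_t,\nabla f_t)$ and $\g(\nabla f_t,Pf_t)$, and since $\g$ is symmetric these are the same $1$-form on $M$, so they combine into $2\g(Pf_t,\nabla f_t)$. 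This produces the factor $2$ inside the parentheses of the $H$-term, so that $H_f(f_t,f_t)=P\i\big(\adj{\nabla P}(f_t,f_t)^\bot-2\,Tf.\g(Pf_t,\nabla f_t)^\sharp-\g(Pf_t,f_t)\Tr^g(S)\big)$.

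Finally I would assemble $\frac12 H_f(f_t,f_t)-K_f(f_t,f_t)$, leaving the overall factor $\frac12$ in front of the whole $H$-parenthesis, so that the factor $2$ from the symmetrization stays inside exactly as it does in the statement, and thereby recover the displayed formula verbatim. I do not expect a genuine obstacle at this stage: the entire content lives in the existence of $\adj{\nabla P}$ (section~\ref{so:ad}) and in the variational formula for the volume density (section~\ref{va:vo}), both of which were already used in section~\ref{so:me} to produce $H$ and $K$; in particular the cancellation of the top-order term $\grad^g \g(Pf_t,f_t)$ against the tangential part of $\adj{\nabla P}$ is already built into the formula for $H$. The only thing worth double-checking is that identifying $h$ with $k$ collapses the symmetrization cleanly and introduces no spurious curvature or trace contribution.
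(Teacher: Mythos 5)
Your proposal is correct and is exactly the paper's own argument: the paper proves this theorem by plugging the formulas for $H$ and $K$ from section~\ref{so:me} into the general geodesic equation $\nabla_{\p_t} f_t = \frac12 H_f(f_t,f_t) - K_f(f_t,f_t)$ of section~\ref{sh:ge}. Your observation that $\g(Pf_t,\nabla f_t)+\g(\nabla f_t,Pf_t)$ collapses by symmetry of $\g$ to the factor $2\,\g(Pf_t,\nabla f_t)$ is precisely the one bookkeeping point in the substitution, and you have it right.
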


%------------------------------------------------------------------------------------------
\section[Geodesic equation]{Geodesic equation on immersions in terms of the momentum}\label{so:gemo}
%------------------------------------------------------------------------------------------

The geodesic equation in terms of the momentum has been calculated in section~\ref{sh:gemo}
for a general metric on immersions. For a Sobolev-type metric $G^P$, the momentum 
$G^P(f_t,\cdot)$ takes the form 
$$p=Pf_t\otimes\vol(g): \R \to T^*\Imm$$ 
since all other parts of the metric (namely the integral and $\g$) 
are constant and can be neglected. 

\begin{thm*}
The geodesic equation written in terms of the momentum for a Sobolev-type metric $G^P$
on $\Imm$ is given by:
\begin{equation*}
\left\{\begin{aligned}
p&=Pf_t\otimes\vol(g) 
\\
\nabla_{\p_t}p &= \frac12\big(\adj{\nabla P}(f_t,f_t)^\bot-2Tf.\g(Pf_t,\nabla f_t)^\sharp
-\g(Pf_t,f_t)\Tr^g(S)\big)\otimes\vol(g)
\end{aligned}\right.
\end{equation*}
\end{thm*}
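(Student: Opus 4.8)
The plan is to specialize the general momentum form of the geodesic equation from section~\ref{sh:gemo} to the Sobolev-type metric $G^P$. That result already asserts, for any metric $G$ on $\Imm$ whose gradients $H,K$ exist, that along a geodesic
$$\nabla_{\p_t}p = \frac12 G_f\big(H(f_t,f_t),\cdot\big), \qquad p = G_f(f_t,\cdot).$$
Under the assumption that $\adj{\nabla P}$ exists (section~\ref{so:ad}), the gradients $H,K$ for $G^P$ were obtained in closed form in section~\ref{so:me}, so both ingredients are in hand and the proof reduces to two substitutions together with one algebraic cancellation.

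First I would identify the momentum. By definition $G^P_f(f_t,k)=\int_M\g(Pf_t,k)\vol(g)$, so as an element of $T^*_f\Imm$ the momentum is the functional $k\mapsto\int_M\g(Pf_t,k)\vol(g)$; since the integration and the fixed ambient metric $\g$ carry no $t$-dependence, this functional is represented by the density-valued section $Pf_t\otimes\vol(g)$, giving the first line. Next I would evaluate the right-hand side. Setting $h=k=f_t$ in the formula for $H_f$ from section~\ref{so:me} and using symmetry of $\g$ to merge $\g(Pf_t,\nabla f_t)+\g(\nabla f_t,Pf_t)=2\g(Pf_t,\nabla f_t)$ gives
$$H_f(f_t,f_t)=P\i\big(\adj{\nabla P}(f_t,f_t)^\bot-2\,Tf.\g(Pf_t,\nabla f_t)^\sharp-\g(Pf_t,f_t)\Tr^g(S)\big).$$
Forming $G^P_f\big(H_f(f_t,f_t),\cdot\big)=\int_M\g\big(P\,H_f(f_t,f_t),\cdot\big)\vol(g)$ then cancels the outer $P=P_f$ against $P\i$ --- precisely the structural advantage of the momentum picture --- so that the bracketed expression reappears intact, and representing the functional as a density as before produces the claimed second line.

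The computation is otherwise routine; the only point demanding care is the bookkeeping for $\nabla_{\p_t}$ on the smooth cotangent bundle. I must check that $\nabla_{\p_t}$ here is the covariant derivative on mappings into $T^*\Imm$ from section~\ref{sh:cov}, and that the identification of $G^P_f(f_t,\cdot)$ with $Pf_t\otimes\vol(g)$ is compatible with it, so that differentiating the density-valued representative genuinely computes $\nabla_{\p_t}p$. Granting this identification, which is implicit in the passage to the smooth dual $G(T\Imm)$, the statement follows immediately from section~\ref{sh:gemo} and section~\ref{so:me}; no new estimate or integration by parts beyond those already used to produce $H$ is required.
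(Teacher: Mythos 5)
Your proposal is correct and follows essentially the same route as the paper: the paper obtains this theorem precisely by specializing the general momentum form of the geodesic equation from section~\ref{sh:gemo} to $G^P$, substituting the $H$-gradient from section~\ref{so:me} with $h=k=f_t$, whereupon the outer $P_f$ cancels the $P_f\i$ and the momentum is represented by the density $Pf_t\otimes\vol(g)$. Your extra remarks on the symmetry merging of $\g(Ph,\nabla k)+\g(\nabla h,Pk)$ and on the compatibility of the density representative with $\nabla_{\p_t}$ are exactly the (implicit) bookkeeping the paper relies on.
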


\section{Well-posedness of the geodesic equation}\label{so:we}
%------------------------------------------------------------------------------------------

It will be proven that the geodesic equation for a Sobolev-type metric $G^P$ on $\Imm$ is well-posed 
under some assumptions on $P$. 
It will also be shown that $(\pi,\exp)$ is a diffeomorphism from a neighbourhood 
of the zero section in $T\Imm$ to a neighbourhood of the diagonal in $\Imm \x \Imm$. 

First, Sobolev sections of vector bundles are introduced. 
More information can be found in \cite{Shubin1987} and in \cite{EichhornFricke1998}. 
Let $E\to M$ be a vector bundle over a compact Riemannian manifold $(M,g)$. 
$E$ is given a fiber Riemannian metric $\hat g^E$ 
and a compatible covariant derivative $\hat \nabla^E$ on $\Ga(E)$ is chosen. 
Let $\hat \nabla^M$ be the Levi-Civita covariant derivative on $\Ga(TM)$ for a fixed 
background metric $\hat g$ on $M$.
Then the {\it Sobolev space} $H^k(E)$ is the Hilbert space completion 
of the space of smooth sections $\Ga(E)$ in the Sobolev norm
$$
\|s\|_k^2 = \sum_{j=0}^k \int_M  (\hat g^E\otimes \hat g^0_j)((\hat \nabla^E)^j s, (\hat \nabla^E)^j s)\vol(\hat g).
$$
The Sobolev space does not depend on the choices of $\hat g,\hat g^E,\hat \nabla^E$; 
the resulting norms are equivalent, see \cite{Shubin1987}. 
The following results hold (see \cite{Eichhorn2007}):
\newtheorem*{SL}{Sobolev lemma}
\newtheorem*{MP}{Module property of Sobolev spaces}
\begin{SL}
If $k>\dim(M)/2$ then the identy on $\Ga(E)$ extends to a
injective bounded linear mapping $H^{k+p}(E)\to C^p(E)$ where $C^p(E)$ carries the 
supremum norm of all derivatives up to order $p$.
\end{SL}
\begin{MP}
If $k>\dim(M)/2$ then pointwise evaluation $H^k(L(E,E))\x H^k(E)\to H^k(E)$ is
bounded bilinear. Likewise all other pointwise contraction operations are multilinear bounded
operations.
\end{MP}
The following notation shall be used: 
\begin{align*}
T\Imm(M,N) &= C^\infty_{\Imm}(M,TN):= \{s\in C^\infty(M,TN): \pi_N\o s\in\Imm(M,N)\},
\\
T_f\Imm(M,N) &= C^\infty_f(M,TN):= \{s\in C^\infty(M,TN): \pi_N\o s = f\} \cong \Ga(f^*TN).
\end{align*}
The smooth Sobolev manifolds (for $k>\dim(M)/2+1$) 
$$
\Imm^{k+1}(M,N) \subset \Imm^k(M,N),\qquad \bigcap_{k}\Imm^k(M,N) = \Imm(M,N) 
$$
shall also be considered. 
They are constructed from the Sobolev completions in each canonical chart separately and then 
glued together, always with respect to the the background metrics. See \cite{Eichhorn2007} for a 
detailed treatment in a more general situation ($M$ does not need to be compact there).

It is assumed that the operator $P$ satisfies the following properties, for 
$k>\frac{\dim(M)}{2}+1$. (Some of the assumptions have already been stated earlier.)

\begin{ass} 
$P \in \Ga\big(L(T\Imm;T\Imm)\big)$ is smooth and 
invariant under the action of $\Diff(M)$. 
(See section~\ref{so:in} for the definition of invariance.)
\end{ass}

\begin{ass}
For each $f\in \Imm(M,N)$ the operator 
$$P_f:\Ga(f^*TN) \to \Ga(f^*TN)$$ 
is an elliptic pseudo-differential operator of order $2p$ for $p>0$ of classical type
which is positive and symmetric with respect to the $H^0$-metric on $\Imm$, 
$$H^0_f(h,k) = \int_M \g(h,k)\vol(g)\qquad \text{for } f \in \Imm \text{ and } h,k \in T_f\Imm.$$
\end{ass}

Since $P_f$ is elliptic, it is unbounded selfadjoint on the Hilbert completion of $T_f\Imm$
with respect to $H^0$, see \cite[theorem 26.2]{Shubin1987}.
Furthermore $P_f$ extends to a bounded injective (since it is positive) linear operator 
$$
P_f: H^{k+2p}(f^*TN)\to H^k(f^*TN)
$$
which is also surjective (since it is Fredholm as an elliptic operator, with vanishing index 
as a selfadjoint operator). 

\begin{ass} 
$P$ extends to a smooth section of the smooth Sobolev bundle
\begin{equation*}\xymatrix{
L\big(H^{k+2p}_{\Imm^{k+2p}}(M,TN),H^{k}_{\Imm^k}(M,TN)|_{\Imm^{k+2p}(M,N)}\big) \ar[d]  \\
\Imm^{k+2p}(M,N)
}\end{equation*}
with fiber 
$$L\big(H^{k+2p}(f^*TN),H^k(f^*TN)\big)\quad\text{over}\quad f\in \Imm^{k+2p}(M,N)$$ 
such that 
$$P_f:H^{k+2p}(f^*TN)\to H^k(f^*TN)$$ 
is invertible for each $f\in \Imm^{k+2p}(M,N)$.
\end{ass}

By the implicit function theorem on Banach spaces, 
$f\mapsto P_f\i$ is then a smooth section of the smooth Sobolev bundle 
\begin{equation*}\xymatrix{
L\big(H^{k}_{\Imm^k}(M,N)|_{\Imm^{k+2p}(M,N)}, H^{k+2p}_{\Imm^{k+2p}}(M,N)\big) \ar[d]  \\
\Imm^{k+2p}(M,N)
}\end{equation*}
Moreover, $P_f\i$ is also an elliptic pseudo-differential operator of order $-2p$. 

\begin{ass} 
The normal part of the adjoint $f\mapsto \adj{\nabla P}_f^\bot$ from 
section~\thetag{\ref{so:ad}} extends to a smooth section of the smooth Sobolev bundle 
\begin{equation*}\xymatrix{
L^2_{\text{sym}}\big(H^{k+2p}(f^*TN);H^{k}(f^*TN)\big) \ar[d]  \\
\Imm^{k+2p}(M,N)
}\end{equation*}
\end{ass}

\begin{ass}\label{so:we:ass5}
All mappings $P_fu$, $P_f\i u$, and $\adj{\nabla P}_f^\bot(u,v)$ are linear pseudo differential 
operators in $u$ and $v$ of order $2p$, $-2p$, and $2p$, respectively. 
As mappings in the footpoint $f$, viewed locally in a trivialisation of the bundle 
$C^\infty_{\Imm}(M,TN)\to \Imm(M,N)$, 
they are nonlinear, and it is assumed that they are a composition of operators of the following type:
(a) Local operators of order $\le 2p$, i.e., 
$A(f)(x)=A(x,\hat \nabla^{2p}f(x),\hat \nabla^{2p-1}f(x),\dots,\hat \nabla f(x), f(x))$. 
(b) Linear pseudo-differential operators.
\end{ass}

These properties hold for the operator $1+A\De^p$ considered in section~\ref{la}.

\begin{thm*}
Let $p\ge 1$ and $k>\dim(M)/2+1$, and let
$P$ satisfy assumptions 1--5.

Then the initial value problem for the geodesic equation 
\thetag{\ref{so:ge}}
has unique local solutions in the Sobolev manifold $\Imm^{k+2p}$ of
$H^{k+2p}$-immersions. The solutions depend smoothly on $t$ and on the initial
conditions $f(0,\;.\;)$ and $f_t(0,\;.\;)$.
The domain of existence (in $t$) is uniform in $k$ and thus this
also holds in $\Imm(M,N)$.

Moreover, in each Sobolev completion $\Imm^{k+2p}$, the Riemannian exponential mapping $\exp^{P}$ exists 
and it smooth on a $H^{k_0}$-open neighborhood of the zero section in the tangent bundle, 
and $(\pi,\exp^{P})$ is a diffeomorphism from a (smaller)  $H^{k_0}$-open neigbourhood of the zero 
section to an $H^{k_0}$-open neighborhood of the diagonal in 
$\Imm^{k+2p}\x \Imm^{k+2p}$, where $k_0$ is the smallest integer 
$>\dim(M)/2+2p$. 
All these neighborhoods are uniform in $k>\dim(M)/2$, and thus both properties of the exponential 
mapping continue to hold in $\Imm(M,N)$.
\end{thm*}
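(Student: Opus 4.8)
The plan is to recast the second-order geodesic equation of section~\ref{so:ge} as a first-order ODE on the tangent bundle of a Sobolev completion $\Imm^{k+2p}$, to show that the resulting vector field---the geodesic spray---is smooth and loses no derivatives, and then to invoke the standard existence, uniqueness and smooth-dependence theory for ODEs on Hilbert manifolds, followed by an Ebin--Marsden no-loss-of-derivatives argument to transport the conclusions back to the smooth category $\Imm(M,N)=\bigcap_k\Imm^{k+2p}$.

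First I would write the state as $(f,f_t)$ and rewrite $\nabla_{\p_t}f_t=\ldots$ as $\p_t f=f_t$, $\p_t f_t=\Phi(f,f_t)$. Since $\nabla_{\p_t}f_t$ equals $\p_t f_t$ plus a term quadratic in $f_t$ assembled pointwise from the Christoffel symbols of $N$ along $f$ (a local operator of order $0$ in the footpoint, hence harmless by the module property), solving for $\p_t f_t$ exhibits $\Phi$ as $P\i$ applied to the right-hand side in section~\ref{so:ge}, minus the Christoffel term. The heart of this step is order-counting. By assumption~5, $\adj{\nabla P}^\bot$, $(\nabla_{f_t}P)f_t$ and $Pf_t$ are pseudo-differential of order $2p$ in the velocity, so they send $H^{k+2p}\to H^k$; the algebraic terms $Tf.\g(Pf_t,\nabla f_t)^\sharp$ and $\g(Pf_t,f_t)\Tr^g(S)$ lie in $H^k$ by the module property (here $p\ge 1$ is used so that the mean curvature $\Tr^g(S)$ and $\nabla f_t$ retain $\ge k$ derivatives); and by assumption~3 the inverse $P\i$ is a smooth section raising regularity by $2p$. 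Composing, $\Phi$ maps $T\Imm^{k+2p}$ into $H^{k+2p}$ with no loss. Smoothness of $\Phi$ in its footpoint follows because, in a local trivialisation, assumption~5 presents every ingredient as a composition of local operators of order $\le 2p$ with linear pseudo-differential operators, each smooth between the relevant Sobolev bundles.

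With the spray established as a smooth vector field on the Hilbert manifold $T\Imm^{k+2p}$, the Banach-space theory of ODEs gives unique local integral curves depending smoothly on time and on the initial data, which is exactly local existence, uniqueness and smoothness of the geodesics and of $\exp^P$ on each fixed Sobolev level. That $(\pi,\exp^P)$ is a diffeomorphism near the zero section then follows from the inverse function theorem on Banach manifolds, since $T_{0_f}\exp^P$ is the identity on $T_f\Imm^{k+2p}$.

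The main obstacle is the uniformity in $k$, which is what licenses the passage to $\Imm(M,N)$. Here I would run the Ebin--Marsden argument: the sprays on consecutive levels are compatible, $S_{k+1}$ being the restriction of $S_k$, so uniqueness forces a solution with $H^{k+1+2p}$ initial data to agree with the $H^{k+2p}$ solution wherever both exist. The delicate point is to show the smoother solution cannot leave $\Imm^{k+1+2p}$ before the $H^{k+2p}$ existence time. I would control this through the variational (linearised) equation governing the spatial derivatives of the flow: it is linear in the top-order derivative with coefficients depending only on the lower-regularity solution, so a bound on the $H^{k+2p}$ norm propagates to a bound on the $H^{k+1+2p}$ norm over the same interval, and no blow-up of the extra derivative can occur. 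Iterating in $k$ and intersecting yields the uniform interval of existence and the uniform neighborhoods, with the base threshold $k_0>\dim(M)/2+2p$ chosen so that the Sobolev lemma keeps the relevant sections genuine immersions after the order-$2p$ operations; the statements in $\Imm(M,N)$ follow.
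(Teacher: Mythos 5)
Your proposal follows the same three-step strategy as the paper's proof: formulate the geodesic flow as a smooth autonomous vector field on a Sobolev completion and invoke Banach ODE theory; gain uniformity of the existence interval in $k$ by applying $\hat\nabla$ to the equation and observing that the top-order derivatives enter only linearly, with coefficients determined by the already-known lower-regularity solution; and deduce the exponential-map statements from the spray form of the equation, with uniform neighborhoods coming from the fact that the spray on the roughest level $\Imm^{k_0}$ restricts to the spray on each finer level. The one substantive difference is your choice of state variable. You work with the velocity, $(f,f_t)\in T\Imm^{k+2p}$, so your right-hand side retains the term $P_f^{-1}\big((\nabla_{f_t}P)f_t\big)$ coming from the $K$-gradient. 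Note that assumption~5 does not cover $(\nabla_{f_t}P)f_t$: it speaks only of $P_fu$, $P_f^{-1}u$ and $\adj{\nabla P}_f^\bot(u,v)$, so your appeal to it for this term is misplaced; its smoothness must instead be extracted from assumption~3 (the derivative of the smooth section $f\mapsto P_f$ of the Sobolev bundle in footpoint directions $m\in H^{k+2p}$, plus pointwise Christoffel corrections controlled by the module property). The paper sidesteps this issue entirely by taking the momentum-type variable $u=P_ff_t$ as the second coordinate, working on the mixed-regularity bundle $T\Imm^k|_{\Imm^{k+2p}}$ with $u\in H^k$: in momentum form the $(\nabla_{f_t}P)f_t$ term is absent, and every ingredient of the right-hand side ($P^{-1}$, $\adj{\nabla P}^\bot$, pointwise contractions with $Tf$, $S$ and the sharp/flat operators) is literally one of the objects named in assumptions 3--5, so the term-by-term verification matches the hypotheses with no extra work. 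Your route is viable once the above patch is made; the paper's substitution buys a cleaner fit with the stated assumptions.
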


This proof is partly an adaptation of \cite[section 4.3]{Michor107}. 
It works in three steps: First, the geodesic equation 
is formulated as the flow equation of a smooth vector field on a Sobolev completion of $T\Imm$. 
Thus one gets local existence and uniqueness of solutions. Second, it is shown that 
the time-interval where a solution exists does not depend on the order of the Sobolev space
of immersions. Thus one gets solutions on the intersection of all Sobolev spaces, which 
is the space of smooth immersions. Third, a general argument proves the claims 
about the exponential map.

\begin{proof}
The geodesic equation is considered as the flow equation of a smooth
($C^\infty$) autonomous vector field $X$ on 
$$
T\Imm^k(M,N)|_{\Imm^{k+2p}(M,N)} = H^k_{\Imm^{k}}(M,TN)|_{\Imm^{k+2p}(M,N)}
$$
Locally near a fixed smooth immersion $f_0\in\Imm(M,N)$ this looks like 
$U^{k+2p}\x H^k(f_0^*TN)$ where $U^{k+2p}$ is $H^{k}$-open in 
$\Imm^{k+2p}(M,N)\subset H^{k+2p}(M,N)$.
It suffices to prove the theorem in each open subset $U^{k+2p}\x H^k(f_0^*TN)$.

On this subset one can write $X=(X_1,X_2)$ as follows (using \thetag{\ref{so:ge}}):
\begin{align*}
f_t &= P_f\i u =:X_1(f,u)
\\
u_t&= 
\frac12\adj{\nabla P}(P_f\i u,P_f\i u)^\bot
\\&\qquad
-Tf.\g(u,\nabla P_f\i u\big)^\sharp
-\frac12\g(u,P_f\i u).\Tr^{f^*\g}(S^f)
\tag{5}\\&\qquad
-\Tr^{f^*\g}\big(\g(\nabla P_f\i u,Tf)\big)u
\\&
=: X_2(f,u)
\end{align*}
For $(f,u)\in U^{k+2p}\x H^k$ one has $\adj{\nabla P}_f(P_f\i u,P_f\i u) \in H^k$.
When $Y\in H^k$ then also $Y^\bot \in H^k$ since $g=f^*\g \in H^{k+2p-1} \subset H^{k+1}$.
Similarly, when $\alpha \in H^k$ then also $\alpha^\sharp \in H^k$.
$S^f\in H^{k+2p-2} \subset H^k$.  
Thus a term by term investigation using \thetag{1} -- \thetag{3} shows that 
the right hand side of \thetag{5} is smooth in  $(f,u)\in U^{k+2p}\x H^k$ with values in $H^{k+2p} \x H^{k}$.
Thus by the theory of smooth ODE's on Banach spaces,
the flow $\on{Fl}^k$ exists on $U^{k+2p}\x H^k$ and is smooth in $t$ and the initial 
conditions for fixed $k>\frac{\dim(M)}{2}$.

Consider $C^\infty$ initial conditions $f_0=f(0,\quad)$ and
$u_0=P_{f_0} f_t(0,\quad)=u(0,\quad)$ for the flow equation \thetag{5} in 
$U^{\infty}\x \Ga(f_0^*TN)$. Suppose the 
trajectory $\on{Fl}^k_t(f_0,u_0)$ of
$X$ through these initial conditions in $U^{k+2p}\x H^k$ maximally exists for $t\in (-a_k,b_k)$, 
and the trajectory $\on{Fl}^{k+1}_t(f_0,u_0)$ in $U^{k+1+2p}\x H^{k+1}$ 
maximally exists for $t\in(-a_{k+1},b_{k+1})$ with $a_{k+1}<a_k$ and $b_{k+1}<b_k$, say. By
uniqueness of solutions one has $\on{Fl}^{k+1}_t(f_0,u_0)=\on{Fl}^{k}_t(f_0,u_0)$ for
$t\in (-a_{k+1,}b_{k+1})$. 
Now $\hat \nabla$ is applied to both equations \thetag{5}:
\begin{align*}
(\hat \nabla f)_t &= \hat \nabla f_t =  \hat \nabla X_1(f,u) 
\\
(\hat \nabla u)_t &=\hat \nabla u_t =  \hat \nabla X_2(f,u) 
\end{align*}
It is claimed that the highest derivatives of $f$ and $u$ appear only linearly
in $\hat \nabla X_i(f,u)$ for $i=1,2$, i.e.
$$
\hat \nabla X_i(f,u) 
= X_{i.1}(f,u)(\hat \nabla^{2p+1}f) + X_{i,2}(f,u)(\hat \nabla^{2p+1}u) + X_{i,3}(f,u)
$$
where all $X_{i,j}(f,u)(v)$ and $X_{i,3}(f,u)$ ($i,j=1,2$) are smooth in all variables, 
of highest order $2p$ in $f$ and $u$, 
linear and algebraic (i.e., of order 0) in $v$.
This claim follows from assumption~\ref{so:we:ass5}: 
(a) For a local operator we can apply the chain rule: 
The derivative of order $2p+1$ of $f$ appears only linearly.
(b) For a linear pseudo differential operator $A$ of order $k$ 
the commutator $[\hat \nabla,A]$ is a pseudo-differential operator of order $k$ again. 

Then one writes $\hat \nabla^{2p+1}f = \hat \nabla^{2p}\tilde f$ and 
$\hat \nabla^{2p+1}u = \hat \nabla^{2p}\tilde u$ for the highest derivatives only.
The last system now becomes 
\begin{align*}
\tilde f_t &= X_{1,1}(f,u)(\hat \nabla^{2p}\tilde f) + 
X_{1,2}(f,u)(\hat \nabla^{2p}\tilde u) + X_{1,3}(f,u)
\\
\tilde u_t &= X_{2,1}(f,u)(\hat \nabla^{2p}\tilde f) + 
X_{2,2}(f,u)(\hat \nabla^{2p}\tilde u) + X_{2,3}(f,u)
\end{align*}
which is inhomogeneous bounded linear
in $(\tilde f,\tilde u)\in U^{k+2p}\x H^k$ with
coefficients bounded linear operators on $H^{k+2p}$ and $H^k$, respectively.  
These coefficients are $C^\infty$ functions of $(f,u) \in U^{k+2p}\x H^k$ 
which are already known on the interval $(-a_k,b_k)$. 
This equation 
therefore has a solution $(\tilde f(t,\quad),\tilde u(t,\quad))$ for all $t$ for which the
coefficients exists, thus for all $t\in (a_k,b_k)$. The limit 
$\lim_{t\nearrow b_{k+1}} (\tilde f(t,\quad),\tilde u(t,\quad))$ exists in $U^{k+2p}\x H^k$ 
and by continuity it equals $(\hat \nabla f,\hat \nabla u)$ in $U^{k+2p}\x H^k$
for some $t>b_{k+1}$. Thus the $H^{k+1}$-flow was not
maximal and can be continued. So $(-a_{k+1},b_{k+1})=(-a_k,b_k)$.  
Iterating this procedure one concludes that the flow of $X$ exists in
$\bigcap_{m\ge k} U^{m+2p}\x H^{m}= \Imm\x C^\infty$. 

It remains to check the properties of the Riemannian exponential mapping $\exp^P$.
It is given by $\exp^P_{f}(h)= c(1)$ where $c(t)$ is the geodesic emanating from  
value $f$ with initial velocity $h$. 
The properties claimed follow
from local existence and uniqueness of solutions to 
the geodesic equation on each space $\Imm^{k+2p}(M,N)$
and from the form of the geodesic equation $f_{tt}=\Ga_f(f_t,f_t)$
when it is written down in a chart using the Christoffel symbols,
namely linearity in $f_{tt}$ and bilinearity in $f_t$.
See for example \cite[22.6 and 22.7,]{MichorH} for a detailed proof in terms of the spray
vector field $S(f,h)=(f,h;h,\Ga_f(h,h))$ which works on each 
$T\Imm^{k+2p}(M,N)=H^{k+2p}_{\Imm^{k+2p}}(M,TN)$
without any change in notation.
So one checks this on the largest of these spaces $\Imm^{k_0}(M,N)$ (i.e. with the smallest $k$). 
Since the spray on $\Imm^{k_0}$ restricts to the spray on each $\Imm^{k+2p}$,
the exponential mapping $\exp^P$ and the inverse $(\pi,\exp^P)\i$ on $\Imm^{k_0}$ restrict 
to the corresponding mappings on each $\Imm^{k+2p}$. Thus the neighborhoods of existence are 
uniform in $k$.
\end{proof}

%------------------------------------------------------------------------------------------
\section{Momentum mappings}\label{so:mo}
%------------------------------------------------------------------------------------------

Recall that by assumption, the operator $P$ is invariant under the action of the 
reparametrization group $\on{Diff}(M)$. Therefore
the induced metric $G^P$ is invariant under this group action, too.  
According to \cite[section~2.5]{Michor107} one gets:

\begin{thm*}
The reparametrization momentum, which is the momentum mapping 
corresponding to the action of $\Diff(M)$ on $\Imm(M,N)$, 
is conserved along any geodesic $f$ in $\Imm(M,N)$: 
\begin{align*}
&\forall X\in\X(M): \int_M  \g( Tf.X,Pf_t ) \vol(g) \\
\intertext{or equivalently}
&g\bigl((Pf_t )^\top\bigr) \vol(g) \in\Ga(T^*M\otimes_M\vol(M))
\end{align*}
is constant along $f$.
\end{thm*}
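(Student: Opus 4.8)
The plan is to realize this as the special case of the conserved momenta of section~\ref{sh:mo}, specialized to the reparametrization action and to the Sobolev-type metric $G^P$. First I would identify the fundamental vector field of the right action of $\Diff(M)$: for $X\in\X(M)$ one has $\ze_X(f)=\p_s|_0(f\o \on{Fl}^X_s)=Tf.X$, exactly as recorded in section~\ref{sh:gesh}. The associated Noether momentum is therefore $j_X(f_t)=G^P_f(\ze_X(f),f_t)=\int_M \g\big(P_f(Tf.X),f_t\big)\vol(g)$.

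Conservation then follows from the general computation already carried out in section~\ref{sh:gesh}. That computation --- which uses only torsion-freeness of $\nabla$, the $\Diff(M)$-invariance of the metric, and the definition of the metric gradients $H,K$ --- shows that
$$\p_t\big(G_f(\ze_X(f),f_t)\big)=G_f\Big(K_f(f_t,f_t)+\nabla_{\p_t}f_t-\tfrac12 H_f(f_t,f_t),\,\ze_X(f)\Big).$$
For $G^P$ all three ingredients are available: $P$, hence $G^P$, is $\Diff(M)$-invariant by assumption~\ref{so:in}, and $H,K$ exist by section~\ref{so:me} (given the adjoint $\adj{\nabla P}$). Along a geodesic the bracket on the right is precisely the residual of the geodesic equation of section~\ref{so:ge} and so vanishes, giving $\p_t j_X(f_t)=0$. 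Using the symmetry of $P_f$ with respect to the $H^0$-metric I would then rewrite $j_X(f_t)=\int_M \g(Tf.X,Pf_t)\vol(g)$, which is the first stated form.

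To obtain the equivalent covariant form I would split $Pf_t=Tf.(Pf_t)^\top+(Pf_t)^\bot$ into tangential and normal parts as in section~\ref{no:no}. Since $Tf.X$ is tangential, orthogonality kills the normal contribution and $\g(Tf.X,Pf_t)=\g(Tf.X,Tf.(Pf_t)^\top)=g(X,(Pf_t)^\top)$, so that $j_X(f_t)=\int_M \big(g((Pf_t)^\top)\vol(g)\big)(X)$, the integral of the density-valued one-form $g((Pf_t)^\top)\vol(g)\in\Ga(T^*M\otimes_M\vol(M))$ paired against $X$. Because $X$ is independent of $t$, differentiating under the integral sign shows that $j_X(f_t)$ being constant for every $X$ is equivalent to $\p_t\big(g((Pf_t)^\top)\vol(g)\big)$ integrating to zero against every $X\in\X(M)$; by non-degeneracy of this integration pairing on smooth sections (localize with bump functions) this forces $g((Pf_t)^\top)\vol(g)$ itself to be constant along $f$.

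The main obstacle is not the conservation itself --- that is a direct transcription of the Noether argument of sections~\ref{sh:mo} and \ref{sh:gesh} --- but rather the covariant reformulation: one must check that the $f$-dependent weight $\vol(g)$ and metric $g$ are correctly folded into the \emph{fixed} bundle $T^*M\otimes_M\vol(M)$, and that the passage from ``constant for all $X$'' to ``constant as a section'' is justified by non-degeneracy of the pairing rather than by a pointwise argument.
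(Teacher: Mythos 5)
Your proposal is correct and takes essentially the same route as the paper: the paper proves this theorem by observing that $\Diff(M)$-invariance of $P$ makes $G^P$ invariant and then invoking the general Noether-type conservation of momenta from section~\ref{sh:mo} (citing \cite[section~2.5]{Michor107}). Your write-up merely makes explicit what the paper delegates to those citations, namely the derivative computation already carried out in section~\ref{sh:gesh} and the equivalence between the integral form and the density-valued section $g\bigl((Pf_t)^\top\bigr)\vol(g)$, and both of these elaborations are sound.
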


%------------------------------------------------------------------------------------------
\section{Horizontal bundle}\label{so:ho}
%------------------------------------------------------------------------------------------

The splitting of $T\Imm$ into horizontal and vertical subspaces will be calculated
for Sobolev-type metrics $G^P$. See section~\ref{sh:sub} for the general theory. 
By definition, a tangent vector $h$ to $f \in \Imm(M,N)$ is horizontal if
and only if it is $G^P$-perpendicular to
the $\on{Diff}(M)$-orbits. This is the case if and only if 
$\g( P_f h(x), T_x f .X_x) = 0$ at every point $x \in M$. 
Therefore the horizontal bundle at the point $f$ equals 
\begin{align*}
&\big\{h\in T_f\Imm: P_fh(x)\,\bot\, T_x f(T_xM)\text{ for all 
}x\in M\}
=\big\{h : (P_fh)^\top = 0\big\}. 
\end{align*}
Note that the horizontal bundle consists of vector fields that are normal to $f$ 
when $P=\Id$, i.e. for the $H^0$-metric on $\Imm$.

Let us work out the $G^P$-decomposition of $h$ into vertical and 
horizontal parts. This decomposition is written as
\begin{equation*}
h= Tf.h^{\text{ver}} + h^{\text{hor}}.
\end{equation*}
Then 
\begin{align*}
P_fh = P_f (Tf.h^{\text{ver}}) + P_f h^{\text{hor}} \quad \text{with} \quad
(P_fh)^\top = (P_f (Tf.h^{\text{ver}}))^\top + 0. 
\end{align*}
Thus one considers the operators
\begin{align*}
&P_f^\top :\X(M) \to \X(M), &\qquad  P_f^\top (X) &= \big(P_f(Tf.X)\big)^\top,
\\
&P_{f,\bot}:\X(M)\to \Ga\big(\Nor(f)\big) \subset C^\infty(M,TN), &  P_{f,\bot}(X) &=  
\big(P_f(Tf.X)\big)^\bot.
\end{align*}
The operator $P_f^\top $ is unbounded, positive and symmetric on the Hilbert completion
of $T_f\Imm$ with respect to the $H^0$-metric since one has 
\begin{align*}
\int_M g(P_f^\top X,Y)\vol(g) &= 
\int_M \g(Tf.P_f^\top X,Tf.Y)\vol(g) 
\\&
= \int_M \g(P_{f}(Tf.X),Tf.Y)\vol(g) 
%= \int_M \g(P_{f}(Tf.Y),Tf.X)\vol(g) 
\\&
= \int_M g(P_f^\top Y,X)\vol(g), 
\\
\int_M g(P_f^\top X,X)\vol(g) &= 
\int_M \g(P_{f}(Tf.X),Tf.X)\vol(g) \; > 0 \quad\text{  if }X\ne 0.
\end{align*}
Let $\si^{P_f}$ and $\si^{P_f^\top}$ denote the principal symbols of $P_f$ and $P_f^\top$, 
respectively. Take any $x \in M$ and $\xi\in T^*_xM\setminus\{0\}$.
Then $\si^{P_f}(\xi)$ is symmetric, positive definite on $(T_{f(x)}N,\g)$. 
This means that one has for any $h,k \in T_{f(x)}N$ that
\begin{equation*}
\g\big(\si^{P_f}(\xi)h,k\big) = \g\big(h,\si^{P_f}(\xi)k\big), \qquad
\g\big(\si^{P_f}(\xi)h,h\big) > 0 \text{ for } h \neq 0.
\end{equation*}
The principal symbols $\si^{P_f}$ and $\si^{P_f^\top}$ are related by
\begin{equation*}
g\big(\si^{P_f^\top}(\xi)X,Y\big) = \g\big(Tf.\si^{P_f^\top}(\xi)X,Tf.Y\big)
= \g\big(\si^{P_f}(\xi)Tf.X,Tf.Y\big),
\end{equation*}
where $X,Y \in T_xM$.
Thus $\si^{P_f^\top}(\xi)$ is symmetric, positive definite on $(T_xM,g)$.
Therefore $P_f^\top $ is again elliptic, thus it is selfadjoint, 
so its index (as operator $H^{k+2p}\to H^{k}$) vanishes. It 
is injective (since positive), hence it is bijective and thus invertible. 
Thus it has been proven: 

\begin{lem*}
The decomposition of $h \in T\Imm$ into its vertical and horizontal components 
is given by
\begin{align*}
h^{\text{ver}} &= (P_f^\top )\i\big((P_fh)^\top\big),
\\
h^{\text{hor}} &= h - Tf.h^{\text{ver}} =  h - Tf.(P_f^\top )\i\big((P_fh)^\top\big).
\end{align*}
\end{lem*}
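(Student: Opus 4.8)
The plan is to solve directly for the two components in the decomposition $h = Tf.h^{\text{ver}} + h^{\text{hor}}$, exploiting the two structural facts already established: the vertical space at $f$ consists of the vectors $Tf.X$ with $X \in \X(M)$, and, by the computation preceding the statement, the horizontal bundle is $\{k \in T_f\Imm : (P_f k)^\top = 0\}$. Since $f$ is an immersion, $Tf$ is injective, so a vertical vector determines its generating field $h^{\text{ver}} \in \X(M)$ uniquely; this already guarantees that any such splitting is unique once it exists.

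First I would apply the operator $P_f$ to the ansatz and take tangential parts. From
$$P_f h = P_f(Tf.h^{\text{ver}}) + P_f h^{\text{hor}}$$
and the fact that $h^{\text{hor}}$ is horizontal, so that $(P_f h^{\text{hor}})^\top = 0$, one obtains
$$(P_f h)^\top = \big(P_f(Tf.h^{\text{ver}})\big)^\top = P_f^\top(h^{\text{ver}}),$$
the last equality being just the definition of the tangential operator $P_f^\top$ introduced above. Thus the whole decomposition is reduced to solving the single linear equation $(P_f h)^\top = P_f^\top(h^{\text{ver}})$ for the vertical field $h^{\text{ver}}$.

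Next I would invert $P_f^\top$. This is the crucial point, and it is precisely what the symbol argument just before the lemma supplies: $P_f^\top$ is unbounded, positive and symmetric on the $H^0$-completion of $\X(M)$, and its principal symbol $\si^{P_f^\top}(\xi)$ is symmetric positive definite on $(T_xM,g)$ for every $\xi \neq 0$, inherited from the positive definiteness of $\si^{P_f}(\xi)$ on $(T_{f(x)}N,\g)$. Hence $P_f^\top$ is elliptic, therefore selfadjoint with vanishing index, and being injective (by positivity) it is bijective and invertible. Solving then gives $h^{\text{ver}} = (P_f^\top)\i\big((P_f h)^\top\big)$, and the horizontal part follows at once as $h^{\text{hor}} = h - Tf.h^{\text{ver}} = h - Tf.(P_f^\top)\i\big((P_f h)^\top\big)$.

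The only genuine obstacle is the invertibility of $P_f^\top$, and that has essentially been discharged in the paragraph preceding the statement; what remains for the lemma is to read off the solution of the linear equation and to verify that the resulting $h^{\text{hor}}$ is indeed horizontal, i.e. that $(P_f h^{\text{hor}})^\top = (P_f h)^\top - P_f^\top(h^{\text{ver}}) = 0$, which holds by construction. Together with the uniqueness noted above, this establishes that $T_f\Imm = \{Tf.X : X \in \X(M)\} \oplus \Hor_f$ is a genuine direct sum with the stated projections.
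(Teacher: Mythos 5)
Your proposal is correct and follows essentially the same route as the paper: apply $P_f$ to the ansatz $h = Tf.h^{\mathrm{ver}} + h^{\mathrm{hor}}$, take tangential parts to reduce everything to the linear equation $(P_f h)^\top = P_f^\top(h^{\mathrm{ver}})$, and invoke the invertibility of $P_f^\top$ established just before the lemma via positivity, symmetry and the principal symbol argument. Your explicit verification that the resulting $h^{\mathrm{hor}}$ is horizontal and your uniqueness remark (via injectivity of $Tf$) are left implicit in the paper but are the same argument, not a different one.
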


%------------------------------------------------------------------------------------------
\section{Horizontal curves}\label{so:ho2}
%------------------------------------------------------------------------------------------

To establish the one-to-one correspondence between curves in shape space and horizontal 
curves in $\Imm$ that has been described in theorem~\ref{sh:sub}, 
one needs the following property: 

\begin{lem*}
For any smooth path $f$ in $\Imm(M,N)$ there exists a
smooth path $\ph$ in $\on{Diff}(M)$ with $\ph(0,\;.\;)=\on{Id}_M$ 
depending smoothly on $f$ such that
the path $\tilde f$ given by $\tilde f(t,x)=f(t,\ph(t,x))$ is horizontal:
$$\g \big( P_{\tilde f}(\p_t\tilde f),T\tilde f.TM \big) =0.$$
Thus any path in shape space can be lifted to a horizontal path of immersions.
\end{lem*}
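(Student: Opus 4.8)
The plan is to reparametrize the given path $f$ by the flow of a suitably chosen time-dependent vector field on $M$, so that the reparametrized path $\tilde f = f\o\ph$ becomes horizontal. Concretely, I would look for $\ph$ as the flow of a time-dependent vector field $X_t\in\X(M)$, i.e. $\p_t\ph = X_t\o\ph$ with $\ph(0,\cdot)=\Id_M$. The entire problem then reduces to identifying the correct $X_t$ and verifying that it exists, is smooth in $t$, and depends smoothly on $f$; note that $\tilde f=f\o\ph$ is automatically an immersion since $f$ is one and $\ph$ is a diffeomorphism.

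First I would compute the velocity of $\tilde f$. Writing $\p_t\ph = X_t\o\ph$ and applying the chain rule in the $M$-variable gives
$$\p_t\tilde f = (f_t + Tf.X_t)\o\ph.$$
Using the reparametrization invariance of $P$ from section~\ref{so:in} this yields
$$P_{\tilde f}(\p_t\tilde f) = P_{f\o\ph}\big((f_t+Tf.X_t)\o\ph\big) = \big(P_f(f_t+Tf.X_t)\big)\o\ph.$$
Since $\Nor(f\o\ph)_x = \Nor(f)_{\ph(x)}$ and $\ph$ is a diffeomorphism, this composed field is normal to $\tilde f$ (which is exactly the horizontality of $\p_t\tilde f$, by the description of the horizontal bundle in section~\ref{so:ho}) if and only if $P_f(f_t+Tf.X_t)$ is normal to $f$, i.e. its tangential part vanishes. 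Splitting off the tangential part and using the operator $P_f^\top$ introduced in section~\ref{so:ho}, this condition reads
$$(P_f f_t)^\top + P_f^\top(X_t) = 0.$$

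Second, I would solve for $X_t$. Because $P_f^\top$ is elliptic, positive, and hence invertible by section~\ref{so:ho}, the last equation has the unique solution
$$X_t = -(P_f^\top)\i\big((P_f f_t)^\top\big).$$
The decisive point is that the right-hand side depends only on the prescribed data $f(t,\cdot)$ and $f_t(t,\cdot)$ and not on $\ph$, so $t\mapsto X_t$ is a genuine smooth curve in $\X(M)$ rather than an implicit equation in the unknown $\ph$. Defining $\ph$ as its flow, equivalently $\ph=\on{evol}^r(X)$ via the regularity of $\Diff(M)$ established in section~\ref{sh:di}, produces the desired path with $\ph(0,\cdot)=\Id_M$, and by construction $\tilde f=f\o\ph$ is horizontal.

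The main work lies in the smoothness claims rather than in the algebra. I expect the principal obstacle to be verifying that $f\mapsto X_t$ is smooth and that its flow depends smoothly on $f$: this rests on the smooth dependence of $(P_f^\top)\i$ on $f$, which follows from the smoothness of $P$, the ellipticity and invertibility of $P_f^\top$, and the implicit function theorem, exactly as in the well-posedness argument of section~\ref{so:we}. Granting this, smoothness of $\ph$ in $(t,f)$ follows from smoothness of the evolution map of the regular Lie group $\Diff(M)$. If one prefers to avoid the convenient-calculus regularity statement, the same construction can first be carried out on the Sobolev completions $\Imm^{k}$, with the usual bootstrapping to the smooth category as in section~\ref{so:we}.
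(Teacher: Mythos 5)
Your proposal is correct and follows essentially the same route as the paper: the Moser trick of seeking $\ph$ as the flow of a time-dependent vector field, using the $\Diff(M)$-invariance of $P$ to reduce horizontality of $f\o\ph$ to the vanishing of $\big(P_f(f_t+Tf.\xi)\big)^\top$, and then solving $\xi=-(P_f^\top)\i\big((P_ff_t)^\top\big)$ via the invertibility of $P_f^\top$ established in section~\ref{so:ho}. Your added remarks on smooth dependence on $f$ (via smoothness of $(P_f^\top)\i$ and the regular-Lie-group evolution map of $\Diff(M)$) only make explicit what the paper leaves implicit.
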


The basic idea is to write the path $\ph$ as the integral curve of a time dependent vector field. 
This method is called the Moser-trick (see \cite[Section 2.5]{Michor102}).

\begin{demo}{Proof}
Since $P$ is invariant, one has $(r^\ph)^* P = P$ or 
$P_{f\o \ph}(u\o \ph)=(P_fu)\o\ph$ for $\ph\in\on{Diff}(M)$. 
In the following $f\circ\varphi$ will denote the map $f(t, \varphi(t,x))$, etc.
One looks for $\ph$ as the integral curve of a time dependent vector field $\xi(t,x)$ on
$M$, given by $\ph_t=\xi\o \ph$.
The following expression must vanish for all $x \in M$ and $X_x \in T_x M$:
\begin{align*}
0&=\g\Big( P_{f\o\ph}\big(\p_t(f\circ \varphi)\big)(x), T(f\circ \varphi).X_x \Big) \\&=
\g\Big( P_{f\o\ph}\big((\p_t f)\circ\varphi +Tf.(\p_t\varphi)\big)(x), T(f\circ \varphi).X_x \Big)
\\ &=
\g\Big( \big((P_{f}(\p_tf)) +P_{f}(Tf.\xi)\big)\big(\ph(x)\big),Tf\circ T\varphi.X_x \Big) 
\end{align*}
Since $T\varphi$ is surjective, $T\varphi.X$ exhausts the tangent space $T_{\varphi(x)}M$, and one has
$$\big((P_{f}(\p_tf)) +P_{f}(Tf.\xi)\big)\big(\ph(x)\big)\quad \perp \quad f.$$
This holds for all $x \in M$, and by the surjectivity of $\varphi$, one also has that
$$\big((P_{f}(\p_tf)) +P_{f}(Tf.\xi)\big)(x)\quad \perp \quad f$$
at all $x \in M$. 
This means that the tangential part $\big(P_{f}(\p_tf) +  P_f(Tf.\xi)\big)^\top$ vanishes.
Using the time dependent vector field
$$\xi=-(P_f^\top )\i \big((P_f \p_t f)^\top\big)$$
and its flow $\ph$ achieves this.
\qed\end{demo}

%------------------------------------------------------------------------------------------
\section[Geodesic equation]{Geodesic equation on shape space}\label{so:gesh}
%------------------------------------------------------------------------------------------

By the previous section and theorem~\ref{sh:sub}, 
geodesics in $B_i$ correspond exactly to horizontal geodesics in $\Imm$. 
The equations for horizontal geodesics in the space of immersions have been written down in 
section~\ref{sh:gesh}. Here they are specialized to Sobolev-type metrics: 
\begin{thm*} 
The geodesic equation on shape space for a Sobolev-type metric $G^P$ 
is equivalent to the set of equations
\begin{equation*}\left\{\begin{aligned}
f_t &= f_t^\hor \in \Hor, \\
(\nabla_{\p_t} f_t)^\hor &= 
\frac12P\i\Big(\adj{\nabla P}(f_t,f_t)^\bot -\g(Pf_t,f_t).\Tr^g(S)\Big)\\
&\qquad-P\i\Big(\big((\nabla_{f_t}P)f_t\big)^\bot-\Tr^g\big(\g(\nabla f_t,Tf)\big) Pf_t\Big),
\end{aligned}\right.\end{equation*}
where $f$ is a horizontal path of immersions.
\end{thm*}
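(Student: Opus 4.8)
The plan is to specialize the general shape-space geodesic equation to the Sobolev-type metric $G^P$. First I would note that the hypotheses of Theorem~\ref{sh:gesh} are met: by Lemma~\ref{so:ho2} every curve in $B_i$ admits a horizontal lift to $\Imm$, so along such a horizontal path $f$ the geodesic equation reduces to
$$(\nabla_{\p_t} f_t)^\hor = \Big(\tfrac12 H_f(f_t,f_t) - K_f(f_t,f_t)\Big)^\hor .$$
Into the right-hand side I would substitute the explicit metric gradients of Lemma~\ref{so:me} with $h=k=m=f_t$ (using the symmetry $\g(Pf_t,\nabla f_t)=\g(\nabla f_t,Pf_t)$); this reproduces the immersion geodesic equation of section~\ref{so:ge}, so the only remaining task is to compute its horizontal projection.

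Throughout I would exploit horizontality of $f_t$, which by the description of the horizontal bundle in section~\ref{so:ho} means $(P_f f_t)^\top=0$, i.e. $P_f f_t$ is normal to $f$. This immediately makes the summand $\Tr^g(\g(\nabla f_t,Tf))\,Pf_t$ and the mean-curvature term $\g(Pf_t,f_t)\Tr^g(S)$ normal. For the projection itself I would use the formula of Lemma~\ref{so:ho}, namely $w^\hor = w - Tf.(P_f^\top)\i\big((P_fw)^\top\big)$. Writing $\tfrac12 H_f(f_t,f_t)-K_f(f_t,f_t)=P_f\i(Z)$, the identity $P_f(P_f\i Z)=Z$ shows that the projection is governed entirely by the tangential part $Z^\top$. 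The two genuinely tangential contributions to $Z$ are $-Tf.\g(Pf_t,\nabla f_t)^\sharp$ coming from $\tfrac12 H$ and the tangential part of $(\nabla_{f_t}P)f_t$ coming from $K$; the cancellation of the top-order gradient $\grad^g\g(Pf_t,f_t)$ that would otherwise survive has already been carried out inside $H$ via the tangential adjoint formula of Lemma~\ref{so:ad}. Collecting the normal survivors $\tfrac12\adj{\nabla P}(f_t,f_t)^\bot$, $-\tfrac12\g(Pf_t,f_t)\Tr^g(S)$, $-\big((\nabla_{f_t}P)f_t\big)^\bot$ and $\Tr^g(\g(\nabla f_t,Tf))Pf_t$ then reassembles into the claimed right-hand side.

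The step I expect to be the main obstacle is the horizontal projection itself, precisely because $P_f\i$ does not respect the splitting of $f^*TN$ into tangential and normal parts: one cannot simply erase the summands of the form $Tf.(\dots)$ that sit inside $P_f\i$. The way I would keep this honest is to never project $P_f\i Z$ directly, but to work with its $P_f$-image $Z$ — equivalently at the level of the momentum $p=Pf_t\otimes\vol(g)$ as in section~\ref{sh:geshmo} — split $Z=Z^\bot+Tf.Z^\top$ there, and only afterwards apply $P_f\i$ together with the vertical correction $-Tf.(P_f^\top)\i(Z^\top)$. Verifying that this vertical correction exactly accounts for the tangential terms, so that the displayed normal expression really is the horizontal projection, is the delicate point; it is here that the reparametrization-invariance of $P$ (through Lemma~\ref{so:ad}) and the horizontality constraint $P_f f_t\perp Tf$ are both indispensable.
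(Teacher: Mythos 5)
Your skeleton --- Theorem~\ref{sh:gesh} plus the lifting Lemma~\ref{so:ho2}, then substitution of the gradients from Lemma~\ref{so:me} --- is exactly the one-line specialization the paper itself offers, so the whole content of a real proof sits in the projection step, and that is precisely where your argument stops being a proof. The claim you defer as ``the delicate point'' (that the vertical correction $-Tf.(P_f^\top)\i(Z^\top)$ exactly accounts for the tangential summands sitting inside $P_f\i$) is false in general. Writing $\tfrac12 H(f_t,f_t)-K(f_t,f_t)=P_f\i(Z)$ with $Z=Z^\bot+Tf.Z^\top$, and using $P_f(Tf.X)=Tf.P_f^\top(X)+P_{f,\bot}(X)$ from section~\ref{so:ho}, one finds
\begin{equation*}
P_f\i(Tf.Z^\top)-Tf.(P_f^\top)\i(Z^\top)
=-P_f\i\Big(P_{f,\bot}\big((P_f^\top)\i(Z^\top)\big)\Big),
\qquad
\Big(\tfrac12 H-K\Big)^\hor
=P_f\i\Big(Z^\bot-P_{f,\bot}\big((P_f^\top)\i(Z^\top)\big)\Big).
\end{equation*}
So the projection does not simply erase the tangential terms: it leaves an extra normal term involving the nonlocal inverse $(P_f^\top)\i$, which is absent from the right-hand side you want to reach. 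This term vanishes only if $P_f$ maps tangential fields to tangential fields; for $P=1+A\De^p$ it does not (already $\De(Tf.X)$ acquires a normal component built from the second fundamental form), so ``collecting the normal survivors'' cannot be the whole story.

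Second, even the survivors you collect carry the wrong sign: substituting $-K_f(f_t,f_t)=-P_f\i\big((\nabla_{f_t}P)f_t+\Tr^g(\g(\nabla f_t,Tf))\,Pf_t\big)$ produces the normal term $-\Tr^g(\g(\nabla f_t,Tf))\,Pf_t$, whereas your list (and the displayed equation) has it with a plus; nothing in your computation produces this flip --- it is copied from the statement rather than derived. Both defects are visible already in the limiting case $P=\on{Id}$, where the extra $P_{f,\bot}$-term vanishes and horizontality means $f_t$ normal, so that $\Tr^g(\g(\nabla f_t,Tf))=-\g(f_t,\Tr^g(S))$: your right-hand side then equals $-\tfrac12\g(f_t,f_t)\Tr^g(S)-\g(f_t,\Tr^g(S))f_t$, while the genuine horizontal projection is $-\tfrac12\g(f_t,f_t)\Tr^g(S)+\g(f_t,\Tr^g(S))f_t$ (for plane curves this is the classical $H^0$-equation $a_t=\tfrac12\kappa a^2$, not $a_t=-\tfrac32\kappa a^2$). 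The robust way to close the argument --- and the one the paper actually verifies --- is the momentum formulation of sections~\ref{sh:geshmo} and \ref{so:geshmo}: for the momentum $p=Pf_t\otimes\vol(g)$ the horizontal/vertical splitting is the plain normal/tangential splitting of the density, no inversion of $P_f^\top$ ever enters, the vertical part of the equation is checked to hold automatically along horizontal paths, and a velocity-form equation is then read off from $(\nabla_{\p_t}(Pf_t))^\bot=((\nabla_{f_t}P)f_t)^\bot+(P_f\nabla_{\p_t}f_t)^\bot$ --- with the minus sign in front of $\Tr^g(\g(\nabla f_t,Tf))\,Pf_t$.
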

These equations are not handable very well since taking the horizontal part
of a vector to $\Imm$ involves inverting an elliptic pseudo-differential operator, 
see section~\ref{so:ho}. 
However, the formulation in the next section is much better.

%------------------------------------------------------------------------------------------
\section[Geodesic equation]{Geodesic equation on shape space in terms of the momentum}\label{so:geshmo}
%------------------------------------------------------------------------------------------

The geodesic equation in terms of the momentum has been derived in section~\ref{sh:geshmo} 
for a general metric on shape space. Now it is specialized to Sobolev-type metrics
using the formula for the $H$-gradient from section~\ref{so:me}. 

As in section~\ref{so:gemo} the momentum $G^P(f_t,\cdot)$ is identified with
$Pf_t \otimes \vol(g)$. By definition, the momentum is horizontal if it annihilates all 
vertical vectors. This  is the case if and only if $Pf_t$ is normal to $f$. 
Thus the splitting of the momentum in horizontal and vertical parts is given by
$$Pf_t \otimes \vol(g) = (Pf_t)^\bot \otimes \vol(g) + Tf.(Pf_t)^\top \otimes \vol(g).$$
This is much simpler than the splitting of the velocity in horizontal and vertical parts where 
a pseudo-differential operator has to be inverted, see section~\ref{so:ho}.
Thus the following version of the geodesic equation on shape space is the easiest to solve.

\begin{thm*}
The geodesic equation on shape space is equivalent to the set of equations
for a path of immersions $f$:
\begin{equation*}
\left\{\begin{aligned}
p &= Pf_t \otimes \vol(g), \qquad Pf_t = (Pf_t)^\bot, \\
(\nabla_{\p_t}p)^\hor &= \frac12 \Big(\adj{\nabla P}(f_t,f_t)^\bot-\g(Pf_t,f_t).\Tr^g(S)\Big) \otimes \vol(g).
\end{aligned}\right.
\end{equation*}
\end{thm*}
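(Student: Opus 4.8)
The plan is to derive this statement as the specialization of the general momentum form of the geodesic equation on shape space, Theorem~\ref{sh:geshmo}, to a Sobolev-type metric $G^P$. That general theorem carries the hypothesis that every curve in $B_i$ lift to a horizontal curve in $\Imm$; for $G^P$ this is precisely the content of the lifting lemma of section~\ref{so:ho2} (the Moser-trick construction of the reparametrization $\ph$), so I would first invoke that lemma to license the use of Theorem~\ref{sh:geshmo} in the present setting.

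Next I would insert the explicit Sobolev data. By section~\ref{so:gemo} the momentum $p=G^P_f(f_t,\cdot)$ is identified with $Pf_t\otimes\vol(g)$, the $\int_M$ and $\g$ being constant along the path. The horizontality condition $p\in\Hor$ then translates, via the description of the horizontal bundle in section~\ref{so:ho}, into the pointwise statement that $Pf_t$ be $\g$-normal to $f$, i.e.\ $Pf_t=(Pf_t)^\bot$; this is exactly the first line of the claimed system.

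For the right-hand side I would compute $G^P_f\big(H(f_t,f_t),\cdot\big)=P\big(H(f_t,f_t)\big)\otimes\vol(g)$ and substitute the $H$-gradient from the lemma in section~\ref{so:me}. Because $PP\i=\Id$, the outer $P$ cancels the $P\i$ and (using $\g(Pf_t,\nabla f_t)+\g(\nabla f_t,Pf_t)=2\g(Pf_t,\nabla f_t)$) one gets
\[
P\big(H(f_t,f_t)\big)=\adj{\nabla P}(f_t,f_t)^\bot-2\,Tf.\g(Pf_t,\nabla f_t)^\sharp-\g(Pf_t,f_t)\,\Tr^g(S),
\]
which is the same bracket that appears in the $\Imm$-level equation of section~\ref{so:gemo}. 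The final step is to pass to the horizontal part: by the splitting of momenta recorded in section~\ref{so:geshmo}, the horizontal part of a momentum $b\otimes\vol(g)$ is $b^\bot\otimes\vol(g)$, so I only need to retain the \emph{normal} summands. Here the decisive observation is that $\Tr^g(S)$ is the vector-valued mean curvature and hence lies in $\Nor(f)$ (section~\ref{no:we}), so $\g(Pf_t,f_t)\Tr^g(S)$ is already normal, whereas $Tf.\g(Pf_t,\nabla f_t)^\sharp$ is tangential and is therefore annihilated by the projection. Collecting the surviving terms yields the stated second equation.

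I expect the only delicate point to be the horizontal projection of $\nabla_{\p_t}p$: one must be confident that ``horizontal part of the momentum'' really amounts to keeping the normal component of its vector representative, and that the tangential term $Tf.\g(Pf_t,\nabla f_t)^\sharp$, which is present in the geodesic equation on $\Imm$ (section~\ref{so:gemo}) but absent on $B_i$, is exactly the piece removed upon descending to shape space. Everything else is bookkeeping on the normal/tangential type of each summand, which the variational formulas of section~\ref{va} and the $H$-gradient computation of section~\ref{so:me} have already made fully explicit.
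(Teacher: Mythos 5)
Your proposal is correct and follows essentially the same route as the paper: specialize Theorem~\ref{sh:geshmo} (its lifting hypothesis discharged by the Moser-trick lemma of section~\ref{so:ho2}), identify $p$ with $Pf_t\otimes\vol(g)$ so that horizontality becomes $Pf_t=(Pf_t)^\bot$, insert the $H$-gradient of section~\ref{so:me}, and observe that the horizontal projection kills the tangential term $Tf.\g(Pf_t,\nabla f_t)^\sharp$ while $\adj{\nabla P}(f_t,f_t)^\bot$ and $\g(Pf_t,f_t)\Tr^g(S)$ survive since $\Tr^g(S)$ is normal. The paper only adds a supplementary hand verification (not needed for the theorem) that the vertical part of the $\Imm$-level equation from section~\ref{so:gemo} holds automatically along horizontal paths.
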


The equation for geodesics on $\Imm$ without the horizontality condition is
$$\nabla_{\p_t}p = \frac12\big(\adj{\nabla P}(f_t,f_t)^\bot-2Tf.\g(Pf_t,\nabla f_t)^\sharp
-\g(Pf_t,f_t).\Tr^g(S)\big)\otimes\vol(g),
$$
see section~\ref{so:gemo}. 
It has been proven in section~\ref{sh:gesh} that the vertical part of this equation 
is satisfied automatically when the geodesic is horizontal. 
Nevertheless this will be checked by hand because the proof is much simpler here
than in the general case. 

If $f_t$ is horizontal then by definition $Pf_t$ is normal to $f$. Thus one has for any $X \in \X(M)$ that
\begin{align*}
g\big((\nabla_{\p_t}Pf_t)^\top, X \big) &= 
\g(\nabla_{\p_t}Pf_t, Tf.X ) = 
0-\g(Pf_t,\nabla_{\p_t}Tf.X)\\&=
-\g(Pf_t,\nabla_X f_t)=
-g\big(\g(Pf_t,\nabla f_t)^\sharp,X).
\end{align*}
Thus 
\begin{align*}
\big(\nabla_{\p_t} p\big)^\vert &=
\big((\nabla_{\p_t} Pf_t) \otimes \vol(g) + Pf_t \otimes D_{(f,f_t)} \vol(g) \big)^\vert \\&=
Tf.(\nabla_{\p_t} Pf_t)^\top \otimes \vol(g) + Tf.(Pf_t)^\top \otimes D_{(f,f_t)} \vol(g) \\&=
-Tf.\g(Pf_t,\nabla f_t)^\sharp \otimes \vol(g) + 0 ,
\end{align*}
which is exactly the vertical part of the geodesic equation.

%------------------------------------------------------------------------------------------
%------------------------------------------------------------------------------------------
\chapter{Geodesic distance on shape space}\label{ge}
%------------------------------------------------------------------------------------------
%------------------------------------------------------------------------------------------

It came as a big surprise when it was discovered in \cite{Michor98} 
that the Sobolev metric of order zero induces vanishing geodesic distance on shape space $B_i$. 
It will be shown that this problem can be overcome by using higher order Sobolev metrics. 
The proof of this result is based on bounding the $G^P$-length of a path from below by its area swept out. 
The main result is in section~\ref{ge:no}.

This section is based on \cite[section~5]{Michor119}. The same ideas can also be found
in \cite[section~2.4]{Bauer2010}, \cite[section~7]{Michor118} and \cite[section~3]{Michor102}.

%------------------------------------------------------------------------------------------
\section{Geodesic distance on shape space}\label{ge:ge}
%------------------------------------------------------------------------------------------

\emph{Geodesic distance} on $B_i$ is given by
$$\dist_{G^P}^{B_i}(F_0,F_1) = \inf_F L_{G^P}^{B_i}(F),$$
where the infimum is taken over all $F :[0,1] \to B_i$ with $F(0)=F_0$ and $F(1)=F_1$.
$L_{G^P}^{B_i}$ is the length of paths in $B_i$ given by
$$L_{G^P}^{B_i}(F) = \int_0^1 \sqrt{G^P_F(F_t,F_t)} dt \quad \text{for $F:[0,1] \to B_i$.}$$
Letting $\pi:\Imm \to B_i$ denote the projection, one has 
$$L_{G^P}^{B_i}(\pi \o f) =  L_{G^P}^{\Imm}(f) =\int_0^1 \sqrt{G^P_f(f_t,f_t)} dt$$
when $f:[0,1]\to \Imm$ is horizontal.
In the following sections, conditions on the metric $G^P$ ensuring that $\dist_{G^P}^{B_i}$ separates
points in $B_i$ will be developed.

%------------------------------------------------------------------------------------------
\section{Vanishing geodesic distance}\label{ge:va}
%------------------------------------------------------------------------------------------

\begin{thm*}
The distance $\dist_{H^0}^{B_i}$ induced by the Sobolev $L^2$ metric of order zero vanishes. 
Indeed it is possible to connect any two distinct shapes by a path of arbitrarily short length. 
\end{thm*}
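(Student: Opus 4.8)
The plan is to follow the corrugation argument of Michor and Mumford \cite{Michor98,Michor102}, adapted to the general setting of this work. Since $\dist_{H^0}^{B_i}$ is an infimum over paths, it suffices to exhibit, for arbitrary shapes $F_0\neq F_1$ and arbitrary $\epsilon>0$, a single path $F:[0,1]\to B_i$ from $F_0$ to $F_1$ with $L_{H^0}^{B_i}(F)<\epsilon$. By section~\ref{so:ho2} such a path can be lifted to a path $f$ of immersions, and by section~\ref{so:ho} (with $P=\Id$) the horizontal bundle of the $H^0$-metric consists of the normal fields. Hence for \emph{any} path $f$ in $\Imm$ lying over $F=\pi\o f$,
$$ L_{H^0}^{B_i}(F)=\int_0^1 \Big(\int_M \g(f_t^\bot,f_t^\bot)\,\vol(g)\Big)^{1/2} dt, $$
because the submersion length sees only the normal part $f_t^\bot=f_t^{\hor}$ of the velocity. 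The whole point is that only the \emph{normal} component of the velocity enters, while the motion itself may be almost entirely tangential, i.e.\ free reparametrization on $B_i$.

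The mechanism I would exploit is to corrugate the immersion by a high-frequency, small-amplitude fold so that, off a negligible set, the tangent spaces $T_xf(T_xM)$ become nearly aligned with a prescribed direction $v$ of motion. If the folds are made ever steeper, a point pushed in the direction $v$ at fixed speed then moves almost tangentially, so its normal speed $|f_t^\bot|$ tends to $0$. The folding does inflate $\vol(g)$, but in the model computation the normal speed decays like the cosine of the fold angle $\beta$ while the volume grows only like its reciprocal, so the integrand $\g(f_t^\bot,f_t^\bot)\,\vol(g)$ is driven to $0$. First I would fix this scaling on a flat coordinate patch, establishing a lemma: a corrugated immersion can be pushed a fixed distance in the direction $v$ along a path of $H^0$-length $O(\sqrt{\cos\beta})$, with $\beta\to\pi/2$ the fold steepness.

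With this lemma the global construction runs in three stages: (i) from $F_0$ grow a steep corrugation; (ii) with the corrugation in place, deform the folded immersion so that its envelope moves from (the image of) $F_0$ to that of $F_1$, which by the lemma costs length $\to 0$ as the steepness grows; (iii) remove the corrugation on the $F_1$ side. I would first fix the steepness large enough that stage (ii) has length $<\epsilon/3$, and then arrange that the growth and removal stages (i),(iii) are likewise short, either by letting their amplitude tend to $0$ as the frequency grows or by keeping the corrugation present throughout so that (i),(iii) are themselves mostly tangential. Concatenating the three stages and projecting to $B_i$ yields a path from $F_0$ to $F_1$ of total length $<\epsilon$.

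The main obstacle I anticipate is not the heuristic but its rigorous and coordinate-free execution: organizing the corrugation as a genuine smooth family of immersions of the whole compact $M$ into $(N,\g)$ rather than a local flat picture, controlling the curvature corrections to both $\vol(g)$ and the normal/tangential splitting that the flat model suppresses, and joining the grow/move/remove stages into one smooth path with endpoints exactly $F_0$ and $F_1$. The quantitative heart is the uniform estimate on $M$ balancing the decay of $|f_t^\bot|^2$ against the growth of $\vol(g)$; once that is secured, the vanishing of the infimum, and hence of $\dist_{H^0}^{B_i}$, follows.
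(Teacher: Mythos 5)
Your strategy is the right one: it is the Michor--Mumford corrugation argument, which is exactly what the paper's proof of this theorem (section~\ref{ge:va}, following \cite{Michor98,Michor102}) carries out, and your scaling heuristic (normal speed $\sim\cos\beta$, volume $\sim 1/\cos\beta$, hence length $\sim\sqrt{\cos\beta}$) matches the paper's exact formulas. But as a proof your proposal has a genuine gap, and it sits precisely where you yourself flag ``anticipated obstacles'': you have no mechanism for producing the corrugation globally, smoothly, and with the right endpoints on a compact $M$ in a curved $(N,\g)$. The paper's device, which your plan is missing, is to corrugate \emph{in the time parameter of an arbitrary connecting path} rather than to fold the immersion in space: take a horizontal path $f$ from $f_0$ to $f_1$, a surjective Morse function $\al:M\to[0,1]$ with controlled singular values, and set $\tilde f(t,x)=f\big(\ph(t,\al(x)),x\big)$, where $\ph$ is an explicit zigzag in $t$ with $n$ teeth. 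Then $T\tilde f=\ph_\al\,d\al\,f_t+Tf$ and $\tilde f_t=\ph_t f_t$, and one gets the exact identity
\begin{equation*}
\norm{\tilde f_t^\bot}^2\,\vol(\tilde f^*\g)
=\frac{\ph_t^2\,\norm{f_t}^2}{\sqrt{1+\ph_\al^2\,\norm{d\al}_{g}^2\,\norm{f_t}^2}}\;\vol(g),
\end{equation*}
which is driven to zero as $n\to\infty$ because $\ph_\al\sim n$. This one construction dissolves all three of your stated obstacles simultaneously: it is coordinate-free on all of $M$; it requires no flat model and no curvature corrections, since the folds ride along the given path itself so every formula is exact in any ambient $(N,\g)$; and the endpoints are automatic ($\ph(0,\cdot)=0$, $\ph(1,\cdot)=1$), so your grow/move/remove staging and the delicate matching of the three stages never arise.

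There is also a step in your plan that would fail as stated: on a compact $M$ the folds cannot be made uniformly steep. Any globally defined fold phase is (locally) a smooth function, and a smooth function on compact $M$ has critical points --- for $M=S^2$ even every $1$-form must vanish somewhere --- so $\norm{d\al}_g$ degenerates on a nonempty set, near which the motion remains genuinely normal no matter how large the frequency is. Your flat-patch lemma is fine on a patch, but the patches cannot be glued into an everywhere-steep corrugation, so the quantitative estimate cannot hold pointwise on all of $M$. The paper confronts this head on: it chooses the Morse function so that its singular values are controlled, excises geodesic balls of total volume $<\ep$ around the singular set (whose contribution to the length is $O(\ep)$ using only the uniform bound on $\norm{f_t}$), and on the complement, where $\norm{d\al}_g\ge\et>0$, it converts the integrals into slice-wise integrals over level sets of $\al$ and estimates them (splitting according to $\norm{f_t}<\ep$ or $\ge\ep$) by terms that are $O(\ep)$ plus terms vanishing as $n\to\infty$. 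Without this excision-and-measure argument --- or some substitute for it --- the corrugation argument does not close up on a compact manifold, so this is the second essential ingredient your proposal would need before it becomes a proof.
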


This result was first established by Michor and Mumford 
for the case of planar curves in \cite{Michor98}.
Here a more general version from \cite{Michor102} is quoted. 

\begin{proof}
Take a path $f(t,x)$ in $\on{Imm}(M,N)$ from $f_0$ to $f_1$ and
make it horizontal by the same method that was used in \ref{so:ho2}. 
Horizontality for the $H^0$-metric simply means $\g(f_t,\D f)=0$. 
This forces a reparametrization on $f_1$. 

Let $\al:M\to [0,1]$ be a surjective Morse function whose
singular values are all contained in the set
$\{\frac{k}{2N}:0\le k\le 2N\}$ for some integer $N$. We shall
use integers $n$ below and we shall use only multiples of $N$. 

Then the level sets
$M_r:=\{x\in M:\al(x)=r\}$ are of Lebesgue measure 0.
We shall also need the slices 
$M_{r_1,r_2}:=\{x\in M:r_1\le\al(x)\le r_2\}$.
Since $M$ is compact there exists a constant $C$ such that the
following estimate holds uniformly in $t$:
$$
\int_{M_{r_1,r_2}} \on{vol}(f(t,\quad)^*\g) 
\le C(r_2-r_1)\int_{M} \on{vol}(f(t,\quad)^*\g) 
$$

Let $\tilde f(t,x)=f(\ph(t,\al(x)),x)$ where 
$\ph:[0,1]\x[0,1]\to[0,1]$ is given as in \cite{Michor98},~3.10  by
\begin{displaymath}
\ph(t,\al)=
\begin{cases}
  2t(2n\al-2k), &\; 0 \le t\le 1/2,\;
    \tfrac{2k}{2n}\le \al \le\tfrac{2k+1}{2n}
  \\
  2t(2k+2-2n\al), &\; 0 \le t\le 1/2,\;
    \tfrac{2k+1}{2n}\le \al \le\tfrac{2k+2}{2n}
  \\
  2t-1+2(1-t)(2n\al-2k), &\; 1/2 \le t\le 1,\;
    \tfrac{2k}{2n}\le \al \le\tfrac{2k+1}{2n}
  \\
  2t-1+2(1-t)(2k+2-2n\al), &\; 1/2 \le t\le 1,\;
    \tfrac{2k+1}{2n}\le \al \le\tfrac{2k+2}{2n}.
\end{cases}
\end{displaymath}
See figure~\ref{ge:va:zi} for an illustration of the construction.

\begin{figure}
\centering
\includegraphics[width=.7\textwidth]{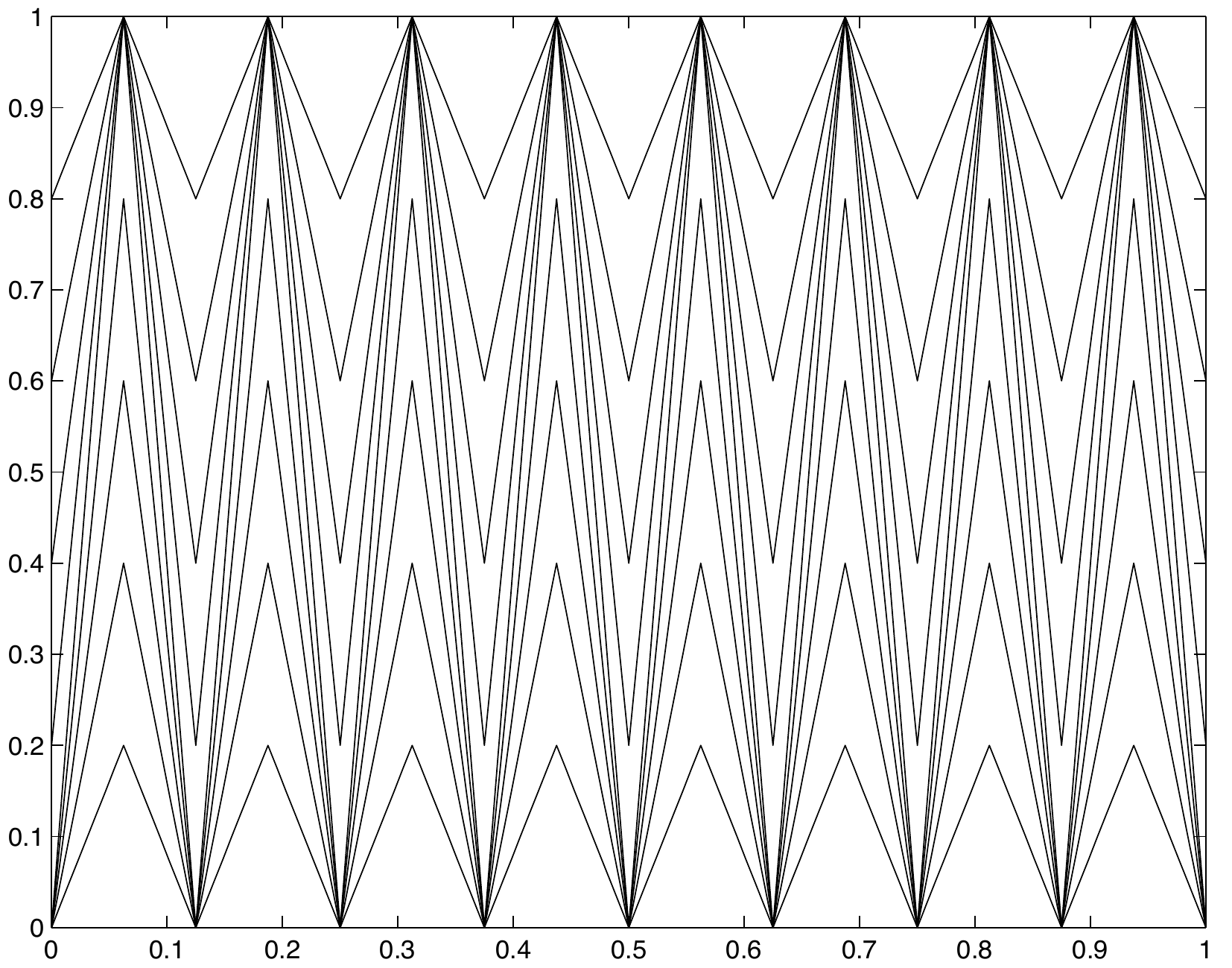}
\caption{Plot of the function $\ph$. Each zig-zagged line corresponds to 
$\ph(t,\cdot)$ for some fixed values of $t$, namely 
$t=\frac{1}{10}, \frac{2}{10}, \ldots ,\frac{9}{10}$. 
The figure is taken from \cite{Michor98}. }
\label{ge:va:zi}
\end{figure}

Then we get $\D \tilde f = \ph_\al.d\al.f_t + \D f$ and 
$\tilde f_t = \ph_t. f_t$ where
\begin{displaymath}
\ph_\al=\begin{cases} +4nt \\ -4nt \\+4n(1-t) \\ -4n(1-t) \end{cases},\qquad 
\ph_t=\begin{cases} 4n\al-4k \\ 4k+4-4n\al \\2-4n\al+4k\\-(2-4n\al+4k)\end{cases}.
\end{displaymath}
We use horizontality $\g(f_t,\D f)=0$ 
to determine $\tilde f_t^\bot=\tilde f_t + \D\tilde f(X)$ where
$X\in TM$ satisfies
$0=\g(\tilde f_t + \D\tilde f(X),\D\tilde f(\xi))$ for all
$\xi\in TM$. We also use
$$
d\al(\xi)=g(\on{grad}^{g}\al,\xi)
  =\g\big(\D f(\on{grad}^{g}\al),\D f(\xi)\big)
$$
and get
\begin{align*}
0 &= \g\big(\tilde f_t + \D\tilde f(X),\D\tilde f(\xi)\big)
\\&
= g\Bigl(\ph_tf_t + \ph_\al d\al(X) f_t + \D f(X), 
                \ph_\al d\al(\xi) f_t +\D f(\xi)\Bigr)
\\&
=\ph_t.\ph_\al.g(\on{grad}^{g}\al,\xi)\|f_t\|^2 
+\\&\quad
+\ph_\al^2.g(\on{grad}^{g}\al,X).g(\on{grad}^{g}\al,\xi)\|f_t\|^2
+\g(\D f(X),\D f(\xi))
\\&
=(\ph_t.\ph_\al+\ph_\al^2.g(\on{grad}^{g}\al,X))\|f_t\|^2
g(\on{grad}^{g}\al,\xi) + g(X,\xi)
\end{align*}
This implies that $X=\la\on{grad}^{g}\al$ for a function $\la$
and in fact we get  
\begin{align*}
\tilde f_t^\bot 
= \frac{\ph_t}
   {1+\ph_\al^2\|d\al\|_{g}^2\|f_t\|^2}
   f_t
-\frac{\ph_t \ph_\al\|f_t\|_g^2}
   {1+\ph_\al^2\|d\al\|_{f^*\g}^2\|f_t\|^2}
  \D f(\on{grad}^{g}\al)
\end{align*}
and 
$$
\|\tilde f_t\|^2 
= \frac{\ph_t^2\|f_t\|^2}
      {1+\ph_\al^2\|d\al\|_{f^*\g}^2\|f_t\|^2}
$$
From $\D \tilde f = \ph_\al.d\al.f_t + \D f$ and 
$\g(f_t,\D f)=0$ we get for the volume form 
$$
\on{vol}(\tilde f^*\g) =
\sqrt{1+\ph_\al^2\,\|d\al\|_{g}^2\|f_t\|^2}\,\on{vol}(g).
$$
For the horizontal length we get
%\allowdisplaybreaks
\begin{align*}
&L^{\text{hor}}(\tilde f)  
=\int_0^1\Bigl(\int_M
  \|\tilde f_t^\bot\|^2 \on{vol}(\tilde f^*\g)\Bigr)^{\tfrac12}dt
=\\&
=\int_0^1\Bigl(\int_M
  \frac{\ph_t^2\|f_t\|^2}
      {\sqrt{1+\ph_\al^2\|d\al\|_{g}^2\|f_t\|^2}}
      \on{vol}(g)\Bigr)^{\tfrac12}dt
=\\&
=\int_0^{\frac12}\Biggl(\sum_{k=0}^{n-1}\Bigl(
\int_{M_{\tfrac{2k}{2n},\tfrac{2k+1}{2n}}} 
  \frac{(4n\al-4k)^2\|f_t\|^2}
      {\sqrt{1+(4nt)^2\|d\al\|_{g}^2\|f_t\|^2}}
  \on{vol}(g)
+\\&\qquad\qquad
+\int_{M_{\tfrac{2k+1}{2n},\tfrac{2k+2}{2n}}} 
  \frac{(4k+4-4n\al)^2\|f_t\|^2}
      {\sqrt{1+(4nt)^2\|d\al\|_{g}^2\|f_t\|^2}}
      \on{vol}(g)
\Bigr)\Biggr)^{\tfrac12}dt
+\\&\quad
+\int_{\frac12}^1\Biggl(\sum_{k=0}^{n-1}\Bigl(
\int_{M_{\tfrac{2k}{2n},\tfrac{2k+1}{2n}}} 
  \frac{(2-4n\al+4k)^2\|f_t\|^2}
      {\sqrt{1+(4n(1-t))^2\|d\al\|_{g}^2\|f_t\|^2}}
  \on{vol}(g)
+\\&\qquad\qquad\quad
+\int_{M_{\tfrac{2k+1}{2n},\tfrac{2k+2}{2n}}} 
  \frac{(2-4n\al+4k)^2\|f_t\|^2}
      {\sqrt{1+(4n(1-t))^2\|d\al\|_{g}^2\|f_t\|^2}}
  \on{vol}(g)
\Bigr)\Biggr)^{\tfrac12}dt
\end{align*}
Let $\ep>0$.
The function 
$(t,x)\mapsto \|f_t(\ph(t,\al(x)),x)\|^2$ is 
uniformly bounded. 
On $M_{\tfrac{2k}{2n},\tfrac{2k+1}{2n}}$ the function
$4n\al-4k$ has values in $[0,2]$. Choose disjoint geodesic
balls centered at the finitely many singular values of the
Morse function $\al$ of total $g$-volume $<\ep$. Restricted
to the union $M_{\text{sing}}$ of these balls the integral
above is $O(1)\ep$. So we have to estimate the integrals on the
complement $\tilde M=M\setminus M_{\text{sing}}$ where the
function $\|d\al\|_{g}$ is uniformly bounded from below by
$\et>0$.

Let us estimate one of the sums above. We use the fact that
the singular points of the Morse function $\al$ lie all on the
boundaries of the sets $\tilde M_{\tfrac{2k}{2n},\tfrac{2k+1}{2n}}$
so that we can transform the integrals as follows:
\begin{align*}
&\sum_{k=0}^{n-1}
\int_{\tilde M_{\tfrac{2k}{2n},\tfrac{2k+1}{2n}}} 
  \frac{(4n\al-4k)^2\|f_t\|^2}
      {\sqrt{1+(4nt)^2\|d\al\|_{g}^2\|f_t\|^2}}
  \on{vol}(g)=
\\&=\sum_{k=0}^{n-1}
\int_{\tfrac{2k}{2n}}^{\tfrac{2k+1}{2n}}\int_{\tilde M_{r}} 
  \frac{(4nr-4k)^2\|f_t\|^2}
      {\sqrt{1+(4nt)^2\|d\al\|_{g}^2\|f_t\|^2}}
  \frac{\on{vol}(i_r^*f^*\g)}{\|d\al\|_{g}}\;dr
\end{align*}
We estimate this sum of integrals:
Consider first the set of all $(t,r,x)\in M_r$ such that 
$|f_t(\ph(t,r),x)|<\ep$. There we estimate by 
$$
O(1).n.16n^2.\ep^2.(r^3/3)|_{r=0}^{r=1/2n}=O(\ep).
$$
On the complementary set where $|f_t(\ph(t,r),x)|\ge\ep$ we
estimate by
$$
O(1).n.16n^2.\frac1{4nt\et^2\ep}(r^3/3)|_{r=0}^{r=1/2n}
  =O(\frac1{nt\et^2\ep})
$$
which goes to 0 if $n$ is large enough.
The other sums of integrals can be estimated similarly, thus 
$L^{\text{hor}}(\tilde f)$ goes to 0 for $n\to \infty$. It is clear that
one can approximate $\ph$ by a smooth function without changing the
estimates essentially. 
\end{proof}

%------------------------------------------------------------------------------------------
\section{Area swept out}\label{ge:ar}
%------------------------------------------------------------------------------------------

For a path of immersions $f$ seen as a mapping $f:[0,1] \x M \to N$ one has
$$(\text{area swept out by $f$})=\int_{[0,1]\x M} \vol(f(\cdot,\cdot)^* \g) 
=\int_0^1 \int_M \norm{f_t^\bot} \vol(g) dt.$$

%------------------------------------------------------------------------------------------
\section{Area swept out bound}\label{ge:ar1}
%------------------------------------------------------------------------------------------

\begin{lem*}
Let $G^P$ be a Sobolev type metric that is at least as strong as the $H^0$-metric, i.e. 
there is a constant $C_1 > 0$ such that
\begin{align*}
\norm{h}_{G^P} \geq C_1 \norm{h}_{H^0} = C_1 \sqrt{\int_M \g(h,h) \vol(g)} \qquad \text{for all $h \in T\Imm$.} 
\end{align*}
Then one has the area swept out bound for any path of immersions $f$:
\begin{align*}
C_1 \ (\text{area swept out by $f$})
\leq \max_t \sqrt{\Vol\big(f(t)\big)} . L_{G^P}^{\Imm}(f).
\end{align*}
\end{lem*}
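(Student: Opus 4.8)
The plan is to reduce the claimed inequality to a pointwise-in-$t$ estimate on the inner integral $\int_M \norm{f_t^\bot}\vol(g)$ and then integrate in $t$. The only genuine tool needed is the Cauchy--Schwarz inequality with respect to the measure $\vol(g)$, together with the $\g$-orthogonality of the tangential and normal parts of $f_t$ from section~\ref{no:no}.

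First I would fix $t$ and apply Cauchy--Schwarz to the product $\norm{f_t^\bot}\cdot 1$:
\begin{align*}
\int_M \norm{f_t^\bot}\vol(g) &\leq \Big(\int_M \norm{f_t^\bot}^2\vol(g)\Big)^{1/2}\Big(\int_M 1\,\vol(g)\Big)^{1/2} \\
&= \Big(\int_M \g(f_t^\bot,f_t^\bot)\vol(g)\Big)^{1/2}\sqrt{\Vol\big(f(t)\big)},
\end{align*}
where I used $\int_M \vol(g)=\Vol(f(t))$ from section~\ref{no:vo}. Next I would use that the decomposition $f_t = Tf.f_t^\top + f_t^\bot$ is $\g$-orthogonal, so that $\g(f_t,f_t) = \norm{Tf.f_t^\top}^2 + \norm{f_t^\bot}^2 \geq \norm{f_t^\bot}^2$ pointwise on $M$. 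Integrating this gives $\int_M \g(f_t^\bot,f_t^\bot)\vol(g) \leq \norm{f_t}_{H^0}^2$, whence
$$\int_M \norm{f_t^\bot}\vol(g) \leq \norm{f_t}_{H^0}\sqrt{\Vol\big(f(t)\big)}.$$

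Then I would invoke the hypothesis in the form $\norm{f_t}_{H^0} \leq C_1\i\sqrt{G^P_f(f_t,f_t)}$ and bound $\sqrt{\Vol(f(t))} \leq \max_t \sqrt{\Vol(f(t))}$, giving for each $t$
$$C_1\int_M \norm{f_t^\bot}\vol(g) \leq \max_t\sqrt{\Vol\big(f(t)\big)}\,\sqrt{G^P_f(f_t,f_t)}.$$
Integrating over $t\in[0,1]$ and recognizing the definitions of the area swept out (section~\ref{ge:ar}) and of $L_{G^P}^{\Imm}(f)$ (section~\ref{ge:ge}) yields the claim. I do not expect a serious obstacle here; the only points requiring care are the Cauchy--Schwarz step that factors out the total volume, since it is precisely this factor that forces $\sqrt{\Vol}$ rather than $\Vol$ into the final bound, and the observation that the area integrand involves the normal part $f_t^\bot$ rather than $f_t$ itself, so that the orthogonal splitting is exactly what lets one pass from $\norm{f_t^\bot}$ to $\norm{f_t}_{H^0}$ and thereby bring in the comparison hypothesis.
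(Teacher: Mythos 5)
Your proof is correct and is essentially the paper's own argument run in the opposite direction: the paper starts from $L_{G^P}^{\Imm}(f)$ and bounds it below using the hypothesis, the inequality $\norm{f_t}_{H^0}\geq\norm{f_t^\bot}_{H^0}$, and the same Cauchy--Schwarz step against the constant function $1$, ending with $\min_t\big(\int_M\vol(g)\big)^{-1/2}=\big(\max_t\int_M\vol(g)\big)^{-1/2}$. The ingredients, and the way the $\sqrt{\Vol}$ factor and the normal part enter, are identical, so there is nothing to add.
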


The proof is an adaptation of the one given in \cite[section~7.3]{Michor118} for almost local metrics.

\begin{proof}
\begin{align*}
L_{G^P}^{\Imm}(f)&=\int_0^1 \norm{f_t}_{G^P} dt \geq
C_1 \int_0^1 \norm{f_t}_{H^0} dt \\&\geq 
C_1 \int_0^1 \norm{f_t^\bot}_{H^0} dt =
C_1 \int_0^1 \Big(\int_M \norm{f_t^\bot}^2 \vol(g) \Big)^{\frac12} dt \\&\geq
C_1 \int_0^1 \Big(\int_M \vol(g) \Big)^{-\frac12} \int_M 1.\norm{f_t^\bot} \vol(g) dt\\&\geq
C_1 \min_t \Big(\int_M \vol(g) \Big)^{-\frac12} \int_{[0,1]\x M} \vol(f(\cdot,\cdot)^* \g)  \\&=
C_1 \Big(\max_t \int_M \vol(g) \Big)^{-\frac12}\ (\text{area swept out by $f$}) 
\qedhere
\end{align*}
\end{proof}

%------------------------------------------------------------------------------------------
\section{Lipschitz continuity of $\sqrt{\Vol}$}\label{ge:li}
%------------------------------------------------------------------------------------------

\begin{lem*}
Let $G^P$ be a Sobolev type metric that is at least as strong as the $H^1$-metric, i.e. 
there is a constant $C_2 > 0$ such that
\begin{align*}
\norm{h}_{G^P} \geq C_2 \norm{h}_{H^1} = 
C_2 \sqrt{\int_M \g\big( (1+\Delta) h,h \big) \vol(g)} \qquad \text{for all $h \in T\Imm$.} 
\end{align*}
Then the mapping
$$\sqrt{\Vol}:(B_i,\dist_{G^P}^{B_i}) \to \R_{\geq 0}$$
is Lipschitz continuous, i.e. for all $F_0$ and $F_1$ in $B_i$ one has:
$$
\sqrt{\Vol(F_1)}-\sqrt{\Vol(F_0)} 
\leq \frac{1}{2 C_2} \dist_{G^P}^{B_i}(F_0,F_1).
$$
\end{lem*}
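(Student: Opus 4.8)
The plan is to prove the infinitesimal version of the estimate along paths and then integrate. Note first that $\Vol$ is invariant under the reparametrization group $\Diff(M)$, hence descends to a well-defined function on $B_i$, and that $\dist_{G^P}^{B_i}$ is by definition an infimum of lengths over all paths $F$ joining $F_0$ to $F_1$. It therefore suffices to show, for every such path $F$, that $\sqrt{\Vol(F_1)}-\sqrt{\Vol(F_0)}\le \tfrac{1}{2C_2}L_{G^P}^{B_i}(F)$, and then to pass to the infimum. Lifting $F$ to a horizontal path $f$ in $\Imm$ (which exists by section~\ref{so:ho2}), one has $\Vol(F_i)=\Vol(f(i))$ and, by section~\ref{ge:ge}, $L_{G^P}^{B_i}(F)=L_{G^P}^{\Imm}(f)=\int_0^1\norm{f_t}_{G^P}\,dt$. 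So everything reduces to the pointwise-in-$t$ bound
\[
\big|\p_t\sqrt{\Vol(f(t))}\big|\le \tfrac{1}{2C_2}\norm{f_t}_{G^P},
\]
after which the fundamental theorem of calculus finishes the argument.

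Next I would differentiate the volume along $f$. Using the variation of the volume density from section~\ref{va:vo},
\[
\p_t\Vol(f)=\int_M D_{(f,f_t)}\vol(g)=\int_M\Big(\on{div}^g(f_t^\top)-\g\big(f_t^\bot,\Tr^g(S)\big)\Big)\vol(g),
\]
and on the closed manifold $M$ the divergence term integrates to zero, so only the mean-curvature term survives. It is cleaner, however, to keep the equivalent first form of the same lemma,
\[
\p_t\Vol(f)=\int_M\Tr^g\big(\g(\nabla f_t,Tf)\big)\vol(g)=\widetilde{g^0_1\otimes\g}(\nabla f_t,Tf),
\]
because this already displays only a single covariant derivative of $f_t$, so that only $H^1$-control of $f_t$ is needed and no further integration by parts is required.

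The key estimate is then Cauchy--Schwarz in $H^0$ on sections of $T^*M\otimes f^*TN$, giving $\big|\p_t\Vol(f)\big|\le \norm{\nabla f_t}_{H^0}\,\norm{Tf}_{H^0}$. Here $Tf$, viewed as a section of $T^*M\otimes f^*TN$, has constant pointwise norm $(g^0_1\otimes\g)(Tf,Tf)=\Tr(g\i g)=\dim M$, so $\norm{Tf}_{H^0}^2=(\dim M)\,\Vol(f)$. Moreover, from $\De=\nabla^*\nabla$ and the adjoint property (sections~\ref{no:co*} and \ref{no:la}) one has $\int_M\g(\De f_t,f_t)\vol(g)=\norm{\nabla f_t}_{H^0}^2$, whence $\norm{\nabla f_t}_{H^0}\le\norm{f_t}_{H^1}$. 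Dividing by $2\sqrt{\Vol(f)}$ yields
\[
\big|\p_t\sqrt{\Vol(f)}\big|=\frac{|\p_t\Vol(f)|}{2\sqrt{\Vol(f)}}\le\frac{\sqrt{\dim M}}{2}\,\norm{f_t}_{H^1}\le\frac{\sqrt{\dim M}}{2C_2}\,\norm{f_t}_{G^P},
\]
using $\norm{f_t}_{G^P}\ge C_2\norm{f_t}_{H^1}$. Integrating in $t$ and taking the infimum over paths gives the claim.

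The main obstacle is the estimate of the second-order (mean-curvature) contribution to $\p_t\Vol$: the naive bound $|\g(f_t^\bot,\Tr^g(S))|\le\norm{f_t^\bot}\,\norm{\Tr^g(S)}$ is useless because $\norm{\Tr^g(S)}$ is not controlled by $\Vol(f)$, which is exactly why the $H^0$-metric is too weak and one derivative must be shifted onto $f_t$; the first-variation identity $\p_t\Vol=\widetilde{g^0_1\otimes\g}(\nabla f_t,Tf)$ accomplishes this without any extra integration by parts. The one remaining delicate point is the precise constant: bounding the trace $\Tr^g(\g(\nabla f_t,Tf))$ by a Frobenius-type norm produces the pointwise factor $\norm{Tf}=\sqrt{\dim M}$, so that for curves ($\dim M=1$) one recovers exactly the stated constant $\tfrac{1}{2C_2}$, while in general this dimensional factor is the point where the sharp constant is determined.
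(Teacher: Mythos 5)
Your proof is correct and follows essentially the same route as the paper's own: the first variation of the volume from section~\ref{va:vo} in the form $\p_t\Vol=\int_M\Tr^g\big(\g(\nabla f_t,Tf)\big)\vol(g)$, Cauchy--Schwarz in $H^0$, the bounds $\norm{\nabla f_t}_{H^0}\le\norm{f_t}_{H^1}\le\tfrac1{C_2}\norm{f_t}_{G^P}$, division by $2\sqrt{\Vol}$, and integration in $t$ followed by an infimum over paths. (The paper arrives at the same integrand by dropping the divergence term and integrating by parts, a cosmetic difference, and it leaves the lifting of paths in $B_i$ to paths of immersions implicit, where you invoke section~\ref{so:ho2}.)

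The one substantive difference is the constant, and there your bookkeeping is the correct one. Pointwise $(g^0_1\otimes\g)(Tf,Tf)=\Tr(g\i g)=\dim(M)$, so $\norm{Tf}_{H^0}=\sqrt{\dim(M)}\,\sqrt{\Vol}$, and the argument yields the Lipschitz constant $\tfrac{\sqrt{\dim(M)}}{2C_2}$. The paper's proof silently estimates $\norm{Tf}_{H^0}\le\sqrt{\Vol}$, which is valid only for $\dim(M)=1$. Moreover, the dimensional factor is not an artifact of Cauchy--Schwarz: writing $m=\dim(M)\ge2$ and taking for $f$ the family of concentric spheres of radius $r$ in $\R^{m+1}$ with $f_t=\nu$, one has $\De\nu=\tfrac m{r^2}\nu$ (section~\ref{su:co}), hence
\[
\frac{\p_t\sqrt{\Vol}}{\tfrac12\norm{f_t}_{H^1}}=\frac{m}{\sqrt{r^2+m}}\;\longrightarrow\;\sqrt m>1
\qquad\text{as } r\to 0,
\]
so the infinitesimal estimate with constant $\tfrac12$ fails; and since this path is horizontal, integrating it between two spheres of radii $\ep<R$ with $R^2<m(m-1)$ gives $\sqrt{\Vol(F_1)}-\sqrt{\Vol(F_0)}>\tfrac12 L^{\Imm}_{G^P}(f)\ge\tfrac12\dist^{B_i}_{G^P}(F_0,F_1)$ for the $H^1$-metric itself ($C_2=1$). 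So for $\dim(M)\ge2$ the lemma holds only with the constant $\tfrac{\sqrt{\dim(M)}}{2C_2}$, exactly as your proof produces; this is harmless for the sequel, since section~\ref{ge:no} only needs Lipschitz continuity with some positive constant.
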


For the case of planar curves, this has been proven in  \cite[section~4.7]{Michor107}.

\begin{proof}
\begin{align*}
\p_t \Vol &= \int_M \Big(\on{div}^g(f_t^\top)-\g\big(f_t^\bot, \Tr^g(S)\big)\Big) \vol(g) \\&=
0+\int_M \g(f_t, \nabla^* Tf) \vol(g) =
\int_M (g^0_1  \otimes \g)(\nabla f_t, Tf) \vol(g) \\&\leq
\sqrt{\int_M  \norm{\nabla f_t}_{g^0_1  \otimes \g}^2 \vol(g)} 
\sqrt{\int_M  \norm{Tf}_{g^0_1  \otimes \g}^2 \vol(g)} \\&\leq
\norm{f_t}_{H^1}\ \sqrt{\Vol}  \leq \frac{1}{C_2}  \norm{f_t}_{G^P}\ \sqrt{\Vol} .
\end{align*}
Thus
\begin{align*}
\p_t \sqrt{\Vol(f)}=\frac{\p_t \Vol(f)}{2 \sqrt{\Vol(f)}}\leq
\frac{1}{2 C_2} \norm{f_t}_{G^P}.
\end{align*}
By integration one gets
\begin{align*}
\sqrt{\Vol(f_1)}-\sqrt{\Vol(f_0)} &= 
\int_0^1 \p_t \sqrt{\Vol(f)}dt \leq 
\int_0^1 \frac{1}{2 C_2} \norm{f_t}_{G^P} = 
\frac{1}{2 C_2}\ L_{G^P}^{\Imm}(f).
\end{align*}
Now the infimum over all paths $f:[0,1] \rightarrow \Imm$ with $\pi(f(0))=F_0$ and $\pi(f(1))=F_1$ is taken. 
\end{proof}

% Let $f$ be a horizontal path. Then
% \begin{align*}
% 0&=\int_M \g\big( P f_t,Tf. f_t^{\top} \big) = 
% \int_M \g\big( P (f_t^{\bot} .\nu + Tf. f_t^{\top}), Tf. f_t^{\top} \big) \\&= 
% \int_M \g\big( P (f_t^{\bot} .\nu ), Tf. f_t^{\top} \big)+
% \int_M \g\big( P (Tf. f_t^{\top}), Tf.f_t^{\top} \big) \\&= 
% \int_M \g\big( P (Tf. f_t^{\top}), f_t^{\bot} .\nu \big)+
% \int_M \g\big( P (Tf. f_t^{\top}), Tf.f_t^{\top} \big) .
% \end{align*}
% Therefore 
% \begin{align*}
% G^P(f_t,f_t) &= \int_M \g\big( P (f_t^{\bot} .\nu + Tf. f_t^{\top}), f_t^{\bot} . \nu \big) \\&=
% \int_M \g\big( P (f_t^{\bot} .\nu), f_t^{\bot} . \nu \big) +
% \int_M \g\big( P (Tf. f_t^{\top}), f_t^{\bot} . \nu \big) \\&=
% \int_M \g\big( P (f_t^{\bot} .\nu), f_t^{\bot} . \nu \big) -
% \int_M \g\big( P (Tf. f_t^{\top}), Tf.f_t^{\top} \big) ..
% \end{align*}
% The first summand expands to 
% \begin{align*}
% G^P(f_t^{\bot}.\nu,f_t^{\bot}.\nu) = \int_M (f_t^{\bot})^2 \big(1 + A \Tr(L^2) \big) + A \norm{df_t^{\bot}}^2_{g\i}.
% \end{align*}

%------------------------------------------------------------------------------------------
\section{Non-vanishing geodesic distance}\label{ge:no}
%------------------------------------------------------------------------------------------

Using the estimates proven above and the fact that the area swept out separates points at least on $B_e$, 
one gets the following result:

\begin{thm*}
The Sobolev type metric $G^P$ induces non-vanishing geodesic distance on $B_e$ if it is
stronger or as strong as the $H^1$-metric, i.e. if there is a constant $C > 0$ such that
\begin{align*}
\norm{h}_{G^P} \geq C \norm{h}_{H^1} = 
C \sqrt{\int_M \g\big( (1+\Delta) h,h \big) \vol(g)} \qquad \text{for all $h \in T\Imm$.} 
\end{align*}
\end{thm*}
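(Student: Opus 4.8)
The plan is to produce an explicit positive lower bound for $\dist_{G^P}^{B_e}(F_0,F_1)$ when $F_0\neq F_1$, by playing the two estimates of sections~\ref{ge:ar1} and \ref{ge:li} against the fact that a path between genuinely different embedded shapes cannot sweep out zero area. First I would record that, since $\Delta=\nabla^*\nabla$ is positive, $\norm{h}_{H^1}^2=\int_M\g(h,h)\vol(g)+\int_M\g(\Delta h,h)\vol(g)\geq\norm{h}_{H^0}^2$; hence the hypothesis $\norm{h}_{G^P}\geq C\norm{h}_{H^1}$ also gives $\norm{h}_{G^P}\geq C\norm{h}_{H^0}$, so \emph{both} lemmas apply with $C_1=C_2=C$.

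Next, fix $F_0\neq F_1$ in $B_e$ and let $f:[0,1]\to\Imm$ be any horizontal lift of a path in $B_e$ from $F_0$ to $F_1$ (these exist by section~\ref{so:ho2}), writing $L:=L_{G^P}^{\Imm}(f)$ and $a(f)$ for the area swept out. The proof of section~\ref{ge:li} in fact establishes the pointwise bound $\p_t\sqrt{\Vol(f)}\leq\frac{1}{2C}\norm{f_t}_{G^P}$, so integrating from $0$ to $t$ gives
\[
\sqrt{\Vol(f(t))}\leq \sqrt{\Vol(F_0)}+\tfrac{1}{2C}L\qquad\text{for all }t,
\]
whence $\max_t\sqrt{\Vol(f(t))}\leq\sqrt{\Vol(F_0)}+\frac{1}{2C}L$. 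Feeding this into the area swept out bound $C\,a(f)\leq\max_t\sqrt{\Vol(f(t))}\cdot L$ of section~\ref{ge:ar1} yields the single scalar inequality
\[
C\,a(f)\leq\Big(\sqrt{\Vol(F_0)}+\tfrac{1}{2C}L\Big)\,L .
\]

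The decisive input is that the area swept out separates points on $B_e$. Because $F_0$ and $F_1$ are \emph{distinct embedded} shapes, their images $\Sigma_0,\Sigma_1$ differ, so (after possibly swapping) there is a point $q\in\Sigma_1\setminus\Sigma_0$; as $\Sigma_0$ is compact there is $r>0$ with $B(q,2r)\cap\Sigma_0=\emptyset$, while $\Sigma_1$ meets $B(q,r)$. Any connecting path must carry the surface across the annulus $B(q,2r)\setminus B(q,r)$ and therefore sweeps out $(m+1)$-volume bounded below by a constant $\delta=\delta(F_0,F_1)>0$ that is \emph{independent of the path}. (This is exactly where embeddedness, rather than mere immersedness, enters.) Granting $a(f)\geq\delta$, the displayed inequality becomes $C\delta\leq\sqrt{\Vol(F_0)}\,L+\frac{1}{2C}L^2$. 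The right-hand side is a strictly increasing function of $L\geq0$ vanishing at $L=0$, so this forces $L\geq L_\ast$, where $L_\ast>0$ is the unique positive root of $\sqrt{\Vol(F_0)}\,L+\frac{1}{2C}L^2=C\delta$. Since $\dist_{G^P}^{B_e}(F_0,F_1)=\inf_f L_{G^P}^{\Imm}(f)$ over horizontal lifts $f$, I conclude $\dist_{G^P}^{B_e}(F_0,F_1)\geq L_\ast>0$.

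I expect the main obstacle to be the geometric separation fact, i.e.\ the path-independent lower bound $\delta>0$ on the area swept out between distinct embeddings; the codimension can be arbitrary here, so the annulus/flux argument needs care (and is the reason the statement is restricted to $B_e$). Once $\delta>0$ is in hand, the two metric estimates combine by the elementary quadratic bookkeeping above, and the remaining reductions (the comparison $H^1\geq H^0$ and the passage to horizontal lifts) are routine.
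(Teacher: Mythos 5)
Your proposal is correct in its overall architecture, and it is exactly the paper's architecture: the paper's entire proof of this theorem consists of invoking the area swept out bound of section~\ref{ge:ar1}, the Lipschitz continuity of $\sqrt{\Vol}$ of section~\ref{ge:li}, and the (unproven, merely asserted) fact that area swept out separates points of $B_e$. Your preliminary reduction ($\Delta\geq 0$ gives $\norm{h}_{H^1}\geq\norm{h}_{H^0}$, so one constant $C$ serves in both lemmas), your integrated form of the Lipschitz estimate bounding $\max_t\sqrt{\Vol(f(t))}$, the quadratic inequality $C\delta\leq\sqrt{\Vol(F_0)}\,L+\tfrac{1}{2C}L^2$ forcing $L\geq L_\ast>0$, and the passage through horizontal lifts (section~\ref{so:ho2}) are precisely the right way to combine the two lemmas.

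The step that would fail, however, is your annulus justification of the separation constant $\delta$. Crossing the annulus $B(q,2r)\setminus B(q,r)$ costs nothing in swept volume by itself: the swept volume is $\int_0^1\int_M\norm{f_t^\bot}\,\vol(g)\,dt$, and a single material point, or a hairpin-shaped protrusion of width $\ep$, can cross the annulus while sweeping volume $O(\ep)$. (For curves in $\R^3$: grow a thin tentacle from $\Sigma_0$ out to $q$; as $\ep\to0$ the swept area tends to $0$, while the endpoint shape changes with $\ep$.) This shows that no bound $\delta=\delta(r)$ depending only on the separation radius can exist; the lower bound must use the full target shape $F_1$, and it needs a genuine argument. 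For hypersurfaces ($\dim M=n-1$) one can argue with the mod-$2$ winding number: near $q$, points on one side of $\Sigma_1$ have different winding numbers with respect to $\Sigma_0$ and $\Sigma_1$, every such point must lie in the image of the track (otherwise the track is a homotopy avoiding that point and the winding numbers would agree), and hence the swept volume dominates the volume of a fixed open set. In arbitrary codimension the standard argument is measure-theoretic, via mod-$2$ flat chains: the track carries a chain $T$ with $\partial T=[\Sigma_1]-[\Sigma_0]$ and mass at most the swept volume, and since the flat norm satisfies $\mathcal F(\partial T)\le M(T)$, a sequence of connecting paths with swept volume tending to zero would force $[\Sigma_1]=[\Sigma_0]$ as flat chains, hence $\Sigma_1=\Sigma_0$ for closed embedded submanifolds. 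The paper does not prove the separation fact either, so your proposal is on a par with the text there; but the specific annulus mechanism you sketch should be replaced by one of these arguments.
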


%------------------------------------------------------------------------------------------
%------------------------------------------------------------------------------------------
\chapter[Sobolev metrics induced by the Laplacian]{Sobolev metrics induced by the Laplace operator}\label{la}
%------------------------------------------------------------------------------------------
%------------------------------------------------------------------------------------------

The results on non-vanishing geodesic distance from the previous section lead us to consider 
operators $P$ that are induced by the Laplacian operator: 
$$P=1+A \Delta^p, \quad P \in \Ga\big(L(T\Imm;T\Imm)\big)$$
for a constant $A>0$. 
(See section~\ref{no:la} for the definition of the Laplacian that is used in this work.)
At every $f \in \Imm$, $P_f$ is a positive, selfadjoint and bijective operator
of order $2p$ acting on $T_f\Imm = \Ga(f^*TN)$.
Note that $\Delta$ depends smoothly on the immersion $f$ via the pullback-metric $f^*\g$, 
so that the same is true of $P$.
$P$ is invariant under the action of the reparametrization group $\on{Diff}(M)$.
It induces the Sobolev metric
$$G_f^P(h,k)=\int_M \g\big(P_f (h),k\big) \vol(g)
=\int_M \g\Big(\big(1+A (\De^{f^*\g})^p \big)h,k\Big) \vol(f^*\g). $$
When $A=1$ we write $H^p := G^{1+\De^p}$.

In this section we will calculate explicitly for $P=1+A \De^p$
the geodesic equation and conserved momenta that have been deduced 
in section~\ref{so} for a general operator $P$. The hardest part will be the 
partial integration needed for the adjoint of $\nabla P$. 
As a result we will get explicit formulas that are ready to be implemented numerically.

This section is based on \cite[section~6]{Michor119}. 

%--------------------------------------------------------------------------------
\section{Other choices for $P$}\label{la:ot}
%--------------------------------------------------------------------------------

Other choices for $P$ are the operator $P=1+A (\nabla^*)^p \nabla^p$ 
corresponding to the metric
$$G_f^P(h,k)=\int_M \big(\g(h,k)+A \g(\nabla^p h,\nabla^p k) \big) \vol(g),$$
and other operators that differ only in lower order terms. Since these operators all 
have the same principal symbol, they induce equivalent metrics on each tangent space $T_f \Imm$.
It would be interesting to know if the induced geodesic distances on $B_i$ are equivalent as well.

%--------------------------------------------------------------------------------
\section{Adjoint of $\nabla P$}\label{la:ad}
%--------------------------------------------------------------------------------

To find a formula for the geodesic equation one has to calculate the adjoint of $\nabla P$, 
see section~\ref{so:ge}.
The following calculations at the same time show the existence of the adjoint.
It has been shown in section~\ref{so:ad}
that the invariance of the operator $P$ with respect to reparametrizations
determines the tangential part of the adjoint:
\begin{align*} 
\adj{\nabla P}(h,k)\big)^\top &=\grad^g \g(Ph,k)-\big(\g(Ph,\nabla k)+\g(\nabla h,Pk)\big)^\sharp.
\end{align*} 
It remains to calculate its normal part using the variational formulas from section~\ref{va}.

In the following calculations there will be terms of the form $\Tr(g\i s_1g\i s_2)$, 
where $s_1,s_2$ are two-forms on $M$. 
When the two-forms are seen as mappings $TM \to T^*M$, they can be composed with $g\i:T^*M \to TM$. 
Thus the expression under the trace is a mapping $TM \to TM$ to which the trace can be applied. 
When one of the two-forms is vector valued, the same tensor components as before are contracted.
For example when $h \in \Ga(f^*TN)$ then $s_2=\nabla^2 h$ is a two-form on $M$ with values in $f^*TN$. 
Then in the expression $\Tr(g\i.s_1.g\i.s_2)$ only $TM$ and $T^*M$ components are contracted, 
whereas the $f^*TN$ component remains unaffected.

{\allowdisplaybreaks
\begin{align*}
&\int_M \g\big(m^\bot,\adj{\nabla P}(h,k)\big) \vol(g)=
\int_M \g\big((\nabla_{m^\bot} P)h,k\big) \vol(g)\\
&\quad=A\sum_{i=0}^{p-1}\int_M\g((\nabla_{m^\bot}\Delta)\Delta^{p-i-1}h ,\Delta^{i}k )\vol(g)\\
&\quad =A\sum_{i=0}^{p-1}\int_M\g\Big(\Tr\Big(g\i.D_{(f,m^\bot)}g.g\i\nabla^2\Delta^{p-i-1}h\Big) ,\Delta^{i}k \Big)\vol(g)\\
&\qquad\qquad -A\sum_{i=0}^{p-1}\int_M\g\Big(\nabla_{\big(\nabla^*(D_{(f,m^\bot)}g)+\frac12 
d\Tr^g(D_{(f,m^\bot)}g)\big)^\sharp}\Delta^{p-i-1}h ,\Delta^{i}k \Big)\vol(g)\\
&\qquad\qquad+A\sum_{i=0}^{p-1}\int_M\g\Big(\nabla^*R^{\g}(m^\bot,Tf)\Delta^{p-i-1}h ,\Delta^{i}k \Big)\vol(g)\\
&\qquad\qquad-A\sum_{i=0}^{p-1}\int_M\g\Big(\Tr^g\big(R^{\g}(m^\bot,Tf)\nabla\Delta^{p-i-1}h\big) ,\Delta^{i}k \Big)\vol(g)\\
&=
A\sum_{i=0}^{p-1}\int_M\Tr\Big(g\i.D_{(f,m^\bot)}g.g\i \g(\nabla^2\Delta^{p-i-1}h,\Delta^{i}k )\Big) \vol(g)\\
&\qquad\qquad -A\sum_{i=0}^{p-1}\int_M (g^0_1\otimes \g)\Big(\nabla\Delta^{p-i-1}h ,(\nabla^*D_{(f,m^\bot)}g)\otimes\Delta^{i}k \Big)\vol(g)\\
&\qquad\qquad -A\sum_{i=0}^{p-1}\int_M (g^0_1\otimes \g)\Big(\nabla\Delta^{p-i-1}h ,\frac12 d\Tr^g(D_{(f,m^\bot)}g)\otimes\Delta^{i}k \Big)\vol(g)\\
&\qquad\qquad+A\sum_{i=0}^{p-1}\int_M(g^0_1\otimes \g)\Big(R^{\g}(m^\bot,Tf)\Delta^{p-i-1}h ,\nabla\Delta^{i}k \Big)\vol(g)\\
&\qquad\qquad-A\sum_{i=0}^{p-1}\int_M\g\Big(\Tr^g\big(R^{\g}(m^\bot,Tf)\nabla\Delta^{p-i-1}h\big) ,\Delta^{i}k \Big)\vol(g)
\end{align*}
Using the following  symmetry property of the curvature tensor (see \cite[24.4.4]{MichorH}):
$$\g(R^{\g}(X,Y)Z,U)=-\g(R^{\g}(Y,X)Z,U)=-\g(R^{\g}(Z,U)Y,X)$$
yields:
\begin{align*}
&\int_M \g\big(m^\bot,\adj{\nabla P}(h,k)\big) \vol(g)=\\
&\qquad=
A\sum_{i=0}^{p-1}\int_Mg^0_2\Big(D_{(f,m^\bot)}g,\g(\nabla^2\Delta^{p-i-1}h,\Delta^{i}k )\Big) \vol(g)\\
&\qquad\qquad -A\sum_{i=0}^{p-1}\int_M g^0_1\Big(\g(\nabla\Delta^{p-i-1}h,\Delta^{i}k),\nabla^*D_{(f,m^\bot)}g \Big)\vol(g)\\
&\qquad\qquad -A\sum_{i=0}^{p-1}\int_M g^0_1\Big(\g(\nabla\Delta^{p-i-1}h,\Delta^{i}k) ,\frac12 \nabla\Tr^g(D_{(f,m^\bot)}g) \Big)\vol(g)\\
&\qquad\qquad+A\sum_{i=0}^{p-1}\int_M\g\Big(\Tr^g\big(R^{\g}(\Delta^{p-i-1}h,\nabla\Delta^{i}k)Tf\big) ,m^\bot \Big)\vol(g)\\
&\qquad\qquad-A\sum_{i=0}^{p-1}\int_M\g\Big(\Tr^g\big(R^{\g}(\nabla\Delta^{p-i-1}h,\Delta^{i}k)Tf\big) , m^\bot\Big)\vol(g)\\
&\qquad=
A\sum_{i=0}^{p-1}\int_Mg^0_2\Big(D_{(f,m^\bot)}g,\g(\nabla^2\Delta^{p-i-1}h,\Delta^{i}k )\Big) \vol(g)\\
&\qquad\qquad -A\sum_{i=0}^{p-1}\int_M g^0_2\Big(\nabla\g(\nabla\Delta^{p-i-1}h,\Delta^{i}k),D_{(f,m^\bot)}g \Big)\vol(g)\\
&\qquad\qquad -\frac{A}{2}\sum_{i=0}^{p-1}\int_M \Big(\nabla^*\g(\nabla\Delta^{p-i-1}h,\Delta^{i}k) \Big)\Tr^g(D_{(f,m^\bot)}g) \vol(g)\\
&\qquad\qquad+A\sum_{i=0}^{p-1}\int_M\g\Big(\Tr^g\big(R^{\g}(\Delta^{p-i-1}h,\nabla\Delta^{i}k)Tf \big),m^\bot \Big)\vol(g)\\
&\qquad\qquad-A\sum_{i=0}^{p-1}\int_M\g\Big(\Tr^g\big(R^{\g}(\nabla\Delta^{p-i-1}h,\Delta^{i}k)Tf\big) , m^\bot\Big)\vol(g)\\
&\qquad=
A\sum_{i=0}^{p-1}\int_Mg^0_2\Big(D_{(f,m^\bot)}g,\g(\nabla^2\Delta^{p-i-1}h,\Delta^{i}k )\Big) \vol(g)\\
&\qquad\qquad -A\sum_{i=0}^{p-1}\int_M g^0_2\Big(\g(\nabla^2\Delta^{p-i-1}h,\Delta^{i}k),D_{(f,m^\bot)}g \Big)\vol(g)\\
&\qquad\qquad -A\sum_{i=0}^{p-1}\int_M g^0_2\Big(\g(\nabla\Delta^{p-i-1}h,\nabla\Delta^{i}k),D_{(f,m^\bot)}g \Big)\vol(g)\\
&\qquad\qquad -\frac{A}{2}\sum_{i=0}^{p-1}\int_M \Big(\nabla^*\g(\nabla\Delta^{p-i-1}h,\Delta^{i}k) \Big)\Tr^g(D_{(f,m^\bot)}g) \vol(g)\\
&\qquad\qquad+A\sum_{i=0}^{p-1}\int_M\g\Big(\Tr^g\big(R^{\g}(\Delta^{p-i-1}h,\nabla\Delta^{i}k)Tf \big), m^\bot \Big)\vol(g)\\
&\qquad\qquad-A\sum_{i=0}^{p-1}\int_M\g\Big(\Tr^g\big(R^{\g}(\nabla\Delta^{p-i-1}h,\Delta^{i}k)Tf\big) , m^\bot\Big)\vol(g)\\
&\qquad=
-A\sum_{i=0}^{p-1}\int_Mg^0_2\Big(D_{(f,m^\bot)}g,\g(\nabla\Delta^{p-i-1}h,\nabla\Delta^{i}k )\Big) \vol(g)\\
&\qquad\quad -\frac{A}{2}\sum_{i=0}^{p-1}\int_M 
\Big(\nabla^*\g(\nabla\Delta^{p-i-1}h,\Delta^{i}k) \Big)\Tr^g(D_{(f,m^\bot)}g) \vol(g)\\
&\qquad\qquad+A\sum_{i=0}^{p-1}\int_M\g\Big(\Tr^g\big(R^{\g}(\Delta^{p-i-1}h,\nabla\Delta^{i}k)Tf \big), m^\bot \Big)\vol(g)\\
&\qquad\qquad-A\sum_{i=0}^{p-1}\int_M\g\Big(\Tr^g\big(R^{\g}(\nabla\Delta^{p-i-1}h,\Delta^{i}k)Tf\big) , m^\bot\Big)\vol(g)\\
&\qquad=
-A\sum_{i=0}^{p-1}\int_Mg^0_2\Big(-2.\g(m^\bot,S),\g(\nabla\Delta^{p-i-1}h,\nabla\Delta^{i}k )\Big) \vol(g)\\
&\qquad\qquad -\frac{A}{2}\sum_{i=0}^{p-1}\int_M \Big(\nabla^*\g(\nabla\Delta^{p-i-1}h,\Delta^{i}k) \Big)\Tr^g\big(-2.\g(m^\bot,S)\big) \vol(g)\\
&\qquad\qquad+A\sum_{i=0}^{p-1}\int_M\g\Big(\Tr^g\big(R^{\g}(\Delta^{p-i-1}h,\nabla\Delta^{i}k)Tf \big),m^\bot \Big)\vol(g)\\
&\qquad\qquad-A\sum_{i=0}^{p-1}\int_M\g\Big(\Tr^g\big(R^{\g}(\nabla\Delta^{p-i-1}h,\Delta^{i}k)Tf\big) , m^\bot\Big)\vol(g)
\\&\qquad=
\int_M \g\Big(m^\bot,2A\sum_{i=0}^{p-1}\Tr\big(g\i S g\i \g(\nabla\Delta^{p-i-1}h,\nabla\Delta^{i}k ) \big)\Big)\\&\qquad\qquad
+\int_M \g\Big(m^\bot,A\sum_{i=0}^{p-1} \big(\nabla^*\g(\nabla\Delta^{p-i-1}h,\Delta^{i}k) \big) \Tr^g(S)\Big) \vol(g)\\
&\qquad\qquad+A\sum_{i=0}^{p-1}\int_M\g\Big(\Tr^g\big(R^{\g}(\Delta^{p-i-1}h,\nabla\Delta^{i}k)Tf \big), m^\bot \Big)\vol(g)\\
&\qquad\qquad-A\sum_{i=0}^{p-1}\int_M\g\Big(\Tr^g\big(R^{\g}(\nabla\Delta^{p-i-1}h,\Delta^{i}k)Tf\big) , m^\bot\Big)\vol(g).
\end{align*} 
%For the variation in tangential direction, we use the variational formulas for the metric and the volume density from section~\ref{va} and get 
%\begin{align*}
%&\int_M \g\big((D_{(f,Tf.m^\top)}P)h,k\big) \vol(g)\\&\qquad=
%A\sum_{i=0}^{p-1}\int_Mg^0_2\Big(2\on{Sym} \nabla (m^\top)^\flat,-\g(\nabla\Delta^{p-i-1}h,\nabla\Delta^{i}k )\Big) \vol(g)\\
%&\qquad\qquad -\frac{A}{2}\sum_{i=0}^{p-1}\int_M \Big(\nabla^*\g(\nabla\Delta^{p-i-1}h,\Delta^{i}k) \Big)\Tr^g(2\on{Sym} \nabla (m^\top)^\flat) \vol(g)\\
%&\qquad=
%%
%2A\sum_{i=0}^{p-1}\int_Mg^0_1\Big( (m^\top)^\flat,-\nabla^*\on{Sym} \g(\nabla\Delta^{p-i-1}h,\nabla\Delta^{i}k )\Big) \vol(g)\\
%&\qquad\qquad +A\sum_{i=0}^{p-1}\int_M \Big(\nabla^*\g(\nabla\Delta^{p-i-1}h,\Delta^{i}k) \Big)\nabla^* (m^\top)^\flat \vol(g)\\
%&\qquad=
%%
%-2A\sum_{i=0}^{p-1}\int_M g\Big( m^\top,\big(\nabla^*\on{Sym}\g(\nabla\Delta^{p-i-1}h,\nabla\Delta^{i}k )\big)^\sharp\Big) \vol(g)\\
%&\qquad\qquad +A\sum_{i=0}^{p-1}\int_M g\Big(\big(\nabla \nabla^*\g(\nabla\Delta^{p-i-1}h,\Delta^{i}k)\big)^\sharp,m^\top \Big)\vol(g) \\ 
%&\qquad =
%%
%-G_f^P\Big( m,2A\sum_{i=0}^{p-1}P_f\i\Big[Tf.\big(\nabla^*\on{Sym}\g(\nabla\Delta^{p-i-1}h,\nabla\Delta^{i}k )\big)^\sharp\Big]\Big)\\
%&\qquad\qquad +G_f^P\Big(m,A\sum_{i=0}^{p-1}P_f\i\Big[Tf.\big( \nabla \nabla^*\g(\nabla\Delta^{p-i-1}h,\Delta^{i}k) \big)^\sharp \Big]\Big).
%\end{align*}
} %end of \allowdisplaybreaks
From this, one can read off the normal part of the adjoint. 
Thus one gets:
\begin{lem*}
The adjoint of $\nabla P$ defined in section~\ref{so:ad} for the operator $P=1+A\De^p$ is
\begin{align*}
\adj{\nabla P}(h,k)&=
2A\sum_{i=0}^{p-1}\Tr\big(g\i S g\i \g(\nabla\Delta^{p-i-1}h,\nabla\Delta^{i}k ) \big)
\\&\qquad
+A\sum_{i=0}^{p-1} \big(\nabla^*\g(\nabla\Delta^{p-i-1}h,\Delta^{i}k) \big) \Tr^g(S)\\
&\qquad+A\sum_{i=0}^{p-1}\Tr^g\big(R^{\g}(\Delta^{p-i-1}h,\nabla\Delta^{i}k)Tf \big)\\
&\qquad-A\sum_{i=0}^{p-1}\Tr^g\big(R^{\g}(\nabla\Delta^{p-i-1}h,\Delta^{i}k)Tf \big)
\\&\qquad
+Tf.\Big[\grad^g \g(Ph,k)-\big(\g(Ph,\nabla k)+\g(\nabla h,Pk)\big)^\sharp\Big].
\end{align*}
\end{lem*}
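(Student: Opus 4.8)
The plan is to compute the normal part of $\adj{\nabla P}$ directly from the defining relation in section~\ref{so:ad}, since the tangential part is already pinned down by the invariance lemma there. Because $P=1+A\Delta^p$ and the identity contributes nothing to $\nabla_{m}P$, I would first reduce to $\nabla_{m^\bot}\Delta^p$. Writing $\Delta^p$ as a composition and using that $\nabla_{m^\bot}$ acts as a derivation (section~\ref{sh:cov}), a Leibniz expansion gives $\nabla_{m^\bot}\Delta^p=\sum_{i=0}^{p-1}\Delta^i(\nabla_{m^\bot}\Delta)\Delta^{p-i-1}$. Inserting this under the integral and moving the outer powers $\Delta^i$ onto $k$ by self-adjointness of $\Delta$ with respect to the $H^0$-metric reduces the problem to evaluating $\int_M \g\big((\nabla_{m^\bot}\Delta)\Delta^{p-i-1}h,\Delta^{i}k\big)\vol(g)$ for each $i$.

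Next I would substitute the variational formula for the Laplacian from section~\ref{va:la}, which expresses $(\nabla_{m^\bot}\Delta)u$ as the sum of a Hessian term $\Tr\big(g\i(D_{(f,m^\bot)}g)g\i\nabla^2 u\big)$, a first-order term $-\nabla_{(\ldots)^\sharp}u$ built from $\nabla^*(D_{(f,m^\bot)}g)$ and $d\Tr^g(D_{(f,m^\bot)}g)$, and two curvature terms involving $R^{\g}(m^\bot,Tf)$. The heart of the argument is then repeated partial integration to transfer all derivatives off $m^\bot$, so that $m^\bot$ sits as a free argument and the normal part can be read off. Here I would use the trace–metric identity of section~\ref{no:tr} to rewrite $\Tr(g\i B g\i C)$ as $g^0_2(B,C)$, the formula $\nabla^*=-\Tr^g\nabla$ from section~\ref{no:co*} to integrate by parts, and the curvature symmetry $\g(R^{\g}(X,Y)Z,U)=-\g(R^{\g}(Z,U)Y,X)$ from \cite{MichorH} to swap $m^\bot$ into the last slot of the two curvature terms.

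The step I expect to demand the most care is the cancellation of the top-order contributions. After integrating the first-order term by parts, it produces a $g^0_2\big(\g(\nabla^2\Delta^{p-i-1}h,\Delta^{i}k),D_{(f,m^\bot)}g\big)$ term that exactly cancels the $\nabla^2$ contribution coming from the Hessian piece, leaving only the lower-order pairing $g^0_2\big(\g(\nabla\Delta^{p-i-1}h,\nabla\Delta^{i}k),D_{(f,m^\bot)}g\big)$ together with a term $\nabla^*\g(\nabla\Delta^{p-i-1}h,\Delta^{i}k)\,\Tr^g(D_{(f,m^\bot)}g)$. Shepherding these two families of terms, and the two curvature families, correctly through the integrations by parts is the main bookkeeping obstacle.

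Finally I would specialize the generic variation $D_{(f,m^\bot)}g$ to a purely normal direction: by section~\ref{va:me} the tangential Lie-derivative part drops out and $D_{(f,m^\bot)}g=-2\,\g(m^\bot,S)$. Substituting this, converting the surviving $g^0_2$ pairings back into traces against $S$ via section~\ref{no:tr}, and then collecting the four normal contributions with the already-known tangential part $Tf.\big[\grad^g\g(Ph,k)-\big(\g(Ph,\nabla k)+\g(\nabla h,Pk)\big)^\sharp\big]$ yields exactly the claimed formula for $\adj{\nabla P}(h,k)$.
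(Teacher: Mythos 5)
Your proposal is correct and follows essentially the same route as the paper's proof: Leibniz expansion of $\nabla_{m^\bot}\Delta^p$ with self-adjointness of $\Delta$ to isolate $(\nabla_{m^\bot}\Delta)$, substitution of the variational formula from section~\ref{va:la}, partial integration in which the $\nabla^2$-terms cancel, the curvature symmetry to move $m^\bot$ into a free slot, and finally $D_{(f,m^\bot)}g=-2\,\g(m^\bot,S)$ to read off the normal part, combined with the tangential part from section~\ref{so:ad}. Nothing essential is missing.
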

\section{Geodesic equations and conserved momentum}\label{la:ge}
%------------------------------------------------------------------------------------------

The shortest and most convenient formulation of the geodesic equation is in terms 
of the momentum $p=(1+A\De^p)f_t \otimes \vol(g)$, see sections~\ref{so:gemo} and \ref{so:geshmo}.

\begin{thm*}
The geodesic equation on $\Imm(M,N)$ for the $G^P$-metric 
with $P=1+A \Delta^p$ is given by:
$$\left\{\begin{aligned}
p &= (1+A \De^p)f_t \otimes \vol(g), \\
\nabla_{\p_t}p&=\Bigg(
A\sum_{i=0}^{p-1}\Tr\big(g\i S g\i \g(\nabla(\Delta^{p-i-1}f_t),\nabla\Delta^{i}f_t ) \big)\\&\quad
+\frac{A}{2}\sum_{i=0}^{p-1}\big(\nabla^*\g(\nabla(\Delta^{p-i-1}f_t),\Delta^{i}f_t) \big).\Tr^g(S)\\&\quad
+2A\sum_{i=0}^{p-1}\Tr^g\big(R^{\g}(\Delta^{p-i-1}f_t,\nabla\Delta^{i}f_t)Tf\big)\\&\quad
-\frac12\g(Pf_t,f_t) \Tr^g(S)
-Tf.\g(Pf_t,\nabla f_t)^\sharp\Bigg) \otimes \vol(g).
\end{aligned}\right.$$
This equation is well-posed by theorem \ref{so:we} since all 
conditions are satisfied.
For the special case of plane curves, this agrees with the geodesic equation calculated in 
\cite[section~4.2]{Michor107}.
\end{thm*}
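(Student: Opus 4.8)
The plan is to read off the stated equation by specializing the momentum form of the geodesic equation of section~\ref{so:gemo} to $P=1+A\Delta^p$ and inserting the adjoint computed in section~\ref{la:ad}. Recall from section~\ref{so:gemo} that for a general Sobolev-type metric the geodesic equation is
\begin{equation*}
p = Pf_t\otimes\vol(g),\qquad \nabla_{\p_t}p = \tfrac12\big(\adj{\nabla P}(f_t,f_t)^\bot - 2Tf.\g(Pf_t,\nabla f_t)^\sharp - \g(Pf_t,f_t)\Tr^g(S)\big)\otimes\vol(g).
\end{equation*}
Setting $P=1+A\Delta^p$ gives the first line $p=(1+A\Delta^p)f_t\otimes\vol(g)$ at once, and the last two summands $-Tf.\g(Pf_t,\nabla f_t)^\sharp$ and $-\tfrac12\g(Pf_t,f_t)\Tr^g(S)$ of the second line are already in final form. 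It therefore remains to substitute $h=k=f_t$ into the formula for $\adj{\nabla P}(h,k)$ from the lemma of section~\ref{la:ad} and to keep only its normal part, since the tangential summand $Tf.[\dots]$ is annihilated by $(\cdot)^\bot$.

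The only genuine simplification occurs in the two curvature sums of $\adj{\nabla P}(f_t,f_t)^\bot$, namely $A\sum_{i=0}^{p-1}\Tr^g(R^{\g}(\Delta^{p-i-1}f_t,\nabla\Delta^i f_t)Tf)$ and $-A\sum_{i=0}^{p-1}\Tr^g(R^{\g}(\nabla\Delta^{p-i-1}f_t,\Delta^i f_t)Tf)$. I would reindex the second sum by $i\mapsto p-1-i$ and invoke the antisymmetry $R^{\g}(X,Y)=-R^{\g}(Y,X)$ in the first two slots of the curvature tensor; this identifies it with the first sum, so the two merge into a single curvature contribution. Collecting the $\Tr(g\i S g\i\,\g(\dots))$ term, the $(\nabla^*\g(\dots))\Tr^g(S)$ term, the merged curvature term, and the two carried-over summands then produces exactly the displayed right-hand side.

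It remains to justify the assertion of well-posedness, for which I would appeal to theorem~\ref{so:we} and verify its assumptions~1--5 for $P=1+A\Delta^p$. Smoothness in $f$ and $\Diff(M)$-invariance hold because $\Delta=\Delta^{f^*\g}$ depends smoothly and equivariantly on the pullback metric; $P_f$ is an elliptic, $H^0$-positive and $H^0$-symmetric operator of order $2p$ because $\Delta=\nabla^*\nabla$ is a positive Laplace-type operator, and ellipticity yields the bounded invertibility $P_f\colon H^{k+2p}\to H^k$ required by assumptions~2 and~3. Assumption~4, the smooth Sobolev-bundle extension of the normal adjoint, is checked from the explicit formula of section~\ref{la:ad}, which exhibits $\adj{\nabla P}^\bot$ as a composition of covariant derivatives, the pointwise tensorial operations built from $S$, $R^{\g}$ and $g\i$, and powers of $\Delta$, which is the local-operator-plus-pseudo-differential structure demanded by assumption~\ref{so:we:ass5}. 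Finally, specializing $M=S^1$ and $N=\R^2$ recovers the equation of \cite[section~4.2]{Michor107}.

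Since the costly partial integration was already carried out in the adjoint lemma of section~\ref{la:ad}, the derivation of the equation itself is essentially bookkeeping, and I expect the verification of the order-and-structure condition of assumption~\ref{so:we:ass5}---checking that every constituent of $\adj{\nabla P}^\bot$ and of $P_f^{-1}$ is either local of order $\le 2p$ or a classical pseudo-differential operator, so that theorem~\ref{so:we} genuinely applies---to be the main obstacle, rather than the algebraic collection of terms.
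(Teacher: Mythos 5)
Your route is the same as the paper's implicit one: the theorem is meant to follow by specializing the momentum form of the geodesic equation in section~\ref{so:gemo} to $P=1+A\De^p$ and inserting the adjoint of section~\ref{la:ad}; your remarks that the tangential summand $Tf.[\dots]$ is killed by $(\cdot)^\bot$ and that well-posedness reduces to checking assumptions~1--5 of theorem~\ref{so:we} are both in order.

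The gap is in your final bookkeeping claim, and it is a factor of $2$ on the curvature term. Your merging step itself is correct: with $h=k=f_t$, reindexing $i\mapsto p-1-i$ and using $R^{\g}(X,Y)=-R^{\g}(Y,X)$ gives
\begin{equation*}
-A\sum_{i=0}^{p-1}\Tr^g\big(R^{\g}(\nabla\De^{p-i-1}f_t,\De^{i}f_t)Tf\big)
=+A\sum_{i=0}^{p-1}\Tr^g\big(R^{\g}(\De^{p-i-1}f_t,\nabla\De^{i}f_t)Tf\big),
\end{equation*}
so the two curvature sums of $\adj{\nabla P}(f_t,f_t)$ merge into $2A\sum_{i}\Tr^g\big(R^{\g}(\De^{p-i-1}f_t,\nabla\De^{i}f_t)Tf\big)$. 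But the equation of section~\ref{so:gemo} multiplies the \emph{whole} of $\adj{\nabla P}(f_t,f_t)^\bot$ by $\tfrac12$. That factor is exactly what turns the adjoint's coefficients $2A$ and $A$ (first two sums) into the theorem's $A$ and $\tfrac{A}{2}$; applied consistently to the merged curvature term it yields coefficient $A$, not the $2A$ displayed in the statement. So your procedure, carried out correctly, does not ``produce exactly the displayed right-hand side'': it produces the equation with curvature coefficient $A$, while the printed $2A$ equals the merged adjoint coefficient with the mandatory $\tfrac12$ omitted. In other words, the stated theorem is not consistent with the combination of sections~\ref{so:gemo} and~\ref{la:ad} on which both you and the paper rely, and a correct write-up must either derive the coefficient $A$ and flag this discrepancy, or exhibit an extra factor of $2$ in the adjoint, which the cited lemma does not contain. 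Note also that this cannot be caught by the paper's own consistency check against plane curves, since there $N=\R^2$ is flat and all curvature terms vanish; asserting exact agreement without tracking the $\tfrac12$ through the curvature term is precisely the step that fails.
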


$P=1+A\De^p$ and consequently $G^P$ are invariant under the action of the reparametrization group
$\Diff(M)$. According to sections~\ref{sh:mo} and \ref{so:mo} one gets:
\begin{thm*}
The momentum mapping for the action of $\Diff(M)$ on $\Imm(M,N)$
$$g\Big(\big((1+A\De^p)f_t \big)^\top\Big) \otimes \vol(g)\in \Ga(T^*M\otimes_M\vol(M))$$
is constant along any geodesic $f$ in $\Imm(M,N)$.
\end{thm*}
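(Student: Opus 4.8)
The plan is to recognize this statement as the specialization to $P = 1 + A\De^p$ of the general momentum-conservation theorem of section~\ref{so:mo}, so that the only real work is to check that $P = 1 + A\De^p$ meets the hypotheses of that theorem. The decisive hypothesis is that $P$ be invariant under the reparametrization group $\Diff(M)$ in the sense of section~\ref{so:in}, i.e. $P_{f\o\ph}(h\o\ph) = (P_f h)\o\ph$ for all $\ph\in\Diff(M)$. Granting this, the induced metric $G^P$ is $\Diff(M)$-invariant, the reparametrization momentum map $j_X(f_t) = G^P_f(\ze_X(f),f_t)$ with fundamental vector field $\ze_X(f) = Tf.X$ is conserved along geodesics by Emmy Noether's theorem (section~\ref{sh:mo}), and the conserved object is exactly $g\big((P_f f_t)^\top\big)\otimes\vol(g)$; substituting $P_f f_t = (1+A\De^p)f_t$ then yields the claim.

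First I would verify the invariance of $P$. Since $(f\o\ph)^*\g = \ph^*(f^*\g)$, the Levi-Civita connection and hence the Bochner Laplacian of the pulled-back metric are the $\ph$-pullbacks of those of $f^*\g$; concretely $\De^{(f\o\ph)^*\g}(h\o\ph) = (\De^{f^*\g}h)\o\ph$. Iterating gives the same identity for $\De^p$, the identity operator trivially has the property, and the property is preserved under linear combinations, so $(1+A\De^p)_{f\o\ph}(h\o\ph) = \big((1+A\De^p)_f h\big)\o\ph$. This is precisely the invariance required in section~\ref{so:in}, and it is the only genuinely structural input.

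Next I would unwind the conserved quantity. By Noether's theorem the function $j_X(f_t) = G^P_f(\ze_X(f),f_t) = \int_M \g(Tf.X,P_f f_t)\vol(g)$ is constant along any geodesic $f$, where I have used the symmetry of $P_f$ with respect to the $H^0$-metric to move $P_f$ onto $f_t$. Decomposing $P_f f_t$ into tangential and normal parts and contracting against $Tf.X$ gives $j_X(f_t) = \int_M g\big(X,(P_f f_t)^\top\big)\vol(g)$, and since $X\in\X(M)$ is arbitrary this forces the $T^*M\otimes_M\vol(M)$-valued section $g\big((P_f f_t)^\top\big)\otimes\vol(g)$ to be itself constant in $t$. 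Setting $P = 1 + A\De^p$ finishes the proof.

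The main obstacle is not a computation but a bookkeeping point: the general theorem of section~\ref{so:mo} presupposes that the metric gradients $H,K$ exist (the existence assumption of section~\ref{sh:me}), which in turn requires the existence of the adjoint $\adj{\nabla P}$. For $P = 1 + A\De^p$ this existence is exactly what was established by the partial-integration computation of section~\ref{la:ad}, so all hypotheses of the general theorem are met and the specialization is legitimate. Beyond invoking that, the remaining steps are the routine naturality of the Laplacian and the elementary tangential/normal bookkeeping just described.
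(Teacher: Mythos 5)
Your proposal is correct and follows essentially the same route as the paper: the paper proves this theorem by observing that $P=1+A\De^p$ is invariant under $\Diff(M)$, hence $G^P$ is too, and then citing the general conservation results of sections~\ref{sh:mo} and \ref{so:mo}. You simply fill in the details the paper delegates to those sections and their references -- the naturality of the Laplacian under pullback, the unwinding of $G^P_f(Tf.X,f_t)$ into the density $g\big((Pf_t)^\top\big)\otimes\vol(g)$, and the existence of $H,K$ via the adjoint from section~\ref{la:ad} -- all of which is accurate.
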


The horizontal geodesic equation for a general metric on $\Imm$ has been derived in 
section~\ref{sh:geshmo}. In section~\ref{so:geshmo} it has been shown that this equation 
takes a very simple form. Now it is possible to write down this equation specifically for 
the operator $P=1+A\De^p$:
\begin{thm*}
The geodesic equation on shape space for the Sobolev-metric $G^P$ with $P=1+A\De^p$ is equivalent 
to the set of equations
\begin{equation*}
\left\{\begin{aligned}
p &= Pf_t \otimes \vol(g), \qquad Pf_t = (Pf_t)^\bot, \\
(\nabla_{\p_t}p)^\hor &= 
\Bigg(A\sum_{i=0}^{p-1}\Tr\big(g\i S g\i \g(\nabla\Delta^{p-i-1}f_t,\nabla\Delta^{i}f_t ) \big)
\\&\qquad+ \frac{A}{2}\sum_{i=0}^{p-1} \big(\nabla^*\g(\nabla\Delta^{p-i-1}f_t,\Delta^{i}f_t) \big) \Tr^g(S)
\\&\qquad+2A\sum_{i=0}^{p-1}\Tr^g\big(R^{\g}(\Delta^{p-i-1}f_t,\nabla\Delta^{i}f_t)Tf\big)
\\&\qquad-\frac12\g(Pf_t,f_t).\Tr^g(S) \Bigg) \otimes \vol(g),
\end{aligned}\right.
\end{equation*}
where $f$ is a path of immersions. 
For the special case of plane curves, this agrees with the geodesic equation calculated in 
\cite[section~4.6]{Michor107}.
\end{thm*}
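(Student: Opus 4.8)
The plan is to obtain this statement as a direct specialization of the general shape-space geodesic equation in terms of the momentum (the theorem of section~\ref{so:geshmo}), which for any Sobolev-type metric $G^P$ reads
\begin{equation*}
\left\{\begin{aligned}
p &= Pf_t \otimes \vol(g), \qquad Pf_t = (Pf_t)^\bot, \\
(\nabla_{\p_t}p)^\hor &= \tfrac12 \Big(\adj{\nabla P}(f_t,f_t)^\bot-\g(Pf_t,f_t)\,\Tr^g(S)\Big) \otimes \vol(g).
\end{aligned}\right.
\end{equation*}
The validity of this equation on $B_i$ rests on the lifting property of section~\ref{so:ho2}, which guarantees that every path in shape space admits a horizontal lift to $\Imm$, so that one may work entirely with horizontal paths of immersions. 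With $P=1+A\De^p$ at hand, the only remaining input is the explicit formula for $\adj{\nabla P}(h,k)$ established in the Lemma of section~\ref{la:ad}; the substantive analytic work (the partial integrations producing that adjoint) having already been done there, the present theorem is essentially a corollary.

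First I would substitute $P=1+A\De^p$ and read off the normal part $\adj{\nabla P}(f_t,f_t)^\bot$ from that Lemma, discarding the manifestly tangential summand $Tf.[\,\cdots]$. This leaves the second-fundamental-form term $2A\sum_i\Tr(g\i S g\i\g(\nabla\De^{p-i-1}f_t,\nabla\De^{i}f_t))$, the mean-curvature term $A\sum_i\big(\nabla^*\g(\nabla\De^{p-i-1}f_t,\De^{i}f_t)\big)\Tr^g(S)$, and the two curvature sums. Multiplying by $\tfrac12$ reproduces the first two lines of the asserted right-hand side, and together with the explicit $-\tfrac12\g(Pf_t,f_t)\Tr^g(S)$ from the general formula it reproduces the last line.

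The one genuinely non-routine step is the treatment of the two curvature sums
\begin{equation*}
A\sum_{i=0}^{p-1}\Tr^g\big(R^{\g}(\De^{p-i-1}f_t,\nabla\De^{i}f_t)Tf\big)
-A\sum_{i=0}^{p-1}\Tr^g\big(R^{\g}(\nabla\De^{p-i-1}f_t,\De^{i}f_t)Tf\big).
\end{equation*}
Here I would reindex the second sum by $i\mapsto p-1-i$ and invoke the antisymmetry $R^{\g}(X,Y)=-R^{\g}(Y,X)$ of the Riemann tensor (the same symmetry used in section~\ref{la:ad}); this shows that the second sum coincides with the first, so the two collapse into the single curvature contribution displayed in the statement. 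Combining all terms then yields the claimed equation. Finally I would note that well-posedness is inherited from Theorem~\ref{so:we}, since $1+A\De^p$ satisfies assumptions~1--5 there, as remarked at the opening of section~\ref{la}; thus no further obstacle remains beyond the bookkeeping above.
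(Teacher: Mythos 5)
Your route is the paper's own: section~\ref{la:ge} offers no separate proof, but presents the theorem as the direct specialization of the momentum form of the shape-space geodesic equation (section~\ref{so:geshmo}) to $P=1+A\De^p$, using the adjoint computed in section~\ref{la:ad} and the lifting lemma of section~\ref{so:ho2} --- precisely the three ingredients you cite. Your handling of the two $S$-terms and of the term $-\tfrac12\g(Pf_t,f_t)\Tr^g(S)$ is correct.

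The gap is in the curvature block, and it is a concrete factor of $2$. The general equation reads $(\nabla_{\p_t}p)^\hor=\tfrac12\big(\adj{\nabla P}(f_t,f_t)^\bot-\g(Pf_t,f_t)\Tr^g(S)\big)\otimes\vol(g)$, so \emph{every} summand of $\adj{\nabla P}(f_t,f_t)^\bot$ must be halved. You halve the two $S$-summands (turning $2A\sum$ into $A\sum$ and $A\sum$ into $\tfrac{A}{2}\sum$), but not the curvature summands: your reindexing/antisymmetry argument correctly gives
\begin{align*}
&A\sum_{i=0}^{p-1}\Tr^g\big(R^{\g}(\De^{p-i-1}f_t,\nabla\De^{i}f_t)Tf\big)
-A\sum_{i=0}^{p-1}\Tr^g\big(R^{\g}(\nabla\De^{p-i-1}f_t,\De^{i}f_t)Tf\big)
\\&\qquad
=2A\sum_{i=0}^{p-1}\Tr^g\big(R^{\g}(\De^{p-i-1}f_t,\nabla\De^{i}f_t)Tf\big),
\end{align*}
but this identity concerns the adjoint \emph{before} the overall factor $\tfrac12$; after halving, the curvature contribution is $A\sum_{i}\Tr^g\big(R^{\g}(\De^{p-i-1}f_t,\nabla\De^{i}f_t)Tf\big)$, not the $2A\sum_i$ displayed in the statement. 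So a uniform application of the $\tfrac12$ --- the only consistent reading of section~\ref{so:geshmo} --- yields coefficient $A$, and the stated coefficient $2A$ does not follow from your argument as written; your proposal reaches the displayed equation only because the $\tfrac12$ is silently dropped on exactly this one group of terms. What your attempt actually reproduces is the paper's own bookkeeping: the companion theorem on $\Imm(M,N)$ earlier in section~\ref{la:ge} (and the equation in section~\ref{so:gemo} it should follow from) shows the same unhalved curvature coefficient $2A$ next to halved $S$-terms, i.e.\ the displayed statement is inconsistent, in this one coefficient, with the conjunction of the lemma of section~\ref{la:ad} and the general equation of section~\ref{so:geshmo}. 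To make the proof sound you must either halve uniformly and note that the coefficient you obtain is $A$ rather than $2A$, or exhibit a reason why the curvature block escapes the halving --- and there is none.
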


%Special case Sobolev one:
%$$\boxed{\begin{aligned}
%Pf_t&=a.\nu\\
%a_t&=A\Big[ g^0_2\big(s,\g(\nabla f_t,\nabla f_t ) \big)\
%\\&\qquad+\frac12\big(\nabla^*\g(\nabla f_t, f_t) \big).\on{Tr}(L) \Big]\\&\qquad
%-\big(\on{div}^g(f_t^\top)-\frac12\on{Tr}(L).f_t^\bot)\big)a=\\&\qquad
%A\Big[ g^0_2\big(s,\g(\nabla f_t,\nabla f_t ) \big)\
%\\&\qquad+\frac12\big(\nabla^*\g(\nabla f_t, f_t) \big).\on{Tr}(L) \Big]\\&\qquad
%-\big(\on{div}^g(f_t^\top)-\frac12\on{Tr}(L).f_t^\bot)\big)a
%\end{aligned}}$$

%------------------------------------------------------------------------------------------
%------------------------------------------------------------------------------------------
\chapter{Surfaces in $n$-space}\label{su}
%------------------------------------------------------------------------------------------
%------------------------------------------------------------------------------------------
 
This section is about the special case where the ambient space $N$ is $\R^n$. 
The flatness of $\R^n$ leads to a simplification of the geodesic equation, and the 
Euclidean motion group acting on $\R^n$ induces additional conserved quantities. 
The vector space structure of $\R^n$ allows to define a 
Fr\'echet metric. 
This metric will be compared to Sobolev metrics. 
Finally in section~\ref{su:co} the space of concentric hyper-spheres in $\R^n$
is briefly investigated. 

Most of the material presented here can also be found in \cite{Michor119}.

%------------------------------------------------------------------------------------------
\section{Geodesic equation}\label{su:ge}
%------------------------------------------------------------------------------------------

The covariant derivative $\nabla^{\g}$ on $\R^n$ is but the usual derivative.
Therefore the covariant derivatives $\nabla_{\p_t} f_t$ and $\nabla_{\p_t}p$ in the geodesic 
equation can be replaced by $f_{tt}$ and $p_t$, respectively.
(Note that $\Imm(M,\R^n)$ is an open subset of the Fr\'echet vector space 
$C^\infty(M,\R^n)$.)
Also, the curvature terms disappear because $\R^n$ is flat.
Any of the formulations of the geodesic equation presented so far can thus be
adapted to the case $N=\R^n$. 

We want to show how the geodesic equation simplifies further under the additional assumptions 
that $\dim(M)=\dim(N)-1$ and that $M$ is orientable. Then it is possible define a unit
vector field $\nu$ to $M$. The condition that $f_t$ is horizontal then simplifies 
to $Pf_t = a.\nu$ for $a \in C^\infty(M)$. 
The geodesic equation can then be written as an equation for $a$. 
However, the equation is slightly simpler 
when it is written as an equation for $a.\vol(g)$. 
In practise, $\vol(g)$ can be treated as a function on $M$ because
one can identify $\vol(g)$ with its density with respect to $du^1 \wedge \ldots \wedge du^{n-1}$, 
where $(u^1, \ldots, u^{n-1})$ is a chart on $M$. Thus multiplication by $\vol(g)$ 
does not pose a problem. 

\begin{thm*}
The geodesic equation for a Sobolev-type metric $G^P$ on shape space 
$B_i(M,\R^n)$ with $\dim(M)=n-1$
is equivalent to the set of equations
\begin{equation*}
\left\{\begin{aligned}
Pf_t  &= a.\nu \\
\p_t\big(a.\vol(g)\big) &= \frac12 \g\big(\adj{\nabla P}(f_t,f_t),\nu\big) 
-\frac12 \g(Pf_t,f_t) \g\big(\Tr^g(S),\nu\big),
\end{aligned}\right.
\end{equation*}
where $f$ is a path in $\Imm(M,\R^n)$ and $a$ is a time-dependent function on $M$. 
\end{thm*}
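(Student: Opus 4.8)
The plan is to specialize the momentum form of the geodesic equation on shape space, Theorem~\ref{so:geshmo}, to the flat codimension-one situation and then read off the component along the unit normal $\nu$. By Theorem~\ref{so:geshmo} the geodesic equation on $B_i$ is equivalent to
$$p = Pf_t \otimes \vol(g),\quad Pf_t = (Pf_t)^\bot, \qquad (\nabla_{\p_t}p)^\hor = \tfrac12\big(\adj{\nabla P}(f_t,f_t)^\bot - \g(Pf_t,f_t)\Tr^g(S)\big)\otimes\vol(g).$$
As noted at the start of section~\ref{su:ge}, for $N=\R^n$ the covariant derivative $\nabla^{\g}$ is the ordinary derivative, so $\nabla_{\p_t}$ may be replaced by $\p_t$ and all curvature contributions hidden in $\adj{\nabla P}$ vanish; this is what leaves the right hand side in the displayed form. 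Since $\dim(M)=n-1$ and $M$ is orientable, the normal bundle $\Nor(f)$ is a trivial line bundle spanned by the unit field $\nu$, so the horizontality condition $Pf_t=(Pf_t)^\bot$ reads exactly $Pf_t = a.\nu$ with $a := \g(Pf_t,\nu)\in C^\infty(M)$. This is the first of the two asserted equations.

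First I would rewrite the momentum as $p = a.\nu\otimes\vol(g)$ and expand by the Leibniz rule, using flatness to put $\nabla_{\p_t}=\p_t$ on the $f^*TN$-factor:
$$\nabla_{\p_t}p = \big((\p_t a)\nu + a\,\p_t\nu\big)\otimes\vol(g) + a\,\nu\otimes\p_t\vol(g).$$
The key observation is that $(\p_t\nu)^\bot=0$: differentiating $\g(\nu,\nu)=1$ in $t$ gives $\g(\p_t\nu,\nu)=0$, and since $\Nor(f)=\R\nu$ in codimension one, the normal part of $\p_t\nu$ vanishes (equivalently, $\p_t\nu$ is tangential). Taking the horizontal part of the momentum, which by the description in section~\ref{so:ho} amounts to the normal projection in the $f^*TN$-slot, therefore kills the $a\,\p_t\nu$ term and yields
$$(\nabla_{\p_t}p)^\hor = \nu\otimes\big((\p_t a)\vol(g) + a\,\p_t\vol(g)\big) = \nu\otimes\p_t\big(a.\vol(g)\big).$$

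Next I would expand the right hand side. Because $\Nor(f)$ is spanned by $\nu$, both normal vectors there are multiples of $\nu$: one has $\adj{\nabla P}(f_t,f_t)^\bot = \g(\adj{\nabla P}(f_t,f_t),\nu)\,\nu$, and $\Tr^g(S)=\g(\Tr^g(S),\nu)\,\nu$ since the mean curvature $\Tr^g(S)$ is normal (section~\ref{no:we}). Hence the right hand side equals $\tfrac12\big(\g(\adj{\nabla P}(f_t,f_t),\nu) - \g(Pf_t,f_t)\g(\Tr^g(S),\nu)\big)\,\nu\otimes\vol(g)$. Equating the $\nu$-components of the two normal-valued densities gives the second asserted equation for $\p_t(a.\vol(g))$, after the bookkeeping of the volume-density factor under the identification of section~\ref{su:ge} that treats $\vol(g)$ as its coordinate density. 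Conversely, since every step is a rewriting valid under the codimension-one and flatness hypotheses, the two displayed equations together are equivalent to the geodesic equation of Theorem~\ref{so:geshmo}, and hence to the geodesic equation on $B_i(M,\R^n)$.

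I expect the only genuinely delicate point to be the evaluation of $(\nabla_{\p_t}p)^\hor$: one must justify that the horizontal projection of the momentum is pointwise normal projection, apply the Leibniz rule correctly across the $f^*TN\otimes\Vol(M)$ tensor product (with $\p_t\vol(g)$ supplied by section~\ref{va:vo}), and invoke $(\p_t\nu)^\bot=0$ to discard the term in $\p_t\nu$. The remaining work — splitting normal vectors along $\nu$ and tracking the volume density — is routine.
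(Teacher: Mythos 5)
Your proof is correct and follows essentially the same route as the paper: specialize the momentum form of the geodesic equation from theorem~\ref{so:geshmo}, use the codimension-one trivialization $\Nor(f)=\R\nu$ to write $Pf_t=a.\nu$, and discard the $\p_t\nu$ contribution via $\g(\p_t\nu,\nu)=0$. The paper merely organizes the computation by applying $\g(\cdot,\nu)$ to the momentum equation and differentiating the scalar pairing $\g(p,\nu)=a.\vol(g)$, whereas you expand $\nabla_{\p_t}p$ by the Leibniz rule and then take horizontal (normal) parts before equating $\nu$-components --- the same calculation in a slightly different order.
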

\begin{proof}
Applying $\g(\cdot,\nu)$ to the geodesic equation \ref{so:geshmo} 
on shape space in terms of the momentum one gets
\begin{align*}
\p_t\big(a.\vol(g)\big) &= 
\p_t\ \g\big(Pf_t \otimes \vol(g),\nu\big) \\&=
\g\Big(\nabla_{\p_t}\big(Pf_t \otimes \vol(g)\big),\nu\Big) 
+ \g\big(Pf_t \otimes \vol(g),\nabla_{\p_t} \nu\big)  \\&=
\frac12 \g\big(\adj{\nabla P}(f_t,f_t),\nu\big) -\frac12 \g(Pf_t,f_t) \g\big(\Tr^g(S),\nu\big)+ 0.
\qedhere
\end{align*}
\end{proof}
Let us spell this equation out in even more details for the $H^1$-metric.
This is the case of interest for the numerical examples in section~\ref{nu}. 
\begin{thm*}
The geodesic equation on shape space $B_i(M,\R^n)$ 
for the Sobolev-metric $G^P$ with $P=1+A\De$ is equivalent 
to the set of equations
\begin{equation*}
\left\{\begin{aligned}
Pf_t &=  a.\nu \\
\p_t \big(a.\vol(g)\big) &= 
\Big(A g^0_2\big(s, \g(\nabla f_t,\nabla f_t ) \big) %\\&\qquad
-\frac{\Tr(L)}{2} \big(\norm{f_t}_{\g}^2 + A \norm{\nabla f_t}_{g^0_1\otimes\g}^2 \big) 
\Big) \vol(g),
\end{aligned}\right.
\end{equation*}
where $f$ is a path of immersions, 
$a$ is a time-dependent function on $M$, 
$s=\g(S,\nu) \in \Ga(T^0_2 M)$ is the shape operator,
$L=g\i s \in \Ga(T^1_1 M)$ is the Weingarten mapping, 
and $\Tr(L)$ is the mean curvature.
%For the special case of plane curves, this agrees with the geodesic equation calculated in 
%\cite[section~4.6]{Michor107}.
\end{thm*}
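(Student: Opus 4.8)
The plan is to specialize the general codimension-one equation just proved in this section to the operator $P=1+A\De$ (the case $p=1$), feeding in the explicit adjoint $\adj{\nabla P}$ computed in section~\ref{la:ad}. Thus I would start from
$$\p_t\big(a.\vol(g)\big)=\Big(\tfrac12\g\big(\adj{\nabla P}(f_t,f_t),\nu\big)-\tfrac12\g(Pf_t,f_t)\,\g\big(\Tr^g(S),\nu\big)\Big)\vol(g)$$
and substitute the formula for $\adj{\nabla P}(h,k)$ with $h=k=f_t$. Two simplifications are immediate: every curvature term $R^{\g}$ drops out because $N=\R^n$ is flat (section~\ref{su:ge}), and the entire tangential block $Tf.[\,\cdots]$ is annihilated by $\g(\cdot,\nu)$ since $\nu$ is $\g$-orthogonal to the image of $Tf$.

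Next I would turn the surviving normal terms into the scalar quantities in the statement. Contracting the term $2A\,\Tr\big(g\i Sg\i\g(\nabla f_t,\nabla f_t)\big)$ with $\nu$ replaces the vector-valued second fundamental form $S$ by the scalar $s=\g(S,\nu)$, and the trace lemma of section~\ref{no:tr} (applicable since $s$ is symmetric) rewrites the result as $2A\,g^0_2\big(s,\g(\nabla f_t,\nabla f_t)\big)$; the prefactor $\tfrac12$ then yields the asserted term $A\,g^0_2(s,\g(\nabla f_t,\nabla f_t))$. Likewise $\g(\Tr^g(S),\nu)=\Tr^g(s)=\Tr(L)$ is the mean curvature, so the remaining adjoint contribution becomes $\tfrac{A}{2}\big(\nabla^*\g(\nabla f_t,f_t)\big)\Tr(L)$, while the mean-curvature correction becomes $-\tfrac{\Tr(L)}{2}\big(\norm{f_t}_{\g}^2+A\,\g(\Delta f_t,f_t)\big)$, using $\g(Pf_t,f_t)=\norm{f_t}_{\g}^2+A\g(\Delta f_t,f_t)$.

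The decisive step, and the one I expect to require the most care, is the codifferential identity
$$\nabla^*\g(\nabla f_t,f_t)=\g(\Delta f_t,f_t)-\norm{\nabla f_t}_{g^0_1\otimes\g}^2,$$
which I would derive from the product rule for $\nabla^*$ together with $\Delta=\nabla^*\nabla$, $\nabla\g=0$, and $\Delta=-\Tr^g\nabla^2$. Substituting it into $\tfrac{A}{2}(\nabla^*\g(\nabla f_t,f_t))\Tr(L)$ produces a term $+\tfrac{A\Tr(L)}{2}\g(\Delta f_t,f_t)$ that cancels precisely against the $-\tfrac{A\Tr(L)}{2}\g(\Delta f_t,f_t)$ coming from the mean-curvature correction. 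What remains is exactly $A\,g^0_2(s,\g(\nabla f_t,\nabla f_t))-\tfrac{\Tr(L)}{2}\big(\norm{f_t}_{\g}^2+A\norm{\nabla f_t}_{g^0_1\otimes\g}^2\big)$ times $\vol(g)$, which is the claim. Beyond verifying this identity and tracking the signs through the cancellation, the argument is routine bookkeeping enabled by the flatness of $\R^n$ and the horizontality reduction $Pf_t=a\nu$.
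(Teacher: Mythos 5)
Your proposal is correct and follows essentially the same route as the paper: the paper applies $\g(\cdot,\nu)$ to the $\Imm$-geodesic equation of section~\ref{la:ge} (which already has the adjoint substituted), while you substitute the adjoint of section~\ref{la:ad} into the codimension-one equation of section~\ref{su:ge} — the same computation in a different order, arriving at the identical intermediate expression. Your key codifferential identity $\nabla^*\g(\nabla f_t,f_t)=\g(\Delta f_t,f_t)-\norm{\nabla f_t}_{g^0_1\otimes\g}^2$ and the resulting cancellation of the $\tfrac{A}{2}\Tr(L)\g(\Delta f_t,f_t)$ terms is exactly the paper's observation that the second-order derivatives of $f_t$ cancel.
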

\begin{proof}
The fastest way to get to this equation is to apply $\g(\cdot,\nu)$ to
the geodesic equation on $\Imm$ from section~\ref{la:ge}.
This yields
\begin{align*}
\p_t \big(a.\vol(g)\big) &= 
\Big(A \Tr\big(g\i.s.g\i \g(\nabla f_t,\nabla f_t ) \big)
+ \frac{A}{2} \big(\nabla^*\g(\nabla f_t,f_t) \big) \Tr(L)\\&\qquad
-\frac12\g(Pf_t,f_t).\Tr(L) \Big) \vol(g)\\&=
\Big(A g^0_2\big(s, \g(\nabla f_t,\nabla f_t ) \big)
- \frac{A}{2} \Tr^g\big(\g(\nabla^2 f_t,f_t) \big) \Tr(L)\\&\qquad
- \frac{A}{2} \Tr^g\big(\g(\nabla f_t,\nabla f_t) \big) \Tr(L)
-\frac12\g\big((1+A\De)f_t,f_t\big)\Tr(L) \Big) \vol(g)\\&=
\Big(A g^0_2\big(s, \g(\nabla f_t,\nabla f_t ) \big)
- \frac{A}{2} \Tr^g\big(\g(\nabla f_t,\nabla f_t) \big) \Tr(L)\\&\qquad
-\frac12\g\big(f_t,f_t\big).\Tr(L) \Big) \vol(g).
\end{align*}
Notice that the second order derivatives of $f_t$ have canceled out.
\end{proof}

%------------------------------------------------------------------------------------------
\section{Additional conserved momenta}
%------------------------------------------------------------------------------------------
 
If $P$ is invariant under the action of the Euclidean motion group $\R^n\rtimes\on{SO}(n)$, 
then also the metric $G^P$ is in invariant under this group action 
and one gets additional conserved quantities as described in section~\ref{sh:mo}:

\begin{thm*}
For an operator $P$ that is invariant under the action of the 
Euclidean motion group $\R^n\rtimes\on{SO}(n)$, 
the linear momentum
$$ \int_M  Pf_t \vol(g)\in(\R^n)^* $$
and the angular momentum
\begin{align*}
\forall X\in \mathfrak{so}(n): \int_M \g( X.f,Pf_t ) \vol(g) \\
\text{or equivalently } \int_M (f\wedge Pf_t ) 
\vol(g)\in\textstyle{\bigwedge^2}\R^n\cong \mathfrak{so}(n)^* 
\end{align*}
are constant along any geodesic $f$ in $\Imm(M,\R^n)$.
The operator $P=1+A \De^p$ satisfies this property. 
\end{thm*}

These momenta have also been calculated in \cite[section~4.3]{Michor119}. 

%------------------------------------------------------------------------------------------
\section{Fr\'echet distance and Finsler metric}\label{su:fr}
%------------------------------------------------------------------------------------------

This section can also be found in \cite[section~5.6]{Michor119}.

The Fr\'echet distance on shape space $B_i(M,\R^n)$ is defined as
\begin{align*}
\dist_\infty^{B_i}(F_0,F_1) = \inf_{f_0,f_1} \norm{f_0 - f_1}_{L^\infty},
\end{align*}
where the infimum is taken over all $f_0, f_1$ with $\pi(f_0)=F_0, \pi(f_1)=F_1$.
As before, $\pi$ denotes the projection $\pi:\Imm \to B_i$. 
Fixing $f_0$ and $f_1$, one has
\begin{align*}
\dist_\infty^{B_i}\big(\pi(f_0),\pi(f_1)\big) = \inf_{\varphi} \norm{f_0 \o \varphi - f_1}_{L^\infty},
\end{align*}
where the infimum is taken over all $\varphi \in \on{Diff}(M)$.
The Fr\'echet distance is related to the Finsler metric
\begin{align*}
G^\infty : T \Imm(M,\R^n) \rightarrow \R, \qquad h \mapsto \norm{h^\bot}_{L^\infty}.
\end{align*}

\begin{lem*}
The pathlength distance induced by the Finsler metric $G^\infty$ 
provides an upper bound for the Fr\'echet distance:
\begin{align*}
\dist_\infty^{B_i}(F_0,F_1) \leq \dist_{G^\infty}^{B_i}(F_0,F_1) = \inf_f \int_0^1 \norm{f_t}_{G^\infty} dt,
\end{align*}
where the infimum is taken over all paths 
$$f:[0,1] \to \Imm(M,\R^n) \quad \text{with} \quad \pi(f(0))=F_0, \pi(f(1))=F_1.$$
\end{lem*}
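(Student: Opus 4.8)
The plan is to prove the inequality $\dist_\infty^{B_i}(F_0,F_1) \leq \dist_{G^\infty}^{B_i}(F_0,F_1)$ by showing that \emph{every} path in $\Imm$ joining lifts of $F_0$ and $F_1$ gives an admissible competitor for the Fr\'echet infimum whose $L^\infty$-displacement is bounded by the $G^\infty$-length of the path. Since the Fr\'echet distance is itself an infimum over pairs of lifts, it suffices to estimate the displacement produced by a single path and then pass to the infimum over all paths.

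First I would fix a smooth path $f:[0,1]\to\Imm(M,\R^n)$ with $\pi(f(0))=F_0$ and $\pi(f(1))=F_1$. Using the horizontal-lift construction from section~\ref{so:ho2}, I would replace $f$ by a reparametrized path $\tilde f(t,x)=f(t,\ph(t,x))$ that is horizontal for the $H^0$-metric, i.e.\ satisfies $\tilde f_t = \tilde f_t^\bot$ so that $\tilde f_t$ is everywhere normal to the immersion. This replacement is harmless: it does not change the projected endpoints $F_0,F_1$, and since $G^\infty(h)=\norm{h^\bot}_{L^\infty}$ depends only on the normal part, it does not increase (in fact it does not change the relevant) $G^\infty$-length. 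The point of horizontalizing is that along $\tilde f$ one has $\tilde f_t=\tilde f_t^\bot$, so the pointwise fundamental theorem of calculus applied in the flat space $\R^n$ gives, for each fixed $x\in M$,
\begin{align*}
\big|\tilde f(1,x)-\tilde f(0,x)\big| = \Big| \int_0^1 \tilde f_t(t,x)\,dt \Big| \leq \int_0^1 \big|\tilde f_t^\bot(t,x)\big|\,dt.
\end{align*}

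Taking the supremum over $x\in M$ and using $\norm{\int_0^1 \tilde f_t^\bot\,dt}_{L^\infty}\le\int_0^1\norm{\tilde f_t^\bot}_{L^\infty}\,dt$ yields
\begin{align*}
\norm{\tilde f(1,\cdot)-\tilde f(0,\cdot)}_{L^\infty} \leq \int_0^1 \norm{\tilde f_t^\bot}_{L^\infty}\,dt = \int_0^1 \norm{\tilde f_t}_{G^\infty}\,dt = L_{G^\infty}^{\Imm}(\tilde f).
\end{align*}
Now $\tilde f(0,\cdot)$ and $\tilde f(1,\cdot)$ are lifts of $F_0$ and $F_1$ respectively, so the left-hand side is an upper bound for $\dist_\infty^{B_i}(F_0,F_1)$ by definition of the Fr\'echet distance as an infimum over lifts. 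Taking the infimum of the right-hand side over all admissible paths $f$ (equivalently $\tilde f$) gives $\dist_\infty^{B_i}(F_0,F_1)\le\dist_{G^\infty}^{B_i}(F_0,F_1)$, which is the claim.

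The main obstacle I anticipate is the interchange of the supremum over $x$ with the time integral and the measurability/continuity bookkeeping behind the Minkowski-type inequality $\norm{\int \tilde f_t^\bot}_{L^\infty}\le\int\norm{\tilde f_t^\bot}_{L^\infty}$; this is routine for smooth paths on the compact manifold $M$ but must be stated carefully. A secondary point requiring care is the legitimacy of the horizontalization step: one should confirm that the reparametrization $\ph$ furnished by section~\ref{so:ho2} is smooth and fixes the projected endpoints, so that $\tilde f$ remains an admissible competitor and its $G^\infty$-length equals that of the original horizontal representative. Everything else is elementary because the flatness of $\R^n$ lets $\nabla_{\p_t}$ collapse to the ordinary $t$-derivative, so no covariant-derivative subtleties intervene.
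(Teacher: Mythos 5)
Your proposal is correct and follows essentially the same route as the paper: reparametrize the path so that its velocity is normal (horizontal for the $H^0$-metric, which leaves the $G^\infty$-length and the projected endpoints unchanged), apply the fundamental theorem of calculus in flat $\R^n$ together with the Minkowski-type inequality $\norm{\int_0^1 f_t\,dt}_{L^\infty}\le\int_0^1\norm{f_t}_{L^\infty}dt$, and then pass to the infimum over paths. The only cosmetic difference is ordering — the paper applies the inequality to arbitrary paths and invokes the reparametrization argument at the level of the two infima, while you horizontalize each path first — and your version is, if anything, slightly more careful in only using that path endpoints are particular lifts rather than asserting equality of the Fr\'echet infimum with an infimum over path endpoints.
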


\begin{proof}
Since any path $f$ can be reparametrized such that $f_t$ is normal to $f$, one has
$$\inf_f \int_0^1 \norm{f_t^\bot}_{L^\infty} dt = \inf_f \int_0^1 \norm{f_t}_{L^\infty} dt, $$
where the infimum is taken over the same class of paths $f$ as described above. Therefore
\begin{align*}
\dist_\infty^{B_i}(F_0,F_1) &= \inf_f \norm{f(1)-f(0)}_{L^\infty} 
= \inf_f \norm{ \int_0^1 f_t dt}_{L^\infty} 
\leq \inf_f \int_0^1 \norm{f_t}_{L^\infty} dt \\ &
= \inf_f \int_0^1 \norm{f_t^\bot}_{L^\infty} dt
= \dist_{G^\infty}^{B_i}(F_0,F_1). \qedhere
\end{align*}
\end{proof}

% The reason for introducing the Finsler metric is that the Finsler metric is easier to compare to Sobolev metrics than the Fr\'echet distance. 

It is claimed in \cite[theorem~13]{MennucciYezzi2008} that $d_\infty=\dist_{G^\infty}$. However, 
the proof given there only works on the vector space $C^\infty(M,\R^n)$ and not on $B_i(M,\R^n)$. 
The reason is that convex combinations of immersions are used in the proof, 
but that the space of immersions is not convex.

% Its main ingredient is Morrey's inequality (or variants) 
% $$\norm{u}_{L^\infty} \le \norm{u}_{C^{0,1-m/q}} \leq C \norm{u}_{W^{1,q}} 
% \qquad \text{for } u:\R^m \rightarrow \R, m<q<\infty.$$
% We always have $q=2$ since we are in a Riemannian setting, and $m=\dim(M)$. 
% The inequality holds for planar curves, where $m=1$. 
% Surfaces correspond to the critical case $m=q=2$ where the inequality does not hold in general.  

%------------------------------------------------------------------------------------------
\section{Sobolev versus Fr\'echet distance}\label{su:fr2}
%------------------------------------------------------------------------------------------

This section can also be found in \cite[section~5.7]{Michor119}. 

It is a desirable property of any distance on shape space to be stronger than the Fr\'echet
distance. Otherwise, singular points of a shape could move arbitrarily far away
without increasing the distance much. 

As the following result shows, Sobolev metrics of low order do not have this property. 
The author and the authors of \cite{Michor119} believe that they have this property 
when the order is high enough, but were not able to prove this.

\begin{lem*}
Let $G^P$ be a metric on $B_i(M,\R^n)$ that is weaker than or at least as weak as a Sobolev $H^p$-metric with 
$p < \frac{\dim(M)}2+1$, i.e.
\begin{align*}
\norm{h}_{G^P} \leq C \norm{h}_{H^p} = C \sqrt{\int_M \g\big( (1+\Delta^p) h,h \big) \vol(g)} 
\qquad \text{for all $h \in T\Imm$.}
\end{align*}
Then the Fr\'echet distance can not be bounded by the $G^P$-distance. 
\end{lem*}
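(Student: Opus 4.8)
The plan is to disprove any inequality $\dist_\infty^{B_i}\le C'\,\dist_{G^P}^{B_i}$ by exhibiting shapes that are far apart in the Fr\'echet distance but arbitrarily close in the $G^P$-distance. First I would reduce to the model metric $H^p$: since $\norm{h}_{G^P}\le C\norm{h}_{H^p}$ holds pointwise for every $h\in T\Imm$, it passes to horizontal vectors and hence to the path-length functionals, giving $\dist_{G^P}^{B_i}(F_0,F_1)\le C\,\dist_{H^p}^{B_i}(F_0,F_1)$ for all $F_0,F_1$. Thus it suffices to construct a family $F_0,\,F_1^\varepsilon\in B_i(M,\R^n)$ with $\dist_\infty^{B_i}(F_0,F_1^\varepsilon)\ge c>0$ \emph{uniformly} in $\varepsilon$, while $\dist_{H^p}^{B_i}(F_0,F_1^\varepsilon)\to 0$ as $\varepsilon\to0$; the existence of such a family contradicts every candidate constant $C'$.

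For the family I would fix a base immersion $f_0$ and let $f_1^\varepsilon$ agree with $f_0$ outside a small ball while carrying, inside it, a thin spike of fixed height $c$ and width $\varepsilon$ in a fixed normal direction $\nu$, with a tip point $x_\ast$ mapped to distance $c$ from the image $f_0(M)$. The uniform Fr\'echet lower bound is then elementary: for any reparametrization $\varphi\in\Diff(M)$ the point $f_0(\varphi(x_\ast))$ lies on $f_0(M)$, so $\norm{f_0\circ\varphi-f_1^\varepsilon}_{L^\infty}\ge \dist\big(f_1^\varepsilon(x_\ast),f_0(M)\big)\ge c$, and taking the infimum over $\varphi$ gives $\dist_\infty^{B_i}(F_0,F_1^\varepsilon)\ge c$ for every $\varepsilon$.

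The heart of the argument is to connect $f_0$ to $f_1^\varepsilon$ by a path whose $H^p$-length tends to $0$. The naive choice --- a straight growth of the spike, or any extrusion at constant rate --- has velocity equal to a bump of height $O(1)$ and width $\varepsilon$, whose $H^p$-norm scales like $\varepsilon^{\dim(M)/2-p}$; this tends to $0$ only for $p<\dim(M)/2$ and is therefore too weak to reach the full range $p<\dim(M)/2+1$. To gain the missing power of $\varepsilon$ I would exploit the geometry of the quotient: tangential motion is vertical and hence free for $\dist^{B_i}$ (see section~\ref{so:ho}), so one may first concentrate material into the small ball and spread it afterwards at no cost, and, crucially, layer the normal push in time by the Moser/zigzag reparametrization used in the proof of Theorem~\ref{ge:va}. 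There the damping factor $\norm{\tilde f_t^\bot}^2=\dfrac{\varphi_t^2\norm{f_t}^2}{1+\varphi_\alpha^2\norm{d\alpha}^2\norm{f_t}^2}$ shows that a steep time-reparametrization suppresses the effective normal speed; applied here it lowers the $H^0$-part of the velocity by one power of $\varepsilon$. Estimating $\norm{f_t^\varepsilon}_{H^p}$ --- with the footpoint-dependent Laplacian $\Delta^{f^*\g}$ and volume $\vol(g)$, and all derivatives up to order $p$ --- the total length should be bounded by $\varepsilon^{\dim(M)/2+1-p}$, which tends to $0$ exactly when $p<\dim(M)/2+1$.

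The main obstacle is this last, sharp length estimate. One must balance the steepness of the time-reparametrization, which lowers the normal speed, against the cost of its derivatives of order up to $p$ that enter the $H^p$-norm, and verify that the optimal balance produces the exponent $\dim(M)/2+1-p$ rather than the naive $\dim(M)/2-p$. Subsidiary points to check are that the constructed curve remains inside $\Imm(M,\R^n)$ throughout, that the concentration and spreading steps are genuinely horizontal-length-free in the limit, and that the Fr\'echet lower bound of the second paragraph is unaffected by the reparametrizations used to build the cheap path.
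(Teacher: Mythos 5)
Your logical frame is sound: the reduction to $P=1+\Delta^p$ is the same one the paper makes, and the uniform Fr\'echet lower bound for your spike family is elementary and correct. But there is a genuine gap exactly where you flag ``the main obstacle'': the claim that a spike of \emph{fixed} height and width $\varepsilon$ can be produced at $H^p$-cost $O(\varepsilon^{\dim(M)/2+1-p})$ is never established, and the mechanism you propose for it cannot deliver it. The zigzag/Moser damping factor from the vanishing-distance proof in section~\ref{ge:va} controls only the \emph{pointwise} ($H^0$) size of $\tilde f_t^\bot$; for $p\ge 1$ the $H^p$-norm also integrates covariant derivatives of $\tilde f_t^\bot$, and these contain derivatives of the reparametrization ($\varphi_\al$, $\varphi_{\al\al}$, $\varphi_{t\al}$, \dots), which grow at exactly the rate of the damping you hope to gain. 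This is precisely why the vanishing-distance phenomenon is confined to $H^0$ and why the non-vanishing theorem of section~\ref{ge:no} can hold at all: whatever is bought by steep time-layering is paid back in the derivative terms. So on the range $\dim(M)/2\le p<\dim(M)/2+1$, where your own naive estimate $\varepsilon^{\dim(M)/2-p}$ no longer tends to zero, your argument has no working engine. Two further inaccuracies: your scaling $\varepsilon^{\dim(M)/2-p}$ is computed against a fixed background metric, whereas $\norm{\cdot}_{H^p}$ is taken in the induced metric of the evolving, highly stretched surface (this actually helps you in places, but it must be tracked); and for $\dim(M)=1$ the thin-needle geometry yields no smallness at all, since the two sides of the needle have length independent of $\varepsilon$.

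The paper takes a different and much simpler route, which inverts your quantifiers: instead of making the $G^P$-distance of a family with fixed Fr\'echet gap tend to zero, it keeps the $G^P$-distance \emph{bounded} while making the Fr\'echet distance \emph{arbitrarily large}. Take $f_1$ to be the translate of $f_0$ by a vector of length $\ell$, and connect them by: scale $f_0$ down by a factor $r\colon 1\to r_0$, translate the tiny shape, scale back up. Under scaling one has $\Delta^{(r f_0)^*\g}=r^{-2}\Delta^{f_0^*\g}$ and $\vol\big((r f_0)^*\g\big)=r^{\dim(M)}\vol(f_0^*\g)$, so the full scaling-to-zero path has length $\int_0^1\big(\int_M \g\big((1+r^{-2p}(\Delta^{f_0^*\g})^p)f_0,f_0\big)\,r^{\dim(M)}\vol(f_0^*\g)\big)^{1/2}\,dr$, which is finite precisely when $p<\dim(M)/2+1$; this integrability at $r=0$ is where the crucial ``$+1$'' comes from, with no reparametrization trick at all. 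Translating the shape scaled to size $r_0$ costs only $\ell\,r_0^{\dim(M)/2}\sqrt{\Vol(f_0)}$ (the Laplacian kills constant fields), which is made arbitrarily small by choosing $r_0$ small. Hence $\dist_{G^P}^{B_i}$ stays below a constant $2L+1$ while $\dist_\infty^{B_i}\to\infty$, giving the contradiction. If you want to salvage your spike family instead, you would have to replace the zigzag step by this same scaling mechanism (e.g.\ create a bump at scale $\varepsilon$ by a scaled model path, then extrude with a transition region that grows with the needle) and then actually carry out the sharp length estimate; as it stands, the central estimate is missing.
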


\begin{proof}
It is sufficient to prove the claim for $P=1+\Delta^p$.
Let $f_0$ be a fixed immersion of $M$ into $\R^n$, 
and let $f_1$ be a translation of $f_0$ by a vector $h$ of length $\ell$. 
It will be shown that the $H^p$-distance between $\pi(f_0)$ and $\pi(f_1)$ 
is bounded by a constant $2L$ that does 
not depend on $\ell$, where $\pi$ denotes the projection of $\Imm$ onto $B_i$. 
Then it follows that the $H^p$-distance can not be bounded from below by the Fr\'echet distance, 
and this proves the claim. 

For small $r_0$, one calculates the $H^p$-length of the following path of immersions:
First scale  $f_0$ by a factor $r_0$, then translate it by $h$, and then
scale it again until it has reached $f_1$. 
The following calculation  shows that under the assumption $p<m/2+1$ 
the immersion $f_0$ can be scaled down to  zero in finite $H^p$-pathlength $L$.
Let $r: [0,1] \to [0,1]$ be a function of time with $r(0)=1$ and $r(1)=0$. 
\begin{align*}
L_{\Imm}^{G^P}\big(r.f_0\big)&=\int_0^1\sqrt{\int_M\g\Big(r_t.\big(1+(\Delta^{(r.f_0)^*\g})^p\big)(f_0),r_t.f_0\Big)\vol\big((r.f_0)^*\g\big)}dt\\
&=\int_0^1\sqrt{\int_M r^2_t.\g\Big(\big(1+\frac{1}{r^{2p}}(\Delta^{f_0^*\g})^p\big)(f_0),f_0\Big)r^m\vol\big(f_0^*\g\big)}dt\\
&=\int_1^0\sqrt{\int_M\g\Big(\big(1+\frac{1}{r^{2p}}(\Delta^{f_0^*\g})^p\big)(f_0),f_0\Big)r^m\vol\big(f_0^*\g\big)}dr =: L
\end{align*}
The last integral converges if $\frac{m-2p}{2}<-1$, which holds by assumption. 
Scaling down to $r_0>0$ needs even less effort. 
So one sees that the length of the shrinking and growing part of the path is bounded by $2L$. 

The length of the translation is simply $\ell \sqrt{r_0^m \Vol(f_0)}=O(r^{m/2})$ 
since the Laplacian of the constant vector field vanishes. Therefore 
\begin{equation*}
\dist_{B_i}^{G^P}\big(\pi(f_0),\pi(f_1)\big) \leq \dist_{\Imm}^{G^P}(f_0,f_1) \leq 2L. \qedhere
\end{equation*}
\end{proof}

\section{Concentric spheres}\label{su:co}
%---------------------------------------------------------------

This section can also be found in \cite[section~6.6]{Michor119}.

For a Sobolev type metric $G^P$ that is invariant under the action of the $SO(n)$ on $\R^n$, 
the set of hyper-spheres in $\R^n$ with common center $0$
is a totally geodesic subspace of $B_i(S^{n-1},\R^n)$. 
The reason is that it is the fixed point set 
of the group $SO(n)$ acting on $B_i$ isometrically. 
(One also needs uniqueness of solutions to the geodesic equation to prove that 
the concentric spheres are totally geodesic.)
This section mainly deals with the case $P=1+\De^p$. 

First we want to determine under what conditions the set of concentric spheres is geodesically complete
under the $G^P$-metric. 

\begin{lem*}
The space of concentric spheres is complete with respect to the $G^P$ 
metric with $P=1+A\Delta^p$ iff $p\geq(n+1)/2$. 
\end{lem*}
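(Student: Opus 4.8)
The plan is to exploit that the space of concentric spheres is one dimensional, parametrized by the radius $r>0$ through the immersions $f_r=r\cdot\iota$, where $\iota\colon S^{n-1}\hookrightarrow\R^n$ is the standard unit embedding. Since this space is totally geodesic (as recalled in the introduction to this section), it is geodesically complete if and only if the metric that $G^P$ induces on the interval $(0,\infty)$ of radii is complete; and as the radii form a connected one dimensional manifold, this holds precisely when both ends $r\to 0$ and $r\to\infty$ lie at infinite distance. First I would check that the radial velocity $\p_r f_r=\nu$ is the outward unit normal and hence horizontal: indeed $P_{f_r}\nu$ will turn out to be a scalar multiple of $\nu$, so $\p_r f_r$ is $G^P$-orthogonal to the $\Diff(M)$-orbit and the length of a radial path in $\Imm$ equals its length in $B_i$.

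The core computation is to evaluate $G^P_{f_r}(\p_r f_r,\p_r f_r)=\int_{S^{n-1}}\g\big(P_{f_r}\nu,\nu\big)\vol(g_r)$. The pullback metric scales as $g_r=r^2 g_1$, so that $\vol(g_r)=r^{n-1}\vol(g_1)$ and $\De^{g_r}=r^{-2}\De^{g_1}$. Because $\R^n$ is flat, the coordinate vector fields $e_1,\dots,e_n$ are parallel along $f_r$, so the Bochner Laplacian on $f_r^*T\R^n$ acts componentwise: writing $\nu=(x_1,\dots,x_n)$ with the $x_i$ the coordinate functions restricted to $S^{n-1}$, each $x_i$ is a degree-one spherical harmonic with $\De^{g_1}x_i=(n-1)x_i$. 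Hence
\[ P_{f_r}\nu=\Big(1+A\,\tfrac{(n-1)^p}{r^{2p}}\Big)\nu,\qquad \g(\nu,\nu)=1, \]
and therefore, with $\Vol(S^{n-1})$ the volume of the unit sphere,
\[ \big\|\p_r f_r\big\|_{G^P}^2=\Vol(S^{n-1})\Big(r^{n-1}+A(n-1)^p\,r^{\,n-1-2p}\Big). \]
This exhibits the induced metric on the radii explicitly.

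It then remains to analyse the length integral $\int\sqrt{r^{n-1}+A(n-1)^p\,r^{\,n-1-2p}}\,dr$ at the two ends. As $r\to\infty$ the integrand behaves like $r^{(n-1)/2}$, whose integral always diverges, so that end is at infinite distance independently of $p$. As $r\to 0$ the more singular term dominates and the integrand behaves like $r^{(n-1-2p)/2}$; the integral $\int_0 r^{(n-1-2p)/2}\,dr$ converges exactly when $\tfrac{n-1-2p}{2}>-1$, i.e. when $p<\tfrac{n+1}{2}$. Thus $r=0$ is reachable in finite length precisely for $p<(n+1)/2$, in which case the space fails to be complete, whereas for $p\ge(n+1)/2$ both ends are at infinite distance and the space is complete. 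I expect the main obstacle to be the bookkeeping that legitimises the two reductions used along the way — that a totally geodesic one dimensional submanifold is complete iff its induced length metric is, and that the flatness of $\R^n$ genuinely diagonalises $P_{f_r}$ on $\nu$ via the eigenvalue $n-1$ — rather than the elementary convergence analysis of the final integral.
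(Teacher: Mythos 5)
Your proposal is correct and follows essentially the same route as the paper: reduce to the one-dimensional family of radii, verify that radial paths are horizontal because $P_{f_r}\nu$ is a multiple of $\nu$, compute $P_{f_r}\nu=\bigl(1+A\tfrac{(n-1)^p}{r^{2p}}\bigr)\nu$, and analyze the resulting length integral $\int\sqrt{\bigl(1+A(n-1)^p r^{-2p}\bigr)r^{n-1}}\,dr$ at both ends, including the boundary case $p=(n+1)/2$. The only deviation is the sub-step establishing $\De\nu=\tfrac{n-1}{r^2}\nu$: the paper derives it from the Weingarten map via $\De\nu=\nabla^*(-Tf.L)=\Tr(L^2)\nu$, whereas you use the scaling $\De^{g_r}=r^{-2}\De^{g_1}$ together with the fact that the flat Bochner Laplacian acts componentwise and the coordinate functions are degree-one spherical harmonics with eigenvalue $n-1$ --- both arguments are valid and yield the same formula.
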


\begin{proof}
The space is complete
if and only if it is impossible to scale a sphere down to zero or up to 
infinity in finite $G^P$ path-length. 
So let $f$ be a path of concentric spheres. 
It is uniquely described by its radius $r$. Its velocity is $f_t=r_t.\nu$, where $\nu$ 
designates the unit normal vector field. 
One has
$$\g\big(g\i.S,\nu\big)=:L=-\tfrac{1}{r}\on{Id}_{TM}, 
\quad \Tr(L^k)=(-1)^k\tfrac{n-1}{r^k},\quad \Vol=r^{n-1}\tfrac{n\pi^{n/2}}{\Gamma(n/2+1)}.$$
Keep in mind that $r$ and $r_t$ are constant functions on the sphere, so that all 
derivatives of them vanish.
Therefore 
\begin{align*}
\Delta \nu&=\nabla^*(\nabla \nu)=\nabla^*(-Tf.L)=\Tr^g\Big(\nabla(Tf.L)\Big)
\\&=\Tr^g\Big(\nabla(Tf).L\Big)+\Tr^g\Big(Tf.(\nabla L)\Big)\\&=
\Tr(L^2).\nu+ \Tr^g\Big(Tf.\nabla(-\tfrac1r\on{Id}_{TM})\Big)=\frac{n-1}{r^2}.\nu+0
\end{align*}
and
\begin{align*}
Pf_t&=(1+A\Delta^p)(r_t.\nu)=r_t.\left(1+A\frac{(n-1)^p}{r^{2p}}\right).\nu.
\end{align*}
From this it is clear that the path $f$ is horizontal. Therefore its length as a path in $B_i$ 
is the same as its length as a path in $\Imm$. 
One calculates its length as in the proof of \ref{su:fr2}:
\begin{align*}
L^{G^P}_{B_i}(f)&=\int_0^1\sqrt{G_f^P(f_t,f_t)}dt=
\int_0^1\sqrt{\int_M r_t^2.\left(1+A\frac{(n-1)^p}{r^{2p}}\right)\vol(g)}dt\\&
=\int_0^1 |r_t|\sqrt{ \left(1+A\frac{(n-1)^p}{r^{2p}}\right)\frac{n.\pi^{n/2}}{\Gamma(n/2+1)}r^{n-1}}dt\\&
=\sqrt{\frac{n.\pi^{n/2}}{\Gamma(n/2+1)}}\int_{r_0}^{r_1} \sqrt{\left(1+A\frac{(n-1)^p}{r^{2p}}\right)r^{n-1}}dr.
\end{align*}
The integral diverges for $r_1 \to \infty$ since the integrand is greater than $r^{(n-1)/2}$. 
It diverges for $r_0 \to 0$ iff $(n-1-2p)/2 \leq -1$, which is equivalent to $p \geq (n+1)/2$. 
\end{proof}

The geodesic equation within the space of concentric spheres 
reduces to an ODE for the radius that can be read off 
the geodesic equation in section \ref{la:ge}:
\begin{align*}
r_{tt}=-r_t^2\Big(\frac{n-1}{2r}-\frac{p.A.(n-1)^p}{r\big(r^{2p}+A(n-1)^p\big)}\Big).
\end{align*}

%------------------------------------------------------------------------------------------
%---------------------------------------------------------------
\chapter{Diffeomorphism groups}\label{di}
%---------------------------------------------------------------
%------------------------------------------------------------------------------------------

For $M=N$ the space $\Emb(M,M)$ equals the \emph{diffeomorphism group of $M$}. 
An operator $P \in \Ga\big(L(T\Emb;T\Emb)\big)$ that is invariant under reparametrizations
induces a right-invariant Riemannian metric on this space. 
Thus one gets the geodesic equation for 
right-invariant Sobolev metrics on diffeomorphism groups and well-posedness of this equation.
To the authors knowledge, well-posedness has so far only been shown for the special case 
$M=S^1$ in \cite{Constantin2003}
and for the special case of Sobolev order one metrics in \cite{GayBalmaz2009}.
Theorem~\ref{so:we} establishes this result for arbitrary compact $M$ and
Sobolev metrics of arbitrary order.

In the decomposition of a vector $h \in T_f\Emb$ into its tangential and normal components 
$h=Tf.h^\top + h^\bot$, the normal part $h^\bot$ vanishes.  
By the naturality of the covariant derivative, $S=\nabla Tf$ vanishes as well. Thus on gets: 

\begin{thm*}
The geodesic equation on $\Diff(M)$ is given by
\begin{equation*}
\left\{\begin{aligned}
p &= Pf_t \otimes \vol(g), \\
\nabla_{\p_t}p &=-Tf.\g(Pf_t,\nabla f_t)^\sharp \otimes \vol(g).
\end{aligned}\right.
\end{equation*}
\end{thm*}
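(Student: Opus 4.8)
The strategy is simply to take the geodesic equation on shape space in terms of the momentum from section~\ref{so:geshmo} (or the version on immersions from section~\ref{so:gemo}) and substitute the two simplifications that hold for diffeomorphism groups. The key geometric observation is already spelled out in the text preceding the statement: for $M=N$ every immersion is in fact a local diffeomorphism, so the normal bundle $\Nor(f)$ is trivial. This forces $h^\bot = 0$ for every tangent vector $h \in T_f\Emb$, and by the naturality of the covariant derivative the second fundamental form $S = \nabla Tf$ vanishes identically, hence also $\Tr^g(S) = 0$.

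The next step is to feed these two facts into the general geodesic equation
$$\nabla_{\p_t}p = \frac12\big(\adj{\nabla P}(f_t,f_t)^\bot-2Tf.\g(Pf_t,\nabla f_t)^\sharp
-\g(Pf_t,f_t).\Tr^g(S)\big)\otimes\vol(g)$$
from section~\ref{so:gemo}. Since $S=0$, the last term $-\g(Pf_t,f_t).\Tr^g(S)$ drops out immediately. For the first term $\adj{\nabla P}(f_t,f_t)^\bot$, I would observe that the normal projection of any vector field along $f$ is zero because $\Nor(f)=\{0\}$, so this term vanishes as well. What survives is precisely the middle term $-2\,Tf.\g(Pf_t,\nabla f_t)^\sharp$, and the factor of $\frac12$ turns the $-2$ into $-1$, yielding
$$\nabla_{\p_t}p = -Tf.\g(Pf_t,\nabla f_t)^\sharp\otimes\vol(g),$$
together with the defining relation $p = Pf_t\otimes\vol(g)$. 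This is exactly the asserted equation.

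The only point requiring slight care is the treatment of horizontality. On a diffeomorphism group, the full tangent space is tangential (there is nothing normal), so the horizontal/vertical splitting collapses and no projection survives; equivalently one uses the unprojected equation from section~\ref{so:gemo} rather than the horizontal version. I expect the main conceptual obstacle to be justifying that $\adj{\nabla P}(f_t,f_t)^\bot = 0$ cleanly, but this is immediate from $\Nor(f)=\{0\}$, so the whole argument is essentially a direct substitution into an already-established formula rather than a new computation.
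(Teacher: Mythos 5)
Your proposal is correct and matches the paper's own derivation: the paper likewise observes that for $M=N$ the normal parts $h^\bot$ vanish and $S=\nabla Tf=0$, and then reads the result off the momentum form of the geodesic equation from section~\ref{so:gemo}, with the $\frac12\cdot(-2)$ producing the final coefficient. Your additional remark that one must use the unprojected equation on immersions (rather than the horizontal/shape-space version, which degenerates since the quotient is a point) is a correct and sensible clarification of what the paper leaves implicit.
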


Note that this equation is not right-trivialized, in contrast to 
the equation given in \cite{Arnold1966,Michor102,Michor109}, for example.

%By the invariance of $P$ one has $(Ph)^\top = P(h^\top)$.
%One also has 
%$$(\nabla_{\p_t} h) \o f = \nabla_{\p_t}(h \o f)$$
%since avoiding the sloppy notation above one has
%\begin{align*}
%(\nabla_{(\p_t,0_M)} h) \o (\Id_{\R},f) &= 
%\nabla_{(\p_t,0_M)\o (\Id_{\R},f)} h =
%\nabla_{T(\Id_{\R},f) \o (\p_t,0_M)} h \\&=
%\nabla_{(\p_t,0_M)} \big(h \o (\Id_{\R},f) \big).
%\end{align*}

% Thus the right-trivialized version of the geodesic equation is 
%\begin{equation*}\boxed{
%\begin{aligned}
%\nabla_{\p_t}(Pf_t^\top)&=
%-g(Pf_t^\top,\nabla f_t^\top)^\sharp-\on{div}(f_t^\top) Pf_t^\top.
%\end{aligned}
%}\end{equation*}

%Calculate $\on{ad}_u^\top.u$: 
%\begin{align*}
%&\int \g(Ph,[k,m]) \vol(\g) = 
%-\int (\L_k\g)(Ph,m)+\g(\L_kPh,m)+\g(Ph,m)\on{div}(k) \vol(\g) 
%\\&\qquad= 
%-\int (\nabla k^\flat)(Ph,m)+(\nabla k^\flat)(m,Ph)+\g(\nabla_kPh-\nabla_{Ph}k,m)+\g(Ph,m)\on{div}(k) \vol(\g) 
%\\&\qquad= 
%-\int \g(\nabla_{Ph} k,m)+\g(\nabla_m k,Ph)+\g(\nabla_kPh-\nabla_{Ph}k,m)+\g(Ph,m)\on{div}(k) \vol(\g) 
%\\&\qquad= 
%-\int \g(\nabla_m k,Ph)+\g(\nabla_kPh,m)+\g(Ph,m)\on{div}(k) \vol(\g) 
%\\&\qquad= 
%-\int \g(\g(\nabla k,Ph)^\sharp+\nabla_kPh+\on{div}(k)Ph,m)\vol(\g) 
%\end{align*}
%Therefore 
%\begin{align*}
%\on{ad}_k^\top h = P\i \big(\g(\nabla k,Ph)^\sharp+\nabla_kPh+\on{div}(k)Ph \big)
%\end{align*}

%------------------------------------------------------------------------------------------
%---------------------------------------------------------------
\chapter{Numerical results}\label{nu}
%---------------------------------------------------------------
%------------------------------------------------------------------------------------------

This section can also be found in \cite[section~7]{Michor119}.

It is of great interest for shape comparison to solve the \emph{boundary value problem} 
for geodesics in shape space. 
When the boundary value problem can be solved, 
then any shape can be encoded as the initial momentum of a geodesic 
starting at a fixed reference shape. 
Since the initial momenta all lie in the same vector space, 
this also opens the way to statistics on shape space. 

There are two approaches to solving the boundary value problem.
In \cite{Michor118} the first approach of minimizing \emph{horizontal path energy} 
over the set of curves in $\Imm$ 
connecting two fixed boundary shapes has been taken. 
This has been done for several almost local metrics. 
For these metrics it is straightforward to calculate the horizontal energy because 
the horizontal bundle equals the normal bundle.  
However, in the case of Sobolev type metrics 
the horizontal energy involves the inverse of a differential operator (see section~\ref{so:ho}), 
which makes this approach much harder.

The second approach is the method of \emph{geodesic shooting}. 
This method is based on iteratively solving the initial value problem 
while suitably adapting the initial conditions. 
The theoretical requirements of existence of solutions to the geodesic equation and smooth dependence on 
initial conditions are met for Sobolev type metrics, see section \ref{so:we}.
This makes geodesic shooting a promising approach.

{\it The first step towards this aim is to numerically solve the initial value problem for geodesics, 
at least for the $H^1$-metric and the case of surfaces in $\R^3$, 
and that is what will be presented in this work. 
}

The geodesic equation on shape space is equivalent to the horizontal geodesic equation
on the space of immersions. For the case of surfaces in $\R^3$, 
it takes the form given in section~\ref{su:ge}.
This equation can be conveniently set up
using the DifferentialGeometry package incorporated in the computer algebra system Maple
as demonstrated in figure~\ref{nu:maple}. 
(The equations that have actually been solved were simplified 
by multiplying intermediate terms with suitable powers of 
$\sqrt{\vol(g)}$, but for the sake of clearness this has not been included in the Maple code
in figure~\ref{nu:maple}.)

\begin{figure}[p]
\lstset{basicstyle=\small\ttfamily,
	backgroundcolor=\color[gray]{0.9},
	numbers=left, numberstyle=\scriptsize, stepnumber=2, numbersep=5pt,
  	commentstyle=\small,columns=flexible,
  	showstringspaces=false}
\begin{lstlisting}
with(DifferentialGeometry);with(Tensor);with(Tools);
DGsetup([u,v],[x,y,z],E);
f := evalDG(f1(t,u,v)*D_x+f2(t,u,v)*D_y+f3(t,u,v)*D_z);
G := evalDG(dx &t dx + dy &t dy + dz &t dz);
Gamma_vrt := 0 &mult Connection(dx &t D_x &t du);
Tf := CovariantDerivative(f,Gamma_vrt);
g:=ContractIndices(G &t Tf &t Tf,[[1,3],[2,5]]);
g_inv:=InverseMetric(g);
Gamma_bas:=Christoffel(g);
det:=Hook([D_u,D_u],g)*Hook([D_v,D_v],g)-Hook([D_u,D_v],g)^2;
ft := evalDG(diff(f1(t,u,v),t)*D_x+diff(f2(t,u,v),t)*D_y
	+diff(f3(t,u,v),t)*D_z);
ft := convert(ft,DGtensor);
S := CovariantDerivative(Tf,Gamma_vrt,Gamma_bas);
cross:=evalDG((dy &w dz) &t D_x + (dz &w dx) &t D_y + (dx &w dy) &t D_z);
N:=Hook([ContractIndices(Tf &t D_u,[[2,3]]),
	ContractIndices(Tf &t D_v,[[2,3]])],cross);
nu:=convert(N/sqrt(Hook([N,N],G)),DGvector);
s := ContractIndices(G &t S &t nu, [[1,3],[2,6]]);
L := ContractIndices(g_inv &t s,[[2,3]]);
Gftft := ContractIndices(G &t ft &t ft,[[1,3],[2,4]]);
Cft := CovariantDerivative(ft,Gamma_vrt);
CCft := CovariantDerivative(Cft,Gamma_vrt,Gamma_bas);
Dft := ContractIndices(-g_inv &t CCft,[[1,4],[2,5]]);
Pft := evalDG(ft+A*Dft);
GCftCft := ContractIndices(Cft &t Cft &t G,[[1,5],[3,6]]);
gGCftCft := ContractIndices(g_inv &t GCftCft,[[2,3]]);
TrLgGCftCft := ContractIndices(L &t gGCftCft,[[1,4],[2,3]]);
TrgGCftCft := ContractIndices(gGCftCft,[[1,2]]);
TrL := ContractIndices(L,[[1,2]]);
# b(t,u,v) := a(t,u,v)*sqrt(det);
eq1 := ContractIndices(Pft &t dx,[[1,2]])*sqrt(det) = 
	b(t,u,v)*ContractIndices(nu &t dx,[[1,2]]);
eq2 := ContractIndices(Pft &t dy,[[1,2]])*sqrt(det) = 
	b(t,u,v)*ContractIndices(nu &t dy,[[1,2]]);
eq3 := ContractIndices(Pft &t dz,[[1,2]])*sqrt(det) = 
	b(t,u,v)*ContractIndices(nu &t dz,[[1,2]]);
eq4 := diff(b(t,u,v),t) = 
	(A*TrLgGCftCft - TrL/2*(Gftft+A*TrgGCftCft))*sqrt(det);
\end{lstlisting}
\caption{Maple source code to set up the geodesic equation.}
\label{nu:maple}
\end{figure}

Unfortunately, Maple (as of version 14) is not able to solve 
PDEs with more than one space variable numerically. 
Thus the equations were translated into Mathematica.
The PDE was solved using the method of lines. Spatial discretization was done 
using an equidistant grid, and spatial derivatives were 
replaced by finite differences.
The time-derivative $f_t$ appears implicitly in the equation 
$P_f(f_t)=a.\nu$, and this remains so 
when the operator $P_f$ is replaced by finite differences.

The solver that has been used is 
the Implicit Differential-Algebraic 
(IDA) solver that is part of the SUNDIALS suite and is integrated into Mathematica. 
IDA uses backward differentiation of order 1 to 5 with 
variable step widths. Order 5 is standard and has also been used here. 
At each time step, the new value of $f_t$ is computed using some previous values of $f$, and then 
the new value of $f$ is calculated from the equation $P_f( f_t)=a.\nu$.  
The dependence on $f$ in this equation is of course highly nonlinear.
A Newton method is used to solve it. 
This operation is quite costly and has to be done at every step, 
which is a main disadvantage of backward differentiation algorithms.
Explicit methods are probably much better adapted
to the problem. The implementation of an explicit solver 
is ongoing work of Martin Bauer, Martins Bruveris and the author.
The results are too preliminary to be included in this thesis. 

%After the last time step, an a-posteriori estimate of the spatial error 
%is computed by comparing the solutions for different numbers of grid points. 
%There is a trade-off between the number of grid points 
%that are necessary to keep the spatial error estimates low
%and the order of the finite differences used to approximate spatial derivatives. 
%In the examples run by the author, first order differences and many grid points
%gave a better performance than higher differences with less grid points.

In the examples that follow, $f$ at time zero is a square $[0,\pi]\x[0,\pi]$ flatly embedded in $\R^3$. 
This is a manifold with boundary, but it can be seen as a part of a bigger closed manifold. 
Zero boundary conditions are used for both $f$ and $a$. 
It remains to specify an initial condition for $a$. 
As a first example, let us assume that $a$ at time zero equals $\sin(x)\sin(y)$, 
where $x,y$ are the Euclidean coordinates on the square.
The resulting geodesic is depicted in figure~\ref{nu:bump_1}. 
In the absence of a closed-form solution of the geodesic equation, one way to 
check if the solution is correct is to see if the energy $G^P(f_t,f_t)$ 
is conserved. Figure~\ref{nu:energyplot} shows 
this for the geodesic from figure~\ref{nu:bump_1}
with various space and time discretizations. 

A more complicated example of a geodesic is shown in figure \ref{nu:A_ge} and \ref{nu:A_in}. 
There, the initial velocity was chosen to be a smoothened version of a black and white
picture of the letter A. The initial momentum was computed from it using a discrete 
Fourier transform. 

Finally, it is shown in figure \ref{nu:se} that self-intersections of the surface can occur. 
This is not due to a numerical error but part of the theory, 
and can be an advantage or a disadvantage depending 
on the application.

\begin{figure}[p]
\centering
\includegraphics[width=\textwidth-10pt]{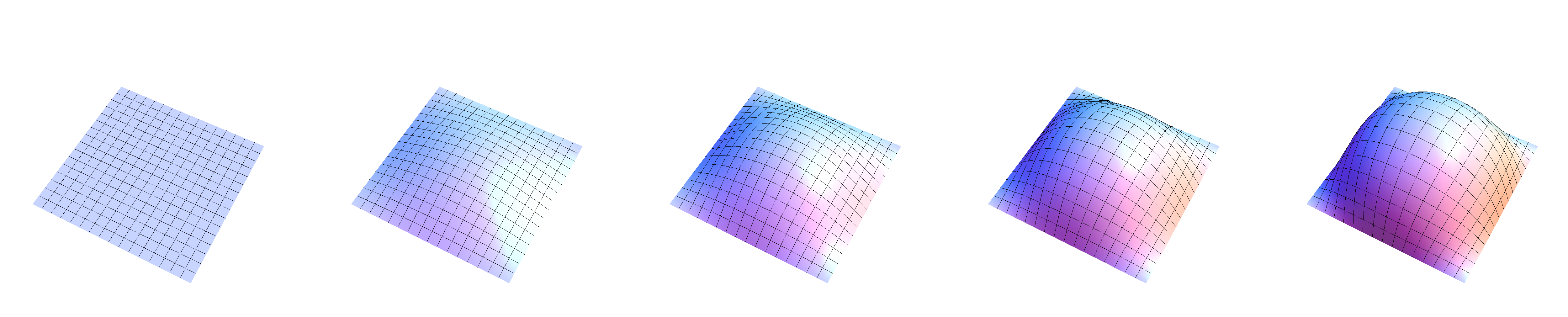}
\caption{Geodesic where a bump is formed out a flat plane. 
The initial momentum is $a=\sin(x)\sin(y)$.
Time increases linearly from left to right. The final time is $t=5$. }
\label{nu:bump_1}
\end{figure}

\begin{figure}[p]
\centering
\begin{psfrags}
% -- content from the PSfragPrologue option --
%auto-ignore (makes arXiv.org ignore this file)
% This file replaces labels in an EPS graphic by LaTeX macros,
% i.e. it should be accompanied by an EPS file (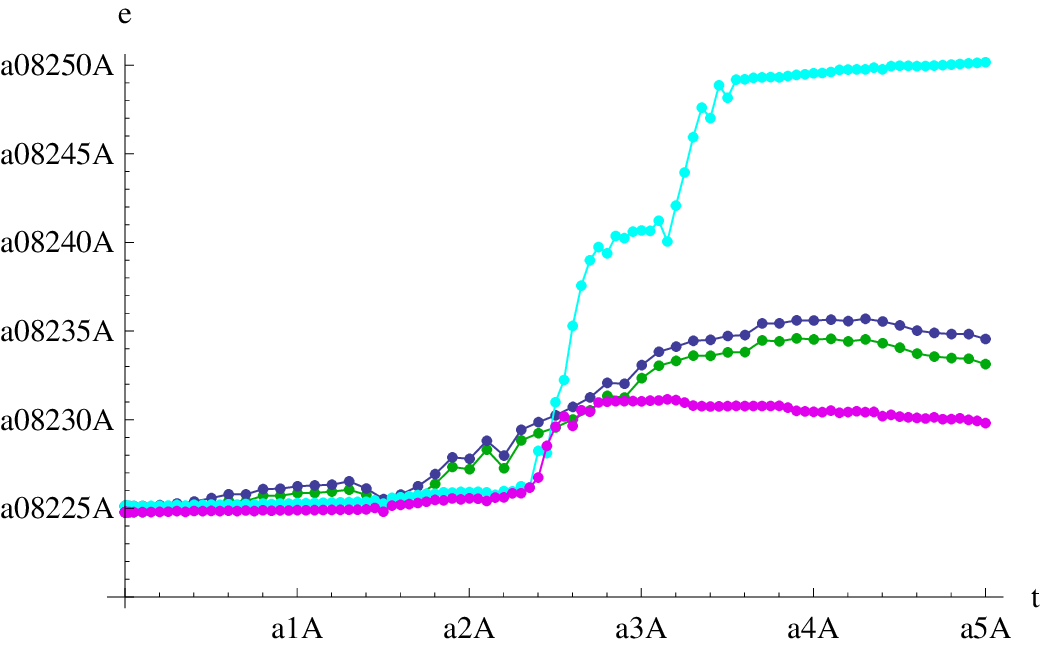). 
\def\PFGstripminus-#1{#1}%
\def\PFGshift(#1,#2)#3{\raisebox{#2}[\height][\depth]{\hbox{%
  \ifdim#1<0pt\kern#1 #3\kern\PFGstripminus#1\else\kern#1 #3\kern-#1\fi}}}%
\providecommand{\PFGstyle}{}%
% -- end of PSfragPrologue option --
%
% psfrag replacement commands
\psfrag{a08220A}[cr][cr]{\PFGstyle $0.8220$}%
\psfrag{a08220B}[Bc][Bc]{\PFGstyle $0.8220$}%
\psfrag{a08220}[Bc][Bc][1.][0.]{\PFGstyle $0.8220$}%
\psfrag{a08225A}[cr][cr]{\PFGstyle $0.8225$}%
\psfrag{a08225B}[Bc][Bc]{\PFGstyle $0.8225$}%
\psfrag{a08225}[Bc][Bc][1.][0.]{\PFGstyle $0.8225$}%
\psfrag{a08230A}[cr][cr]{\PFGstyle $0.8230$}%
\psfrag{a08230B}[Bc][Bc]{\PFGstyle $0.8230$}%
\psfrag{a08230}[Bc][Bc][1.][0.]{\PFGstyle $0.8230$}%
\psfrag{a08235A}[cr][cr]{\PFGstyle $0.8235$}%
\psfrag{a08235B}[Bc][Bc]{\PFGstyle $0.8235$}%
\psfrag{a08235}[Bc][Bc][1.][0.]{\PFGstyle $0.8235$}%
\psfrag{a08240A}[cr][cr]{\PFGstyle $0.8240$}%
\psfrag{a08240B}[Bc][Bc]{\PFGstyle $0.8240$}%
\psfrag{a08240}[Bc][Bc][1.][0.]{\PFGstyle $0.8240$}%
\psfrag{a08245A}[cr][cr]{\PFGstyle $0.8245$}%
\psfrag{a08245B}[Bc][Bc]{\PFGstyle $0.8245$}%
\psfrag{a08245}[Bc][Bc][1.][0.]{\PFGstyle $0.8245$}%
\psfrag{a08250A}[cr][cr]{\PFGstyle $0.8250$}%
\psfrag{a08250B}[Bc][Bc]{\PFGstyle $0.8250$}%
\psfrag{a08250}[Bc][Bc][1.][0.]{\PFGstyle $0.8250$}%
\psfrag{a0A}[Bc][Bc]{\PFGstyle $0$}%
\psfrag{a0}[Bc][Bc][1.][0.]{\PFGstyle $0$}%
\psfrag{a1A}[tc][tc]{\PFGstyle $1$}%
\psfrag{a1B}[Bc][Bc]{\PFGstyle $1$}%
\psfrag{a1}[Bc][Bc][1.][0.]{\PFGstyle $1$}%
\psfrag{a2A}[tc][tc]{\PFGstyle $2$}%
\psfrag{a2B}[Bc][Bc]{\PFGstyle $2$}%
\psfrag{a2}[Bc][Bc][1.][0.]{\PFGstyle $2$}%
\psfrag{a3A}[tc][tc]{\PFGstyle $3$}%
\psfrag{a3B}[Bc][Bc]{\PFGstyle $3$}%
\psfrag{a3}[Bc][Bc][1.][0.]{\PFGstyle $3$}%
\psfrag{a4A}[tc][tc]{\PFGstyle $4$}%
\psfrag{a4B}[Bc][Bc]{\PFGstyle $4$}%
\psfrag{a4}[Bc][Bc][1.][0.]{\PFGstyle $4$}%
\psfrag{a5A}[tc][tc]{\PFGstyle $5$}%
\psfrag{a5B}[Bc][Bc]{\PFGstyle $5$}%
\psfrag{a5}[Bc][Bc][1.][0.]{\PFGstyle $5$}%
\psfrag{eA}[Bc][Bc]{\PFGstyle $G^P(f_t,f_t)$}%
\psfrag{e}[bc][bc]{\PFGstyle $G^P(f_t,f_t)$}%
\psfrag{tA}[Bc][Bc]{\PFGstyle $t$}%
\psfrag{t}[cl][cl]{\PFGstyle $t$}%
\includegraphics[width=\textwidth-10pt]{energyplot}
\end{psfrags}
\caption{Conservation of the energy $G^P(f_t,f_t)$ along the geodesic in figure~\ref{nu:bump_1}. 
The true value of $G^P(f_t,f_t)$ is $\tfrac{\pi^2}{4(1+2A)} \approx 0.822467$ for $A=1$. 
The maximum time step used in blue and green is 0.1. For purple and cyan it is 0.05. The number
of grid points used in blue and cyan is 100 times 100. For green and purple it is 200 times 200.}
\label{nu:energyplot}
\end{figure}

\begin{figure}[p]
\centering
\includegraphics[width=\textwidth-10pt]{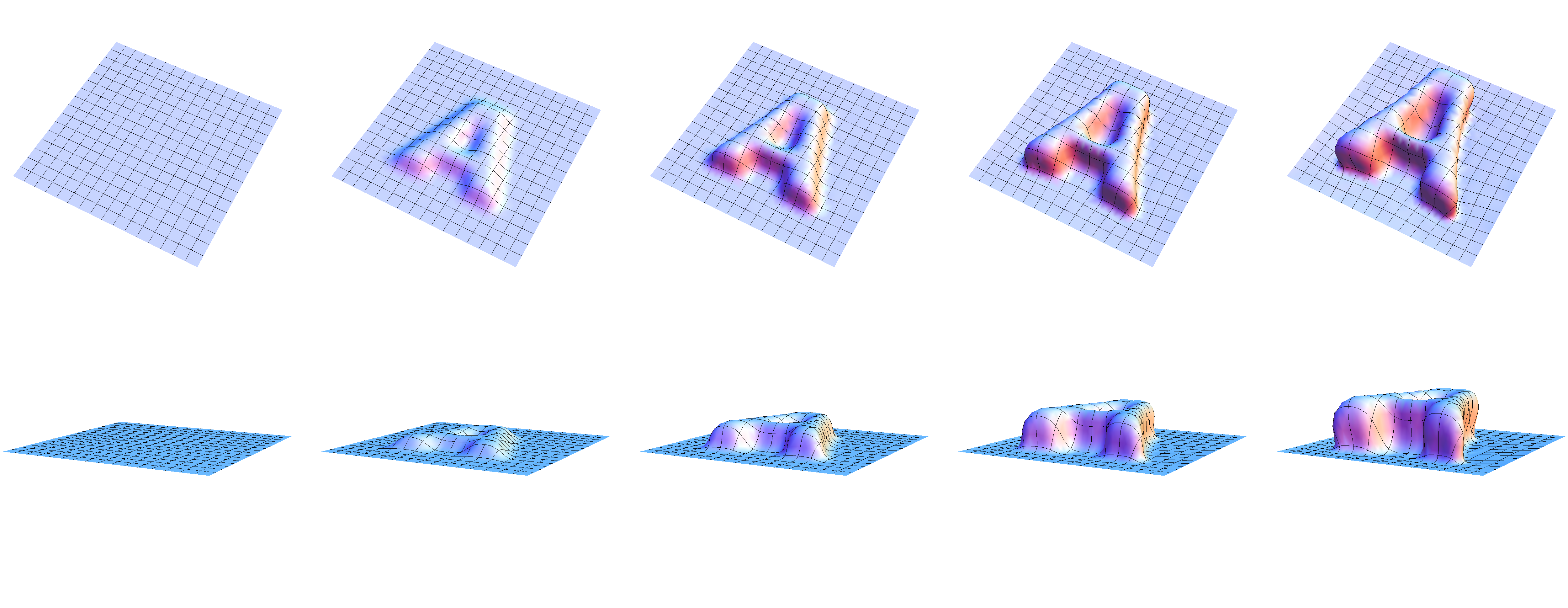}
\caption{Letter A forming along a geodesic path. Time increases linearly from left to right. 
The final time is $t=0.8$.
Top and bottom row are different views of the same geodesic. }
\label{nu:A_ge}
\end{figure}

\begin{figure}[p]
\centering
\includegraphics[width=\textwidth-10pt]{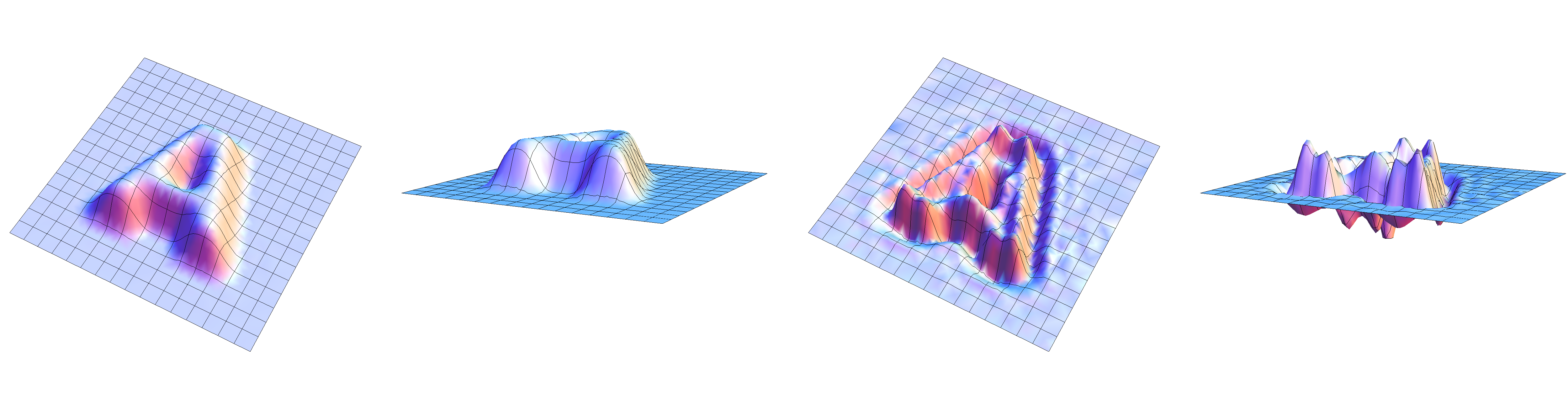}
\caption{Initial velocity $f_t(0,\cdot)$ and momentum $a(0,\cdot)$ of the geodesic in figure \ref{nu:A_ge}. 
Both are shown first from above, then from the side. }
\label{nu:A_in}
\end{figure}

\begin{figure}[p]
\centering
\includegraphics[width=\textwidth-10pt]{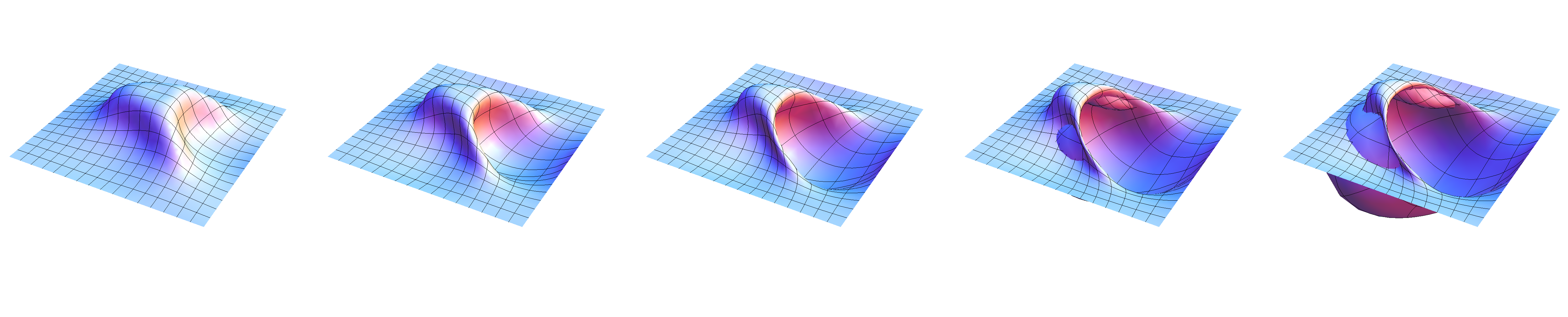}
\caption{A self-intersection forming along a geodesic. Time increases linearly from left to right. }
\label{nu:se}
\end{figure}

% Comparison order one and two: 
% Order two takes double amount of time, same number of steps. 
% When integrating up to time 1, the results differ of magnitude .05

%------------------------------------------------------------------------------------------
%---------------------------------------------------------------
\backmatter
\appendix
\addcontentsline{toc}{chapter}{Appendix}
%---------------------------------------------------------------
%------------------------------------------------------------------------------------------

%------------------------------------------------------------------------------------------
\addcontentsline{toc}{section}{Bibliography}
\markboth{APPENDIX}{BIBLIOGRAPHY}
\bibliographystyle{plainnat}
\bibliography{bibliography}
\chapter*{Zusammenfassung}
\markboth{APPENDIX}{ZUSAMMENFASSUNG}
\addcontentsline{toc}{section}{Zusammenfassung}
%---------------------------------------------------------------
%------------------------------------------------------------------------------------------

\selectlanguage{ngerman}
\noindent
{\sc Zusammenfassung.} 
Geometrische Figuren spielen eine wichtige Rolle in vielen Bereichen der Wissenschaft, Technik und
Medizin. Mathematisch kann eine Figur als unparametrisierte, immersive Teil-Mannigfaltigkeit
modelliert werden, was auch der dieser Arbeit zugrunde liegende Figurenbegriff ist. Wenn der Raum
der Figuren mit einer Riemannschen Metrik ausgestattet wird, er\"offnet sich die Welt der Riemannschen
Differential-Geometrie mit Geod\"aten, Gradienten-Fl\"ussen und Kr\"ummung. Leider induziert die
einfachste solche Metrik verschwindende Kurvenl\"angen-Distanz am Raum der Figuren. Diese
Entdeckung von Michor und Mumford war der Ausgangspunkt f\"ur die Suche nach st\"arkeren Metriken,
die es erm\"oglichen sollten, Figuren anhand von f\"ur die jeweilige Anwendung wichtigen Merkmalen zu
unterscheiden. Sobolev-Metriken sind ein vielversprechender Ansatz dazu. Es gibt sie in zwei
Varianten: Sogenannte \"au{\ss}ere Metriken, die von Metriken auf der Diffeomorphismen-Gruppe des
umgebenden Raums induziert werden, und innere Metriken, die intrinsisch zur Figur definiert sind. In
dieser Arbeit werden innere Sobolev-Metriken in einem sehr allgemeinen Rahmen entwickelt und
pr\"asentiert. Es gibt keine Einschr\"ankung auf die Dimension des eingebetteten oder umgebenden
Raums, und der umgebende Raum muss nicht flach sein. Es wird gezeigt, dass die
Kurvenl\"angen-Distanz von inneren Sobolev Metriken am Raum der Figuren nicht verschwindet. Die
Geod\"atengleichung und die durch Symmetrien erzeugten Erhaltungsgr\"o{\ss}en werden hergeleitet, und
die Wohldefiniertheit der Geod\"atengleichung wird gezeigt. Beispiele von numerischen L\"osungen der
Geod\"atengleichung schlie{\ss}en die Arbeit ab.
\selectlanguage{english}

\end{document}